\def\mapright#1{\smash{\mathop{\longrightarrow}\limits^{#1}}}
\newtheorem{proposition}{Proposition}[section]
\newtheorem{lemma}[proposition]{Lemma}
\newtheorem{corollary}[proposition]{Corollary}
\newtheorem{theorem}[proposition]{Theorem}
\theoremstyle{definition}
\newtheorem{definition}[proposition]{Definition}
\newtheorem{definitions}[proposition]{Definitions}
\theoremstyle{remark}
\newtheorem{remark}[proposition]{Remark}
\newtheorem{remarks}[proposition]{Remarks}
\newcommand{\thlabel}[1]{\label{th:#1}}
\newcommand{\thref}[1]{Theorem~\ref{th:#1}}
\newcommand{\selabel}[1]{\label{se:#1}}
\newcommand{\seref}[1]{Section~\ref{se:#1}}
\newcommand{\lelabel}[1]{\label{le:#1}}
\newcommand{\leref}[1]{Lemma~\ref{le:#1}}
\newcommand{\prlabel}[1]{\label{pr:#1}}
\newcommand{\prref}[1]{Proposition~\ref{pr:#1}}
\newcommand{\colabel}[1]{\label{co:#1}}
\newcommand{\coref}[1]{Corollary~\ref{co:#1}}
\newcommand{\delabel}[1]{\label{de:#1}}
\newcommand{\deslabel}[1]{\label{de:#1}}
\newcommand{\desref}[1]{Definitions~\ref{de:#1}}
\newcommand{\eqlabel}[1]{\label{eq:#1}}
\newcommand{\equref}[1]{(\ref{eq:#1})}
\def\ra{\rightarrow}
\def\cd{\cdot}
\def\Id{{\rm Id}}
\def\mfM{{\mf M}}
\def\mfN{\mf {N}}
\def\mfP{\mf {P}}
\def\mfC{{\mf C}}
\newcommand{\gbrbox}{\mbox{$\gbr\gvac{-1}\gnot{\hspace*{-4mm}\Box}$}}
\def\ot{\otimes}
\def\va{\varepsilon}
\def\un{\underline}
\def\mf{\mathfrak}
\def\mfA{\mf {A}}
\def\mfa{\mf {a}}
\def\mfm{\mf {m}}
\def\mfn{\mf {n}}
\def\le{\langle}
\def\ri{\rangle}
\def\l{\lambda}
\def\r{\rho}
\def\va{\varepsilon}
\def\v{\varphi}
\def\tl{\triangleleft}
\def\tr{\triangleright}
\def\btr{\blacktriangleright}
\def\btl{\blacktriangleleft}
\def\ra{\rightarrow}
\def\d{\delta}
\def\ov{\overline}
\def\cal{\mathcal}
\def\un{\underline}
\newcommand{\une}{\mbox{$\un {\va }$}}
\newcommand{\una}{\mbox{$c_{\un {1}}$}}
\newcommand{\unb}{\mbox{$c_{\un {2}}$}}
\newcommand{\Cc}{\cal C}
\newcommand{\Dc}{\cal D}
\newcommand{\Mm}{\cal M}
\def\equal#1{\smash{\mathop{=}\limits^{#1}}}
\def\equalupdown#1#2{\smash{\mathop{=}\limits^{#1}\limits_{#2}}}
 \newcommand{\gbeg}[2]{
   \unitlength=1pt
   \grrow = #2
   \grcolumn = 0
   \grcalca = #1
   \grcalcb = #2
   \multiply \grcalca by \factor
   \grwidth = \grcalca
   \multiply \grcalcb by \factor
   \begin{minipage}{\grcalca pt}
   \begin{picture}(\grcalca,\grcalcb)
   \advance \grcalcb by -\factor
   \put(0, \grcalcb){\line(1,0){\grwidth}} }
 \newcommand{\gend}{
   \put(0, \factor){\line(1,0){\grwidth}}
   \end{picture}
   {\vskip2.5ex}
   \end{minipage} }
 \newcommand{\gnl}{
   \advance \grrow by -1
   \grcolumn = 0}
 \newcommand{\gvac}[1]{       
   \advance \grcolumn by #1} 
 \newcommand{\gcl}[1]{
   \grcalca = \grcolumn
   \multiply \grcalca by \factor
   \advance \grcalca by \hfactor
   \grcalcb = \grrow
   \multiply \grcalcb by \factor
   \grcalcc = #1
   \multiply \grcalcc by \factor
   \put(\grcalca,\grcalcb) {\line(0,-1){\grcalcc}} 
   \advance \grcolumn by 1}
 \newcommand{\gcn}[4]{
   \grcalca = \grcolumn
   \multiply \grcalca by \factor
   \grcalci = #3
   \multiply \grcalci by \hfactor
   \advance \grcalca by \grcalci
   \grcalcb = \grcolumn
   \multiply \grcalcb by \factor 
   \grcalci = #3
   \advance \grcalci by #4
   \multiply \grcalci by \qfactor
   \advance \grcalcb by \grcalci
   \grcalcc = \grcolumn
   \multiply \grcalcc by \factor
   \grcalci = #4
   \multiply \grcalci by \hfactor
   \advance \grcalcc by \grcalci
   \grcalcd = \grrow
   \multiply \grcalcd by \factor 
   \grcalce = \grrow
   \multiply \grcalce by \factor 
   \grcalci = #2
   \multiply \grcalci by \tfactor
   \advance \grcalce by -\grcalci
   \grcalcf = \grrow
   \multiply \grcalcf by \factor 
   \grcalci = #2
   \multiply \grcalci by \hfactor
   \advance \grcalcf by -\grcalci
   \grcalcg = \grrow
   \multiply \grcalcg by \factor 
   \grcalci = #2
   \multiply \grcalci by \tfactor
   \multiply \grcalci by 2
   \advance \grcalcg by -\grcalci
   \grcalch = \grrow
   \advance \grcalch by -#2
   \multiply \grcalch by \factor 
   \qbezier(\grcalca,\grcalcd)(\grcalca,\grcalce)(\grcalcb,\grcalcf) 
   \qbezier(\grcalcb,\grcalcf)(\grcalcc,\grcalcg)(\grcalcc,\grcalch) 
   \advance \grcolumn by #1}
 \newcommand{\gnot}[1]{
   \grcalca = \grcolumn
   \multiply \grcalca by \factor
   \advance \grcalca by \hfactor
   \grcalcb = \grrow
   \multiply \grcalcb by \factor
   \advance \grcalcb by -\hfactor
   \put(\grcalca,\grcalcb) {\makebox(0,0){$\scriptstyle #1$}} }
 \newcommand{\got}[2]{
   \grcalca = \grcolumn
   \multiply \grcalca by \factor
   \grcalcc = #1
   \multiply \grcalcc by \hfactor
   \advance \grcalca by \grcalcc
   \grcalcb = \grrow
   \multiply \grcalcb by \factor
   \advance \grcalcb by -\tfactor
   \advance \grcalcb by -\tfactor
   \put(\grcalca,\grcalcb){\makebox(0,0)[b]{$#2$}}
   \advance \grcolumn by #1}
 \newcommand{\gob}[2]{
   \grcalca = \grcolumn
   \multiply \grcalca by \factor
   \grcalcc = #1
   \multiply \grcalcc by \hfactor
   \advance \grcalca by \grcalcc
   \put(\grcalca,0){\makebox(0,0)[b]{$#2$}}
   \advance \grcolumn by #1}
 \newcommand{\gmu}{  
   \grcalca = \grcolumn
   \advance \grcalca by 1
   \multiply \grcalca by \factor
   \grcalcb = \grrow
   \multiply \grcalcb by \factor
   \grcalcc = \factor
   \advance \grcalcc by \hfactor
   \put(\grcalca,\grcalcb){\oval(\factor,\grcalcc)[b]}
   \advance \grcalcb by -\hfactor
   \advance \grcalcb by -\qfactor
   \put(\grcalca,\grcalcb) {\line(0,-1){\qfactor}} 
   \advance \grcolumn by 2}
 \newcommand{\gcmu}{   
   \grcalca = \grcolumn
   \advance \grcalca by 1
   \multiply \grcalca by \factor
   \grcalcb = \grrow
   \advance \grcalcb by -1
   \multiply \grcalcb by \factor
   \grcalcc = \factor
   \advance \grcalcc by \hfactor
   \put(\grcalca,\grcalcb){\oval(\factor,\grcalcc)[t]}
   \advance \grcalcb by \factor
   \put(\grcalca,\grcalcb) {\line(0,-1){\qfactor}} 
   \advance \grcolumn by 2}
 \newcommand{\glm}{
   \grcalca = \grcolumn
   \multiply \grcalca by \factor
   \advance \grcalca by \hfactor
   \grcalcb = \grcalca
   \advance \grcalcb by \factor
   \grcalcc = \grrow
   \multiply \grcalcc by \factor
   \grcalcd = \grcalcc
   \advance \grcalcd by -\tfactor
   \grcalce = \grcalcd
   \advance \grcalce by -\tfactor
   \put(\grcalca, \grcalcc){\line(0,-1){\tfactor}}
   \put(\grcalca, \grcalcd){\line(1,0){\factor}}
   \put(\grcalca, \grcalcd){\line(3,-1){\factor}}
   \put(\grcalcb, \grcalcc){\line(0,-1){\factor}}
   \advance \grcolumn by 2}
 \newcommand{\grm}{
   \grcalcb = \grcolumn
   \multiply \grcalcb by \factor
   \advance \grcalcb by \hfactor
   \grcalca = \grcalcb
   \advance \grcalca by \factor
   \grcalcc = \grrow
   \multiply \grcalcc by \factor
   \grcalcd = \grcalcc
   \advance \grcalcd by -\tfactor
   \grcalce = \grcalcd
   \advance \grcalce by -\tfactor
   \put(\grcalca, \grcalcc){\line(0,-1){\tfactor}}
   \put(\grcalca, \grcalcd){\line(-1,0){\factor}}
   \put(\grcalca, \grcalcd){\line(-3,-1){\factor}}
   \put(\grcalcb, \grcalcc){\line(0,-1){\factor}}
   \advance \grcolumn by 2}
 \newcommand{\glcm}{
   \grcalca = \grcolumn
   \multiply \grcalca by \factor
   \advance \grcalca by \hfactor
   \grcalcb = \grcalca
   \advance \grcalcb by \factor
   \grcalcc = \grrow
   \advance \grcalcc by -1
   \multiply \grcalcc by \factor
   \grcalcd = \grcalcc
   \advance \grcalcd by \tfactor
   \grcalce = \grcalcd
   \advance \grcalce by \tfactor
   \put(\grcalca, \grcalcc){\line(0,1){\tfactor}}
   \put(\grcalca, \grcalcd){\line(1,0){\factor}}
   \put(\grcalca, \grcalcd){\line(3,1){\factor}}
   \put(\grcalcb, \grcalcc){\line(0,1){\factor}}
   \advance \grcolumn by 2}
 \newcommand{\grcm}{
   \grcalcb = \grcolumn
   \multiply \grcalcb by \factor
   \advance \grcalcb by \hfactor
   \grcalca = \grcalcb
   \advance \grcalca by \factor
   \grcalcc = \grrow
   \advance \grcalcc by -1
   \multiply \grcalcc by \factor
   \grcalcd = \grcalcc
   \advance \grcalcd by \tfactor
   \grcalce = \grcalcd
   \advance \grcalce by \tfactor
   \put(\grcalca, \grcalcc){\line(0,1){\tfactor}}
   \put(\grcalca, \grcalcd){\line(-1,0){\factor}}
   \put(\grcalca, \grcalcd){\line(-3,1){\factor}}
   \put(\grcalcb, \grcalcc){\line(0,1){\factor}}
   \advance \grcolumn by 2}
 \newcommand{\gwmu}[1]{    
   \grcalca = \grcolumn
   \multiply \grcalca by \factor
   \grcalcd = \hfactor
   \multiply \grcalcd by #1
   \advance \grcalca by \grcalcd
   \grcalcb = \grrow
   \multiply \grcalcb by \factor
   \grcalcc = \factor
   \advance \grcalcc by \hfactor
   \grcalcd = #1
   \advance \grcalcd by -1
   \multiply \grcalcd by \factor
   \put(\grcalca,\grcalcb){\oval(\grcalcd,\grcalcc)[b]}
   \advance \grcalcb by -\hfactor
   \advance \grcalcb by -\qfactor
   \put(\grcalca,\grcalcb) {\line(0,-1){\qfactor}} 
   \advance \grcolumn by #1}
 \newcommand{\gwcm}[1]{   
   \grcalca = \grcolumn
   \multiply \grcalca by \factor
   \grcalcd = \hfactor
   \multiply \grcalcd by #1
   \advance \grcalca by \grcalcd
   \grcalcb = \grrow
   \advance \grcalcb by -1
   \multiply \grcalcb by \factor
   \grcalcc = \factor
   \advance \grcalcc by \hfactor
   \grcalcd = #1
   \advance \grcalcd by -1
   \multiply \grcalcd by \factor
   \put(\grcalca,\grcalcb){\oval(\grcalcd,\grcalcc)[t]}
   \advance \grcalcb by \factor
   \put(\grcalca,\grcalcb) {\line(0,-1){\qfactor}} 
   \advance \grcolumn by #1}
 \newcommand{\gwmuc}[1]{    
   \grcalca = \grcolumn
   \multiply \grcalca by \factor
   \advance \grcalca by \hfactor
   \grcalcb = \grrow
   \multiply \grcalcb by \factor
   \grcalcc = #1
   \advance \grcalcc by -1
   \multiply \grcalcc by \factor
   \put(\grcalca,\grcalcb){\line(1,0){\grcalcc}}
   \advance \grcalca by -\hfactor
   \grcalcd = \hfactor
   \multiply \grcalcd by #1
   \advance \grcalca by \grcalcd
   \grcalcc = \factor
   \advance \grcalcc by \hfactor
   \grcalcd = #1
   \advance \grcalcd by -1
   \multiply \grcalcd by \factor
   \put(\grcalca,\grcalcb){\oval(\grcalcd,\grcalcc)[b]}
   \advance \grcalcb by -\hfactor
   \advance \grcalcb by -\qfactor
   \put(\grcalca,\grcalcb) {\line(0,-1){\qfactor}} 
   \advance \grcolumn by #1}
 \newcommand{\gwcmc}[1]{   
   \grcalca = \grcolumn
   \multiply \grcalca by \factor
   \advance \grcalca by \hfactor
   \grcalcb = \grrow
   \multiply \grcalcb by \factor
   \advance \grcalcb by -\factor
   \grcalcc = #1
   \advance \grcalcc by -1
   \multiply \grcalcc by \factor
   \put(\grcalca,\grcalcb){\line(1,0){\grcalcc}}
   \grcalcd = #1
   \advance \grcalcd by -1
   \multiply \grcalcd by \hfactor
   \advance \grcalca by \grcalcd
   \grcalcc = \factor
   \advance \grcalcc by \hfactor
   \grcalcd = #1
   \advance \grcalcd by -1
   \multiply \grcalcd by \factor
   \put(\grcalca,\grcalcb){\oval(\grcalcd,\grcalcc)[t]}
   \advance \grcalcb by \factor
   \put(\grcalca,\grcalcb) {\line(0,-1){\qfactor}} 
   \advance \grcolumn by #1}
 \newcommand{\gev}{  
   \grcalca = \grcolumn
   \advance \grcalca by 1
   \multiply \grcalca by \factor
   \grcalcb = \grrow
   \multiply \grcalcb by \factor
   \grcalcc = \factor
   \advance \grcalcc by \hfactor
   \put(\grcalca,\grcalcb){\oval(\factor,\grcalcc)[b]}
   \advance \grcolumn by 2}
 \newcommand{\gdb}{   
   \grcalca = \grcolumn
   \advance \grcalca by 1
   \multiply \grcalca by \factor
   \grcalcb = \grrow
   \advance \grcalcb by -1
   \multiply \grcalcb by \factor
   \grcalcc = \factor
   \advance \grcalcc by \hfactor
   \put(\grcalca,\grcalcb){\oval(\factor,\grcalcc)[t]}
   \advance \grcolumn by 2}
 \newcommand{\gwev}[1]{    
   \grcalca = \grcolumn
   \multiply \grcalca by \factor
   \grcalcd = \hfactor
   \multiply \grcalcd by #1
   \advance \grcalca by \grcalcd
   \grcalcb = \grrow
   \multiply \grcalcb by \factor
   \grcalcc = \factor
   \advance \grcalcc by \hfactor
   \grcalcd = #1
   \advance \grcalcd by -1
   \multiply \grcalcd by \factor
   \put(\grcalca,\grcalcb){\oval(\grcalcd,\grcalcc)[b]}
   \advance \grcolumn by #1}
 \newcommand{\gwdb}[1]{   
   \grcalca = \grcolumn
   \multiply \grcalca by \factor
   \grcalcd = \hfactor
   \multiply \grcalcd by #1
   \advance \grcalca by \grcalcd
   \grcalcb = \grrow
   \advance \grcalcb by -1
   \multiply \grcalcb by \factor
   \grcalcc = \factor
   \advance \grcalcc by \hfactor
   \grcalcd = #1
   \advance \grcalcd by -1
   \multiply \grcalcd by \factor
   \put(\grcalca,\grcalcb){\oval(\grcalcd,\grcalcc)[t]}
   \advance \grcolumn by #1}
 \newcommand{\gbr}{
   \grcalca = \grcolumn
   \multiply \grcalca by \factor
   \advance \grcalca by \hfactor
   \grcalcb = \grcalca
   \advance \grcalcb by \hfactor
   \grcalcc = \grcalca
   \advance \grcalcc by \factor
   \grcalcd = \grrow
   \multiply \grcalcd by \factor
   \grcalce = \grcalcd
   \advance \grcalce by -\tfactor
   \grcalcf = \grcalcd
   \advance \grcalcf by -\hfactor
   \grcalcg = \grcalce
   \advance \grcalcg by -\tfactor
   \grcalch = \grcalcd
   \advance \grcalch by -\factor
   \qbezier(\grcalca,\grcalcd)(\grcalca,\grcalce)(\grcalcb,\grcalcf) 
   \qbezier(\grcalcb,\grcalcf)(\grcalcc,\grcalcg)(\grcalcc,\grcalch) 
   \advance \grcalcf by -\dfactor
   \advance \grcalcb by -\sfactor
   \qbezier(\grcalca,\grcalch)(\grcalca,\grcalcg)(\grcalcb,\grcalcf) 
   \advance \grcalcf by \sfactor
   \advance \grcalcb by \tfactor
   \qbezier(\grcalcc,\grcalcd)(\grcalcc,\grcalce)(\grcalcb,\grcalcf) 
   \advance \grcolumn by 2}
 \newcommand{\gibr}{
   \grcalca = \grcolumn
   \multiply \grcalca by \factor
   \advance \grcalca by \hfactor
   \grcalcb = \grcalca
   \advance \grcalcb by \hfactor
   \grcalcc = \grcalca
   \advance \grcalcc by \factor
   \grcalcd = \grrow
   \multiply \grcalcd by \factor
   \grcalce = \grcalcd
   \advance \grcalce by -\tfactor
   \grcalcf = \grcalcd
   \advance \grcalcf by -\hfactor
   \grcalcg = \grcalce
   \advance \grcalcg by -\tfactor
   \grcalch = \grcalcd
   \advance \grcalch by -\factor
   \qbezier(\grcalcc,\grcalcd)(\grcalcc,\grcalce)(\grcalcb,\grcalcf) 
   \qbezier(\grcalcb,\grcalcf)(\grcalca,\grcalcg)(\grcalca,\grcalch) 
   \advance \grcalcf by -\dfactor
   \advance \grcalcb by \sfactor
   \qbezier(\grcalcc,\grcalch)(\grcalcc,\grcalcg)(\grcalcb,\grcalcf) 
   \advance \grcalcf by \sfactor
   \advance \grcalcb by -\tfactor
   \qbezier(\grcalca,\grcalcd)(\grcalca,\grcalce)(\grcalcb,\grcalcf) 
   \advance \grcolumn by 2}
 \newcommand{\gbrc}{
   \grcalca = \grcolumn
   \multiply \grcalca by \factor
   \advance \grcalca by \hfactor
   \grcalcb = \grcalca
   \advance \grcalcb by \hfactor
   \grcalcc = \grcalca
   \advance \grcalcc by \factor
   \grcalcd = \grrow
   \multiply \grcalcd by \factor
   \grcalce = \grcalcd
   \advance \grcalce by -\tfactor
   \grcalcf = \grcalcd
   \advance \grcalcf by -\hfactor
   \grcalcg = \grcalce
   \advance \grcalcg by -\tfactor
   \grcalch = \grcalcd
   \advance \grcalch by -\factor
   \put(\grcalcb,\grcalcf){\circle{\hfactor}}
   \qbezier(\grcalca,\grcalcd)(\grcalca,\grcalce)(\grcalcb,\grcalcf) 
   \qbezier(\grcalcb,\grcalcf)(\grcalcc,\grcalcg)(\grcalcc,\grcalch) 
   \advance \grcalcf by -\dfactor
   \advance \grcalcb by -\sfactor
   \qbezier(\grcalca,\grcalch)(\grcalca,\grcalcg)(\grcalcb,\grcalcf) 
   \advance \grcalcf by \sfactor
   \advance \grcalcb by \tfactor
   \qbezier(\grcalcc,\grcalcd)(\grcalcc,\grcalce)(\grcalcb,\grcalcf) 
   \advance \grcolumn by 2}
 \newcommand{\gibrc}{
   \grcalca = \grcolumn
   \multiply \grcalca by \factor
   \advance \grcalca by \hfactor
   \grcalcb = \grcalca
   \advance \grcalcb by \hfactor
   \grcalcc = \grcalca
   \advance \grcalcc by \factor
   \grcalcd = \grrow
   \multiply \grcalcd by \factor
   \grcalce = \grcalcd
   \advance \grcalce by -\tfactor
   \grcalcf = \grcalcd
   \advance \grcalcf by -\hfactor
   \grcalcg = \grcalce
   \advance \grcalcg by -\tfactor
   \grcalch = \grcalcd
   \advance \grcalch by -\factor
   \put(\grcalcb,\grcalcf){\circle{\hfactor}}
   \qbezier(\grcalcc,\grcalcd)(\grcalcc,\grcalce)(\grcalcb,\grcalcf) 
   \qbezier(\grcalcb,\grcalcf)(\grcalca,\grcalcg)(\grcalca,\grcalch) 
   \advance \grcalcf by -\dfactor
   \advance \grcalcb by \sfactor
   \qbezier(\grcalcc,\grcalch)(\grcalcc,\grcalcg)(\grcalcb,\grcalcf) 
   \advance \grcalcf by \sfactor
   \advance \grcalcb by -\tfactor
   \qbezier(\grcalca,\grcalcd)(\grcalca,\grcalce)(\grcalcb,\grcalcf) 
   \advance \grcolumn by 2} 
 \newcommand{\gu}[1]{
   \grcalca = \grcolumn
   \multiply \grcalca by \factor
   \grcalcd = \hfactor
   \multiply \grcalcd by #1
   \advance \grcalca by \grcalcd
   \grcalcb = \grrow
   \advance \grcalcb by -1
   \multiply \grcalcb by \factor
   \put(\grcalca,\grcalcb) {\line(0,1){\hfactor}} 
   \advance \grcalcb by \hfactor
   \put(\grcalca,\grcalcb) {\circle*{3}}
   \advance \grcolumn by #1}
 \newcommand{\gcu}[1]{
   \grcalca = \grcolumn
   \multiply \grcalca by \factor
   \grcalcd = \hfactor
   \multiply \grcalcd by #1
   \advance \grcalca by \grcalcd
   \grcalcb = \grrow
   \multiply \grcalcb by \factor
   \put(\grcalca,\grcalcb) {\line(0,-1){\hfactor}} 
   \advance \grcalcb by -\hfactor
   \put(\grcalca,\grcalcb) {\circle*{3}}
   \advance \grcolumn by #1}
 \newcommand{\gmp}[1]{
   \grcalca = \grcolumn
   \multiply \grcalca by \factor
   \advance \grcalca by \hfactor
   \grcalcb = \grrow
   \multiply \grcalcb by \factor
   \put(\grcalca,\grcalcb) {\line(0,-1){\dfactor}} 
   \advance \grcalcb by -\factor
   \put(\grcalca,\grcalcb) {\line(0,1){\dfactor}} 
   \advance \grcalcb by \hfactor
   \grcalcc = \factor
   \advance \grcalcc by -\qfactor
   \put(\grcalca,\grcalcb) {\circle{\grcalcc}}
   \put(\grcalca,\grcalcb) {\makebox(0,0){$\scriptstyle #1$}}
   \advance \grcolumn by 1}
 \newcommand{\gbmp}[1]{
   \grcalca = \grcolumn
   \multiply \grcalca by \factor
   \advance \grcalca by \hfactor
   \grcalcb = \grrow
   \multiply \grcalcb by \factor
   \put(\grcalca,\grcalcb) {\line(0,-1){\dfactor}} 
   \advance \grcalcb by -\factor
   \put(\grcalca,\grcalcb) {\line(0,1){\dfactor}} 
   \advance \grcalca by -\hfactor
   \advance \grcalca by \dfactor
   \advance \grcalcb by \dfactor
   \grcalcc = \factor
   \advance \grcalcc by -\sfactor
   \put(\grcalca,\grcalcb) {\framebox(\grcalcc,\grcalcc){$\scriptstyle #1$}}
   \advance \grcolumn by 1}
 \newcommand{\gbmpt}[1]{
   \grcalca = \grcolumn
   \multiply \grcalca by \factor
   \advance \grcalca by \hfactor
   \grcalcb = \grrow
   \multiply \grcalcb by \factor
   \put(\grcalca,\grcalcb) {\line(0,-1){\dfactor}} 
   \advance \grcalcb by -\factor
%   \put(\grcalca,\grcalcb) {\line(0,1){\dfactor}} 
   \advance \grcalca by -\hfactor
   \advance \grcalca by \dfactor
   \advance \grcalcb by \dfactor
   \grcalcc = \factor
   \advance \grcalcc by -\sfactor
   \put(\grcalca,\grcalcb) {\framebox(\grcalcc,\grcalcc){$\scriptstyle #1$}}
   \advance \grcolumn by 1}
 \newcommand{\gbmpb}[1]{
   \grcalca = \grcolumn
   \multiply \grcalca by \factor
   \advance \grcalca by \hfactor
   \grcalcb = \grrow
   \multiply \grcalcb by \factor
%   \put(\grcalca,\grcalcb) {\line(0,-1){\dfactor}} 
   \advance \grcalcb by -\factor
   \put(\grcalca,\grcalcb) {\line(0,1){\dfactor}} 
   \advance \grcalca by -\hfactor
   \advance \grcalca by \dfactor
   \advance \grcalcb by \dfactor
   \grcalcc = \factor
   \advance \grcalcc by -\sfactor
   \put(\grcalca,\grcalcb) {\framebox(\grcalcc,\grcalcc){$\scriptstyle #1$}}
   \advance \grcolumn by 1}
 \newcommand{\gbmpn}[1]{
   \grcalca = \grcolumn
   \multiply \grcalca by \factor
   \advance \grcalca by \hfactor
   \grcalcb = \grrow
   \multiply \grcalcb by \factor
%   \put(\grcalca,\grcalcb) {\line(0,-1){\dfactor}} 
   \advance \grcalcb by -\factor
%   \put(\grcalca,\grcalcb) {\line(0,1){\dfactor}} 
   \advance \grcalca by -\hfactor
   \advance \grcalca by \dfactor
   \advance \grcalcb by \dfactor
   \grcalcc = \factor
   \advance \grcalcc by -\sfactor
   \put(\grcalca,\grcalcb) {\framebox(\grcalcc,\grcalcc){$\scriptstyle #1$}}
   \advance \grcolumn by 1}
 \newcommand{\glmptb}{    
   \grcalca = \grcolumn
   \multiply \grcalca by \factor
   \advance \grcalca by \hfactor
   \grcalcb = \grrow
   \multiply \grcalcb by \factor
   \put(\grcalca,\grcalcb) {\line(0,-1){\dfactor}} 
   \advance \grcalcb by -\factor
   \put(\grcalca,\grcalcb) {\line(0,1){\dfactor}} 
   \advance \grcalca by -\hfactor
   \advance \grcalca by \dfactor
   \advance \grcalcb by \dfactor
   \put(\grcalca,\grcalcb) {\line(1,0){\factor}} 
   \advance \grcalcb by \factor
   \advance \grcalcb by -\sfactor
   \put(\grcalca,\grcalcb) {\line(1,0){\factor}} 
   \grcalcc = \factor
   \advance \grcalcc by -\sfactor
   \put(\grcalca,\grcalcb) {\line(0,-1){\grcalcc}} 
   \advance \grcolumn by 1}
 \newcommand{\glmpt}{    
   \grcalca = \grcolumn
   \multiply \grcalca by \factor
   \advance \grcalca by \hfactor
   \grcalcb = \grrow
   \multiply \grcalcb by \factor
   \put(\grcalca,\grcalcb) {\line(0,-1){\dfactor}} 
   \advance \grcalca by -\hfactor
   \advance \grcalca by \dfactor
   \advance \grcalcb by -\dfactor
   \put(\grcalca,\grcalcb) {\line(1,0){\factor}} 
   \advance \grcalcb by -\factor
   \advance \grcalcb by \sfactor
   \put(\grcalca,\grcalcb) {\line(1,0){\factor}} 
   \grcalcc = \factor
   \advance \grcalcc by -\sfactor
   \put(\grcalca,\grcalcb) {\line(0,1){\grcalcc}} 
   \advance \grcolumn by 1}
 \newcommand{\glmpb}{    
   \grcalca = \grcolumn
   \multiply \grcalca by \factor
   \advance \grcalca by \hfactor
   \grcalcb = \grrow
   \multiply \grcalcb by \factor
   \advance \grcalcb by -\factor
   \put(\grcalca,\grcalcb) {\line(0,1){\dfactor}} 
   \advance \grcalca by -\hfactor
   \advance \grcalca by \dfactor
   \advance \grcalcb by \dfactor
   \put(\grcalca,\grcalcb) {\line(1,0){\factor}} 
   \advance \grcalcb by \factor
   \advance \grcalcb by -\sfactor
   \put(\grcalca,\grcalcb) {\line(1,0){\factor}} 
   \grcalcc = \factor
   \advance \grcalcc by -\sfactor
   \put(\grcalca,\grcalcb) {\line(0,-1){\grcalcc}} 
   \advance \grcolumn by 1}
 \newcommand{\glmp}{    
   \grcalca = \grcolumn
   \multiply \grcalca by \factor
   \advance \grcalca by \dfactor
   \grcalcb = \grrow
   \multiply \grcalcb by \factor
   \advance \grcalcb by -\dfactor
   \put(\grcalca,\grcalcb) {\line(1,0){\factor}} 
   \advance \grcalcb by -\factor
   \advance \grcalcb by \sfactor
   \put(\grcalca,\grcalcb) {\line(1,0){\factor}} 
   \grcalcc = \factor
   \advance \grcalcc by -\sfactor
   \put(\grcalca,\grcalcb) {\line(0,1){\grcalcc}} 
   \advance \grcolumn by 1}
 \newcommand{\gcmptb}{    
   \grcalca = \grcolumn
   \multiply \grcalca by \factor
   \advance \grcalca by \hfactor
   \grcalcb = \grrow
   \multiply \grcalcb by \factor
   \put(\grcalca,\grcalcb) {\line(0,-1){\dfactor}} 
   \advance \grcalcb by -\factor
   \put(\grcalca,\grcalcb) {\line(0,1){\dfactor}} 
   \advance \grcalca by -\hfactor
   \advance \grcalcb by \dfactor
   \put(\grcalca,\grcalcb) {\line(1,0){\factor}} 
   \advance \grcalcb by \factor
   \advance \grcalcb by -\sfactor
   \put(\grcalca,\grcalcb) {\line(1,0){\factor}} 
   \advance \grcolumn by 1}
 \newcommand{\gcmpt}{    
   \grcalca = \grcolumn
   \multiply \grcalca by \factor
   \advance \grcalca by \hfactor
   \grcalcb = \grrow
   \multiply \grcalcb by \factor
   \put(\grcalca,\grcalcb) {\line(0,-1){\dfactor}} 
   \advance \grcalcb by -\factor
   \advance \grcalca by -\hfactor
   \advance \grcalcb by \dfactor
   \put(\grcalca,\grcalcb) {\line(1,0){\factor}} 
   \advance \grcalcb by \factor
   \advance \grcalcb by -\sfactor
   \put(\grcalca,\grcalcb) {\line(1,0){\factor}} 
   \advance \grcolumn by 1}
 \newcommand{\gcmpb}{    
   \grcalca = \grcolumn
   \multiply \grcalca by \factor
   \advance \grcalca by \hfactor
   \grcalcb = \grrow
   \multiply \grcalcb by \factor
   \advance \grcalcb by -\factor
   \put(\grcalca,\grcalcb) {\line(0,1){\dfactor}} 
   \advance \grcalca by -\hfactor
   \advance \grcalcb by \dfactor
   \put(\grcalca,\grcalcb) {\line(1,0){\factor}} 
   \advance \grcalcb by \factor
   \advance \grcalcb by -\sfactor
   \put(\grcalca,\grcalcb) {\line(1,0){\factor}} 
   \advance \grcolumn by 1}
 \newcommand{\gcmp}{    
   \grcalca = \grcolumn
   \multiply \grcalca by \factor
   \grcalcb = \grrow
   \multiply \grcalcb by \factor
   \advance \grcalcb by -\factor
   \advance \grcalcb by \dfactor
   \put(\grcalca,\grcalcb) {\line(1,0){\factor}} 
   \advance \grcalcb by \factor
   \advance \grcalcb by -\sfactor
   \put(\grcalca,\grcalcb) {\line(1,0){\factor}} 
   \advance \grcolumn by 1}
 \newcommand{\grmptb}{    
   \grcalca = \grcolumn
   \multiply \grcalca by \factor
   \advance \grcalca by \hfactor
   \grcalcb = \grrow
   \multiply \grcalcb by \factor
   \put(\grcalca,\grcalcb) {\line(0,-1){\dfactor}} 
   \advance \grcalcb by -\factor
   \put(\grcalca,\grcalcb) {\line(0,1){\dfactor}} 
   \advance \grcalca by \hfactor
   \advance \grcalca by -\dfactor
   \advance \grcalcb by \dfactor
   \put(\grcalca,\grcalcb) {\line(-1,0){\factor}} 
   \advance \grcalcb by \factor
   \advance \grcalcb by -\sfactor
   \put(\grcalca,\grcalcb) {\line(-1,0){\factor}} 
   \grcalcc = \factor
   \advance \grcalcc by -\sfactor
   \put(\grcalca,\grcalcb) {\line(0,-1){\grcalcc}} 
   \advance \grcolumn by 1}
 \newcommand{\grmpt}{    
   \grcalca = \grcolumn
   \multiply \grcalca by \factor
   \advance \grcalca by \hfactor
   \grcalcb = \grrow
   \multiply \grcalcb by \factor
   \put(\grcalca,\grcalcb) {\line(0,-1){\dfactor}} 
   \advance \grcalca by \hfactor
   \advance \grcalca by -\dfactor
   \advance \grcalcb by -\dfactor
   \put(\grcalca,\grcalcb) {\line(-1,0){\factor}} 
   \advance \grcalcb by -\factor
   \advance \grcalcb by \sfactor
   \put(\grcalca,\grcalcb) {\line(-1,0){\factor}} 
   \grcalcc = \factor
   \advance \grcalcc by -\sfactor
   \put(\grcalca,\grcalcb) {\line(0,1){\grcalcc}} 
   \advance \grcolumn by 1}
 \newcommand{\grmpb}{    
   \grcalca = \grcolumn
   \multiply \grcalca by \factor
   \advance \grcalca by \hfactor
   \grcalcb = \grrow
   \multiply \grcalcb by \factor
   \advance \grcalcb by -\factor
   \put(\grcalca,\grcalcb) {\line(0,1){\dfactor}} 
   \advance \grcalca by \hfactor
   \advance \grcalca by -\dfactor
   \advance \grcalcb by \dfactor
   \put(\grcalca,\grcalcb) {\line(-1,0){\factor}} 
   \advance \grcalcb by \factor
   \advance \grcalcb by -\sfactor
   \put(\grcalca,\grcalcb) {\line(-1,0){\factor}} 
   \grcalcc = \factor
   \advance \grcalcc by -\sfactor
   \put(\grcalca,\grcalcb) {\line(0,-1){\grcalcc}} 
   \advance \grcolumn by 1}
 \newcommand{\grmp}{    
   \grcalca = \grcolumn
   \multiply \grcalca by \factor
   \advance \grcalca by \factor
   \advance \grcalca by -\dfactor
   \grcalcb = \grrow
   \multiply \grcalcb by \factor
   \advance \grcalcb by -\dfactor
   \put(\grcalca,\grcalcb) {\line(-1,0){\factor}} 
   \advance \grcalcb by -\factor
   \advance \grcalcb by \sfactor
   \put(\grcalca,\grcalcb) {\line(-1,0){\factor}} 
   \grcalcc = \factor
   \advance \grcalcc by -\sfactor
   \put(\grcalca,\grcalcb) {\line(0,1){\grcalcc}} 
   \advance \grcolumn by 1}
\newcommand{\gsy}{
   \grcalca = \grcolumn
   \multiply \grcalca by \factor
   \advance \grcalca by \hfactor
   \grcalcb = \grcalca
   \advance \grcalcb by \hfactor
   \grcalcc = \grcalca
   \advance \grcalcc by \factor
   \grcalcd = \grrow
   \multiply \grcalcd by \factor
   \grcalce = \grcalcd
   \advance \grcalce by -\tfactor
   \grcalcf = \grcalcd
   \advance \grcalcf by -\hfactor
   \grcalcg = \grcalce
   \advance \grcalcg by -\tfactor
   \grcalch = \grcalcd
   \advance \grcalch by -\factor
   \qbezier(\grcalcc,\grcalcd)(\grcalcc,\grcalce)(\grcalcb,\grcalcf) 
   \qbezier(\grcalcb,\grcalcf)(\grcalca,\grcalcg)(\grcalca,\grcalch) 
   \advance \grcalcf by -\dfactor
   \advance \grcalcb by \sfactor
   \qbezier(\grcalcc,\grcalch)(\grcalcc,\grcalcg)(\grcalcb,\grcalcf) 
   %\advance \grcalcf by \sfactor
   %\advance \grcalcb by -\tfactor
   \qbezier(\grcalca,\grcalcd)(\grcalca,\grcalce)(\grcalcb,\grcalcf) 
   \advance \grcolumn by 2}
 \newcommand{\gwmuh}[3]{    
   \grcalca = \grcolumn
   \multiply \grcalca by \factor
   \grcalcb = #2
   \advance \grcalcb by #3
   \multiply \grcalcb by \qfactor
   \advance \grcalca by \grcalcb
   \grcalcb = \grrow
   \multiply \grcalcb by \factor
   \grcalcc = #3
   \advance \grcalcc by -#2
   \multiply \grcalcc by \hfactor
   \grcalcd = \factor
   \advance \grcalcd by \hfactor
   \put(\grcalca,\grcalcb){\oval(\grcalcc,\grcalcd)[b]}
   \grcalca = \grcolumn
   \multiply \grcalca by \factor
   \grcalcc = #1
   \multiply \grcalcc by \hfactor
   \advance \grcalca by \grcalcc
   \advance \grcalcb by -\hfactor
   \advance \grcalcb by -\qfactor
   \put(\grcalca,\grcalcb) {\line(0,-1){\qfactor}} 
   \advance \grcolumn by #1}
 \newcommand{\gwcmh}[3]{   
   \grcalca = \grcolumn
   \multiply \grcalca by \factor
   \grcalcb = #2
   \advance \grcalcb by #3
   \multiply \grcalcb by \qfactor
   \advance \grcalca by \grcalcb
   \grcalcb = \grrow
   \advance \grcalcb by -1
   \multiply \grcalcb by \factor
   \grcalcc = #3
   \advance \grcalcc by -#2
   \multiply \grcalcc by \hfactor
   \grcalcd = \factor
   \advance \grcalcd by \hfactor
   \put(\grcalca,\grcalcb){\oval(\grcalcc,\grcalcd)[t]}
   \grcalca = \grcolumn
   \multiply \grcalca by \factor
   \grcalcc = #1
   \multiply \grcalcc by \hfactor
   \advance \grcalca by \grcalcc
   \advance \grcalcb by \factor
   \put(\grcalca,\grcalcb) {\line(0,-1){\qfactor}} 
   \advance \grcolumn by #1}
 \newcommand{\gsbox}[1]{
   \grcalca = \grcolumn
   \multiply \grcalca by \factor
   \grcalcb = \grrow
   \multiply \grcalcb by \factor
   \advance \grcalcb by -\factor
   \grcalcc = #1
   \multiply \grcalcc by \factor
   \grcalcd = \factor
   \put(\grcalca,\grcalcb){\framebox(\grcalcc,\grcalcd){}}}
\newcommand{\nat}{\mbox{$\;\natural \;$}}
\begin{document}
\title[Wreath (co)products]
{Monoidal ring and coring structures obtained from wreaths and cowreaths}
\thanks{{\it Key words and phrases.}~~{\rm (co)ring, Eilenberg-Moore category, (co)wreath, module category, (co)representation}}
\author{D. Bulacu}
\address{Faculty of Mathematics and Computer Science, University
of Bucharest, Str. Academiei 14, RO-010014 Bucharest 1, Romania}
\email{daniel.bulacu@fmi.unibuc.ro}
\author{S. Caenepeel}
\address{Faculty of Applied Sciences, 
Vrije Universiteit Brussel, VUB, B-1050 Brussels, Belgium}
\email{scaenepe@vub.ac.be}
\urladdr{http://homepages.vub.ac.be/\~{}scaenepe/}
\thanks{\rm 
The first author was supported by the strategic grant POSDRU/89/1.5/S/58852, Project 
``Postdoctoral program for training scientific researchers" cofinanced by the European Social Fund within the 
Sectorial Operational Program Human Resources Development 2007 - 2013.
The second author was supported by the research project G.0117.10  
``Equivariant Brauer groups and Galois deformations'' from
FWO-Vlaanderen.}
\subjclass[2010]{16T05, 18D05, 18D10}
\begin{abstract}
Let $A$ be an algebra in a monoidal category $\Cc$, and let $X$ be an object in $\Cc$.
We study $A$-(co)ring structures on the left $A$-module $A\ot X$. These correspond
to (co)algebra structures in $EM(\Cc)(A)$, the Eilenberg-Moore category associated to $\Cc$ and $A$. The ring structures are in bijective correspondence to wreaths in $\Cc$, and
their category of representations is the category of representations over the induced wreath product. The coring structures are in bijective correspondence to cowreaths in $\Cc$, and
their category of corepresentations is the category of generalized entwined modules.
We present several examples coming from (co)actions of
Hopf algebras and their generalizations. Various notions of smash products that have
appeared in the literature appear as special cases of our construction.
\end{abstract}
\maketitle

%%%%%%%%%%%%%%%%%%%%%%%%%%%%%%%%%%%%%%%%%%%%%%%%%%%%%
\section*{Introduction}
%%%%%%%%%%%%%%%%%%%%%%%%%%%%%%%%%%%%%%%%%%%%%%%%%%%%%
Let $A$ be an algebra, and let $C$ be a coalgebra, and suppose that we have
an entwining map $\psi$. It is well-known that the vector space $A\ot C$ carries
a coring structure, such that the category of entwined modules is isomorphic to
the category of comodules over this coring. 
Examples of entwining structures come from Doi-Koppinen data over a bialgebra.
Doi-Koppinen data can be defined over quasi-bialgebras; we have similar results:
$A\ot C$ is still an $A$-coring; however, there is one major difference from the
classical theory: $C$ is no longer a $k$-coalgebra. 
Otherwise stated, we can build an $A$-coring structure on $A\ot C$ although $C$ is not an ordinary  $k$-coalgebra.

The aim of this paper is to describe all possible $A$-(co)ring structures of the form $A\ot X$.
We have chosen to present our results in the language of $\Cc$-categories, also known as
module categories. The motivation for this choice was twofold. On one hand,
the generality of this approach allows us to cover many constructions  that are known
for Hopf algebras and their generalizations. On the other hand, the naturality of the
involved categorical arguments allows us to simplify some of the computations.
We use the machinery developed in \cite{par, sch}, slightly improved in \cite{bc3}. 
Schauenburg \cite{sch} has observed that $A$-ring structures on $A\ot X$ (with left
$A$-module structure given by multiplication of $A$) depends on two morphisms, which we
will call $\zeta$ and $\sigma$. These morphisms have to satisfy certain conditions;
these are not given in \cite{sch}. We will work them out in \seref{strofcor}, and we will see 
that they are similar to conditions that appear in the Brzezi\'nski crossed product \cite{brz}.
We also discuss the dual question, namely we discuss $A$-coring structures on $A\ot X$,
and show that they are determined by two morphisms 
$\d: X\ra A\ot X\ot X$ and $f: X\ra A$ satisfying a list of compatibility conditions.
In \seref{monint}, we will restate the conditions on $\d$, $f$ (respectively $\zeta$, $\sigma$
in the ring case). Actually (co)ring structures on $A\ot X$ correspond to (co)algebra
structure on $X$ in a suitable monoidal category ${\cal T}_A^\#$.
Our notation is inspired by the well-known result that entwining structures of the form
$(A,C)_\psi$, with fixed $A$, correspond bijectively to coalgebras in Tamabara's
category ${\cal T}_A$, \cite{tambara}. During a visit of the first author to the
Wigner Research Centre for Physics in Budapest,
Gabriella B\"ohm pointed out that ${\cal T}_A^\#$ is the monoidal category ${\rm EM}(\Cc)(A)$ of
endomorphisms of $A$, viewed as a 0-cell in the Eilenberg-Moore category
${\rm EM}(\Cc)$, where $\Cc$ - being a monoidal category - is viewed as a 2-category
with one 0-cell. A similar result is that ${\cal T}_A={\rm Mnd}(\Cc)(A)$, where
${\rm Mnd}(\Cc)$ is the 2-category of monads in $\Cc$, as introduced in \cite{Ross}.
A second categorical interpretation is presented in \seref{(co)wreaths}:
$A$-(co)ring structures on $A\ot X$ are in bijective correspondence
to (co)wreath structures in $\Cc$, regarded again as a $2$-category with one object.
We also show that the category of representations of an $A$-ring of the form $A\ot X$
is isomorphic to the category of representations of the corresponding wreath product,
see \cite{LackRoss}. The category of corepresentations of an $A$-coring of the form
 $A\ot X$ is isomorphic to the category of generalized entwined modules,
 as introduced in \cite{bc4}.\\
 We present some applications in \seref{exps}. 
Using the theory of actions and coactions over a quasi-bialgebra we 
give examples of (co)wreaths $(A, X)$ with $X$ regarded as an object in ${\cal T}_A^\#$ 
rather than ${\cal T}_A$. 
As a consequence we obtain that the generalized-(quasi) smash product algebra defined in \cite{bpvo} 
is an example of a wreath product. Also the crossed product 
algebra built within the monoidal category of corepresentations over a dual quasi-Hopf algebra \cite{balan} is a wreath product.\\
Quasi-Hopf bimodules over a quasi-bialgebra $H$ can be applied to construct a 
cowreath $(\mfA, C)$ in ${}_H{\cal M}$, the monoidal category of left $H$-representations. We
remark that $C$ is 
viewed as an object in ${\cal T}_\mfA^\#$ and not in ${\cal T}_\mfA$, and that the category of corepresentations 
over the resulting  coring is isomorphic to ${}_H{\cal M}_\mfA^C$, the category of 
quasi-Hopf bimodules  associated to $(\mfA, C, H)$.\\
More examples can be obtained from actions and 
coactions of a bialgebroid. We propose an alternative way to define 
the crossed product algebra over a bialgebroid \cite{bb}, the underlying idea is to describe 
this algebra as a wreath product. We also construct a coring from a Doi-Koppinen datum over a 
bialgebroid, compatible with a module category structure, and recover the isomorphism between the category 
of Doi-Koppinen modules over a bialgebroid and the category of corepresentations over a suitable 
coring \cite{bcm}. These examples can be specified to bialgebroids coming 
from weak bialgebras.

Our theory can be applied to braided bialgebras; this will be the topic of the forthcoming
paper \cite{bc4}.

After an earlier version of this paper was finished, we were informed about the following possible alternative
approach, based on the description of the (co)wreath structures in a certain bicategory, leading to
\thref{EMAmain}. \thref{4.2} is actually a special case of \thref{EMAmain}. We have investigated this,
and we could give a proof of \thref{EMAmain}, but to this end we needed \thref{4.2}, so that
the two results are basically equivalent. Details are given in \seref{Appendix}. 
%%%%%%%%%%%%%%%%%%%%%%%%%%%%%%%%%%%%%%%%%%%%%%%%%%%
\section{Preliminaries}\selabel{prelim}
%%%%%%%%%%%%%%%%%%%%%%%%%%%%%%%%%%%%%%%%%%%%%%%%%%%%%
\setcounter{equation}{0}

\subsection{Module categories}
%%%%%%%%%%%%%%%%%%%%%%%%%%%%%%%%%%%%%%%%%%%%%%%%%%%%%
We assume that the reader is familiar with the basic theory 
of monoidal categories, and refer 
to \cite{bulacu,kas,maj} for more detail. Throughout this paper, $\Cc$ will be a monoidal category
with tensor product $\ot : \Cc\times \Cc\ra \Cc$ and unit object $\un{1}$. 
 We denote the identity morphism of an object $X\in \Cc$ by $\Id_X$. 
 We will assume implicitly that the monoidal 
category $\Cc$ is strict, that is, the associativity $a$ and unit constraints $l, r$ 
are all identity morphisms in $\Cc$. Our results will remain valid in arbitrary monoidal categories, since
every monoidal category is monoidal equivalent to a strict one, see for example \cite{bulacu, kas}. 

A right $\Cc$-category is a 
quadruple $(\Dc , \diamond, \Psi, {\bf r})$, where $\Dc$ is a category, 
$\diamond:\ \Dc\times \Cc\ra \Dc$ is a functor, and 
$\Psi:\ \diamond\circ (\diamond\times\Id)\ra \diamond\circ(\Id\times\ot)$ 
and ${\bf r}:\ \diamond\circ ({\rm Id}\times \un{1})\ra {\rm Id}$ are 
natural isomorphisms such that 
$({\Id_{\mfM}}\diamond l_X)\Psi_{{\mfM}, \un{1}, X}={\bf r}_{\mfM}\diamond {\Id_X}$ 
and the diagrams
\[
\xymatrix{
(({\mf M}\diamond X)\diamond Y)\diamond Z ~~\ar[d]_-{\Psi_{{\mf M}, X, Y}\diamond {\Id_Z}}
                                           \ar[r]^-{\Psi_{{\mf M}\diamond X, Y, Z}}
                                           & ~~({\mf M}\diamond X)\diamond (Y\ot Z) \\
({\mf M}\diamond(X\ot Y))\diamond Z ~~\ar[r]^-{\Psi_{{\mf M}, X\ot Y, Z}}& 
                                     ~~{\mf M}\diamond((X\ot Y)\ot Z)\ar[u]_-{{\Id_\mfM}\diamond a_{X, Y, Z}}
}
\]
%\[
%\xymatrix{
%({\mfM}\diamond \un{1})\diamond X \ar[r]^-{\Psi_{{\mfM}, \un{1}, X}} 
%                                  \ar[dr]_-{{\bf r}_{\mfM}\diamond {\Id_X}} 
%                                   &{\mfM}\diamond (\un{1}\ot X)\ar[d]^-{{\Id_{\mfM}}\diamond l_X}\\
%                                    &X     
%}.
%\]
commute, for all ${\mf M}\in \Dc$ and $X, Y, Z\in \Cc$. Obviously $\Cc$ is itself a right
$\Cc$-category, with $\diamond=\ot$, and $\Psi$ and ${\bf r}$ the natural identities
(recall that we assumed that $\Cc$ is strict). In fact, the above mentioned coherence
theorem can be extended to $\Cc$-categories, and this enables us to assume throughout
that $\Psi$ and ${\bf r}$ are natural identities, without loss of generality. In the literature,
$\Cc$-categories are also named module categories.

Let $\Dc$ be a right $\Cc$-category, and consider an algebra $A$ in $\Cc$.
A right module in $\Dc$ over $A$ is an object $\mfM\in \Dc$ together with a morphism 
$\nu_{\mfM}: {\mf M}\diamond A\ra {\mf M}$ such that 
$\nu_{\mfM}\circ (\Id_\mfM\diamond \un{\eta}_A)={\bf r}_{\mfM}$ and the diagram 
\[
\xymatrix{
(\mfM\diamond A)\diamond A\ar[rr]^-{\nu_{\mfM}\diamond\Id_A}
                           \ar[d]_-{\psi_{\mfM, A, A}}& & \mfM\diamond A\ar[d]^-{\nu_{\mfM}}\\
\mfM\diamond (A\ot A) \ar[r]_-{\Id_\mfM\diamond \un{m}_A}&\mfM\diamond A\ar[r]_-{\nu_{\mfM}}&\mfM                               
}
\] 
commutes.  Let $\Dc_A$ be the category of right modules and right 
linear maps in $\Dc$ over $A$. The right module structure on $\mfM\in \Dc_A$ will be
written symbolically as
$$\nu_\mfM=\gbeg{2}{3}
\got{1}{\mfM}\got{1}{A}\gnl
\grm\gnl
\gob{1}{\mfM}
\gend.$$
We can also define the dual notion of right comodule 
in a right $\Cc$-category $\Dc$ over a  coalgebra $C$ in $\Cc$.
The category of right comodules 
and right colinear maps in $\Dc$ over $C$ will be denoted as $\Dc^C$. 
The right comodule structure on $\mfM\in \Dc^C$ will be written as
$$\rho_\mfM=\gbeg{2}{3}
\got{1}{\mfM}\gnl
\grcm\gnl
\gob{1}{\mfM}\gob{1}{C}
\gend.$$

%%%%%%%%%%%%%%%%%%%%%%%%%%%%%%%%%%%%%%%%%%%%%%%%%%%%%%%%%%%%%%%
\subsection{Rings and corings in monoidal categories}\selabel{2.2}
%%%%%%%%%%%%%%%%%%%%%%%%%%%%%%%%%%%%%%%%%%%%%%%%%%%%%%%%%%%%%%%%%
The notion of ring and coring in a monoidal category is essentially due to Pareigis \cite{par} and 
Schauenburg \cite{sch}.  We present a brief survey on the topic, following terminology and
notation as in \cite{bc3}.\\
It is well-known that the category ${}_A\Mm_A$ of bimodules over a $k$-algebra $A$
is monoidal. A (co)algebra in ${}_A\Mm_A$ is called an $A$-(co)ring, 
see \cite{sw} for the original definition.\\
Let $\Dc$ be a right $\Cc$-category, and assume that both $\Cc$ and $\Dc$ have coequalizers.
Take an algebra $A$ in $\Cc$, $\mfM\in \Dc_A$ and $X\in {}_A\Cc$,
with structure morphism $\mu_X:\ A\ot X\to X$. We consider the
coequalizer $(M\diamond_AX, q^A_{{\mfM}, X})$ of the parallel morphisms  
$\nu_{\mfM}\diamond\Id_X$ and $(\Id_\mfM\diamond \mu_{X})\Psi_{\mfM, A, X}$ in $\Dc$:
\begin{displaymath}
\xymatrix{
(\mfM\diamond A)\diamond X\ar@<-1ex>[rr]_(.54){(\Id_\mfM\diamond \mu_{X})\Psi_{\mfM, A, X}} 
\ar@<1ex>[rr]^(.55){{\nu_{\mfM}\diamond\Id_X}}&&
M\diamond X
\ar[r]^-{q^A_{{\mfM}, X}}& M\diamond_AX .
}
\end{displaymath}
For a left $A$-linear morphism $f:\ X\ra Y$ in $\Cc$, 
let $\tilde{f}:\ \mfM\diamond_AX\ra \mfM\diamond_AY$ the unique morphism 
in $\Dc$ satisfying the equation 
\begin{equation}\eqlabel{coeqmorph}
\tilde{f}q^A_{\mfM, X}=q^A_{\mfM, Y}(\Id_\mfM\diamond f).
\end{equation}
Take $X\mapright{f}Y\mapright{f}Z$ in ${}_A\Cc$. It is easily verified that
$\widetilde{gf}=\tilde{g}\tilde{f}$.\\
Now let $g:\mfM\ra {\mf N}$ in $\Dc_A$ and $Y\in {}_A\Cc$. 
$\hat{g}: \mfM\diamond_AY\ra {\mf N}\diamond_AY$ is the unique morphism in 
$\Dc$ such that  
\begin{equation}\eqlabel{coeqmorph2}
\hat{g} q^A_{{\mfM}, Y}=q^A_{{\mf N}, Y}(g\diamond\Id_Y).
\end{equation}
For $\mfM \mapright{f} \mfN\mapright{g}\mfP$ in $\Dc_A$, we have that
$\widehat{gf}=\hat{g}\hat{f}$.\\
For $\mfM\in \Dc_A$, $X\in \Cc$ and $Y\in {}_A\Cc$, we have canonical
isomorphisms $\Upsilon_\mfM$, $\Upsilon_{\mfM, X}$ and $\Upsilon'_Y$:
\begin{itemize}
\item[-]$\Upsilon_\mfM: \mfM\diamond_AA\mapright{\cong} \mfM$, uniquely determined
by the property $\Upsilon_\mfM q^A_{\mfM, A}=\nu_\mfM$;
\item[-]$\Upsilon_{\mfM, X}: \mfM\diamond_A(A\ot X)\mapright{\cong} \mfM\diamond X$, 
uniquely determined by the property 
$\Upsilon_{\mfM, X}q^A_{\mfM, A\ot X}=(\nu_\mfM\diamond \Id_X)\Psi^{-1}_{\mfM, A, X}$; 
\item[-]$\Upsilon'_Y: A\ot _AY\mapright{\cong} Y$, uniquely determined by the property
$\Upsilon'_Yq^A_{A, Y}=\mu_Y$.
\end{itemize}
The following properties are now easily verified:
\begin{eqnarray}
\eqlabel{invcanisom1}
\Upsilon^{-1}_\mfM=q^A_{\mfM, A}(\Id_\mfM\diamond \un{\eta}_A)&;&
\Upsilon'^{-1}_Y=q^A_{A, Y}(\un{\eta}_A\ot \Id_Y);\\
\eqlabel{rel1}
&&\hspace*{-3cm}
\Upsilon^{-1}_{\mfM, X}=q^A_{\mfM, A\ot X}(\Id_\mfM\diamond \un{\eta}_A\ot \Id_X).
\end{eqnarray}
Before we are able to introduce the associativity constraint on ${}_A\Cc_A$, we need
the following concepts.

\begin{definitions}\deslabel{thetap}
Let $\Dc$ be a right $\Cc$-category and $A$ an algebra in $\Cc$.
\begin{itemize}
\item[(i)] An object $X\in \Cc$ is called (left) $\Dc$-coflat if the functor 
$-\diamond X:\ \Dc\ra \Dc$ preserves coequalizers. If $\Dc=\Cc$ then we simply say that $X$ 
is left coflat.
\item[(ii)] An object $Y\in {}_A\Cc$ is called (left) $\Dc$-robust if for any $\mfM\in \Dc$ and 
$X\in \Cc_A$, the morphism $\Theta'_{\mfM, X, Y}: (\mfM\diamond X)\diamond_AY\ra 
\mfM\diamond(X\ot_AY)$ defined by the commutativity of the diagram
\[
\xymatrix{
((\mfM\diamond X)\diamond A)\diamond Y\ar@<1ex>[rr]^(.55){\nu_{\mfM\diamond X}\diamond \Id_Y} 
\ar@<-1ex>[rr]_-{((\Id_\mfM\diamond \Id_X)\diamond \mu_Y)\Psi_{\mfM\diamond X, Y, A}}&&
(\mfM\diamond X)\diamond Y \ar[dr]_-{\xi'_{\mfM, X, Y}} 
\ar[r]^-{q^A_{\mfM\diamond X, Y}}&
(\mfM\diamond X)\diamond_AY\ar@{.>}[d]^-{\Theta'_{\mfM, X, Y}}\\
&&&\mfM\diamond (X\ot _AY)
}
\]
is an isomorphism. Here $\xi'_{\mfM, X, Y}:=(\Id_\mfM\diamond q^A_{X, Y})\Psi_{\mfM, X, Y}$. 
If $\Dc=\Cc$ then we simply say that $Y$ is left robust and write $\Theta'_{\mfM, X, Y}=
\theta'_{\mfM, X, Y}$.  
\end{itemize} 
We denote by ${}^{\Dc !}_A\Cc$ the category of left $A$-modules 
in $\Cc$ which are left $\Dc$-robust, left robust,  
left $\Dc$-coflat and left coflat. We write ${}^!_A\Cc={}^{\Cc !}_A\Cc$. 
\end{definitions}   

Note that left $\Dc$-coflatness of $A$ and left $\Dc$-robustness of $Y\in{}_A\Cc$
are needed in order to define:
\begin{itemize}
\item[-] a right module structure on $\mfM\diamond_AX$ in $\Dc$ over $A$, for all 
$\mfM\in \Dc_A$ and $X\in{}_A\Cc_A$;
\item[-] a left $A$-module structure on $X\ot_AY$ in $\Cc$, for any $X\in{}_A\Cc_A$;
\item[-] ``canonical" inverse isomorphisms 
\[
\xymatrix{
(\mfM\diamond_AX)\diamond_AY\ar@<1ex>[rr]^-{\Sigma'_{\mfM, X, Y}}&& 
\ar@<1ex>[ll]^-{\Gamma'_{\mfM, X, Y}}\mfM\diamond_A(X\ot_AY),
}
\] 
for $\mfM\in \Dc_A$ and $X\in {}_A\Cc_A$.   
\end{itemize}

Assume that $A$ is left coflat and left $\Dc$-coflat. Then the category 
${}^{\Dc !}_A\Cc_A$ is monoidal with tensor product 
$\ot_A$, associativity constraint $\Sigma'_{-, -, -}$, unit object $A$,
and left and right unit constraints $\Upsilon'_{-}$ and $\Upsilon_{-}$.
Here $\Sigma'$, $\Upsilon'_{-}$ and $\Upsilon_{-}$ have to be specialized
to the case $\Dc=\Cc$. Moreover, the functor $\diamond_A$ defines a right 
${}^{\Dc !}_A\Cc_A$-category structure on $\Dc_A$. More details about all these concepts 
and results can be found in \cite{par, sch, bc3}.

We are now able to define the notions of ring and coring in a monoidal category. 

\begin{definition}\delabel{coring}
Let $\Dc$ be a right $\Cc$-category and $A$ an algebra in $\Cc$ which is left $\Dc$-coflat and 
left coflat. An $A$-(co)ring in $\Cc$ compatible with $\Dc$ is a (co)algebra in the 
monoidal category ${}^{\Dc !}_A\Cc_A$. For such a (co)ring a right (co)module in $\Dc$ 
is a (co)module over it in the right ${}^{\Dc !}_A\Cc_A$-category $\Dc_A$.\\
A (co)ring in $\Cc$ compatible with $\Cc$ is simply termed a (co)ring in $\Cc$. 
Then a right (co)module over a (co)ring in $\Cc$ is exactly a (co)module over it 
in the right ${}_A^!\Cc_A$-category $\Cc_A$.  
\end{definition}

%%%%%%%%%%%%%%%%%%%%%%%%%%%%%%%%%%%%%%%%%%%% 
\section{$A$-(co)rings of the form $A\ot -$}\selabel{strofcor}
%%%%%%%%%%%%%%%%%%%%%%%%%%%%%%%%%%%%%%%%%%%%
\setcounter{equation}{0}

Throughout this section, $A$ will be a left coflat algebra 
in a monoidal category $\Cc$ and $X$ a left coflat object of $\Cc$, so that 
$\mfC=A\ot X\in {}^!_A\Cc$, with left $A$-module structure morphism 
$\mu_{\mfC}:=\un{m}_A\ot\Id_X$.\\
As we have already explained we are interested in finding the $A$-(co)ring structures on $A\ot X$. 
Our starting point is the following result due to 
Tambara \cite{tambara} and Schauenburg \cite{sch}. 

\begin{lemma}\lelabel{TaSchstr}
Let $A$ be an algebra in a monoidal category $\Cc$ and $X$ an object of 
$\Cc$. Consider $\mfC=A\ot X$ as a left $A$-module in $\Cc$ via $\mu_{\mfC}:=\un{m}_A\ot\Id_X$. 
Then there is a bijective correspondence between right $A$-module structures on $\mfC=A\ot X$ 
that make $\mfC=A\ot X$ into an object of ${}_A\Cc_A$ and morphisms 
$\psi: X\ot A\ra A\ot X$ satisfying 
\begin{eqnarray}
\psi(\Id_X\ot \un{m}_A)&=&
(\un{m}_A\ot \Id_X)(\Id_A\ot \psi)(\psi\ot\Id_X)\eqlabel{e1};\\
\psi(\Id_X\ot \un{\eta}_{A})&=&\un{\eta}_A\ot\Id_X. \eqlabel{e2} 
\end{eqnarray}
\end{lemma}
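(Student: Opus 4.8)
The plan is to establish the bijection by giving explicit maps in both directions and checking that the module-axiom diagrams translate into the displayed equations \equref{e1} and \equref{e2}. First I would note that a right $A$-module structure on $\mfC = A\ot X$ in $\Cc$ is a morphism $\nu_{\mfC}\colon (A\ot X)\ot A\ra A\ot X$, and that compatibility with the left $A$-module structure $\mu_{\mfC}=\un{m}_A\ot\Id_X$ (i.e.\ that $\mfC$ lives in ${}_A\Cc_A$) forces $\nu_{\mfC}$ to be left $A$-linear. Since $A\ot X$ is freely generated as a left $A$-module by $X$ (via $\un{\eta}_A\ot\Id_X\colon X\ra A\ot X$), the left-linear morphism $\nu_{\mfC}$ is completely determined by its restriction along this inclusion, namely by $\psi:=\nu_{\mfC}\circ((\un{\eta}_A\ot\Id_X)\ot\Id_A)\colon X\ot A\ra A\ot X$. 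Conversely, given $\psi$, one recovers $\nu_{\mfC}$ as $(\un{m}_A\ot\Id_X)\circ(\Id_A\ot\psi)$. So the first step is to set up this dictionary $\nu_{\mfC}\leftrightarrow\psi$ and check it is a bijection between left-linear morphisms $(A\ot X)\ot A\ra A\ot X$ and morphisms $X\ot A\ra A\ot X$.

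Next I would translate the two right-module axioms. The unit axiom $\nu_{\mfC}\circ(\Id_{\mfC}\ot\un{\eta}_A)=\br_{\mfC}=\Id_{A\ot X}$ (using strictness, $\br$ is the identity), precomposed with $(\un{\eta}_A\ot\Id_X)\ot\Id_{\un 1}$, should reduce exactly to \equref{e2}: $\psi(\Id_X\ot\un{\eta}_A)=\un{\eta}_A\ot\Id_X$; conversely \equref{e2} together with left-linearity gives back the full unit axiom. The associativity axiom — the commuting square relating $(\nu_{\mfC}\ot\Id_A)$, $\nu_{\mfC}$, and $\Id_{\mfC}\ot\un{m}_A$ — precomposed with $(\un{\eta}_A\ot\Id_X)\ot\Id_A\ot\Id_A$ and simplified using left-linearity of $\nu_{\mfC}$ and associativity of $\un{m}_A$, should reduce exactly to \equref{e1}. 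The reverse implication uses that a left-linear morphism out of $A\ot X\ot A\ot A$ is determined by its restriction to $X\ot A\ot A$, so the two sides of the associativity square agree iff they agree after this restriction, which is precisely \equref{e1}.

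Throughout, strictness of $\Cc$ (so $a$, $l$, $r$ are identities) and the freeness of $A\ot X$ over $A$ — i.e.\ the adjunction between $-\ot A$ restriction and induction, or concretely that every left-$A$-linear $g\colon A\ot X\ra M$ factors uniquely through $\un{\eta}_A\ot\Id_X$ — are the two engines of the argument. The main obstacle I expect is purely bookkeeping: keeping the associativity-square translation honest, in particular verifying that after restricting along $\un{\eta}_A$ the left-linear extensions on both sides of \equref{e1} genuinely reconstruct the two composites in the module-associativity square, so that the equivalence is an \emph{iff} and not merely one direction. Once the "determined by restriction to $X$" principle is stated cleanly, both the unit and associativity conditions fall out by a direct diagram chase, and the correspondence $\nu_{\mfC}\leftrightarrow\psi$ is visibly inverse on both sides. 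This is exactly the pattern of Tambara's and Schauenburg's arguments, here carried out in the strict monoidal setting.
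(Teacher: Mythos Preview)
Your proposal is correct and follows essentially the same approach as the paper: the paper's (outlined) proof gives exactly your two formulas $\psi=\nu_{\mfC}(\un{\eta}_A\ot\Id_X\ot\Id_A)$ and $\nu_{\mfC}=(\un{m}_A\ot\Id_X)(\Id_A\ot\psi)$, leaving the routine verification that the right-module axioms correspond to \equref{e1}--\equref{e2} implicit. Your plan simply spells out the freeness/left-linearity argument and the restriction-along-$\un{\eta}_A$ diagram chase that the paper omits.
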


\begin{proof}
We outline the proof. If $\nu_{\mfC}$ defines a right 
$A$-module structure on $A\ot X$ then $\psi =\nu_{\mfC}(\un{\eta}_A\ot\Id_X\ot\Id_A)$ 
satisfies (\ref{eq:e1}-\ref{eq:e2}).
 Conversely, if $\psi:\ X\ot A\ra A\ot X$ satisfies (\ref{eq:e1}-\ref{eq:e2}), 
then $\nu_{\mfC}=(\un{m}_A\ot \Id_X)(\Id_A\ot\psi)$ defines a right 
$A$-module structure on $\mfC$. 
\end{proof}

Let $\mfC\in {}_A\Cc_A$, as in \leref{TaSchstr}. 
Since $A$ is left coflat, $\mfC\ot_A\mfC$ is a right $A$-module via $\nu_{\mfC\ot_A\mfC}$, 
the unique morphism in $\Cc$ making the triangle in the diagram
\[
\xymatrix{
\mfC\ot A\ot \mfC\ot A~~\ar@<1ex>[rr]^-{\nu_{\mfC}\ot \Id_\mfC\ot \Id_A}\ar@<-1ex>[rr]_-{\Id_\mfC\ot\mu_{\mfC}\ot \Id_A}~~&&
\mfC\ot \mfC\ot A~~\ar[r]^{q^A_{\mfC, \mfC}\ot\Id_A}\ar[dr]_{q^A_{\mfC, \mfC}(\Id_\mfC\ot\nu_{\mfC})}~~&
(\mfC\ot_A\mfC)\ot A\ar@{.>}[d]^-{\nu_{\mfC\ot_A\mfC}}\\
& && \mfC\ot_A\mfC  
}
\]
commutative. 
Since $\mfC$ is left robust, $\mfC\ot_A\mfC$ is a left 
$A$-module via $\mu_{\mfC\ot_A\mfC}=\ov{\mu}_{\mfC\ot_A\mfC}\theta'^{-1}_{A, \mfC, \mfC}$, 
where $\theta'$ is defined in \desref{thetap} and $\ov{\mu}_{\mfC\ot_A\mfC}$ is the unique 
morphism in $\Cc$ making the triangle below commutative:
\[
\xymatrix{
A\ot \mfC\ot A\ot \mfC~~\ar@<1ex>[rr]^-{\Id_A\ot \nu_{\mfC}\ot \Id_\mfC}\ar@<-1ex>[rr]_-{\Id_A\ot\Id_\mfC\ot \mu_{\mfC}}~~&&
A\ot \mfC\ot \mfC~~\ar[r]^-{q^A_{A\ot \mfC, \mfC}}\ar[rd]_-{q^A_{\mfC, \mfC}(\mu_{\mfC}\ot\Id_\mfC)}~~&
(A\ot\mfC)\ot_A\mfC\ar@{.>}[d]^-{\ov{\mu}_{\mfC\ot_A\mfC}}\\
& && \mfC\ot_A\mfC  
}.
\]

Our next aim is to characterize left $A$-linear morphisms that define a (co)multiplication on $\mfC$.

\begin{lemma}\lelabel{(co)multstr}
Assume that $\mfC=A\ot X\in {}_A\Cc_A$, as in \leref{TaSchstr}.
\begin{itemize}
\item[(i)] There is a bijective correspondence between left $A$-linear morphisms 
$\un{m}_\mfC:\mfC\ot_A\mfC\ra \mfC$ and morphisms $\zeta: X\ot X\ra A\ot X$ in $\Cc$. 
\item[(ii)] There is a bijective correspondence between left $A$-linear morphisms 
$\un{\Delta}_{\mf C}:\ \mfC\ra \mfC\ot_A\mfC$ in $\Cc$ and 
 morphisms $\d: X\ra A\ot X\ot X$ in $\Cc$. 
\end{itemize}
\end{lemma}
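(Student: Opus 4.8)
The statement is a bijective-correspondence result, so in each of the two parts the strategy is to go back and forth: from a left $A$-linear morphism $\mfC\ot_A\mfC\to\mfC$ (resp.\ $\mfC\to\mfC\ot_A\mfC$) extract a morphism $\zeta$ (resp.\ $\d$) on the "$X$-part", and conversely show that such a $\zeta$ (resp.\ $\d$) determines a unique left $A$-linear morphism with the prescribed restriction. The key technical input is already available in the excerpt: the canonical isomorphism $\Upsilon_{\mfM,X}\colon \mfM\diamond_A(A\ot X)\xrightarrow{\cong}\mfM\diamond X$, which for $\mfM=\mfC$ and the object $X$ replaced by $X$ gives $\mfC\ot_A\mfC=\mfC\ot_A(A\ot X)\cong \mfC\ot X=A\ot X\ot X$. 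So the point is that $\mfC\ot_A\mfC$ is, up to canonical isomorphism, just $A\ot X\ot X$, and a left $A$-linear map out of (or into) it is pinned down by what it does on the $\un1\ot X\ot X$-summand (resp.\ by where it sends $X$).

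For part (i), I would proceed as follows. Using $\Upsilon_{\mfC,X}$, identify $\mfC\ot_A\mfC$ with $A\ot X\ot X$; under this identification the left $A$-action on $\mfC\ot_A\mfC$ becomes $\un m_A\ot\Id_X\ot\Id_X$ (this needs a short check against the definition of $\ov\mu_{\mfC\ot_A\mfC}$ and $\theta'$ given just before the lemma, but it is the expected compatibility). Given $\un m_\mfC$ left $A$-linear, set $\zeta := \un m_\mfC \circ (\text{the canonical map } X\ot X \to \mfC\ot_A\mfC)$, concretely $\zeta = \un m_\mfC\circ \tilde{(\text{coev-type map})}\circ(\un\eta_A\ot\Id_X\ot\un\eta_A\ot\Id_X)$ composed appropriately with $q^A_{\mfC,\mfC}$; more cleanly, $\zeta$ is $\un m_\mfC$ transported along $\Upsilon_{\mfC,X}^{-1}$ and then precomposed with $\un\eta_A\ot\Id_X\ot\Id_X$. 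Conversely, given $\zeta\colon X\ot X\to A\ot X$, define $\un m_\mfC$ on $A\ot X\ot X\cong\mfC\ot_A\mfC$ by $(\un m_A\ot\Id_X)\circ(\Id_A\ot\zeta)$ and transport back along $\Upsilon_{\mfC,X}$; this is left $A$-linear by construction, and the two assignments are mutually inverse because a left $A$-linear map on $A\ot(X\ot X)$ is determined by its restriction to $\un1\ot(X\ot X)$ — formally, by $\Upsilon'_{X\ot X}$-type uniqueness, or directly: $\un m_\mfC = (\un m_A\ot\Id_X)\circ(\Id_A\ot\zeta)$ after the identification, and $\zeta\mapsto\un m_\mfC\mapsto\zeta$ is the identity by a one-line unit computation.

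For part (ii), the argument is the dual/companion one but slightly different in flavour because now $\mfC$ is the source. Transport along $\Upsilon_{\mfC,X}$ turns a left $A$-linear $\und_\mfC\colon\mfC\to\mfC\ot_A\mfC$ into a left $A$-linear map $\mfC=A\ot X\to A\ot X\ot X$; a left $A$-linear map out of the free left $A$-module $A\ot X$ is freely determined by its restriction to $\un1\ot X$, i.e.\ by a morphism $\d\colon X\to A\ot X\ot X$. So set $\d := (\text{transported }\und_\mfC)\circ(\un\eta_A\ot\Id_X)$; conversely from $\d$ define the transported map as $(\un m_A\ot\Id_X\ot\Id_X)\circ(\Id_A\ot\d)$ and compose with $\Upsilon_{\mfC,X}^{-1}$ to land in $\mfC\ot_A\mfC$. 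Freeness of $A\ot X$ as a left $A$-module gives that these are mutually inverse.

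**Main obstacle.** The routine part is the unit/counit bookkeeping showing the two maps are inverse; the one genuinely delicate point is verifying that under $\Upsilon_{\mfC,X}$ the left $A$-module structure $\mu_{\mfC\ot_A\mfC}=\ov\mu_{\mfC\ot_A\mfC}\circ\theta'^{-1}_{A,\mfC,\mfC}$ constructed before the lemma really does correspond to the obvious action $\un m_A\ot\Id_X\ot\Id_X$ on $A\ot X\ot X$ — i.e.\ that "left $A$-linear for the coequalizer-defined structure" coincides with "left $A$-linear for the free structure after transport". This amounts to chasing the two coequalizer diagrams defining $\nu_{\mfC\ot_A\mfC}$, $\ov\mu_{\mfC\ot_A\mfC}$ against the defining property of $\Upsilon_{\mfC,X}$ and of $\theta'$, using left robustness and left coflatness of $A$; it is where the hypotheses of \desref{thetap} are actually used. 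Once that identification is in place, both parts reduce to the elementary statement that morphisms out of, or into, a free left $A$-module $A\ot X$ are in bijection with morphisms on the generating object $X$, which is immediate. I would present (i) in detail and remark that (ii) is entirely analogous, or defer the string-diagram verification of the module-structure compatibility to the graphical calculus set up in the preliminaries.
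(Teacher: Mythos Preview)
Your approach is essentially identical to the paper's: both directions of the bijection are built from the canonical isomorphism $\Upsilon_{\mfC,X}\colon \mfC\ot_A(A\ot X)\cong \mfC\ot X=A\ot X\ot X$ together with the freeness of $A\ot(-)$ as a left $A$-module, and your explicit formulas for $\zeta$, $\delta$ and their inverses coincide with those the paper records. The paper's proof is in fact terser than yours --- it simply writes down the two maps in each direction and leaves the mutual-inverse check implicit --- so the ``main obstacle'' you single out (transporting the left $A$-action along $\Upsilon_{\mfC,X}$) is not isolated there as a separate step; it is absorbed into the routine verification and need not be laboured over.
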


\begin{proof}
(i)  The morphism corresponding to a left $A$-linear
$\un{m}_\mfC: \mfC\ot_A\mfC\ra \mfC$ is $\zeta:=\un{m}_\mfC\Upsilon ^{-1}_{\mfC, X}(\un{\eta}_A\ot \Id_X\ot \Id_X)$. 
The left $A$-linear morphism corresponding to $\zeta$ is
$\un{m}_\mfC:=(\un{m}_A\ot \Id_X)(\Id_A\ot \zeta)\gamma_{\mfC, X}$.
For the sake of completeness, observe that the left $A$-linearity of $\un{m}_\mfC$
is equivalent to
$\un{m}_\mfC q^A_{\mfC, \mfC}(\mu_\mfC\ot \Id_\mfC)=\mu_\mfC(\Id_A\ot \un{m}_\mfC q^A_{\mfC, \mfC})$.\\
(ii) A left $A$-linear morphism $\un{\Delta}_{\mf C}:\ \mfC\ra \mfC\ot_A\mfC$ in $\Cc$ has the form 
$
\un{\Delta}_{\mfC}=\Upsilon^{-1}_{\mfC, X}(\un{m}_A\ot\Id^{\ot 2}_X)(\Id_A\ot \delta_X)$,
for some $\delta_X:\ X\ra A\ot X\ot X$. 
$\delta_X$ is obtained from $\un{\Delta}_{\mfC}$ as 
$\delta_X=\Upsilon_{\mfC, X}\un{\Delta}_{\mfC}(\un{\eta}_A\ot\Id_X)$.
\end{proof}

\begin{lemma}
A left $A$-linear $\un{m}_\mfC: \mfC\ot_A\mfC\ra \mfC$ is right $A$-linear if and only if 
\begin{equation}\eqlabel{righAlinmult}
(\un{m}_A\ot \Id_X)(\Id_A\ot \psi)(\zeta\ot \Id_A)=(\un{m}_A\ot \Id_X)(\Id_A\ot \zeta)
(\psi\ot \Id_X)(\Id_X\ot \psi). 
\end{equation}
A left $A$-linear  $\un{\Delta}_{\mfC}: \mfC\ra \mfC\ot_A\mfC$ is right $A$-linear if and only if 
\begin{equation}\eqlabel{rightAlin}
(\un{m}_A\ot\Id_X^{\ot 2})(\Id_A\ot\d_X)\psi = 
(\un{m}_A\ot\Id_X^{\ot 2})(\Id_A\ot \psi\ot\Id_X)(\Id_\mfC\ot\psi)(\d_X\ot\Id_A).
\end{equation}
\end{lemma}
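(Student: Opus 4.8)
The plan is to verify the asserted equivalences by a direct ``diagram chase'' through the coequalizer presentations of $\mfC\ot_A\mfC$, pushing the statement ``$\un{m}_\mfC$ (resp. $\un{\Delta}_\mfC$) is right $A$-linear'' back along the universal maps to a statement about the generating morphisms $\psi$, $\zeta$ (resp. $\psi$, $\d_X$). Recall from the proof of \leref{(co)multstr} that right $A$-linearity of $\un{m}_\mfC$ is the equation $\un{m}_\mfC q^A_{\mfC,\mfC}(\mu_\mfC\ot\Id_\mfC) = \mu_\mfC(\Id_A\ot \un{m}_\mfC q^A_{\mfC,\mfC})$ (and dually one unwinds the definition of $\nu_{\mfC\ot_A\mfC}$ for the coring case). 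Since $A$ is left coflat, $q^A_{\mfC,\mfC}$ is an epimorphism after tensoring, so it suffices to check the identities after precomposition with $q^A_{\mfC,\mfC}$ and, using \leref{TaSchstr} and \leref{(co)multstr}, after further precomposition with the ``seed'' inclusion $\un{\eta}_A\ot\Id_X\ot\un{\eta}_A\ot\Id_X$ (resp. $\un{\eta}_A\ot\Id_X$). The point is that $\mfC\ot_A\mfC\cong A\ot X\ot X$ via the composite of $\Upsilon_{\mfC,X}$ with the evident identification, so every morphism out of $\mfC\ot_A\mfC$ is determined by its restriction along this seed, and one simply computes both sides.

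First I would fix, once and for all, the explicit formula $\nu_\mfC=(\un{m}_A\ot\Id_X)(\Id_A\ot\psi)$ from \leref{TaSchstr}, the formula $\un{m}_\mfC=(\un{m}_A\ot\Id_X)(\Id_A\ot\zeta)\gamma_{\mfC,X}$ from \leref{(co)multstr}(i), and the description $\gamma_{\mfC,X}=\Upsilon^{-1}_{\mfC,X}$-type map that lets one replace $\un{m}_\mfC q^A_{\mfC,\mfC}$ by $(\un{m}_A\ot\Id_X)(\Id_A\ot\zeta)$ applied to $A\ot X\ot X$ after killing the middle $A$. Then the equation ``$\un{m}_\mfC$ is right $A$-linear'' becomes an equality of two morphisms $A\ot X\ot X\ot A\to A\ot X$; composing both with $\un{\eta}_A\ot\Id_{X\ot X}\ot\Id_A$ and using that $\un{m}_A$ is unital kills the left two $A$'s and reduces everything to $X\ot X\ot A\to A\ot X$. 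On the left-hand side one first multiplies $X\ot X$ via $\zeta$ and then acts by $A$ on the right, i.e. transports the trailing $A$ across the result using $\psi$; this yields $(\un{m}_A\ot\Id_X)(\Id_A\ot\psi)(\zeta\ot\Id_A)$. On the right-hand side one first pushes the trailing $A$ to the left past both copies of $X$ using $\psi$ twice (the module structure on $\mfC\ot_A\mfC$), and then multiplies; this yields $(\un{m}_A\ot\Id_X)(\Id_A\ot\zeta)(\psi\ot\Id_X)(\Id_X\ot\psi)$, which is exactly \equref{righAlinmult}. The coalgebra case \equref{rightAlin} is the formal dual: here the relevant identity is $\nu_{\mfC\ot_A\mfC}(\un{\Delta}_\mfC\ot\Id_A)=\un{\Delta}_\mfC\nu_\mfC$, one restricts along $\un{\eta}_A\ot\Id_X\ot\Id_A$, uses $\un{\Delta}_\mfC=\Upsilon^{-1}_{\mfC,X}(\un{m}_A\ot\Id_X^{\ot2})(\Id_A\ot\d_X)$ from \leref{(co)multstr}(ii), and tracks how the trailing $A$ either enters $\d_X$ after being pushed in by $\psi$ (LHS) or is distributed over the two output copies of $X$ by $\psi$ and $\psi$ after $\d_X$ acts (RHS), giving \equref{rightAlin}.

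Concretely I would organize the computation as a short sequence of elementary rewrites, ideally displayed in the graphical calculus already set up in the preamble (the \gbeg/\gend boxes): start from $\un{m}_\mfC q^A_{\mfC,\mfC}(\mu_\mfC\ot\Id_\mfC)$, use the coequalizer relation $q^A_{\mfC,\mfC}(\nu_\mfC\ot\Id_\mfC)=q^A_{\mfC,\mfC}(\Id_\mfC\ot\mu_\mfC)$ to slide the right $A$ through, use $\Upsilon_{\mfC,X}$ to pass to the concrete model $A\ot X\ot X$, then invoke associativity of $\un{m}_A$ and the definitions of $\nu_\mfC$, $\un{m}_\mfC$ to reach the right-hand side; then do the same starting from $\mu_\mfC(\Id_A\ot\un{m}_\mfC q^A_{\mfC,\mfC})$. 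The two ends match iff \equref{righAlinmult} holds. The main obstacle — really the only subtle point — is bookkeeping: correctly identifying the left $A$-module structure $\mu_{\mfC\ot_A\mfC}$ (via $\ov\mu_{\mfC\ot_A\mfC}$ and $\theta'^{-1}$) and the right $A$-module structure $\nu_{\mfC\ot_A\mfC}$ on the tensor-over-$A$ object in terms of $\psi$, so that the ``push $A$ through two $X$'s'' step is the composite $(\psi\ot\Id_X)(\Id_X\ot\psi)$ and not something else, and making sure that the coflatness/robustness of $A$ and $X$ is what legitimizes replacing equality-after-$q^A$ by equality and passing to the $A\ot X\ot X$ model. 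Once those identifications are in hand — and they are exactly the canonical isomorphisms recorded in \seref{2.2} and \desref{thetap} — both equivalences fall out by a routine, if slightly lengthy, string-diagram manipulation, with no further input needed.
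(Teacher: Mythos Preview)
Your approach is essentially the same as the paper's: compute both sides of the right $A$-linearity condition explicitly in terms of $\psi$, $\zeta$ (resp.\ $\psi$, $\d_X$), using the canonical isomorphism $\mfC\ot_A\mfC\cong \mfC\ot X$, and then strip the leading $A$ by precomposing with $\un{\eta}_A$. The paper carries this out only for the comultiplication case (leaving the ring case to the reader), arriving first at an equation with an extra $A$-factor on the left and then composing with $\un{\eta}_A\ot\Id_X\ot\Id_A$ to obtain \equref{rightAlin}, exactly as you describe.

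One slip to fix: in your first and third paragraphs you cite the equation $\un{m}_\mfC q^A_{\mfC,\mfC}(\mu_\mfC\ot\Id_\mfC)=\mu_\mfC(\Id_A\ot \un{m}_\mfC q^A_{\mfC,\mfC})$ as the condition for \emph{right} $A$-linearity, but this is the \emph{left} $A$-linearity condition (it involves $\mu_\mfC$, the left action). Right $A$-linearity of $\un{m}_\mfC$ is $\nu_\mfC(\un{m}_\mfC\ot\Id_A)=\un{m}_\mfC\,\nu_{\mfC\ot_A\mfC}$, which after precomposing with $q^A_{\mfC,\mfC}\ot\Id_A$ becomes an equality of maps $\mfC\ot\mfC\ot A\to\mfC$. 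Your informal description in the second paragraph (``trailing $A$'' pushed through by $\psi$ twice, etc.)\ is correct and matches this; just make sure the formal equation you start from is the one involving $\nu_\mfC$ and $\nu_{\mfC\ot_A\mfC}$, not $\mu_\mfC$.
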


\begin{proof}
We only prove the second assertion, the first one is left to the reader.
It is not hard to see that 
\begin{eqnarray*}
&&\hspace*{-2cm}
(\Id_A\ot \Upsilon_{\mfC, X})\theta'_{A, \mfC, \mfC}q^A_{A\ot \mfC, \mfC}
=\Id_A\ot \Upsilon_{\mfC, X}q^A_{\mfC, \mfC}\\
&=&
\Id_A\ot \nu_{\mfC}\ot\Id_X=
\nu_{A\ot \mfC}\ot\Id_X=\Upsilon_{A\ot \mfC, X}q^A_{A\ot \mfC, \mfC},
\end{eqnarray*}
and since $q^A_{A\ot \mfC, \mfC}$ is a coequalizer it follows that 
\begin{equation}\eqlabel{rel2}
(\Id_A\ot \Upsilon_{\mfC, X})\theta'_{A, \mfC, \mfC}=
\Upsilon_{A\ot \mfC, X}.
\end{equation}
In order to investigate when $\un{\Delta}_{\mfC}$ is right $A$-linear, we compute
\begin{eqnarray*}
&&\hspace*{-1.5cm}
\nu_{\mfC\ot_A\mfC}(\un{\Delta}_{\mfC}\ot\Id_A)\\
&=&
\nu_{\mfC\ot_A\mfC}(\Upsilon^{-1}_{\mfC, X}\ot\Id_A)(\un{m}_A\ot\Id_X^{\ot 2}\ot\Id_A)
(\Id_A\ot \d_X\ot\Id_A)\\
&=&\nu_{\mfC\ot_A\mfC}(q^A_{\mfC, \mfC}\ot\Id_A)(\Id_{\mfC}\ot\un{\eta}_A\ot\Id_X\ot\Id_A)\\
&&(\un{m}_A\ot\Id_X^{\ot 2}\ot\Id_A)(\Id_A\ot \d_X\ot\Id_A)\\
&=&q^A_{\mfC, \mfC}(\Id_{\mfC}\ot \nu_{\mfC}(\un{\eta}_A\ot\Id_X\ot\Id_A))
(\un{m}_A\ot\Id_X^{\ot 2}\ot\Id_A)(\Id_A\ot \d_X\ot\Id_A)\\
&=&q^A_{\mfC, \mfC}(\Id_{\mfC}\ot\psi)(\un{m}_A\ot\Id_X^{\ot 2}\ot\Id_A)
(\Id_A\ot \d_X\ot\Id_A)\\
&=&q^A_{\mfC, \mfC}(\un{m}_A\ot\Id_X\ot\Id_\mfC)(\Id_A^{\ot 2}\ot\Id_X\ot\psi)
(\Id_A\ot \d_X\ot\Id_A).
\end{eqnarray*} 
Since 
\begin{eqnarray*}
\un{\Delta}_{\mfC}\nu_{\mfC}&=&\Upsilon^{-1}_{\mfC, X}(\un{m}_A\ot\Id_X^{\ot 2})(\Id_A\ot\d_X)
(\un{m}_A\ot\Id_X)(\Id_A\ot \psi)\\
&=&
\Upsilon^{-1}_{\mfC, X}(\un{m}_A\ot \Id^{\ot 2}_X)
(\un{m}_A\ot\Id_A\ot\Id_X^{\ot 2})(\Id^{\ot 2}_A\ot\d_X)(\Id_A\ot\psi) 
\end{eqnarray*}
and 
\begin{eqnarray*}
&&\hspace*{-1.5cm}
\Upsilon_{\mfC, X}q^A_{\mfC, \mfC}(\un{m}_A\ot\Id_X\ot\Id_\mfC)=
(\nu_{\mfC}\ot\Id_X)(\un{m}_A\ot\Id_X\ot\Id_\mfC)\\
&=&(\un{m}_A\ot\Id^{\ot 2}_X)(\Id_A\ot \psi\ot \Id_X)(\un{m}_A\ot\Id_X\ot\Id_\mfC)\\
&=&(\un{m}_A^2\ot\Id_X^{\ot 2})(\Id_A\ot (\Id_A\ot\psi\ot\Id_X)
(\Id_\mfC\ot \psi)(\d_X\ot\Id_A)), 
\end{eqnarray*}
where $\un{m}_A^2:=\un{m}_A(\un{m}_A\ot\Id_A)=\un{m}_A(\Id_A\ot\un{m}_A)$, we obtain that 
$\un{\Delta}_{\mfC}$ is right $A$-linear if and only if 
\begin{eqnarray}
&&\hspace*{-1cm}
(\un{m}_A^2\ot\Id^{\ot 2}_X)(\Id_A\ot (\Id_A\ot\d_X)\psi)\nonumber\\
&&=(\un{m}_A^2\ot\Id^{\ot 2}_X)(\Id_A\ot (\Id_A\ot\psi\ot\Id_X)
(\Id_\mfC\ot\psi)(\d_X\ot\Id_A)).\eqlabel{rightAlinb}
\end{eqnarray}
Composing both sides of \equref{rightAlinb} with 
$\un{\eta}_A\ot\Id_X\ot\Id_A$, we find \equref{rightAlin}. Consequently
\equref{rightAlin} is equivalent to \equref{rightAlinb}, and the result follows.
\end{proof}

Now we have to investigate when $\un{m}_\mfC$ is associative and 
when $\Delta_\mfC$ is coassociative.

\begin{lemma}\lelabel{3.15}
(i) Assume that $\un{m}_\mfC: \mfC\ot_A\mfC\ra \mfC$ is left and right $A$-linear,
that is, \equref{righAlinmult} is satisfied.
Then $\un{m}_\mfC$ is associative if and only if 
\begin{equation}\eqlabel{Multass}
(\un{m}_A\ot \Id_X)(\Id_A\ot \zeta)(\psi\ot\Id_X)(\Id_X\ot \zeta)
=(\un{m}_A\ot\Id_X)(\Id_A\ot\zeta)(\zeta\ot\Id_X).
\end{equation} 
(ii) Assume that $\un{\Delta}_\mfC: \mfC\ra \mfC\ot_A \mfC$ is left and right $A$-linear,
that is, \equref{rightAlin} is satisfied.
Then $\un{\Delta}_\mfC$ is coassociative if and only if 
\begin{equation}\eqlabel{Deltacoass}
(\un{m}_A\ot \Id_X^{\ot 3})(\Id_A\ot\psi\ot\Id_X^{\ot 2})(\Id_\mfC\ot\d_X)\d_X=
(\un{m}_A\ot\Id_X^{\ot 3})(\Id_A\ot\d_X\ot\Id_X)\delta_X.
\end{equation} 
\end{lemma}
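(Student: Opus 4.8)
The plan is to reduce the associativity of $\un{m}_\mfC$ (resp.\ the coassociativity of $\un{\Delta}_\mfC$) to an identity of morphisms built from $\zeta$ and $\psi$ (resp.\ from $\d_X$ and $\psi$), and then to read off \equref{Multass} (resp.\ \equref{Deltacoass}) by composing with the unit $\un{\eta}_A$. Note first that, by hypothesis, $\un{m}_\mfC$ and $\un{\Delta}_\mfC$ are both left and right $A$-linear, so that $\un{m}_\mfC\ot_A\Id_\mfC$, $\Id_\mfC\ot_A\un{m}_\mfC$, $\un{\Delta}_\mfC\ot_A\Id_\mfC$ and $\Id_\mfC\ot_A\un{\Delta}_\mfC$ are all well defined through \equref{coeqmorph} and \equref{coeqmorph2}, so the two identities in question make sense.

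First I would pin down the module structures involved. Iterating the canonical isomorphism $\Upsilon_{-,X}$ and using the defining property of $\ov{\mu}_{\mfC\ot_A\mfC}$ together with \equref{rel2} (and its analogue one step higher), one identifies $\mfC\ot_A\mfC$ with $A\ot X\ot X$ and $\mfC\ot_A\mfC\ot_A\mfC$ with $A\ot X\ot X\ot X$ in such a way that the left $A$-action in each case becomes multiplication of $A$ into the leftmost tensorand, the associativity constraint $\Sigma'$ of ${}_A^!\Cc_A$ being absorbed into these identifications and contributing no extra terms. A left $A$-linear morphism whose source and target both have this ``free-in-front'' shape is determined by its composite with $\un{\eta}_A$ on the free front $A$-factor of the source, this following from left $A$-linearity by a single front multiplication. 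Hence (i) becomes the identity between morphisms $X\ot X\ot X\to A\ot X$ obtained by composing $\un{m}_\mfC(\un{m}_\mfC\ot_A\Id_\mfC)$ and $\un{m}_\mfC(\Id_\mfC\ot_A\un{m}_\mfC)$ with $\un{\eta}_A\ot\Id_X^{\ot 3}$, and (ii) becomes the identity between morphisms $X\to A\ot X\ot X\ot X$ obtained by composing $(\un{\Delta}_\mfC\ot_A\Id_\mfC)\un{\Delta}_\mfC$ and $(\Id_\mfC\ot_A\un{\Delta}_\mfC)\un{\Delta}_\mfC$ with $\un{\eta}_A\ot\Id_X$ (the triple $\ot_A$-product being first identified with $A\ot X\ot X\ot X$).

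For (i): using $\un{m}_\mfC=(\un{m}_A\ot\Id_X)(\Id_A\ot\zeta)\gamma_{\mfC,X}$ from \leref{(co)multstr} and unfolding $\un{m}_\mfC\ot_A\Id_\mfC$ via \equref{coeqmorph2}, the first composite comes out as $(\un{m}_A\ot\Id_X)(\Id_A\ot\zeta)(\zeta\ot\Id_X)$ --- the copy of $A$ produced by the inner multiplication sits in the free front tensorand, so $\psi$ does not enter. For the second composite, identifying the last two copies of $\mfC$ forces the front $A$ of the inner $\mfC\ot_A\mfC$ to pass through the right $A$-action of the outer $\mfC$, which is exactly $\psi$, so it comes out as $(\un{m}_A\ot\Id_X)(\Id_A\ot\zeta)(\psi\ot\Id_X)(\Id_X\ot\zeta)$. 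Equating the two is precisely \equref{Multass}. Part (ii) is the formal dual: from $\un{\Delta}_\mfC=\Upsilon^{-1}_{\mfC,X}(\un{m}_A\ot\Id_X^{\ot 2})(\Id_A\ot\d_X)$ the first composite is $(\un{m}_A\ot\Id_X^{\ot 3})(\Id_A\ot\d_X\ot\Id_X)\d_X$ (again $\psi$-free, the new $A$ landing in front), while the second is $(\un{m}_A\ot\Id_X^{\ot 3})(\Id_A\ot\psi\ot\Id_X^{\ot 2})(\Id_\mfC\ot\d_X)\d_X$, the $\psi$ now coming from pushing the front $A$ of the inner $\mfC\ot_A\mfC$ past the outer $\mfC$; equating them gives \equref{Deltacoass}.

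The only real difficulty is the bookkeeping: one must keep track, after each $\ot_A$, of which tensorand every occurrence of $A$ occupies, in order to see exactly where --- and that it is only there --- the morphism $\psi$ is forced into the picture. Once the ``free-in-front'' description of the module structures is established, the reduction step and the two explicit computations are routine manipulations with the coequalizers $q^A_{-,-}$ and the defining equations of $\Upsilon_{\mfC,X}$, $\ov{\mu}_{\mfC\ot_A\mfC}$ and \equref{rel2}.
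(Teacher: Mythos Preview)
Your proposal is correct and follows essentially the same approach as the paper. The paper carries out only part (ii) explicitly (leaving (i) to the reader) and works directly with the coequalizer morphisms $q^A_{-,-}$, $\Gamma'$, $\widehat{(-)}$, $\widetilde{(-)}$: it introduces the isomorphism $\Lambda_{\mfC,X}:=\Upsilon^{-1}_{\mfC\ot_A\mfC,X}(\Upsilon^{-1}_{\mfC,X}\ot\Id_X)$ --- exactly your iterated $\Upsilon$ identifying $\mfC\ot_A\mfC\ot_A\mfC$ with $A\ot X^{\ot 3}$ --- computes $\widehat{\un{\Delta}_\mfC}\un{\Delta}_\mfC$ and $\Gamma'_{\mfC,\mfC,\mfC}\widetilde{\un{\Delta}_\mfC}\un{\Delta}_\mfC$ under $\Lambda_{\mfC,X}^{-1}$, and then reads off \equref{Deltacoass} by the same ``compose with $\un{\eta}_A$'' reduction you describe.
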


\begin{proof}
We will only prove assertion (ii). Recall from \cite{par, sch, bc3}
that the morphism
$\Gamma'_{\mfM, X, Y}:\ \mfM\diamond_A(X\ot_AY)\ra (\mfM\diamond_AX)\diamond_AY$ 
is determined by the commutativity of the following diagram: 
\[
\xymatrix{
(\mfM\diamond A)\diamond (X\ot_AY)~~\ar@<1ex>[rr]^-{\nu_{\mfM}\diamond \Id_{X\ot_AY}}
\ar@<-1ex>[rr]_-{(\Id_\mfM\diamond\mu_{X\ot_AY})\Psi_{\mfM, A, X\ot_AY}}~~& &
\mfM\diamond(X\ot_AY)\ar[dr]_-{\widehat{q^A_{\mfM, X}}\Theta'^{-1}_{\mfM, X, Y}}\ar[r]^-{q^A_{\mfM, X\ot_AY}}&\mfM\diamond_A(X\ot_AY)
\ar@{.>}[d]^{\Gamma'_{\mfM, X, Y}}\\
&&&(\mfM\diamond_AX)\diamond_AY
}.
\]
We also have to make the observation that 
\begin{eqnarray*}
\Lambda_{\mfM, X}:\hspace*{-2mm}
&=&q^A_{\mfM\diamond_A\mfC, \mfC}
(\Upsilon^{-1}_{\mfM, X}\diamond \Id_{\mfC})(\Id_{\mfM}\diamond\Id_X\diamond\un{\eta}_A\ot\Id_X)\\
\hspace*{-2mm}
&=&q^A_{\mfM\diamond_A\mfC, \mfC}(\Id_{\mfM\diamond_A\mfC}\diamond\un{\eta}_A\ot \Id_X)
(\Upsilon^{-1}_{\mfM, X}\diamond\Id_X)
=\Upsilon^{-1}_{\mfM\diamond_A\mfC, X}(\Upsilon^{-1}_{\mfM, X}\diamond\Id_X),
\end{eqnarray*} 
is an isomorphism in $\Dc$. Now we can compute that
\begin{eqnarray*}
&&\hspace*{-1.5cm}
\Gamma'_{\mfC, \mfC, \mfC}\widetilde{\un{\Delta}_{\mfC}}\un{\Delta}_{\mfC}\\
&\equal{\equref{rel1}}&\Gamma'_{\mfC, \mfC, \mfC}\widetilde{\un{\Delta}_{\mfC}}
q^A_{\mfC, \mfC}(\Id_{\mfC}\ot\un{\eta}_A\ot\Id_X)(\un{m}_A\ot\Id_X^{\ot 2})(\Id_A\ot\delta_X)\\
&\equal{\equref{coeqmorph}}&
\Gamma'_{\mfC, \mfC, \mfC}q^A_{\mfC, \mfC\ot_A\mfC}(\Id_{\mfC}\ot\un{\Delta}_{\mfC})
(\Id_{\mfC}\ot\un{\eta}_A\ot\Id_X)(\un{m}_A\ot\Id_X^{\ot 2})(\Id_A\ot\delta_X)\\
&=&\widehat{q^A_{\mfC, \mfC}}\theta'^{-1}_{\mfC, \mfC, \mfC}(\Id_{\mfC}\ot \Upsilon^{-1}_{\mfC, X})
(\Id_{\mfC}\ot\un{m}_A\ot\Id_X^{\ot 2})(\Id_\mfC\ot\Id_A\ot\d_X)\\
&&(\Id_{\mfC}\ot\un{\eta}_A\ot\Id_X)(\un{m}_A\ot\Id_X^{\ot 2})(\Id_A\ot\d_X)\\
&\equal{\equref{rel1}}&\widehat{q^A_{\mfC, \mfC}}\theta'^{-1}_{\mfC, \mfC, \mfC}
(\Id_{\mfC}\ot q^A_{\mfC, \mfC})
(\Id_{\mfC}^{\ot 2}\ot\un{\eta}_A\ot\Id_X)(\Id_\mfC\ot\un{m}_A\ot\Id_X^{\ot 2})\\
&&(\Id_\mfC\ot\un{\eta}_A\ot\Id_A\ot\Id_X^{\ot 2})(\Id_\mfC\ot\d_X)
(\un{m}_A\ot\Id_X^{\ot 2})(\Id_A\ot\d_X)\\
&=&\widehat{q^A_{\mfC, \mfC}}q^A_{\mfC\ot\mfC, \mfC}
(\Id_{\mfC}^{\ot 2}\ot\un{\eta}_A\ot\Id_X)(\Id_{\mfC}\ot\d_X)(\un{m}_A\ot\Id_X^{\ot 2})
(\Id_A\ot \d_X)\\
&\equal{\equref{coeqmorph2}}&
q^A_{\mfC\ot_A\mfC, \mfC}(q^A_{\mfC, \mfC}\ot\Id_\mfC)(\Id_\mfC^{\ot 2}\ot\un{\eta}_A\ot\Id_X)
(\un{m}_A\ot\Id_X\ot\Id_\mfC\ot\Id_X)\\
&&(\Id_A^{\ot 2}\ot\Id_X\ot\d_X)(\Id_A\ot\d_X).
\end{eqnarray*}
Similarly, we have that 
\begin{eqnarray*}
\widehat{\un{\Delta}_{\mfC}}\un{\Delta}_{\mfC}
&\equal{\equref{rel1}}&
\widehat{\un{\Delta}_{\mfC}}q^A_{\mfC, \mfC}(\Id_{\mfC}\ot\un{\eta}_A\ot\Id_X)(\un{m}_A\ot\Id_X^{\ot 2})
(\Id_A\ot\d_X)\\
&\equal{\equref{coeqmorph2}}&
q^A_{\mfC\ot_A\mfC, \mfC}(\un{\Delta}_{\mfC}\ot\Id_{\mfC})
(\Id_{\mfC}\ot\un{\eta}_A\ot\Id_X)(\un{m}_A\ot\Id_X^{\ot 2})(\Id_A\ot\d_X)\\
&=&q^A_{\mfC\ot_A\mfC, \mfC}(\Upsilon^{-1}_{\mfC, X}\ot \Id_{\mfC})
(\un{m}_A\ot\Id_X^{\ot 2}\ot\Id_\mfC)(\Id_A\ot\d_X\ot\Id_\mfC)\\
&&(\Id_\mfC\ot\un{\eta}_A\ot\Id_X)(\un{m}_A\ot\Id_X^{\ot 2})(\Id_A\ot\d_X)\\
&=&q^A_{\mfC\ot_A\mfC, \mfC}(\Upsilon^{-1}_{\mfC, X}\ot \Id_{\mfC})
(\un{m}_A\ot\Id_X^{\ot 2}\ot\Id_\mfC)(\Id_A^{\ot 2}\ot\Id_X^{\ot 2}\ot\un{\eta}_A\ot\Id_X)\\
&&(\Id_A\ot\d_X\ot\Id_X)
(\un{m}_A\ot\Id_X^{\ot 2})(\Id_A\ot\d_X)\\
&=&\Lambda_{\mfC, X}(\un{m}_A\ot\Id_X^{\ot 3})(\Id_A\ot\d_X\ot\Id_X)
(\un{m}_A\ot\Id_X^{\ot 2})(\Id_A\ot\d_X)\\
&=&\Lambda_{\mfC, X}(\un{m}_A^2\ot\Id_X^{\ot 3})
(\Id_A\ot(\Id_A\ot\d_X\ot\Id_X)\d_X), 
\end{eqnarray*}
where $\un{m}^2_A:=\un{m}_A(\un{m}_A\ot \Id_A)=\un{m}_A(\Id_A\ot\un{m}_A)$. 
Now, since 
\begin{eqnarray*}
&&\hspace*{-1.5cm}
\Lambda^{-1}_{\mfC, X}q^A_{\mfC\ot_A\mfC, \mfC}(q^A_{\mfC, \mfC}\ot\Id_\mfC)
(\Id_\mfC^{\ot 2}\ot\un{\eta}_A\ot\Id_X)(\un{m}_A\ot\Id_X\ot\Id_\mfC\ot\Id_X)\\
&=&(\Upsilon_{\mfC, X}\ot\Id_X)(\nu_{\mfC\ot_A\mfC}\ot\Id_X)
(q^A_{\mfC, \mfC}\ot\Id_{\mfC})\\
&&(\Id_\mfC^{\ot 2}\ot\un{\eta}_A\ot\Id_X)(\un{m}_A\ot\Id_X\ot\Id_\mfC\ot\Id_X)\\
&=&\Upsilon_{\mfC, X}q^A_{\mfC, \mfC}(\Id_\mfC\ot\nu_\mfC)(\Id_\mfC^{\ot 2}\ot\un{\eta}_A)(\un{m}_A\ot\Id_X\ot\Id_\mfC)\ot\Id_X\\
&=&(\nu_\mfC\ot\Id_X)(\un{m}_A\ot\Id_X\ot\Id_\mfC)\ot\Id_X\\
&=&(\un{m}_A^2\ot\Id_X^{\ot 3})
(\Id_A\ot (\Id_A\ot \psi\ot\Id_X^{\ot 2})),
\end{eqnarray*}
it follows that $\un{\Delta}_\mfC$ is coassociative if and only if 
\begin{eqnarray*}
&&\hspace*{-2cm}
(\un{m}_A\ot\Id_X^{\ot 3})
(\Id_A\ot (\Id_A\ot \psi\ot\Id_X^{\ot 2})(\Id_\mfC\ot\d_X)\d_X)\\
&&=(\un{m}_A^2\ot\Id_X^{\ot 3})
(\Id_A\ot(\Id_A\ot\d_X\ot\Id_X)\d_X),
\end{eqnarray*}
and this is equivalent to \equref{Deltacoass}. 
\end{proof}

Finally, we have to discuss when $\un{m}_\mfC$ has a unit and
when $\un{\Delta}_{\mfC}$ has a counit.\\
To this end, we first observe that left $A$-linear morphisms $\un{\eta}_\mfC:A\ra \mfC$ 
corresponds bijectively to morphisms $\sigma: \un{1}\ra A\ot X$.
Actually, $\sigma$ can be obtained 
from $\un{\eta}_\mfC$ as $\un{\eta}_\mfC\un{\eta}_A$, while $\un{\eta}_\mfC$ can
be reconstructed from $\sigma$ using the formula
$(\un{m}_A\ot\Id_X)(\Id_A\ot\sigma): A\ra A\ot X$.\\
In a similar way, left $A$-linear morphisms $\un{\va}_{\mfC}: \mfC\ra A$ are in bijective
correspondence to morphisms $f: X\ra A$. 
Indeed, $f$ can be obtained from $\un{\va}_{\mfC}$ as 
$f:=\un{\va}_{\mfC}(\un{\eta}_A\ot \Id_X)$. Conversely, $\un{\va}_{\mfC}$ can be obtained from
$f$ using the formula $\un{\va}_{\mfC}:=\un{m}_A(\Id_A\ot f)$ from $\mfC$ to $A$. 

\begin{lemma}\lelabel{3.5}
(i) Let $\un{m}_\mfC$ be a left and right $A$-linear associative map, as in 
\leref{3.15}, and $\un{\eta}_\mfC:A\ra \mfC$ left $A$-linear.\\
$\un{\eta}_\mfC: A\ra \mfC$ is right $A$-linear if and only if 
\begin{equation}\eqlabel{unitAbilin}
(\un{m}_A\ot\Id_X)(\Id_A\ot\sigma)=(\un{m}_A\ot\Id_X)(\Id_A\ot \psi)(\sigma\ot\Id_A).
\end{equation}
In this situation, $\un{\eta}_\mfC$ is a unit for $\un{m}_\mfC$ 
if and only 
\begin{eqnarray}
&&(\un{m}_A\ot\Id_X)(\Id_A\ot \zeta)(\psi\ot\Id_X)(\Id_X\ot\sigma)=\un{\eta}_A\ot \Id_X,\eqlabel{unit1}\\
&&(\un{m}_A\ot \Id_X)(\Id_A\ot \zeta)(\sigma\ot \Id_X)=\un{\eta}_A\ot\Id_X.\eqlabel{unit2}
\end{eqnarray}

(ii) Let $\un{\Delta}_\mfC$ be a left and right $A$-linear coassociative map, as in 
\leref{3.15}, and $\un{\va}_{\mfC}: \mfC\ra A$ left $A$-linear.\\
$\un{\va}_{\mfC}$ is right $A$-linear if and only if 
\begin{equation}\eqlabel{counitAbilin}
\un{m}_A(\Id_A\ot f)\psi=\un{m}_A(f\ot \Id_A).
\end{equation}
In this situation, $\un{\va}_\mfC$ is a counit for $\un{\Delta}_\mfC$ 
if and only if
\begin{eqnarray}
&&(\un{m}_A\ot\Id_X)(\Id_A\ot f\ot\Id_X)\d_X=\un{\eta}_A\ot\Id_X,\eqlabel{counit1}\\
&&(\un{m}_A\ot\Id_X)(\Id_A\ot\psi)(\Id_\mfC\ot f)\d_X=\un{\eta}_A\ot\Id_X.\eqlabel{counit2}
\end{eqnarray}
\end{lemma}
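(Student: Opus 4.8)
The plan is to deduce everything from one structural fact: $\mfC=A\ot X$, with its left $A$-action $\mu_\mfC=\un{m}_A\ot\Id_X$, is the \emph{free} left $A$-module on $X$, and more generally any object $A\ot Z$, with left $A$-action on the first tensorand, is the free left $A$-module on $Z$; hence a left $A$-linear morphism $A\ot Z\ra N$ is uniquely determined by its composite with $\un{\eta}_A\ot\Id_Z\colon Z\ra A\ot Z$ and equals $\mu_N(\Id_A\ot h)$ for the resulting $h\colon Z\ra N$. This is exactly the correspondence already exploited in the paragraph preceding \leref{3.5} (for $\un{\eta}_\mfC\leftrightarrow\sigma$ and $\un{\va}_\mfC\leftrightarrow f$) and in \leref{(co)multstr}. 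Every morphism occurring in the (co)unit and bilinearity conditions --- $\mu_\mfC$, $\nu_\mfC$ (from \leref{TaSchstr}), $\un{m}_\mfC$, $\un{\Delta}_\mfC$ (from \leref{(co)multstr}), $\un{\eta}_\mfC$, $\un{\va}_\mfC$, the canonical isomorphisms $\Upsilon_\mfC$, $\Upsilon'_\mfC$, $\Upsilon_{\mfC,X}$, and the induced morphisms $\widehat{(-)}$, $\widetilde{(-)}$ of \equref{coeqmorph}--\equref{coeqmorph2} --- is left $A$-linear; consequently each identity to be proved is an equality of left $A$-linear morphisms out of a free left $A$-module $A\ot Z$ (with $Z=A$, $Z=X\ot A$, or $Z=X$), so it is equivalent to the equality of its two restrictions along $\un{\eta}_A\ot\Id_Z$. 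After this reduction the computations are short string-diagram manipulations using only associativity and unitality of $\un{m}_A$ together with \equref{e1}--\equref{e2}.

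For the two bilinearity statements this is almost immediate. Right $A$-linearity of $\un{\eta}_\mfC$ means $\un{\eta}_\mfC\un{m}_A=\nu_\mfC(\un{\eta}_\mfC\ot\Id_A)$ as morphisms $A\ot A\ra\mfC$; restricting along $\un{\eta}_A\ot\Id_A$ and using $\un{m}_A(\un{\eta}_A\ot\Id_A)=\Id_A$, the formula $\un{\eta}_\mfC=(\un{m}_A\ot\Id_X)(\Id_A\ot\sigma)$ and $\nu_\mfC=(\un{m}_A\ot\Id_X)(\Id_A\ot\psi)$, the left side becomes $(\un{m}_A\ot\Id_X)(\Id_A\ot\sigma)$ and the right side $(\un{m}_A\ot\Id_X)(\Id_A\ot\psi)(\sigma\ot\Id_A)$, i.e.\ \equref{unitAbilin}. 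Dually, right $A$-linearity of $\un{\va}_\mfC$ reads $\un{\va}_\mfC\nu_\mfC=\un{m}_A(\un{\va}_\mfC\ot\Id_A)$ on $\mfC\ot A$; restricting along $\un{\eta}_A\ot\Id_X\ot\Id_A$ and using $(\un{m}_A\ot\Id_X)(\un{\eta}_A\ot-)=\Id$ and $\un{\va}_\mfC=\un{m}_A(\Id_A\ot f)$ with $\un{m}_A(\un{\eta}_A\ot f)=f$, the two sides become $\un{m}_A(\Id_A\ot f)\psi$ and $\un{m}_A(f\ot\Id_A)$, i.e.\ \equref{counitAbilin}.

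For the (co)unit axioms I would first unwind them in the style of the proof of \leref{3.15}. Using \equref{coeqmorph}, \equref{coeqmorph2}, \equref{invcanisom1} and \equref{rel1}, the left unit axiom $\un{m}_\mfC\widehat{\un{\eta}_\mfC}\Upsilon'^{-1}_\mfC=\Id_\mfC$ becomes $\un{m}_\mfC q^A_{\mfC,\mfC}(\sigma\ot\Id_\mfC)=\Id_\mfC$ and the right unit axiom $\un{m}_\mfC\widetilde{\un{\eta}_\mfC}\Upsilon^{-1}_\mfC=\Id_\mfC$ becomes $\un{m}_\mfC q^A_{\mfC,\mfC}(\Id_\mfC\ot\sigma)=\Id_\mfC$; likewise the counit axioms $\Upsilon'_\mfC\widehat{\un{\va}_\mfC}\un{\Delta}_\mfC=\Id_\mfC$ and $\Upsilon_\mfC\widetilde{\un{\va}_\mfC}\un{\Delta}_\mfC=\Id_\mfC$ unwind via $\Upsilon'_\mfC q^A_{A,\mfC}=\mu_\mfC$, $\Upsilon_\mfC q^A_{\mfC,A}=\nu_\mfC$ and the formula for $\un{\Delta}_\mfC$ from \leref{(co)multstr}(ii). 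One then inserts the explicit descriptions $\un{m}_\mfC q^A_{\mfC,\mfC}=(\un{m}_A\ot\Id_X)(\Id_A\ot\zeta)(\nu_\mfC\ot\Id_X)$ (a consequence of the formula in \leref{(co)multstr}(i) and the defining property of $\Upsilon_{\mfC,X}$), $\un{\eta}_\mfC=(\un{m}_A\ot\Id_X)(\Id_A\ot\sigma)$ and $\un{\va}_\mfC=\un{m}_A(\Id_A\ot f)$, and restricts along $\un{\eta}_A\ot\Id_X$. The stray copies of $\un{\eta}_A$ collapse --- by unitality of $\un{m}_A$, and wherever a $\psi$ meets an $\un{\eta}_A$ by \equref{e2} in the form $\psi(\Id_X\ot\un{\eta}_A)=\un{\eta}_A\ot\Id_X$ (equivalently, unitality of the $A$-action $\nu_\mfC$) --- and what remains is precisely: left unit axiom $\Leftrightarrow$ \equref{unit2}, right unit axiom $\Leftrightarrow$ \equref{unit1}, first counit axiom $\Leftrightarrow$ \equref{counit1}, second counit axiom $\Leftrightarrow$ \equref{counit2}. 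That in each pair one condition carries an extra $\psi$ and the other does not mirrors the asymmetry between $\Upsilon'_\mfC/\mu_\mfC$ and $\Upsilon_\mfC/\nu_\mfC$: the ``right-hand'' axiom factors through the right $A$-action $\nu_\mfC$, hence through $\psi$.

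I expect the only genuine work to be bookkeeping: keeping straight which of $\widehat{(-)}$, $\widetilde{(-)}$ and which of $\Upsilon_\mfC$, $\Upsilon'_\mfC$ goes with each (co)unit axiom, and tracking where the $\psi$ coming from $\nu_\mfC$ enters and whether it survives the restriction. There is no conceptual obstacle once one commits to the free-module reduction, which replaces the coequalizer gymnastics of \leref{3.15} by the single remark that it suffices to test on $\un{\eta}_A\ot\Id_Z$. Parts (i) and (ii) are formally co-opposite, so in principle one could be deduced from the other, but carrying out each directly along the above scheme is just as quick; as in \leref{3.15}, I would write out the details only for one of the two parts and leave the mirror-image argument to the reader.
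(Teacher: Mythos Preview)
Your proposal is correct and follows essentially the same route as the paper. The paper, like you, proves only part (ii) in detail and leaves (i) to the reader; it unwinds $\Upsilon_{A,X}\widehat{\un{\va}_\mfC}\un{\Delta}_\mfC$ and $\Upsilon_\mfC\widetilde{\un{\va}_\mfC}\un{\Delta}_\mfC$ explicitly into expressions of the form $(\un{m}_A^2\ot\Id_X)(\Id_A\ot(-))$ and then observes (as with \equref{counitAbilina} versus \equref{counitAbilin}) that such an equality of left $A$-linear maps out of $A\ot Z$ is equivalent to its restriction along $\un{\eta}_A\ot\Id_Z$ --- exactly your free-module reduction, just not named as such.
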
   

\begin{proof}
We will prove the second statement, and leave the first one to the reader.
$\un{\va}_{\mfC}$ is right $A$-linear if and only if
\begin{equation}\eqlabel{counitAbilina}
\un{m}_A^2(\Id_A\ot(\Id_A\ot f)\psi)=
\un{m}_A^2(\Id_A\ot f\ot\Id_A).
\end{equation}
Clearly \equref{counitAbilina} is equivalent to \equref{counitAbilin}. Note that 
\equref{counitAbilin} follows from \equref{counitAbilina}, after we compose both
sides with $\un{\eta}_A\ot \Id_X\ot\Id_A$. Now
\begin{eqnarray*}
\Upsilon_{A, X}\widehat{\un{\va}_{\mfC}}\un{\Delta}_\mfC&=&
\Upsilon_{A, X}\widehat{\un{\va}_{\mfC}}\Upsilon^{-1}_{\mfC, X}(\un{m}_A\ot\Id_X^{\ot 2})
(\Id_A\ot\d_X)\\
&=&\Upsilon_{A, X}\widehat{\un{\va}_{\mfC}}q^A_{\mfC, \mfC}(\Id_{\mfC}\ot\un{\eta}_A\ot\Id_X)
(\un{m}_A\ot\Id_X^{\ot 2})(\Id_A\ot\d_X)\\
&=&\Upsilon_{A, X}q^A_{A, \mfC}(\un{\va}_{\mfC}\ot\Id_{\mfC})
(\Id_{\mfC}\ot\un{\eta}_A\ot\Id_X)(\un{m}_A\ot \Id_X^{\ot 2})(\Id_A\ot\d_X)\\
&=&(\un{m}_A\ot\Id_X)(\un{m}_A\ot\Id_A\ot\Id_X)(\Id_A^{\ot 2}\ot\un{\eta}_A\ot\Id_X)\\
&&(\Id_A\ot f\ot\Id_X)(\un{m}_A\ot\Id_X^{\ot 2})(\Id_A\ot\d_X)\\
&=&(\un{m}_A\ot\Id_X)(\Id_A\ot f\ot\Id_X)(\un{m}_A\ot\Id_X^{\ot 2})(\Id_A\ot\d_X)\\
&=&(\un{m}_A^2\ot\Id_X)(\Id_A\ot (\Id_A\ot f\ot\Id_X)\d_X),
\end{eqnarray*}
and we obtain that $\Upsilon_{A, X}\widehat{\un{\va}_{\mfC}}\un{\Delta}_{\mfC}=\Id_\mfC$ 
if and only if \equref{counit1} holds. In a similar way, we have that
\[
\Upsilon_{\mfC}\widetilde{\un{\va}_{\mfC}}\un{\Delta}_{\mfC}=
(\un{m}_A^2\ot\Id_X)(\Id_A\ot(\Id_A\ot \psi)(\Id_\mfC\ot f)\d_X),
\]
and so $\Upsilon_{\mfC}\widetilde{\un{\va}_{\mfC}}\un{\Delta}_{\mfC}=\Id_\mfC$ if and only 
if \equref{counit2} holds. 
\end{proof}

Collecting our results, we obtain \prref{3.18}.

\begin{proposition}\prlabel{3.18}
Let $A$ be an algebra in a monoidal category $\Cc$ which is left coflat and $X$ a left 
coflat object of $\Cc$. $A\ot X$ is a left $A$-module in $\Cc$ via the multiplication 
$\un{m}_A$ of $A$. Then 

(i) $A$-ring structures on $A\ot X$ in $\Cc$ correspond bijectively to 
morphisms $\psi: X\ot A\ra A\ot X$, $\zeta_X: X\ot X\ra A\ot X$ and 
$\sigma:\un{1}\ra A\ot X$ in $\Cc$ such that \equref{e1}, \equref{e2}, \equref{righAlinmult}, 
\equref{Multass}, \equref{unitAbilin}, \equref{unit1} and \equref{unit2} are satisfied.

(ii) $A$-coring structures
on $\mfC=A\ot X$ correspond bijectively to morphisms $\psi:X\ot A\ra A\ot X$, 
$\d_X: X\ra A\ot X\ot X$ and $f: X\ra A$ in $\Cc$ such that \equref{e1}, \equref{e2}, 
\equref{rightAlin}, \equref{Deltacoass}, \equref{counitAbilin}, \equref{counit1} and 
\equref{counit2} hold.    
\end{proposition}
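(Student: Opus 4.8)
The plan is to unwind \deref{coring} and read off each ingredient of the (co)ring structure from the lemmas already established, in the order \emph{bimodule structure}, then \emph{(co)multiplication}, then \emph{(co)unit}, and finally to compose the resulting bijections.

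By \deref{coring} (applied with $\Dc=\Cc$, using that $A$ is left coflat by hypothesis), an $A$-ring structure on $\mfC=A\ot X$ is precisely a monoid in ${}^!_A\Cc_A$ whose underlying left $A$-module is the given one $\mu_\mfC=\un{m}_A\ot\Id_X$, and an $A$-coring structure is a comonoid therein. Such a monoid consists of: (a) a right $A$-module structure $\nu_\mfC$ making $\mfC$ into an $A$-bimodule — note that $\mfC\in{}^!_A\Cc$ was already recorded at the start of \seref{strofcor}, and the extra robustness and coflatness requirements for membership in ${}^!_A\Cc_A$ concern only the \emph{left} module structure, hence are automatic; (b) an $A$-bilinear associative multiplication $\un{m}_\mfC:\mfC\ot_A\mfC\ra\mfC$; and (c) an $A$-bilinear morphism $\un{\eta}_\mfC:A\ra\mfC$ that is a two-sided unit for $\un{m}_\mfC$. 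Dually, a comonoid consists of an $A$-bimodule structure, an $A$-bilinear coassociative $\un{\Delta}_\mfC:\mfC\ra\mfC\ot_A\mfC$, and an $A$-bilinear counit $\un{\va}_\mfC:\mfC\ra A$.

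Next I transcribe these data. First, \leref{TaSchstr} gives a bijection between the admissible $\nu_\mfC$ and the morphisms $\psi:X\ot A\ra A\ot X$ satisfying \equref{e1} and \equref{e2}. Fixing such a $\psi$ fixes $\nu_\mfC$, the coequalizer $\mfC\ot_A\mfC$, and all the canonical isomorphisms ($\Upsilon_{\mfC,X}$, $\theta'_{A,\mfC,\mfC}$, $\Gamma'_{\mfC,\mfC,\mfC}$, $\Lambda_{\mfC,X}$) used below. For the ring case I then invoke, in turn: \leref{(co)multstr}(i) (left $A$-linear $\un{m}_\mfC\leftrightarrow\zeta:X\ot X\ra A\ot X$); the lemma characterising right $A$-linearity ($\un{m}_\mfC$ is also right $A$-linear iff \equref{righAlinmult}); \leref{3.15}(i) (it is then associative iff \equref{Multass}); the remark before \leref{3.5} (left $A$-linear $\un{\eta}_\mfC\leftrightarrow\sigma:\un{1}\ra A\ot X$); and \leref{3.5}(i) ($\sigma$ is right $A$-linear iff \equref{unitAbilin}, and is a unit iff \equref{unit1} and \equref{unit2}). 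The coring case is the exact dual: \leref{(co)multstr}(ii) ($\un{\Delta}_\mfC\leftrightarrow\d_X:X\ra A\ot X\ot X$), \equref{rightAlin} for right $A$-linearity, \leref{3.15}(ii) and \equref{Deltacoass} for coassociativity, the remark before \leref{3.5} ($\un{\va}_\mfC\leftrightarrow f:X\ra A$), and \leref{3.5}(ii), \equref{counitAbilin}, \equref{counit1}, \equref{counit2} for the counit. Composing these bijections, the assignment of the triple $(\psi,\zeta,\sigma)$ (resp. $(\psi,\d_X,f)$) to an $A$-ring (resp. $A$-coring) structure is a bijection onto the set of triples satisfying the seven listed equations, with inverse given by the reconstruction formulas of the cited lemmas; this is (i) (resp. (ii)).

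The \emph{only} real point to watch — and it is bookkeeping rather than a genuine obstacle — is that the coequalizer $\mfC\ot_A\mfC$, the isomorphisms $\Upsilon_{\mfC,X},\theta',\Gamma',\Lambda$, and the very notion of right $A$-linearity of $\un{m}_\mfC$ and $\un{\Delta}_\mfC$ all depend on the chosen $\psi$; one must therefore fix $\psi$ first and let the remaining data vary freely subject to the stated constraints. Since \leref{(co)multstr}, \leref{3.15} and \leref{3.5} are all stated for an arbitrary fixed $\mfC\in{}_A\Cc_A$ of the form $A\ot X$, this dependence causes no trouble, and the hypotheses of \leref{3.15}(i)/(ii) and \leref{3.5}(i)/(ii) are exactly the right $A$-linearity conditions \equref{righAlinmult}/\equref{rightAlin} and \equref{unitAbilin}/\equref{counitAbilin} that already appear on our list, so nothing beyond the chain of bijections above needs to be checked.
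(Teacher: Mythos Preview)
Your proof is correct and takes the same approach as the paper: the proposition is stated there simply as ``Collecting our results, we obtain \prref{3.18}'', and you have spelled out exactly which result furnishes each piece of the bijection (\leref{TaSchstr} for $\psi$, \leref{(co)multstr} for $\zeta$ or $\d_X$, the right $A$-linearity lemma, \leref{3.15}, and \leref{3.5}). Your additional remarks --- that the robustness/coflatness conditions for ${}^!_A\Cc_A$ depend only on the left structure, and that $\psi$ must be fixed first because the coequalizer $\mfC\ot_A\mfC$ depends on it --- are accurate and make explicit points the paper leaves implicit.
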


For further use, record that $\mfC$ is a (co)ring in $\Cc$ compatible with a right $\Cc$-category $\Dc$ if, 
in addition, $A$ and $X$ are also (left) $\Dc$-coflat objects of $\Cc$. In both cases we refer to the 
$A$-(co)ring $\mfC$ as a quadruple 
$\mfC=(A\ot X, \psi, (\d_X)\zeta_X, (f)\sigma)$ with $\psi, (\d_X)\zeta_X, (f)\sigma$ 
satisfying all the conditions in \prref{3.18}.

%%%%%%%%%%%%%%%%%%%%%%%%%%%%%%%%%%%%%%%%%%%%%%%%%%%%%%%%%%%%%%%%%%%%%%%%%%%%%%%%
\section{The categories ${\rm EM}(\Cc)(A)$ and ${\rm Mnd}(\Cc)(A)$}\selabel{monint}
%%%%%%%%%%%%%%%%%%%%%%%%%%%%%%%%%%%%%%%%%%%%%%%%%%%%%%%%%%%%%%%%%%%%%%%%%%%%%%%%
\setcounter{equation}{0}
The main goal of this section is to restate  the necessary and sufficient 
conditions for $A\ot -$ to be an $A$-(co)ring in terms of monoidal categories.
More precisely, we will show that $A\ot X$ admits 
an $A$-(co)ring structure with the given left $A$-module structure if and only if $X$ 
is a (co)algebra in a certain monoidal category. In a earlier version of this paper we have 
constructed this category by hand, inspired by the structures of $X$ that endow $A\ot X$ with an $A$-(co)ring 
structure. Afterwards Gabriella B\"ohm pointed us that our monoidal category should be related to 
the Eilenberg-Moore category associated to $\Cc$, viewed as a $2$-category in the canonical way. 
After some more investigations we obtained the results of this Section. 

Let ${\cal K}$ be a $2$-category; its objects (or $0$-cells) will be denoted by capital letters. $1$-cell between
two 0-cells $U$ and $V$ will be denoted as 
$
\xymatrix{
U\ar[r]^{f}&V
}
$, 
the identity morphism of a $1$-cell $f$ by $1_f$ and, more generally, a $2$-cell by 
$
\xymatrix{
f\ar@2{->}[r]^\rho&f'
}
$. We also denote by $\circ$ the vertical composition of $2$-cells 
$
\xymatrix{
f\ar@2{->}[r]^\rho&f'\ar@2{->}[r]^-{\tau}&f{''}
}
$ in ${\cal K}(U, V)$, 
by $\odot$ the horizontal composition of $2$-cells 
\[
\xymatrix{
U\rtwocell^f_{f'}{\rho} &V\rtwocell^g_{g'}{\rho'}&W 
},~~
\xymatrix{
gf\ar@2{->}[r]^-{\rho'\odot\rho}&g'f', 
}
\] 
and by 
$
(
\xymatrix{
U\ar[r]^{1_U}&U
}, 
\xymatrix{
1_U\ar@2{->}[r]^{i_U}&1_U
}
)
$ 
the pair defined by the image of the unit functor from ${\bf 1}$ to ${\cal K}(U, U)$, where 
${\bf 1}$ is the terminal object of the category of small categories.  
For more detail on 2-categories, we refer the reader to \cite[Ch. 7]{Borceux} or \cite[Ch. XII]{mc}. 

To a 2-category ${\cal K}$, we can associate a new 2-category $EM({\cal K})$, called the Eilenberg-Moore 
category associated to ${\cal K}$. We sketch the definition, following \cite{LackRoss}.

$\bullet$ $0$-cells are monads in ${\cal K}$, that is quadruples $(A, t, \mu, \eta)$ consisting 
in an object $A$ of ${\cal K}$, a $1$-cell 
$
\xymatrix{
A\ar[r]^t&A
}
$  
in ${\cal K}$ and $2$-cells 
$
\xymatrix{
t\circ t\ar@2{->}[r]^{\mu}&t
}
$ 
and 
$
\xymatrix{
1_A\ar@2{->}[r]^\eta&t
}
$ 
in ${\cal K}$ such that 
\[
\mu\circ(\mu \odot 1_t)=\mu\circ (1_t\odot \mu)~,~\mu\circ (1_t\odot \eta)=1_t=\mu\circ (\eta\odot 1_t)~; 
\]

$\bullet$ $1$-cells are the monad morphisms, i.e., if $\mathbb{A}=(A, t, \mu_t, \eta_t)$ and 
$\mathbb{B}=(B, s, \mu_s, \eta_s)$ are monads in ${\cal K}$ then a monad morphism between $\mathbb{A}$ and $\mathbb{B}$ 
is a pair $(f, \psi)$ with 
$
\xymatrix{
A\ar[r]^f&B
}
$ 
a $1$-cell in ${\cal K}$ and 
$\xymatrix{
sf\ar@2{->}[r]^{\psi}&ft
}
$   
a $2$-cell in ${\mathcal K}$ such that the following equalities hold:
\[
(1_f\odot \mu_t)\circ (\psi\odot 1_t)\circ (1_s\odot \psi)=\psi\circ (\mu_s\odot 1_f)~,~
\psi\circ(\eta_s\odot 1_f)=1_f\odot \eta_t;
\]

$\bullet$ $2$-cells 
$
\xymatrix{
(f, \psi)\ar@2{->}[r]^{\rho}&(g, \phi)
}
$ 
are $2$-cells 
$
\xymatrix{
f\ar@2{->}[r]^{\rho}&gt
}
$ 
in ${\cal K}$ obeying the equality
\begin{equation}\eqlabel{2cellEM}
(1_g\odot\mu_t)\circ (\rho\odot 1_t)\circ \psi=(1_g\odot \mu_t)\circ (\phi\odot 1_t)\circ (1_s\odot \rho);
\end{equation}

$\bullet$ the vertical composition of two $2$-cells 
$
\xymatrix{
(f, \psi)\ar@2{->}[r]^{\rho}&(g, \phi)\ar@2{->}[r]^{\rho'}&(h, \gamma)
}
$ 
is given by 
\[
\xymatrix{
(f, \psi)\ar@2{->}[r]^-{\rho'\circ \rho}&(h, \gamma)
}~,~ 
\rho'\circ \rho:=(1_h\odot \mu_t)\circ (\rho'\odot 1_t)\circ \rho;
\]

$\bullet$ the horizontal composition of two cells 
\[
\xymatrix{
\mathbb{A}\rtwocell^{(f, \psi)}_{(f', \psi')}{\rho}&\mathbb{B}
\rtwocell^{(g, \phi)}_{(g', \phi')}{\rho'}&\mathbb{C}
}
\]
is defined by $(g, \phi)(f, \psi)=(gf, (1_g\odot \psi)\circ (\phi\odot 1_f))$, etc. and 
$
\xymatrix{
gf\ar@2{->}[r]^{\rho'\oslash \rho}&g'f't
}
$ 
given by 
\[
\rho'\oslash \rho:=(1_{g'}\odot 1_{f'}\odot \mu_t)\circ (1_{g'}\odot \rho\odot 1_t)\circ 
(1_{g'}\odot \psi)\circ (\rho'\odot 1_f);
\]

$\bullet$ The identity morphism of the $1$-cell $(f, \psi)$ is $1_f\odot \eta_t$, 
and for any monad $\mathbb{A}=(A, t, \mu_t, \eta_t)$ in ${\cal K}$ we have  
$(1_\mathbb{A}, i_\mathbb{A})=((1_A, 1_t), \eta_t)$. 

It is well-known that strict monoidal categories can be viewed as 2-categories with
one 0-cell $*$. The 1-cells are the objects of the monoidal category, and the 2-cells
are its morphisms. So we can consider the Eilenberg-Moore category associated
to a strict monoidal category. This will be described in \prref{TA2}.

\begin{proposition}\prlabel{TA2}
The Eilenberg-Moore category $EM(\Cc)$ of a strict monoidal category $\Cc$ can be
described as follows.

$\bullet$ $0$-cells: algebras in $\Cc$;

$\bullet$ $1$-cells: 
$
\xymatrix{
A\ar[r]^{(X, \psi)}&B
}
$ 
with $X$ an object of $\Cc$ and $\psi: X\ot B\ra A\ot X$ morphism in $\Cc$ compatible with 
the algebra structure of $A$ and $B$, in the sense that
\begin{equation}\eqlabel{ocellspc}
\gbeg{3}{5}
\got{1}{X}\got{1}{B}\got{1}{B}\gnl
\gcl{1}\gmu\gnl
\gcl{1}\gcn{1}{1}{2}{1}\gnl
\gbrc\gnl
\gob{1}{A}\gob{1}{X}
\gend =
\gbeg{3}{5}
\got{1}{X}\got{1}{B}\got{1}{B}\gnl
\gbrc\gcl{1}\gnl
\gcl{1}\gbrc\gnl
\gmu\gcl{1}\gnl
\gob{2}{A}\gob{1}{X}
\gend 
~~,~~
\gbeg{2}{4}
\got{1}{X}\gnl
\gcl{1}\gu{1}\gnl
\gbrc\gnl
\gob{1}{A}\gob{1}{X}
\gend
=
\gbeg{2}{3}
\got{3}{X}\gnl
\gu{1}\gcl{1}\gnl
\gob{1}{A}\gob{1}{X}
\gend~,
\end{equation}
where 
$
\psi=
\gbeg{2}{3}
\got{1}{X}\got{1}{B}\gnl
\gbrc\gnl
\gob{1}{A}\gob{1}{X}
\gend
$
is our diagrammatic notation for a morphism $\psi : X\ot B\ra A\ot X$; 

$\bullet$ $2$-cells:
$
\xymatrix{
(X, \psi)\ar@2{->}[r]^{\rho}&(Y, \phi)
}
$
is a morphism $\rho: X\ra A\ot Y$ in $\Cc$ such that 
\begin{equation}\eqlabel{mtaext}
(\un{m}_A\ot\Id_Y)(\Id_A\ot \rho)\psi=
(\un{m}_A\ot \Id_Y)(\Id_A\ot \phi)(\rho\ot \Id_B);
\end{equation} 

$\bullet$ the vertical composition of two $2$-cells  
$
\xymatrix{
(X, \psi)\ar@2{->}[r]^{\rho}&(Y, \phi)\ar@2{->}[r]^{\rho'}&(Z, \gamma)
}
$  
is the following composition in $\Cc$:
\begin{equation}\eqlabel{compte}
\rho'\circ \rho:=(\un{m}_A\ot \Id_Z)(\Id_A\ot \rho')\rho : X\ra A\ot Z; 
\end{equation}

$\bullet$ the horizontal composition of $2$-cells 
\[
\xymatrix{
A\rtwocell^{(X, \psi)}_{(X', \psi')}{\rho}&B\rtwocell^{(Y, \phi)}_{(Y', \phi')}{\rho'}&C
}
\]
is defined by 
$
(Y, \phi)(X, \psi)=\left(X\ot Y, 
\gbeg{3}{4}
\got{1}{X}\got{1}{Y}\got{1}{C}\gnl
\gcl{1}\gbrbox\gnl
\gbrc\gcl{1}\gnl
\gob{1}{A}\gob{1}{X}\gob{1}{Y}
\gend
\right), 
$
where 
$
\phi=
\gbeg{2}{3}
\got{1}{X}\got{1}{B}\gnl
\gbrbox\gnl
\gob{1}{A}\gob{1}{X}
\gend
$, etc. and $\rho'\oslash \rho: X\ot Y\ra A\ot X'\ot Y'$ given by  
\begin{equation}\eqlabel{tensorprodmorph}
\rho'\oslash \rho:=
\gbeg{5}{8}
\got{2}{X}\gvac{1}\got{2}{Y}\gnl
\gcn{1}{1}{2}{2}\gvac{2}\gcn{1}{1}{2}{2}\gnl
\gsbox{2}\gnot{\hspace{5mm}\rho}\gvac{3}\gsbox{2}\gnot{\hspace{5mm}\rho'}\gnl
\gcl{1}\gcn{1}{1}{1}{3}\gvac{1}\gcl{1}\gcl{4}\gnl
\gcl{1}\gvac{1}\gbrc\gvac{2}\gnl
\gcn{1}{1}{1}{3}\gvac{1}\gcl{1}\gcl{1}\gnl
\gvac{1}\gmu\gcl{1}\gnl
\gvac{1}\gob{2}{A}\gob{1}{X'}\gob{1}{~~Y'}
\gend\hspace{2mm}, 
\mbox{~~where~~}
\rho=
\gbeg{2}{5}
\got{2}{X}\gnl
\gcn{1}{1}{2}{2}\gnl
\gsbox{2}\gnot{\hspace{5mm}\rho}\gnl
\gcl{1}\gcl{1}\gnl
\gob{1}{A}\gob{1}{~~X'}
\gend
~, ~etc.;
\end{equation}

$\bullet$ For any algebra $A$ in $\Cc$ we have 
$1_A=(\un{1}, \Id_A)$ and $i_A=\un{\eta}_A$, and for 
any $1$-cell 
$
\xymatrix{
A\ar[r]^{(X, \psi)}&A
}
$ 
we have $1_{(X, \psi)}=\un{\eta}_A\ot\Id_X$. 
\end{proposition}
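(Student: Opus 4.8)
The plan is to apply the general construction of the Eilenberg--Moore $2$-category $EM(\mathcal{K})$ recalled above to the particular $2$-category $\mathcal{K}$ obtained from the strict monoidal category $\Cc$: it has a single $0$-cell $*$, its $1$-cells are the objects of $\Cc$, horizontal composition of $1$-cells is $\ot$, its $2$-cells are the morphisms of $\Cc$, vertical composition of $2$-cells is composition in $\Cc$, and horizontal composition of $2$-cells (including whiskering) is given by tensoring; the identity $1$-cell $1_*$ is the unit object $\un{1}$. Throughout, I will use the convention that the composite $g\circ f$ of composable $1$-cells corresponds to $f\ot g$, so that whiskering a $2$-cell $\alpha$ by a $1$-cell on the left (resp.\ on the right), $1_g\odot\alpha$ (resp.\ $\alpha\odot 1_g$), leaves the identity factor on the right (resp.\ on the left) of $\alpha$; this is exactly the choice needed to recover the formulas in the statement.

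With these identifications the proof is a direct, if somewhat bookkeeping-heavy, translation of each piece of data of $EM(\mathcal{K})$ from \cite{LackRoss}. First, a monad $(\,*,t,\mu,\eta\,)$ in $\mathcal{K}$ is an object $A:=t$ of $\Cc$ equipped with morphisms $\mu\colon A\ot A\to A$ and $\eta\colon\un{1}\to A$, and the two monad axioms turn literally into the associativity and unit laws of an algebra; hence $0$-cells of $EM(\Cc)$ are exactly algebras in $\Cc$, with $\mu=\un{m}_A$ and $\eta=\un{\eta}_A$. A monad morphism $(f,\psi)\colon\mathbb{A}\to\mathbb{B}$ then consists of a $1$-cell, i.e.\ an object $X$ of $\Cc$, together with a $2$-cell $\psi\colon s\circ f\Rightarrow f\circ t$; under our convention $s\circ f=X\ot B$ and $f\circ t=A\ot X$, so $\psi\colon X\ot B\to A\ot X$. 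Replacing every whiskered $2$-cell in the two monad-morphism equations $(1_f\odot\mu_t)\circ(\psi\odot 1_t)\circ(1_s\odot\psi)=\psi\circ(\mu_s\odot 1_f)$ and $\psi\circ(\eta_s\odot 1_f)=1_f\odot\eta_t$ by the corresponding morphism of $\Cc$ yields exactly the two identities \equref{ocellspc}. This is the step I would carry out in full detail, since keeping track of which tensor factor is acted upon --- and hence of the left/right placement in the resulting string diagrams --- is the only genuine difficulty; everything else is routine.

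The remaining items follow the same pattern. A $2$-cell $(X,\psi)\Rightarrow(Y,\phi)$ is a $2$-cell $\rho\colon f\Rightarrow g\circ t$, i.e.\ a morphism $\rho\colon X\to A\ot Y$, and the defining condition \equref{2cellEM} becomes $(\un{m}_A\ot\Id_Y)(\Id_A\ot\rho)\psi=(\un{m}_A\ot\Id_Y)(\Id_A\ot\phi)(\rho\ot\Id_B)$, which is \equref{mtaext}. The general formula $\rho'\circ\rho=(1_h\odot\mu_t)\circ(\rho'\odot 1_t)\circ\rho$ for vertical composition of $2$-cells gives \equref{compte}; the general horizontal composite of $1$-cells $(g,\phi)(f,\psi)=(gf,\,(1_g\odot\psi)\circ(\phi\odot 1_f))$ gives $(Y,\phi)(X,\psi)=(X\ot Y,\,(\psi\ot\Id_Y)(\Id_X\ot\phi))$, which is the string-diagram expression in the statement; and the general formula for $\rho'\oslash\rho$ gives \equref{tensorprodmorph} after the same substitutions. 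Finally $(1_\mathbb{A},i_\mathbb{A})=((1_A,1_t),\eta_t)$ unwinds to $1_A=(\un{1},\Id_A)$ and $i_A=\un{\eta}_A$, and $1_{(f,\psi)}=1_f\odot\eta_t$ unwinds to $1_{(X,\psi)}=\un{\eta}_A\ot\Id_X$. Since $EM(\mathcal{K})$ is already known to be a $2$-category, no axiom needs to be re-verified: one has merely exhibited its data, in the case where $\mathcal{K}$ comes from $\Cc$, in the form asserted, and the proposition follows.
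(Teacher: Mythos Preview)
Your approach is correct and is essentially the same as the paper's: both proofs amount to a direct translation of the Lack--Street data for $EM(\mathcal{K})$ into the case where $\mathcal{K}$ is a strict monoidal category viewed as a one-object $2$-category, with the same convention $gf\leftrightarrow f\ot g$. One small caveat: the formula \equref{tensorprodmorph} does \emph{not} fall out of the general expression $\rho'\oslash\rho=(1_{g'}\odot 1_{f'}\odot\mu_t)\circ(1_{g'}\odot\rho\odot 1_t)\circ(1_{g'}\odot\psi)\circ(\rho'\odot 1_f)$ by substitution alone --- direct translation yields $(\un m_A\ot\Id_{X'}\ot\Id_{Y'})(\Id_A\ot\rho\ot\Id_{Y'})(\psi\ot\Id_{Y'})(\Id_X\ot\rho')$, and rewriting this in the form of \equref{tensorprodmorph} (which uses $\psi'$ rather than $\psi$) requires one application of the $2$-cell compatibility \equref{mtaext}; the paper explicitly flags this step.
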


\begin{proof}
The starting point is the identification of a monad in $EM(\Cc)$ with an 
algebra in $\Cc$. It can be easily checked that a monad $\mathbb{A}=(*, 
\xymatrix{
{*}\ar[r]^A&{*}
}, 
\un{m}_A: AA=A\ot A\ra A, \eta_{A}: 1_*=\un{1}\ra A)$ in $EM(\Cc)$ 
is a triple $(A, \un{m}_A, \un{\eta}_A)$ consisting of an 
object $A\in \Cc$ together with two morphisms $\un{m}_A:A\ot A\ra A$ and 
$\un{\eta}_A:\un{1}\ra A$ in $\Cc$ such that 
\[
\un{m}_A(\un{m}_A\ot\Id_A)=\un{m}_A(\Id_A\ot \un{m}_A)~~{\rm and}~~
\un{m}_A(\un{\eta}_A\ot\Id_A)=\Id_A=\un{m}_A(\Id_A\ot\un{\eta}_A).
\]   
Thus the $0$-cells of $EM(\Cc)$ are the algebras in $\Cc$. Then a monad morphism 
between two algebras $A$ and $B$ in $\Cc$ is a pair 
$(
\xymatrix{
{*}\ar[r]^X&{*}
}, \psi)
$ 
with $X\in \Cc$ and $\psi: BX=X\ot B\ra XA=A\ot X$ morphism in $\Cc$ such that 
\[
(\un{m}_A\ot\Id_X)(\Id_A\ot\psi)(\psi\ot \Id_B)=\psi(\Id_X\ot\un{m}_B)~~{\rm and}~~
\psi(\Id_X\ot \un{\eta}_B)=\un{\eta}_A\ot\Id_X.
\]
In diagrammatic notation these equalities read as \equref{ocellspc}, so they are the required conditions 
for $(X, \psi)$ to be a $1$-cell in $EM(\Cc)$.\\
Now, a $2$-cell 
$
\xymatrix{
(X, \psi)\ar@2{->}[r]^\rho&(Y, \phi)
}
$  
is a morphism $\rho: X\ra YA=A\ot Y$ satisfying the condition 
\begin{equation}\eqlabel{2cellpc}
\gbeg{3}{5}
\got{1}{X}\got{1}{B}\gnl
\gbrc\gnl
\gcl{1}\gsbox{2}\gnot{\hspace{5mm}\rho}\gnl
\gmu\gcl{1}\gnl
\gob{2}{A}\gob{1}{Y}
\gend
=
\gbeg{3}{6}
\got{2}{X}\got{1}{B}\gnl
\gcn{1}{1}{2}{2}\gvac{1}\gcl{1}\gnl
\gsbox{2}\gnot{\hspace{5mm}\rho}\gvac{2}\gcl{1}\gnl
\gcl{1}\gbrbox\gnl
\gmu\gcl{1}\gnl
\gob{2}{A}\gob{1}{Y}
\gend .
\end{equation}
If we rewrite this formula as a composition of maps, then we obtain 
\equref{mtaext}. The proof of all the remaining assertions is similar.
We point out that the proof of  \equref{tensorprodmorph} is based on \equref{2cellpc}. 
\end{proof}  

Let $U$ be 0-cell in a $2$-category ${\cal K}$. Then ${\cal K}(U):={\cal K}(U, U)$ is a monoidal
category. The objects are 1-cells $K\to K$, morphisms are $2$-cells, and the tensor product
is given by vertical composition of $2$-cells. The unit is $1_U$, the unit 1-cell on $U$.
We can apply this construction to $EM(\Cc)$. In this way, we obtain a monoidal category
$EM(\Cc)(A)$, for any algebra $A$ in $\Cc$. In \coref{4.2}, we provide an explicit description of this category.

\begin{corollary}\colabel{4.2}
Let $A$ be an algebra in a monoidal category  $\Cc$. Then 
$EM(\Cc)(A)$ is the following monoidal category.\\
$\bullet$ Objects are pairs $(X, \psi)$ with $X$ object in $\Cc$ and 
$\psi: X\ot A\ra A\ot X$ morphism in $\Cc$ such that       
\begin{equation}\eqlabel{ta}
\gbeg{3}{5}
\got{1}{X}\got{1}{A}\got{1}{A}\gnl
\gcl{1}\gmu\gnl
\gcl{1}\gcn{1}{1}{2}{1}\gnl
\gbrc\gnl
\gob{1}{A}\gob{1}{X}
\gend =
\gbeg{3}{5}
\got{1}{X}\got{1}{A}\got{1}{A}\gnl
\gbrc\gcl{1}\gnl
\gcl{1}\gbrc\gnl
\gmu\gcl{1}\gnl
\gob{2}{A}\gob{1}{X}
\gend 
~~,~~
\gbeg{2}{4}
\got{1}{X}\gnl
\gcl{1}\gu{1}\gnl
\gbrc\gnl
\gob{1}{A}\gob{1}{X}
\gend
=
\gbeg{2}{3}
\got{3}{X}\gnl
\gu{1}\gcl{1}\gnl
\gob{1}{A}\gob{1}{X}
\gend~.
\end{equation}

$\bullet$ Morphisms $\rho: (X, \psi)\ra (Y, \phi)$ are morphisms 
$\rho:X\ra A\ot Y$ satisfying \equref{2cellpc}, and the 
composition of two morphisms $\rho$ and $\rho'$ is as in \equref{compte}. 
The identity morphism $\Id_{(X, \psi)}$ is $\un{\eta}_A\ot\Id_X$.

$\bullet$ The tensor product is defined by 
\begin{equation}\eqlabel{mstrtamb}
(X, \psi)\un{\ot}(Y, \phi)=(X\ot Y, \psi_{X\ot Y}),~~{\rm with}~~ 
\psi_{X\ot Y}:=
\gbeg{3}{4}
\got{1}{X}\got{1}{Y}\got{1}{A}\gnl
\gcl{1}\gbrbox\gnl
\gbrc\gcl{1}\gnl
\gob{1}{A}\gob{1}{X}\gob{1}{Y}
\gend ,
\end{equation}
and the unit object is $(\un{1}, \Id_A)$. The tensor product of two morphisms 
$\rho$ and $\rho'$ is $\rho\un{\ot}\rho':=\rho'\oslash \rho$ as it is 
defined by \equref{tensorprodmorph}.   
\end{corollary}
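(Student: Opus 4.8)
The plan is to obtain \coref{4.2} as a direct specialization of \prref{TA2}, using the general fact stated just before the corollary: for a $0$-cell $U$ in a $2$-category ${\cal K}$, the hom-category ${\cal K}(U):={\cal K}(U,U)$ is monoidal, with objects the $1$-cells $U\to U$, morphisms the $2$-cells, tensor product given by horizontal composition, and unit object the identity $1$-cell $1_U$. Applying this to ${\cal K}=EM(\Cc)$ and the $0$-cell $U=A$ (an algebra in $\Cc$, which is a monad in $EM(\Cc)$ by the identification recalled in the proof of \prref{TA2}), we get the monoidal category $EM(\Cc)(A)$. So essentially nothing new has to be proved: one only has to \emph{read off} the data described abstractly in \prref{TA2} in the special case where both the source and target $0$-cells are equal to $A$, and then check that the abstract monoidal structure coincides with the explicit formulas claimed in the corollary.

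Concretely, I would proceed as follows. First, the objects: a $1$-cell $A\to A$ in $EM(\Cc)$ is, by \prref{TA2}, a pair $(X,\psi)$ with $\psi:X\ot A\ra A\ot X$ subject to the compatibility conditions \equref{ocellspc}; specializing $B=A$ gives exactly \equref{ta}. Second, the morphisms: a $2$-cell $(X,\psi)\Ra(Y,\phi)$ is a morphism $\rho:X\ra A\ot Y$ satisfying \equref{2cellpc} (the diagrammatic form of \equref{mtaext} with $B=A$), and the vertical composition of $2$-cells in $EM(\Cc)$, given by the formula displayed before \prref{TA2} and worked out diagrammatically in \prref{TA2}, is precisely \equref{compte}; the identity $2$-cell on $(X,\psi)$ is $1_{(X,\psi)}=\un{\eta}_A\ot\Id_X$. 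Third, the tensor product on ${\cal K}(A)$ \emph{is} horizontal composition in $EM(\Cc)$: on objects, $(Y,\phi)(X,\psi)=\bigl(X\ot Y,\psi_{X\ot Y}\bigr)$ with $\psi_{X\ot Y}$ the braiding-type composite exhibited in \prref{TA2}, which is exactly \equref{mstrtamb}; on morphisms, horizontal composition $\rho'\oslash\rho$ is given by \equref{tensorprodmorph}, so we set $\rho\un{\ot}\rho':=\rho'\oslash\rho$. Fourth, the unit: the identity $1$-cell of the $0$-cell $A$ in $EM(\Cc)$ is $1_A=(\un{1},\Id_A)$ (as recorded in the last bullet of \prref{TA2}), which is therefore the unit object of $EM(\Cc)(A)$.

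The only thing that genuinely requires a word of justification is that the \emph{associativity and unit constraints} of the monoidal category ${\cal K}(U)$ are identities in our situation, so that $EM(\Cc)(A)$ is strict monoidal with the stated data. In a general $2$-category the horizontal composition is only associative up to the structural $2$-cells of ${\cal K}$; but here the underlying $2$-category $EM(\Cc)$ is built from the strict monoidal category $\Cc$ (viewed as a one-object $2$-category), and one checks that the associativity and unit comparison $2$-cells of $EM(\Cc)$ are all identities — this is inherited from the strictness of $\Cc$ and the explicit form of horizontal composition in \prref{TA2}. Hence, restricting to endomorphisms of the $0$-cell $A$, we get an honestly strict monoidal category with the tensor product, unit, and constraints as described. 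I expect this strictness bookkeeping to be the main (and essentially only) obstacle; everything else is a translation of \prref{TA2} to the case of a single $0$-cell. A complete proof therefore reduces to: (1) invoke the general ${\cal K}(U)$ construction, (2) substitute $B=A$ throughout \prref{TA2}, and (3) remark that the constraints are identities because $\Cc$ is strict, so that the formulas \equref{ta}, \equref{2cellpc}, \equref{compte}, \equref{mstrtamb} and \equref{tensorprodmorph} describe $EM(\Cc)(A)$ completely.
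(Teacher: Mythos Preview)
Your proposal is correct and follows exactly the approach the paper takes: the corollary is obtained by specializing \prref{TA2} to the endomorphism $0$-cell $A$, using the general fact (recalled just before the corollary) that ${\cal K}(U,U)$ is monoidal with tensor given by horizontal composition. The paper gives no separate proof beyond that remark, so your step-by-step unpacking---including the observation about strictness---actually makes the argument more explicit than the paper itself.
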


We are now ready to state and prove the main result of this Section.

\begin{theorem}\thlabel{4.2}
Let $\Cc$ be a monoidal category, $A$ an algebra in $\Cc$, $\Dc$ a right 
$\Cc$-category and $X$ an object of $\Cc$. Suppose that $A, X$ are (left) $\Dc$-coflat 
and left coflat objects of $\Cc$. Then $\mfC=A\ot X$ has an $A$-(co)ring structure in 
$\Cc$ compatible with $\Dc$ if and only if $X$ has a (co)algebra structure in $EM(\Cc)(A)$. 
\end{theorem}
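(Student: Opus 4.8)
The plan is to combine \prref{3.18} with \coref{4.2} by matching data on both sides. First I would observe that by \prref{3.18}(i), an $A$-ring structure on $\mfC=A\ot X$ (compatible with $\Dc$, which by the final remark before \seref{monint} adds only that $A,X$ are $\Dc$-coflat) is exactly the data of three morphisms $\psi\colon X\ot A\ra A\ot X$, $\zeta_X\colon X\ot X\ra A\ot X$, $\sigma\colon \un{1}\ra A\ot X$ subject to \equref{e1}, \equref{e2}, \equref{righAlinmult}, \equref{Multass}, \equref{unitAbilin}, \equref{unit1}, \equref{unit2}. On the other side, by \coref{4.2} an algebra structure on the object $(X,\psi)$ of $EM(\Cc)(A)$ consists of $(X,\psi)$ itself (so $\psi$ satisfies \equref{ta}, which is literally \equref{e1}--\equref{e2}), a multiplication $\zeta_X\colon (X,\psi)\un{\ot}(X,\psi)\ra(X,\psi)$, i.e.\ a morphism $\zeta_X\colon X\ot X\ra A\ot X$ which is a $2$-cell in the sense of \equref{2cellpc}, and a unit $\sigma\colon(\un{1},\Id_A)\ra(X,\psi)$, i.e.\ $\sigma\colon\un{1}\ra A\ot X$ which is a $2$-cell, such that $\zeta_X$ is associative and unital for the composition \equref{compte} and the tensor product \equref{mstrtamb}--\equref{tensorprodmorph}. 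So the proof reduces to the following dictionary, which I would verify one line at a time:
\begin{itemize}
\item[(a)] the $2$-cell condition \equref{2cellpc} for $\zeta_X\colon(X,\psi)\un{\ot}(X,\psi)\ra(X,\psi)$ — spelling out $\psi_{X\ot X}$ from \equref{mstrtamb} — is exactly \equref{righAlinmult} (right $A$-linearity of $\un{m}_\mfC$);
\item[(b)] the $2$-cell condition \equref{2cellpc} for $\sigma\colon(\un{1},\Id_A)\ra(X,\psi)$ is exactly \equref{unitAbilin} (right $A$-linearity of $\un{\eta}_\mfC$);
\item[(c)] associativity of $\zeta_X$ in $EM(\Cc)(A)$, i.e.\ $\zeta_X\circ(\zeta_X\un{\ot}\Id_{(X,\psi)})=\zeta_X\circ(\Id_{(X,\psi)}\un{\ot}\zeta_X)$, unwound via \equref{compte} and \equref{tensorprodmorph}, is exactly \equref{Multass};
\item[(d)] the two unit axioms $\zeta_X\circ(\sigma\un{\ot}\Id_{(X,\psi)})=\Id_{(X,\psi)}=\zeta_X\circ(\Id_{(X,\psi)}\un{\ot}\sigma)$ become \equref{unit1} and \equref{unit2} respectively, noting that $\Id_{(X,\psi)}=\un{\eta}_A\ot\Id_X$.
\end{itemize}
Each of (a)--(d) is a direct rewriting: one substitutes the explicit formulas for $\psi_{X\ot X}$, for the horizontal composite $\rho'\oslash\rho$, and for the vertical composite, and reads off the claimed identity; associativity of $\un{m}_A$ and the bialgebra-type axiom \equref{e1} for $\psi$ are used to absorb the extra $\un{m}_A$'s that \equref{tensorprodmorph} produces. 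This is precisely the bookkeeping already carried out implicitly in \leref{3.15} and \leref{3.5}, so I would cite those computations rather than repeat them.

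For the coring half I would argue dually, using \prref{3.18}(ii): an $A$-coring structure on $\mfC$ is $(\psi,\d_X,f)$ satisfying \equref{e1}, \equref{e2}, \equref{rightAlin}, \equref{Deltacoass}, \equref{counitAbilin}, \equref{counit1}, \equref{counit2}, and a coalgebra structure on $(X,\psi)\in EM(\Cc)(A)$ is a comultiplication $\d_X\colon(X,\psi)\ra(X,\psi)\un{\ot}(X,\psi)$, i.e.\ $\d_X\colon X\ra A\ot X\ot X$ which is a $2$-cell, and a counit $f\colon(X,\psi)\ra(\un{1},\Id_A)$, i.e.\ $f\colon X\ra A$ which is a $2$-cell, satisfying coassociativity and counitality. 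The same dictionary applies: the $2$-cell condition for $\d_X$ is \equref{rightAlin} (right $A$-linearity of $\un{\Delta}_\mfC$), the $2$-cell condition for $f$ is \equref{counitAbilin}, coassociativity unwinds to \equref{Deltacoass} via the computation in \leref{3.15}(ii), and the two counit axioms unwind to \equref{counit1} and \equref{counit2} via \leref{3.5}(ii). Finally I would remark that the correspondence $\psi\leftrightarrow\psi$, $(\zeta_X,\sigma)\leftrightarrow(\zeta_X,\sigma)$ (resp.\ $(\d_X,f)\leftrightarrow(\d_X,f)$) is manifestly bijective since both sides are described by literally the same tuples of morphisms subject to equivalent lists of equations; this gives the claimed equivalence and in fact an identification of the two sets of structures.

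The main obstacle is purely computational: carefully expanding the horizontal composition \equref{tensorprodmorph} of $2$-cells and the tensor product \equref{mstrtamb} of $1$-cells, and checking that after using associativity of $\un{m}_A$ and the compatibility \equref{e1} of $\psi$ with $\un{m}_A$, the resulting diagram collapses to exactly \equref{Multass} (resp.\ \equref{Deltacoass}). In the string-diagram calculus these are a few rewrites each, but they must be done precisely; everything else — the $2$-cell conditions and the unit/counit axioms — is an immediate transcription once the definitions in \coref{4.2} are in hand, and the bulk of the work has in any case already been packaged into Lemmas \ref{le:3.15} and \ref{le:3.5}. I would therefore present the proof as: (1) recall the two lists of data from \prref{3.18} and \coref{4.2}; (2) match $\psi$ and the secondary morphisms; (3) check (a)--(d) and their duals using the earlier lemmas; (4) conclude bijectivity.
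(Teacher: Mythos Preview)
Your proposal is correct and follows essentially the same approach as the paper: both reduce the statement to \prref{3.18} and then verify, item by item, that the seven conditions listed there coincide with the axioms for a (co)algebra on $(X,\psi)$ in $EM(\Cc)(A)$ once the tensor product \equref{mstrtamb}, the horizontal composition \equref{tensorprodmorph}, and the vertical composition \equref{compte} are unwound. The paper carries out exactly the string-diagram rewrites you describe in your dictionary (a)--(d) and its dual; one cosmetic point is that in (d) the correspondence is swapped---$\zeta_X\circ(\sigma\un{\ot}\Id)$ yields \equref{unit2} and $\zeta_X\circ(\Id\un{\ot}\sigma)$ yields \equref{unit1}---and the computations you need are not literally those of Lemmas~\ref{le:3.15} and~\ref{le:3.5} (which establish \prref{3.18}) but fresh unwindings of the $EM(\Cc)(A)$ structure, though these are indeed routine.
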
 

\begin{proof}
We show that the conditions in \prref{3.18} are equivalent to $X$ being a 
(co)algebra in $EM(\Cc)(A)$. Denoting 
$\psi=
\gbeg{2}{3}
\got{1}{X}\got{1}{A}\gnl
\gbrc\gnl
\gob{1}{A}\gob{1}{X}
\gend\hspace{2mm},$ 
in diagrammatic notation, (\ref{eq:e1}-\ref{eq:e2})
reduce to \equref{ta}. Hence (\ref{eq:e1}-\ref{eq:e2}) are equivalent to 
the fact that $(X, \psi)$ is an object of $EM(\Cc)(A)$. Now we write
$$\d_X=
\gbeg{3}{5}
\got{3}{X}\gnl
\gvac{1}\gcl{1}\gnl
\gsbox{3}\gnl
\gcl{1}\gcl{1}\gcl{1}\gnl
\gob{1}{A}\gob{1}{X}\gob{1}{X}
\gend\hspace*{2mm};\hspace*{2mm}
f=
\gbeg{1}{3}
\got{1}{X}\gnl
\gmp{f}\gnl
\gob{1}{A}
\gend\hspace*{2mm};\hspace*{2mm} 
\zeta_X=
\gbeg{2}{5}
\got{1}{X}\got{1}{X}\gnl
\gcl{1}\gcl{1}\gnl
\gsbox{2}\gnl
\gcl{1}\gcl{1}\gnl
\gob{1}{A}\gob{1}{X}
\gend\hspace*{2mm};\hspace*{2mm}
\sigma=
\gbeg{2}{6}
\got{2}{\un{1}}\gnl
\gu{1}\gnl
\gcn{1}{1}{1}{2}\gnl
\gsbox{2}\gnl
\gcl{1}\gcl{1}\gnl
\gob{1}{A}\gob{1}{X}
\gend\hspace*{2mm}.$$
\equref{righAlinmult} and \equref{Multass} take the form 
\begin{equation}\eqlabel{apdf1}
\gbeg{3}{6}
\got{1}{X}\got{1}{X}\got{1}{A}\gnl
\gcl{1}\gcl{1}\gcl{1}\gnl
\gsbox{2}\gvac{2}\gcl{1}\gnl
\gcl{1}\gbrc\gnl\gmu\gcl{1}\gnl
\gob{2}{A}\gob{1}{X}
\gend
=
\gbeg{3}{6}
\got{1}{X}\got{1}{X}\got{1}{A}\gnl
\gcl{1}\gbrc\gnl
\gbrc\gcl{1}\gnl
\gcl{1}\gsbox{2}\gnl
\gmu\gcl{1}\gnl
\gob{2}{A}\gob{1}{X}
\gend 
\hspace{3mm}\mbox{~~and~~}\hspace{3mm}
\gbeg{3}{7}
\got{1}{X}\got{1}{X}\got{1}{X}\gnl
\gcl{1}\gcl{1}\gcl{1}\gnl
\gcl{1}\gsbox{2}\gnl
\gbrc\gcl{1}\gnl
\gcl{1}\gsbox{2}\gnl
\gmu\gcl{1}\gnl
\gob{2}{A}\gob{1}{X}
\gend
=
\gbeg{3}{7}
\got{1}{X}\got{1}{X}\got{1}{X}\gnl
\gcl{1}\gcl{1}\gcl{1}\gnl
\gsbox{2}\gvac{2}\gcl{1}\gnl
\gcl{1}\gcl{1}\gcl{1}\gnl
\gcl{1}\gsbox{2}\gnl
\gmu\gcl{1}\gnl
\gob{2}{A}\gob{1}{X}
\gend .
\end{equation}
\equref{rightAlin} and 
\equref{Deltacoass} reduce to  
\begin{equation}\eqlabel{pdf1}
\gbeg{4}{8}
\got{3}{X}\got{1}{A}\gnl
\gvac{1}\gcl{1}\gvac{1}\gcl{2}\gnl
\gsbox{3}\gnl
\gcl{1}\gcl{1}\gcl{1}\gcl{1}\gnl
\gcl{1}\gcl{1}\gbrc\gnl
\gcl{1}\gbrc\gcl{1}\gnl
\gmu\gcl{1}\gcl{1}\gnl
\gob{2}{A}\gob{1}{X}\gob{1}{X}
\gend =
\gbeg{4}{7}
\got{1}{X}\got{1}{A}\gnl
\gbrc\gnl
\gcl{1}\gcn{1}{1}{1}{3}\gnl
\gcl{1}\gsbox{3}\gnl
\gcl{1}\gcl{1}\gcl{1}\gcl{1}\gnl
\gmu\gcl{1}\gcl{1}\gnl
\gob{2}{A}\gob{1}{X}\gob{1}{X}
\gend
\hspace{3mm}\mbox{~~and~~}\hspace{3mm}
\gbeg{5}{8}
\got{5}{X}\gnl
\gvac{2}\gcl{1}\gnl
\gvac{1}\gsbox{3}\gnl
\gcn{1}{1}{3}{1}\gvac{1}\gcl{1}\gcn{1}{1}{1}{3}\gnl
\gcl{1}\gsbox{3}\gvac{3}\gcl{1}\gnl
\gcl{1}\gcl{1}\gcl{1}\gcl{1}\gcl{1}\gnl
\gmu\gcl{1}\gcl{1}\gcl{1}\gnl
\gob{2}{A}\gob{1}{X}\gob{1}{X}\gob{1}{X}
\gend =
\gbeg{5}{9}
\got{3}{X}\gnl
\gvac{1}\gcl{1}\gnl
\gsbox{3}\gnl
\gcl{1}\gcl{1}\gcn{1}{1}{1}{3}\gnl
\gcl{1}\gcl{1}\gsbox{3}\gnl
\gcl{1}\gcl{1}\gcl{1}\gcl{1}\gcl{1}\gnl
\gcl{1}\gbrc\gcl{1}\gcl{1}\gnl
\gmu\gcl{1}\gcl{1}\gcl{1}\gnl
\gob{2}{A}\gob{1}{X}\gob{1}{X}\gob{1}{X}
\gend \hspace{2mm}.
\end{equation}  
In a similar way, \equref{unitAbilin}, \equref{unit1} and \equref{unit2} read as 
\begin{equation}\eqlabel{apdf2}
\gbeg{3}{6}
\got{1}{A}\gnl
\gcl{1}\gu{1}\gnl
\gcl{1}\gcn{1}{1}{1}{2}\gnl
\gcl{1}\gsbox{2}\gnl
\gmu\gcl{1}\gnl
\gob{2}{A}\gob{1}{X}
\gend
=
\gbeg{3}{7}
\gvac{2}\got{1}{A}\gnl
\gu{1}\gvac{1}\gcl{1}\gnl
\gcn{1}{1}{1}{2}\gvac{1}\gcl{1}\gnl
\gsbox{2}\gvac{2}\gcl{1}\gnl
\gcl{1}\gbrc\gnl
\gmu\gcl{1}\gnl
\gob{2}{A}\gob{1}{X}
\gend
\hspace{2mm},\hspace{2mm}
\gbeg{3}{8}
\got{1}{X}\gnl
\gcl{1}\gu{1}\gnl
\gcl{1}\gcn{1}{1}{1}{2}\gnl
\gcl{1}\gsbox{2}\gnl
\gbrc\gcl{1}\gnl
\gcl{1}\gsbox{2}\gnl
\gmu\gcl{1}\gnl
\gob{2}{A}\gob{1}{X}
\gend
=
\gbeg{2}{3}
\gvac{1}\got{1}{X}\gnl
\gu{1}\gcl{1}\gnl
\gob{1}{A}\gob{1}{X}
\gend
\hspace{2mm},\hspace{2mm}
\gbeg{3}{8}
\gvac{2}\got{1}{X}\gnl
\gu{1}\gvac{1}\gcl{1}\gnl
\gcn{1}{1}{1}{2}\gvac{1}\gcl{1}\gnl
\gsbox{2}\gvac{2}\gcl{1}\gnl
\gcl{1}\gcl{1}\gcl{1}\gnl
\gcl{1}\gsbox{2}\gnl
\gmu\gcl{1}\gnl
\gob{2}{A}\gob{1}{X}
\gend
=
\gbeg{2}{3}
\gvac{1}\got{1}{X}\gnl
\gu{1}\gcl{1}\gnl
\gob{1}{A}\gob{1}{X}
\gend , 
\end{equation}
while \equref{counitAbilin}, \equref{counit1} and \equref{counit2}
can be written as
\begin{equation}\eqlabel{pdf2}
\gbeg{2}{5}
\got{1}{X}\got{1}{A}\gnl
\gbrc\gnl
\gcl{1}\gmp{f}\gnl
\gmu\gnl
\gob{2}{A}
\gend = 
\gbeg{2}{4}
\got{1}{X}\got{1}{A}\gnl
\gmp{f}\gcl{1}\gnl
\gmu\gnl
\gob{2}{A}
\gend
\hspace{2mm},\hspace{2mm}
\gbeg{3}{6}
\got{3}{X}\gnl
\gvac{1}\gcl{1}\gnl
\gsbox{3}\gnl
\gcl{1}\gmp{f}\gcl{1}\gnl
\gmu\gcl{1}\gnl
\gob{2}{A}\gob{1}{X}
\gend =
\gbeg{2}{3}
\got{3}{X}\gnl
\gu{1}\gcl{1}\gnl
\gob{1}{A}\gob{1}{X}
\gend
\hspace{3mm}\mbox{~~and~~}\hspace{3mm}
\gbeg{3}{7}
\got{3}{X}\gnl
\gvac{1}\gcl{1}\gnl
\gsbox{3}\gnl
\gcl{1}\gcl{1}\gmp{f}\gnl
\gcl{1}\gbrc\gnl
\gmu\gcl{1}\gnl
\gob{2}{A}\gob{1}{X}
\gend = 
\gbeg{2}{3}
\got{3}{X}\gnl
\gu{1}\gcl{1}\gnl
\gob{1}{A}\gob{1}{X}
\gend
\hspace{2mm}.
\end{equation}
It can be easily checked that the first equality 
in \equref{apdf1} is equivalent to the fact that the map 
$\un{m}_X: (X, \psi)\un{\ot}(X, \psi)\ra (X, \psi)$ defined by 
$\un{m}_X=\zeta_X: X\ot X\ra A\ot X$ is a morphism in $EM(\Cc)(A)$, 
and that \equref{pdf1} is equivalent to the fact that 
$\un{\Delta}_X: (X, \psi)\ra (X, \psi)\un{\ot}(X, \psi)$ defined by 
$\un{\Delta}_X=\d_X: X\ra A\ot X\ot X$ is a morphism in $EM(\Cc)(A)$.\\
We now show that $\un{\Delta}_X$ is coassociative if and only if the second equality in 
\equref{pdf1} holds:
$$
\Id_{(X, \psi)}\un{\ot}\un{\Delta}_X=
\gbeg{5}{6}
\gvac{1}\got{1}{X}\gvac{1}\got{1}{X}\gnl
\gvac{1}\gcl{1}\gvac{1}\gcl{1}\gnl
\gu{1}\gcl{1}\gsbox{3}\gnl
\gcl{1}\gbrc\gcl{1}\gcl{1}\gnl
\gmu\gcl{1}\gcl{1}\gcl{1}\gnl
\gob{2}{A}\gob{1}{X}\gob{1}{X}\gob{1}{X}
\gend =
\gbeg{4}{5}
\got{1}{X}\gvac{1}\got{1}{X}\gnl
\gcl{1}\gvac{1}\gcl{1}\gnl
\gcl{1}\gsbox{3}\gnl
\gbrc\gcl{1}\gcl{1}\gnl
\gob{1}{A}\gob{1}{X}\gob{1}{X}\gob{1}{X}
\gend\hspace*{2mm},
$$
so
$$(\Id_{(X, \psi)}\un{\ot}\un{\Delta}_X)\circ \un{\Delta}_X=
\gbeg{5}{9}
\got{3}{X}\gnl
\gvac{1}\gcl{1}\gnl
\gsbox{3}\gnl
\gcl{1}\gcl{1}\gcn{1}{1}{1}{3}\gnl
\gcl{1}\gcl{1}\gsbox{3}\gnl
\gcl{1}\gcl{1}\gcl{1}\gcl{1}\gcl{1}\gnl
\gcl{1}\gbrc\gcl{1}\gcl{1}\gnl
\gmu\gcl{1}\gcl{1}\gcl{1}\gnl
\gob{2}{A}\gob{1}{X}\gob{1}{X}\gob{1}{X}
\gend \hspace{2mm}.
$$
In a similar way, we have that
\[
\un{\Delta}_X\un{\ot}\Id_{(X, \psi)}=
\gbeg{4}{5}
\gvac{1}\got{1}{X}\gvac{1}\got{1}{X}\gnl
\gvac{1}\gcl{1}\gvac{1}\gcl{1}\gnl
\gsbox{3}\gvac{3}\gcl{1}\gnl
\gcl{1}\gcl{1}\gcl{1}\gcl{1}\gnl
\gob{1}{A}\gob{1}{X}\gob{1}{X}\gob{1}{X}
\gend
,{~\rm so~}\hspace{2mm}
(\un{\Delta}_X\un{\ot}\Id_{(X, \psi)})\circ \un{\Delta}_X=
\gbeg{5}{8}
\got{5}{X}\gnl
\gvac{2}\gcl{1}\gnl
\gvac{1}\gsbox{3}\gnl
\gcn{1}{1}{3}{1}\gvac{1}\gcl{1}\gcn{1}{1}{1}{3}\gnl
\gcl{1}\gsbox{3}\gvac{3}\gcl{1}\gnl
\gcl{1}\gcl{1}\gcl{1}\gcl{1}\gcl{1}\gnl
\gmu\gcl{1}\gcl{1}\gcl{1}\gnl
\gob{2}{A}\gob{1}{X}\gob{1}{X}\gob{1}{X}
\gend\hspace{2mm}.
\]
Comparing these two equalities, we obtain that coassociativity of $\un{\Delta}_X$
is equivalent to the second equality in \equref{pdf1}. In a similar way,
associativity of $\un{m}_X$ is equivalent to the second equality in \equref{apdf1}.\\
Finally, assume that $\un{\eta}_X: (\un{1}, \Id_A)\ra (X, \psi)$ and
$\un{\va}_X: (X, \psi)\ra (\un{1}, \Id_A)$ correspond respectively to 
$\sigma: \un{1}\ra A\ot X$ and $f: X\ra A$ in $\Cc$, as in \leref{3.5}. 
Then $\un{\eta}_X$ is a morphism in $EM(\Cc)(A)$ if and only if 
the first equality in \equref{apdf2} is satisfied, and $\un{\va}_X$ is a morphism in 
$EM(\Cc)(A)$ if and only if the first equality in \equref{pdf2} holds. 
Composing the equalities
\[
\un{\va}_X\un{\ot}\Id_{(X, \psi)}=
\gbeg{3}{4}
\got{1}{X}\gvac{1}\got{1}{X}\gnl
\gmp{f}\gu{1}\gcl{1}\gnl
\gmu\gcl{1}\gnl
\gob{2}{A}\gob{1}{X}
\gend =
\gbeg{2}{3}
\got{1}{X}\got{1}{X}\gnl
\gmp{f}\gcl{1}\gnl
\gob{1}{A}\gob{1}{X}
\gend
\hspace{2mm}{\rm and}\hspace{2mm}
\Id_{(X, \psi)}\un{\ot}\un{\va}_X=
\gbeg{3}{5}
\gvac{1}\got{1}{X}\got{1}{X}\gnl
\gu{1}\gcl{1}\gmp{f}\gnl
\gcl{1}\gbrc\gnl
\gmu\gcl{1}\gnl
\gob{2}{A}\gob{1}{X}
\gend =
\gbeg{2}{4}
\got{1}{X}\got{1}{X}\gnl
\gcl{1}\gmp{f}\gnl
\gbrc\gnl
\gob{1}{A}\gob{1}{X}\gnl
\gend
\]
with $\un{\Delta}_X$, we obtain that
\[
(\un{\va}_X\un{\ot}\Id_{(X, \psi)})\circ \un{\Delta}_X=
\gbeg{3}{6}
\got{3}{X}\gnl
\gvac{1}\gcl{1}\gnl
\gsbox{3}\gnl
\gcl{1}\gmp{f}\gcl{1}\gnl
\gmu\gcl{1}\gnl
\gob{2}{A}\gob{1}{X}
\gend
\hspace{3mm}{\rm and}\hspace{3mm}
(\Id_{(X, \psi)}\un{\ot}\un{\va}_X)\circ \un{\Delta}_X=
\gbeg{3}{7}
\got{3}{X}\gnl
\gvac{1}\gcl{1}\gnl
\gsbox{3}\gnl
\gcl{1}\gcl{1}\gmp{f}\gnl
\gcl{1}\gbrc\gnl
\gmu\gcl{1}\gnl
\gob{2}{A}\gob{1}{X}
\gend\hspace{2mm}.
\]
Therefore, $(\un{\va}_X\un{\ot}\Id_{(X, \psi)})\circ \un{\Delta}_X=\Id_{(X, \psi)}$ 
if and only if the second equality in \equref{pdf2} holds, and 
$(\Id_{(X, \psi)}\un{\ot}\un{\va}_X)\circ \un{\Delta}_X=\Id_{(X, \psi)}$ 
if and only if the third equality in \equref{pdf2} holds. In a similar manner 
we can show that the unit property for $\un{\eta}_X=\sigma$ is equivalent to 
the second and the third equality in \equref{apdf2}. 
\end{proof} 

It is well-known fact in classical Hopf algebra theory that particular examples of 
$A$-(co)ring structures of the form $A\ot X$ can be obtained in the situation when $X$ is a 
(co)algebra entwined with $A$. We end this section by showing that 
this situation occurs in the particular case when $\zeta_X=\un{\eta}_A\ot \un{m}_X$, 
$\sigma=\un{\eta}_A\ot\un{\eta}_X$ for some $\un{m}_X:X\ot X\ra X$, $\un{\eta}_X:\un{1}\ra X$ in $\Cc$, 
respectively when $\delta_X=\un{\eta}_A\ot \un{\Delta}_X$, $f=\un{\eta}_A\un{\va}_X$ 
for some morphism $\un{\Delta}_X:\ X\ra X\ot X$, $\un{\va}_X: X\ra \un{1}$ in $\Cc$. Actually, 
if this is the case then     
\begin{equation}\eqlabel{aass}
\un{m}_\mfC=(\Id_A\ot \un{m}_X)\gamma_{\mfC, X}~,~\un{\eta}_\mfC=\Id_A\ot \un{\eta}_X,
\end{equation}
and respectively
\begin{equation}\eqlabel{ass} 
\un{\Delta}_{\mfC}=\Upsilon^{-1}_{\mfC, X}(\Id_A\ot \un{\Delta}_X)~,~\un{\va}_\mfC=\Id_A\ot\un{\va}_X.  
\end{equation} 

\begin{proposition}\prlabel{fact(co)ring}
Let $A, X$ be left coflat objects in $\Cc$ such that $A$ carries an algebra structure in $\Cc$ and 
$\un{\eta}_A\ot \Id_Y$ is a monomorphism, for any $Y\in \Cc$. 
\begin{itemize}
\item[(i)] $(\mfC, \un{m}_\mfC, \un{\eta}_\mfC)$, with $\un{m}_\mfC$ and $\un{\eta}_\mfC$ defined 
by \equref{aass} is an $A$-ring if 
and only if $(X, \un{m}_X, \un{\eta}_X)$ is an algebra in $\Cc$ and there exists $\psi: X\ot A\ra A\ot X$ 
such that 
\begin{eqnarray*}
&&\psi(\Id_X\ot\un{m}_A)=(\psi\ot \Id_A)(\Id_A\ot\psi)(\un{m}_A\ot\Id_X)~,~
\psi(\Id_X\ot\un{\eta}_A)=\un{\eta}_A\ot \Id_X,\\
&&\psi(\un{m}_X\ot \Id_A)=(\Id_X\ot \psi)(\psi\ot \Id_X)(\Id_A\ot \un{m}_X)~,~
\psi(\un{\eta}_X\ot \Id_A)=\Id_A\ot \un{\eta}_X~.
\end{eqnarray*}   
\item[(ii)] $(\mfC,\un{\Delta}_\mfC, \un{\va}_\mfC)$, with $\un{\Delta}_\mfC$ and $\un{\va}_\mfC$ 
as in \equref{ass}, is an $A$-coring if and 
only if $(X, \un{\Delta}_X, \un{\va}_X)$ is a coalgebra in $\Cc$ and there exists a morphism 
$\psi: X\ot A\ra A\ot X$ in $\Cc$ such that 
\begin{eqnarray*}
&&\psi(\Id_X\ot\un{m}_A)=(\psi\ot \Id_A)(\Id_A\ot\psi)(\un{m}_A\ot\Id_X)~,~
\psi(\Id_X\ot\un{\eta}_A)=\un{\eta}_A\ot \Id_X~,\\
&&(\psi \ot\Id_X)(\Id_X\ot\psi)(\un{\Delta}_X\ot\Id_A)=
(\Id_A\ot\un{\Delta}_X)\psi~,~
(\Id_A\ot \un{\va}_X)\psi=\un{\va}_X\ot \Id_A~.
\end{eqnarray*}
\end{itemize}
\end{proposition}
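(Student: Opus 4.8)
The plan is to deduce everything from \prref{3.18}. Treat part (i); part (ii) is entirely parallel. By the bijection of \leref{(co)multstr}(i) and \leref{3.5}(i), together with the identity \equref{aass}, the left $A$-linear multiplication $\un{m}_\mfC$ and unit $\un{\eta}_\mfC$ built in \equref{aass} from a given pair $\un{m}_X:X\ot X\ra X$, $\un{\eta}_X:\un{1}\ra X$ are precisely the ones that \prref{3.18}(i) associates to the data $\bigl(\psi,\ \zeta_X=\un{\eta}_A\ot\un{m}_X,\ \sigma=\un{\eta}_A\ot\un{\eta}_X\bigr)$. Consequently, for a fixed $\psi$ satisfying \equref{e1}--\equref{e2} (so that $\mfC\in{}_A\Cc_A$), the quadruple $(\mfC,\un{m}_\mfC,\un{\eta}_\mfC)$ is an $A$-ring if and only if this particular triple also satisfies \equref{righAlinmult}, \equref{Multass}, \equref{unitAbilin}, \equref{unit1} and \equref{unit2}. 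So the proof reduces to translating those five conditions under the substitution $\zeta_X=\un{\eta}_A\ot\un{m}_X$, $\sigma=\un{\eta}_A\ot\un{\eta}_X$.

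The translation is mechanical once one uses the unit axioms $\un{m}_A(\un{\eta}_A\ot\Id_A)=\Id_A=\un{m}_A(\Id_A\ot\un{\eta}_A)$ of $A$ and, wherever the second input of $\psi$ is a unit, the relation $\psi(\Id_X\ot\un{\eta}_A)=\un{\eta}_A\ot\Id_X$ from \equref{e2}: one collapses every copy of $\un{m}_A$ that is post-composed with $\un{\eta}_A$ in one argument and deletes every such instance of $\psi$. Carrying this out, \equref{righAlinmult} becomes $\psi(\un{m}_X\ot\Id_A)=(\Id_A\ot\un{m}_X)(\psi\ot\Id_X)(\Id_X\ot\psi)$ and \equref{unitAbilin} becomes $\psi(\un{\eta}_X\ot\Id_A)=\Id_A\ot\un{\eta}_X$, which together with the two conditions \equref{e1}--\equref{e2} already at hand give exactly the four displayed compatibilities between $\psi$ and the structures of $A$ and of $X$; while \equref{Multass}, \equref{unit1} and \equref{unit2} reduce respectively to
\[
\un{\eta}_A\ot\bigl(\un{m}_X(\un{m}_X\ot\Id_X)\bigr)=\un{\eta}_A\ot\bigl(\un{m}_X(\Id_X\ot\un{m}_X)\bigr),\qquad
\un{\eta}_A\ot\bigl(\un{m}_X(\Id_X\ot\un{\eta}_X)\bigr)=\un{\eta}_A\ot\Id_X=\un{\eta}_A\ot\bigl(\un{m}_X(\un{\eta}_X\ot\Id_X)\bigr).
\]
At this point I would invoke the hypothesis that $\un{\eta}_A\ot\Id_Y$ is a monomorphism for every $Y\in\Cc$: cancelling the leftmost tensorand $\un{\eta}_A$ turns these three identities into associativity of $\un{m}_X$ and the two unit axioms for $\un{\eta}_X$, i.e. into the statement that $(X,\un{m}_X,\un{\eta}_X)$ is an algebra in $\Cc$. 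For the converse implication no monomorphism is needed: if $(X,\un{m}_X,\un{\eta}_X)$ is an algebra and $\psi$ satisfies the four conditions, then substituting back and tensoring the algebra axioms (resp. the $\psi$-compatibilities) with suitable identity morphisms and with $\un{\eta}_A$ yields \equref{e1}--\equref{unit2} at once.

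Part (ii) is proved the same way from \prref{3.18}(ii): substitute $\d_X=\un{\eta}_A\ot\un{\Delta}_X$ and $f=\un{\eta}_A\un{\va}_X$ into \equref{rightAlin}, \equref{Deltacoass}, \equref{counitAbilin}, \equref{counit1}, \equref{counit2}, simplify with the unit axioms of $A$ and \equref{e2}, read off from \equref{rightAlin} and \equref{counitAbilin} the compatibility of $\psi$ with $\un{\Delta}_X$ and with $\un{\va}_X$, and cancel the leading $\un{\eta}_A$ in \equref{Deltacoass}, \equref{counit1}, \equref{counit2} (again using the monomorphism hypothesis) to obtain coassociativity of $\un{\Delta}_X$ and its two counit axioms. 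The one point that genuinely needs care — the main obstacle, such as it is — is verifying, in each of \equref{Multass}, \equref{unit1}, \equref{unit2} (resp. \equref{Deltacoass}, \equref{counit1}, \equref{counit2}), that after the reduction \emph{both} sides carry the common left factor $\un{\eta}_A$, so that the monomorphism hypothesis legitimately applies; this is most transparently checked in the graphical calculus of \equref{apdf1}--\equref{pdf2}. Everything else is routine rewriting.
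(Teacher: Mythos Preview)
Your proposal is correct and follows essentially the same route as the paper's own proof: both derive the result from \prref{3.18} by substituting $\zeta_X=\un{\eta}_A\ot\un{m}_X$, $\sigma=\un{\eta}_A\ot\un{\eta}_X$ (resp.\ $\d_X=\un{\eta}_A\ot\un{\Delta}_X$, $f=\un{\eta}_A\un{\va}_X$), reading off the $\psi$-compatibilities from \equref{righAlinmult} and \equref{unitAbilin} (resp.\ \equref{rightAlin} and \equref{counitAbilin}), and then cancelling the leading $\un{\eta}_A$ in the remaining conditions via the monomorphism hypothesis. Your explicit remark that the converse direction needs no monomorphism assumption is also made in the paper, in the Remark immediately following the proof.
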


\begin{proof}
This is a consequence of \prref{3.18}. 
The first two equalities in (i) and (ii) coincide, and are imposed by \leref{TaSchstr}. 
Then \equref{righAlinmult}, the condition that $\un{m}_\mfC$ is right $A$-linear, simplifies  
to the third condition in (i). Right $A$-linearity of $\Delta_\mfC$ is equivalent to
\equref{rightAlin}, which reduces to the third equality in (ii).
The fourth equality 
in (i) is equivalent to \equref{unitAbilin}, which is equivalent to right $A$-linearity of $\un{\eta}_\mfC$.
Similarly, the fourth equality in (ii) is equivalent to \equref{counitAbilin}, the condition that is needed
to make $\un{\va}_\mfC$ right $A$-linear. \\
If $\un{m}_\mfC$ is given by \equref{aass}, then $\un{m}_\mfC$ is associative if and only if
\[
(\un{\eta}_A\ot \Id_X)\un{m}_X(\un{m}_X\ot \Id_X)=
(\un{\eta}_A\ot \Id_X)\un{m}_X(\Id_X\ot \un{m}_X).
\]
Using the assumption that $\un{\eta}_A\ot \Id_X$ is monic, we obtain that
$\un{m}_X$ is associative if and only if $\un{m}_\mfC$ is associative.\\
Similarly, if $\un{\Delta}_X$ is defined by \equref{ass}, then \equref{Deltacoass} reduces to 
\[
\Id_A\ot(\Id_X\ot\un{\Delta}_X)\un{\Delta}_X=
\Id_A\ot(\un{\Delta}_X\ot\Id_X)\un{\Delta}_X,
\]
and therefore the coassociativity of $\un{\Delta}_X$ implies the coassociativity of $\un{\Delta}_{\mfC}$. 
Conversely, if $\un{\Delta}_{\mfC}$ is coassociative then the above equality and the naturality 
of $\ot$ imply that 
\[
(\un{\eta}_A\ot\Id_{X^{\ot 3}})(\Id_X\ot\un{\Delta}_X)\un{\Delta}_X
=(\un{\eta}_A\ot\Id_{X^{\ot 3}})(\un{\Delta}_X\ot\Id_X)\un{\Delta}_X.
\] 
By assumption, $\un{\eta}_A\ot\Id_{X^{\ot 3}}$
is monic, so $\un{\Delta}_X$ is coassociative.\\
Finally, it is easy to see that the counit property of $\un{\va}_X$ (with respect to $\un{\Delta}_X$) implies  
the counit property for $\un{\va}_{\mfC}$ (with respect to $\un{\Delta}_{\mfC}$). Conversely, 
the assumption that $\un{\eta}_A\ot\Id_X$ is monic implies that
the counit property of $\un{\Delta}_X$ follows from the counit property of $\un{\va}_{\mfC}$.\\
The equivalence of the unit property of $\un{\eta}_\mfC$ with respect to $\un{m}_\mfC$
and the unit property of $\un{\eta}_X$ with respect to $\un{m}_X$ follows using
similar arguments. 
\end{proof}

\begin{remark}
The assumption that $\un{\eta}_A\ot\Id_X$ is monic is not needed in the two converse
implications in \prref{fact(co)ring}: if $X$ is a (co)algebra in $\Cc$ satisfying the
four conditions in (i) and (ii), then $\mfC$ is always an $A$-(co)ring. For the direct
implications, it suffices to assume that 
$\un{\eta}_A\ot \Id_X$ and $\un{\eta}_A\ot\Id_{X^{\ot 3}}$ are monic. 
\end{remark}

We finish this Section with a monoidal interpretation of \prref{fact(co)ring}. First we
recall from \cite{Ross, LackRoss} that we can associate a $2$-category ${\rm Mnd}({\cal K})$
to any 2-category ${\cal K}$. 
The $0$-cells and $1$-cells of ${\rm Mnd}({\cal K})$ are the same as those of $EM({\cal K})$, that is, 
0-cells are monads in ${\cal K}$ and 1-cells are monad morphisms. The 2-cells are defined in a
different way: if $(f, \psi)$ and $(g, \phi)$ are 1-cells 
$\mathbb{A}=(A, t, \mu_t, \eta_t)\to\mathbb{B}=(B, s, \mu_s, \eta_s)$, then a $2$-cell 
$\xymatrix{
(f, \psi)\ar@2{->}[r]^{\rho}&(g, \phi)
}
$ is a so-called monad transformation, that
is a $2$-cell 
$\xymatrix{
f\ar@2{->}[r]^{\rho}&g
}
$ 
in ${\cal K}$ such that 
\[
(\rho\odot 1_t)\circ \psi=\phi\circ (1_s\odot \rho).
\]
The vertical composition of  $2$-cells is given by 
vertical composition in ${\cal K}$ and the horizontal composition of $2$-cells 
\[
\xymatrix{
\mathbb{A}\rtwocell^{(f, \psi)}_{(f', \psi')}{\rho}&\mathbb{B}
\rtwocell^{(g, \phi)}_{(g', \phi')}{\rho'}&\mathbb{C}
}
~{\rm is}~
\xymatrix{
gf\ar@2{->}[r]^{\rho'\oslash \rho}&g'f'
},~
\rho'\oslash \rho:=(1_{g'}\odot \rho)\circ (\rho'\odot 1_f), 
\]
where $(g, \phi)(f, \psi)=(gf, (1_g\odot \psi)\circ (\phi\odot 1_f))$, etc. 
We also have that $1_{(f, \psi)}=1_f$ and 
$(1_\mathbb{A}, i_\mathbb{A})=((1_A, 1_t), i_A)$, for any monad $\mathbb{A}=(A, t, \mu_t, \eta_t)$ 
in ${\cal K}$. In \prref{4.6}, we describe the 2-category ${\rm Mnd}(\Cc)$ corresponding to a
2-category with one 0-cell, that is a monoidal category $\Cc$. We omit the proof, as it is similar
to the proof of \prref{TA2}.

\begin{proposition}\prlabel{4.6}
Let $\Cc$ be a monoidal category. The 0-cells and 1-cells of ${\rm Mnd}(\Cc)$ coincide with
those of $EM(\Cc)$, see \prref{TA2}. A $2$-cell 
$
\xymatrix{
(X, \psi)\ar@2{->}[r]^\rho&(Y, \phi)
}
$ 
is a morphism $\rho : X\ra Y$ such that $\phi(\rho\ot \Id_B)=(\Id_A\ot\rho)\psi$.
Vertical composition of 2-cells is given by composition of morphisms in $\Cc$.
The horizontal composition of two 2-cells
\[
\xymatrix{
A\rtwocell^{(X, \psi)}_{(X', \psi')}{\rho}&B\rtwocell^{(Y, \phi)}_{(Y', \phi')}{\rho'}&C
}
\mbox{~is defined by~} 
(Y, \phi)(X, \psi)=\left(X\ot Y, 
\gbeg{3}{4}
\got{1}{X}\got{1}{Y}\got{1}{C}\gnl
\gcl{1}\gbrbox\gnl
\gbrc\gcl{1}\gnl
\gob{1}{A}\gob{1}{X}\gob{1}{Y} 
\gend
\right),~{\rm etc.}  
\]
and $\rho'\oslash \rho:=\rho\ot \rho':X\ot Y\ra X'\ot Y'$. Furthermore, for any algebra $A$ in 
$\Cc$ we have $1_A=(\un{1}, \Id_A)$ and $i_A=\Id_{\un{1}}$, and for 
any $1$-cell 
$
\xymatrix{
A\ar[r]^-{(X, \psi)}&A
}
$ 
we have $1_{(X, \psi)}=\Id_X$. 
\end{proposition}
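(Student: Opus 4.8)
The plan is to imitate, essentially verbatim, the proof of \prref{TA2}, the only change being that the $2$-cells of $EM(\Cc)$ get replaced by the (more restrictive) monad transformations. Since the $0$-cells and $1$-cells of ${\rm Mnd}(\Cc)$ are, by the very definitions of \cite{Ross,LackRoss}, literally the same as those of $EM(\Cc)$ --- in both cases monads in the $2$-category $\Cc$, respectively monad morphisms --- \prref{TA2} already tells us that the $0$-cells are precisely the algebras $A$ in $\Cc$ (with $t=A$, $\mu_t=\un{m}_A$, $\eta_t=\un{\eta}_A$), that the $1$-cells $A\to B$ are precisely the pairs $(X,\psi)$ with $\psi\colon X\ot B\ra A\ot X$ subject to \equref{ocellspc}, and that the composition of $1$-cells and the identity $1$-cells are exactly as in \prref{TA2} (in particular the underlying object of $(Y,\phi)(X,\psi)$ is $X\ot Y$, with the same structure morphism $\psi_{X\ot Y}$ as in \equref{mstrtamb}, and $1_A=(\un{1},\Id_A)$). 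So nothing here needs to be re-established, and the whole content is in the $2$-cells.

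First I would unwind the definition of a $2$-cell. Viewing $\Cc$ as a $2$-category with one $0$-cell, horizontal composition $\odot$ of $2$-cells becomes --- up to exactly the placement conventions already fixed in the proof of \prref{TA2} (so that $sf\leftrightarrow X\ot B$ and $ft\leftrightarrow A\ot X$) --- the tensor product of morphisms, vertical composition $\circ$ becomes ordinary composition, and for a monad $A=t$ one has $1_t=\Id_A$ and $1_s=\Id_B$. By definition a $2$-cell $\rho\colon(X,\psi)\Rightarrow(Y,\phi)$ of ${\rm Mnd}(\Cc)$ is a monad transformation, i.e.\ a $2$-cell $\rho\colon X\Rightarrow Y$ in $\Cc$ --- hence a morphism $\rho\colon X\ra Y$ --- satisfying $(\rho\odot 1_t)\circ\psi=\phi\circ(1_s\odot\rho)$; feeding in the above translations turns this into $(\Id_A\ot\rho)\psi=\phi(\rho\ot\Id_B)$, which is precisely the condition stated in \prref{4.6}. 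The sole difference with \coref{4.2} (and with the proof of \prref{TA2}) is thus the difference between a monad transformation $f\Rightarrow g$ and a general $2$-cell $f\Rightarrow gt$ of $EM(\Cc)$.

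Next I would read off the remaining structure. In ${\rm Mnd}({\cal K})$ the vertical composition of $2$-cells is the vertical composition of ${\cal K}$, which here is composition of morphisms, so $\rho'\circ\rho$ is the composite $X\mapright{\rho}Y\mapright{\rho'}Z$, and $1_{(X,\psi)}=1_f=\Id_X$. For horizontal composition, the $1$-cell part $(g,\phi)(f,\psi)=\bigl(gf,(1_g\odot\psi)\circ(\phi\odot 1_f)\bigr)$ is literally the same formula as in $EM(\Cc)$, hence gives $X\ot Y$ with the $\psi_{X\ot Y}$ of \equref{mstrtamb}; and the $2$-cell part $\rho'\oslash\rho=(1_{g'}\odot\rho)\circ(\rho'\odot 1_f)$ translates, using the same conventions, to $\rho\ot\rho'\colon X\ot Y\ra X'\ot Y'$. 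Finally, specialising $(1_\mathbb{A},i_\mathbb{A})=\bigl((1_A,1_t),i_A\bigr)$ to a $2$-category with one $0$-cell $*$, where $1_*=\un{1}$ and $i_*=\Id_{\un{1}}$, gives $1_A=(\un{1},\Id_A)$ and $i_A=\Id_{\un{1}}$, together with $1_{(X,\psi)}=1_f=\Id_X$ as already noted.

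I do not expect a genuine obstacle: ${\rm Mnd}(\Cc)$ is a $2$-category by the general construction of \cite{Ross,LackRoss}, so the $2$-category axioms and the interchange law hold automatically, and the proposition only asserts that our explicit description reproduces that construction. The one place where care is actually needed --- and the step I would watch most closely --- is keeping the left/right placement of tensor factors and the $\odot$-versus-$\circ$ dictionary exactly consistent with the conventions fixed in the proof of \prref{TA2}, so that the $0$-cell and $1$-cell data agree with those already recorded there; once that bookkeeping is pinned down, every formula for the $2$-cells and their compositions falls out mechanically.
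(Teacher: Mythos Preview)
Your proposal is correct and matches the paper's approach exactly: the paper omits the proof, saying only that it is similar to the proof of \prref{TA2}, and your argument is precisely that --- you translate the general ${\rm Mnd}({\cal K})$ structure through the same dictionary used there, with the only change being that the $2$-cells are monad transformations $f\Rightarrow g$ rather than $2$-cells $f\Rightarrow gt$. Your bookkeeping of the $\odot$-versus-$\ot$ conventions (in particular that $gf$ corresponds to $f\ot g$, so that $(1_{g'}\odot\rho)\circ(\rho'\odot 1_f)$ becomes $\rho\ot\rho'$) is consistent with what was fixed in the proof of \prref{TA2}.
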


As the reader might expect, the $A$-(co)ring structures in \prref{fact(co)ring} 
are related to the monoidal structure of ${\rm Mnd}(\Cc)(A)$. These structures can be also viewed 
in $EM(\Cc)(A)$ via the monoidal functor $F: {\rm Mnd}(\Cc)(A)\ra EM(\Cc)(A)$ that acts as the identity 
on objects and sends a morphism $f$ in ${\rm Mnd}(\Cc)(A)$ to $F(f)=\un{\eta}_A\ot f$. $F$ comes
from the $2$-functor $E: {\rm Mnd}({\cal K})\ra EM({\cal K})$ which is the identity 
on objects and $1$-cells and sends a $2$-cell 
$\xymatrix{
(f, \psi)\ar@2{->}[r]^\rho&(g, \phi)
}
$ 
in ${\rm Mnd}(\Cc)$ to 
$\xymatrix{
(f, \psi)\ar@2{->}[r]^{(1_g\odot \eta)\circ \rho}&(g, \phi)
}
$, 
a $2$-cell in $EM({\cal K})$. Now the proof of \prref{4.7} is left to the reader as a
straightforward exercise.

\begin{proposition}\prlabel{4.7}
Let $A, X$ be left coflat objects in $\Cc$, and assume that $A$ carries an algebra structure 
in $\Cc$ such that $\un{\eta}_A\ot \Id_Y$ is monic, for any $Y\in \Cc$.  
\begin{itemize}
\item[(i)] $(\mfC,\un{m}_\mfC, \un{\eta}_\mfC)$, with $\un{m}_\mfC$ and $\un{\eta}_\mfC$ defined 
by \equref{aass}, is an $A$-ring if 
and only if $(X, \un{m}_X, \un{\eta}_X)$ is an algebra in ${\rm Mnd}(\Cc)(A)$.
\item[(ii)] $(\mfC,\un{\Delta}_\mfC,\un{\va}_\mfC)$, with $\un{\Delta}_\mfC$ and $\un{\va}_\mfC$ as in \equref{ass}, is an $A$-coring if and 
only if $(X, \un{\Delta}_X, \un{\va}_X)$ is a coalgebra in ${\rm Mnd}(\Cc)(A)$. 
\end{itemize}  
\end{proposition}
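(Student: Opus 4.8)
The plan is to deduce \prref{4.7} directly from \prref{fact(co)ring}, by reinterpreting the list of conditions appearing there through the explicit description of the monoidal category ${\rm Mnd}(\Cc)(A)$ given in \prref{4.6}. An essentially equivalent route, closer to the remark preceding the statement, is to argue via \thref{4.2} and the monoidal functor $F:{\rm Mnd}(\Cc)(A)\ra EM(\Cc)(A)$; I sketch both.

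For part (i): by \prref{fact(co)ring}(i), $\mfC=A\ot X$ with $\un{m}_\mfC,\un{\eta}_\mfC$ as in \equref{aass} is an $A$-ring if and only if $(X,\un{m}_X,\un{\eta}_X)$ is an algebra in $\Cc$ and there is a morphism $\psi:X\ot A\ra A\ot X$ satisfying the four listed conditions. The point is then to check, using \prref{4.6}, that this is exactly the assertion that $(X,\un{m}_X,\un{\eta}_X)$ is an algebra object of ${\rm Mnd}(\Cc)(A)$ for that $\psi$: the first two conditions say precisely that $(X,\psi)$ is an object of ${\rm Mnd}(\Cc)(A)$; the third and fourth say precisely that $\un{m}_X:(X,\psi)\un{\ot}(X,\psi)\ra(X,\psi)$ and $\un{\eta}_X:1_A\ra(X,\psi)$ are morphisms in ${\rm Mnd}(\Cc)(A)$ -- here one uses the tensor-of-objects formula from \prref{4.6} to identify the source object $(X\ot X,\psi_{X\ot X})$; and since vertical composition in ${\rm Mnd}(\Cc)(A)$ is composition in $\Cc$ and the tensor product of morphisms is $\rho'\oslash\rho=\rho\ot\rho'$, the remaining (associativity and unit) axioms for an algebra object coincide verbatim with those of $(X,\un{m}_X,\un{\eta}_X)$ as an algebra in $\Cc$. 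This proves (i), and (ii) is dual: \prref{4.6} identifies the conditions of \prref{fact(co)ring}(ii) with the statement that $(X,\un{\Delta}_X,\un{\va}_X)$ is a coalgebra object of ${\rm Mnd}(\Cc)(A)$.

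For the alternative route one checks that $F$, which sends a morphism $\rho$ of $\Cc$ to $\un{\eta}_A\ot\rho$, is a monoidal functor, that it is faithful (this is the only place where the hypothesis $\un{\eta}_A\ot\Id_Y$ monic is used), that $\rho$ underlies a morphism of $EM(\Cc)(A)$ if and only if it underlies a morphism of ${\rm Mnd}(\Cc)(A)$, and that $F$ preserves tensor products of morphisms. Then $F$ both preserves and reflects (co)algebra objects, with structure maps matching up under $F$. Since, by \thref{4.2}/\coref{4.2}, an $A$-(co)ring structure on $A\ot X$ of the special form \equref{aass}, resp.\ \equref{ass}, is the same thing as a (co)algebra structure on $X$ in $EM(\Cc)(A)$ with structure maps $F(\un{m}_X),F(\un{\eta}_X)$, resp.\ $F(\un{\Delta}_X),F(\un{\va}_X)$, the result follows.

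The argument is essentially a translation exercise and I do not expect any serious obstacle. The one step needing care -- and the only real content -- is the bookkeeping that matches the ``morphism of ${\rm Mnd}(\Cc)(A)$'' conditions of \prref{4.6} with the third and fourth conditions of \prref{fact(co)ring}, in particular unwinding $\psi_{X\ot X}$ correctly, and remembering that ``being a (co)algebra in ${\rm Mnd}(\Cc)(A)$'' tacitly packages the datum $\psi$, which is what corresponds to the existential ``there exists $\psi$'' in \prref{fact(co)ring}. No use of monicity is made beyond what \prref{fact(co)ring} already requires (respectively, beyond ensuring that $F$ is faithful).
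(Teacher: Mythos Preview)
Your proposal is correct and aligns with the paper's own treatment: the paper leaves the proof as a ``straightforward exercise'' after observing that the structures of \prref{fact(co)ring} are related to ${\rm Mnd}(\Cc)(A)$ and can be transported to $EM(\Cc)(A)$ via the monoidal functor $F$. Both routes you outline---the direct translation of the conditions in \prref{fact(co)ring} into the description of ${\rm Mnd}(\Cc)(A)$ from \prref{4.6}, and the passage through $F$ and \thref{4.2}---are exactly what the paper has in mind.
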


%%%%%%%%%%%%%%%%%%%%%%%%%%%%%%%%%%%%%%%%%%%%%%%%%%%%%%%%%%%%%%%%%%%%%%%%%%%%%
\section{Wreath products and categories of (co)representations}\selabel{(co)wreaths}
%%%%%%%%%%%%%%%%%%%%%%%%%%%%%%%%%%%%%%%%%%%%%%%%%%%%%%%%%%%%%%%%%%%%%%%%%%%%%%
\setcounter{equation}{0}
The monoidal category ${\rm Mnd}(\Cc)(A)$ appears already in the work of Tambara \cite{tambara},
where it is termed the category of transfer morphisms. Inspired by this terminology, and by the paper
\cite{sch}, we introduce the notation ${\cal T}_A={\rm Mnd}(\Cc)(A)$, and, by analogy,
${\cal T}_A^\#=EM(\Cc)(A)$.
It is known that algebras in ${\cal T}_A$ are in bijective correspondence with 
cross algebra product structures,  see for example \cite[Proposition 2.1]{bcCPHA}.
The aim of this Section is to show that algebras in ${\cal T}_A^\#$ are in bijective correspondence
with the so-called wreath products. We will also discuss coalgebras in ${\cal T}_A^\#$.
Recall that a wreath in a  $2$-category ${\cal K}$ is a monad in $EM({\cal K})$.
According to \cite{LackRoss}, 
a wreath is a monad $\mathbb{A}=(A, t, \mu, \eta)$ in ${\cal K}$ together with a $1$-cell 
$
\xymatrix{
A\ar[r]^s&A
}
$  
and $2$-cells 
$
\xymatrix{
ts\ar@2{->}[r]^\psi&st
}, 
$
$
\xymatrix{
1_A\ar@2{->}[r]^\sigma &st
} 
$ 
and 
$
\xymatrix{
ss\ar@2[r]^\zeta&st
}
$ 
satisfying the following conditions:
\begin{eqnarray}
&&(1_s\odot \mu)\circ (\psi\odot 1_t)\circ (1_t\odot \psi)=\psi\circ (\mu\odot 1_s)~,~
\psi\circ (\eta\odot 1_s)=1_s\odot \eta~;\eqlabel{wr1}\\
&&(1_s\odot \mu)\circ (\psi\odot 1_t)\circ (1_t\odot \sigma)=(1_s\odot \mu)\circ (\sigma\odot 1_t)~;\eqlabel{wr2}\\
&&(1_s\odot \mu)\circ (\psi\odot 1_t)\circ (1_t\odot \zeta)=(1_s\odot \mu)\circ (\zeta\odot 1_t)\circ 
(1_s\odot \psi)\circ (\psi\odot 1_s)~;\eqlabel{wr3}\\
&&(1_s\odot \mu)\circ (\zeta\odot 1_t)\circ (1_s\odot\zeta)=(1_s\odot \mu)\circ (\zeta\odot 1_t)\circ (1_s\odot \psi)
\circ (\zeta\odot 1_s)~;\eqlabel{wr4}\\
&&(1_s\odot \mu)\circ (\zeta\odot 1_t)\circ (1_s\odot \sigma)=1_s\odot \eta~;\eqlabel{wr5}\\
&&(1_s\odot \mu)\circ (\zeta\odot 1_t)\circ (1_s\odot \psi)\circ (\sigma\odot 1_s)=1_s\odot \eta~.\eqlabel{wr6} 
\end{eqnarray}
Let us introduce the dual notion. A triple 
$
(C, 
\xymatrix{
C\ar[r]^{t}&C
}, 
\xymatrix{
tt\ar@2{->}[r]^\delta&t
},  
\xymatrix{
C\ar@2{->}[r]^f&1_C
}
)
$ 
is called a comonad if
\[
(1_t\odot \delta)\circ \delta=(\delta\odot 1_t)\circ \delta~\mbox{\rm and}~
(f\odot 1_t)\circ \delta=1_t=(1_t\odot f)\circ \delta.
\]
A comonad in $EM({\cal K})$ is called a cowreath. 
It can be described as a monad 
$\mathbb{A}=(A, t, \mu, \eta)$ in ${\cal K}$ equipped with a $1$-cell 
$
\xymatrix{
A\ar[r]^s&A
}
$ 
and 2-cells 
$
\xymatrix{
ts\ar@2{->}[r]^\psi&st
}
$, 
$
\xymatrix{
s\ar@2{->}[r]^\delta &sst
}
$ 
and 
$\xymatrix{
s\ar@2{->}[r]^f&t
}
$
such that (\ref{eq:wr1}-\ref{eq:wr2}) and (\ref{eq:cwr2}-\ref{eq:cwr6}) are satisfied.
\begin{eqnarray}
&&(1_s\odot 1_s\odot \mu)\circ (\delta\odot 1_t)\circ \psi\nonumber\\
&&\hspace*{1cm}
=(1_s\odot 1_s\odot \mu)\circ (1_s\odot \psi \odot 1_t)\circ (\psi\odot 1_s\odot 1_t)\circ (1_t\odot \delta)~;\eqlabel{cwr2}\\
&&\mu\circ (1_t\odot f)=\mu\circ (f\odot 1_t)\circ \psi~;\eqlabel{cwr3}\\
&&(1_s\odot 1_s\odot 1_s\odot \mu)\circ (1_s\odot 1_s\odot \psi\odot 1_t)\circ (\delta\odot 1_s\odot 1_t)\circ \delta\nonumber\\
&&\hspace*{1cm}
=(1_s\odot 1_s\odot 1_s\odot \mu)\circ (1_s\odot \delta\odot 1_t)\circ \delta~;\eqlabel{cwr4}\\
&&(1_s\odot \mu)\circ (\psi\odot 1_t)\circ (f\odot 1_s\odot 1_t)\circ \delta =1_s\odot \eta~;\eqlabel{cwr5}\\  
&&(1_s\odot \mu)\circ (1_s\odot f\odot 1_t)=1_s\odot \eta~.\eqlabel{cwr6}
\end{eqnarray}

We now look at wreats and cowreaths in a monoidal category.

\begin{proposition}\prlabel{5.1}
A  (co)wreath in a monoidal category $\Cc$ is a pair $(A, X)$, where $A$ is an algebra in $\Cc$,
an $X$ is a (co)algebra in ${\cal T}_A^\#$. Consequently, (co)ring structures of the form
$A\ot X$ studied in Sections \ref{se:strofcor} and \ref{se:monint}
are in bijective correspondence with (co)wreath structures in $\Cc$.   
\end{proposition}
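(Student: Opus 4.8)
The plan is to derive \prref{5.1} straight from the definitions, using only \prref{TA2}, \coref{4.2} and \thref{4.2}. After viewing the strict monoidal category $\Cc$ as a $2$-category with a single $0$-cell (objects of $\Cc$ as $1$-cells, morphisms of $\Cc$ as $2$-cells, $\ot$ as horizontal composition of $1$-cells, composition in $\Cc$ as vertical composition of $2$-cells), a wreath in $\Cc$ is by definition a monad in $EM(\Cc)$, and a cowreath in $\Cc$ a comonad in $EM(\Cc)$. The first step is to recall the elementary fact that a monad in an arbitrary $2$-category ${\cal L}$ is the same as a $0$-cell $U$ of ${\cal L}$ together with a monoid (algebra) in the monoidal category $\bigl({\cal L}(U,U),\circ,1_U\bigr)$, and, dually, a comonad in ${\cal L}$ is a $0$-cell together with a comonoid (coalgebra) in ${\cal L}(U,U)$.

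Next I would specialise this with ${\cal L}=EM(\Cc)$. By \prref{TA2}, the $0$-cells of $EM(\Cc)$ are precisely the algebras $A$ in $\Cc$; and for such an $A$ the monoidal category $EM(\Cc)(A)=EM(\Cc)(A,A)$ is, by the discussion preceding \coref{4.2} and by \coref{4.2} itself, the monoidal category ${\cal T}_A^\#$. Putting the two observations together yields at once that a wreath in $\Cc$ is the same datum as a pair $(A,X)$ with $A$ an algebra in $\Cc$ and $X$ an algebra in ${\cal T}_A^\#$, and that a cowreath in $\Cc$ is the same as a pair $(A,X)$ with $A$ an algebra in $\Cc$ and $X$ a coalgebra in ${\cal T}_A^\#$ --- which is the first assertion of \prref{5.1}. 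Under this identification the underlying monad of $\Cc$ is $(A,\un{m}_A,\un{\eta}_A)$, the extra $1$-cell $s$ is $X$ with its structure morphism $\psi\colon X\ot A\to A\ot X$, and the remaining wreath (resp.\ cowreath) $2$-cells $\zeta\colon X\ot X\to A\ot X$ and $\sigma\colon\un{1}\to A\ot X$ (resp.\ $\d_X\colon X\to A\ot X\ot X$ and $f\colon X\to A$) are the multiplication $\un{m}_X=\zeta$ and unit $\un{\eta}_X=\sigma$ (resp.\ the comultiplication $\un{\Delta}_X=\d_X$ and counit $\un{\va}_X=f$) of $X$ in ${\cal T}_A^\#$.

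The ``consequently'' clause is then immediate from \thref{4.2}: the latter gives a bijection between the $A$-(co)ring structures on $A\ot X$ in $\Cc$ studied in Sections \ref{se:strofcor} and \ref{se:monint} and the (co)algebra structures on $X$ in $EM(\Cc)(A)={\cal T}_A^\#$; composing this bijection with the one just obtained produces the desired bijective correspondence between those $A$-(co)ring structures and the (co)wreath structures on $(A,X)$ in $\Cc$.

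I do not anticipate any genuine difficulty: the whole proof is a matter of unwinding definitions and stringing together results already established in the excerpt. The only point that requires some care --- and the one I would actually write out --- is the verification that, when the horizontal and vertical compositions of $2$-cells in $EM(\Cc)$ are made explicit via \coref{4.2} in the one-object setting (where $\odot=\ot$ on morphisms, $\mu=\un{m}_A$ and $\eta=\un{\eta}_A$), the wreath axioms \equref{wr1}--\equref{wr6} transcribe exactly into \equref{e1}, \equref{e2}, \equref{righAlinmult}, \equref{Multass}, \equref{unitAbilin}, \equref{unit1}, \equref{unit2}, and the cowreath axioms \equref{wr1}, \equref{wr2}, \equref{cwr2}--\equref{cwr6} into \equref{e1}, \equref{e2}, \equref{rightAlin}, \equref{Deltacoass}, \equref{counitAbilin}, \equref{counit1}, \equref{counit2}, i.e.\ into the conditions of \prref{3.18}. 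This is purely mechanical bookkeeping, so I would relegate it to a remark rather than dwell on it.
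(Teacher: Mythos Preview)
Your proposal is correct and follows essentially the same route as the paper. The paper's proof unpacks the wreath data $(\mathbb{A},s,\psi,\sigma,\zeta)$ directly and checks, axiom by axiom, that \equref{wr1}--\equref{wr6} specialise in the one-object case to \equref{ta}, \equref{apdf1} and \equref{apdf2} (the diagrammatic forms established in \thref{4.2}); the dual cowreath case is then left to the reader. Your argument packages the same computation more conceptually, first invoking the standard fact that a (co)monad in a $2$-category ${\cal L}$ is a $0$-cell $U$ together with a (co)algebra in ${\cal L}(U,U)$, and then specialising via \prref{TA2} and \coref{4.2}. The ``mechanical bookkeeping'' you relegate to a remark is exactly what the paper writes out, so the two proofs have the same substance.
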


\begin{proof}
As we have seen in the proof of \prref{TA2}, giving a monad in $\Cc$ is equivalent to giving
 an algebra $A$ in $\Cc$. 
Moreover, a $1$-cell 
$
\xymatrix{
{*}\ar[r]^s&{*}
} 
$ 
is an object $X$ of $\Cc$ and the required $2$-cells $\psi$, $\sigma$, $\zeta$ in the definition of a wreath 
are morphisms $\psi: X\ot A\ra A\ot X$, $\sigma: \un{1}\ra A\ot X$, $\zeta: X\ot X\ra A\ot X$ in $\Cc$. 
It then comes out that \equref{wr1} is \equref{ta}, \equref{wr2} is the first equality in \equref{apdf2}, 
\equref{wr3} is the first equality in \equref{apdf1}, \equref{wr4} is the second equality in \equref{apdf1}, 
\equref{wr5} is the last equality in \equref{apdf2} and \equref{wr6} is the second equality in \equref{apdf2}.
The dual statement about cowreaths can be proved in a similar way, we leave the details to the reader. 
\end{proof}

Now we discuss (co)representations of  (co)rings defined by (co)wreaths. Since the computations are
rather lenghty, we decided to divide them into several lemma.\\
If $\Dc$ is a right $\Cc$-category then a right module over an $A$-ring $\mfC$ is an algebra for the 
monad $-\diamond_A\mfC$ on the category $\Dc_A$. Explicitly, it is a right module $\mfM$ in $\Dc$ 
over $A$ together with a morphism $\nu_\mfM^\mfC: \mfM\diamond_A\mfC\ra \mfM$ in $\Dc_A$ which 
is associative and unital modulo the $\Cc$-category structure of $\Dc$ and the ring structure of $\mfC$. 

\begin{lemma}\lelabel{5.2}
Let $\Dc$ be a right $\Cc$-category and $(A, X)$ a wreath in $\Cc$ such that the 
$A$-coring $\mfC=A\ot X$ is compatible with the $\Cc$-category structure of $\Dc$. Then 
a right $\mfC$-module is an object $\mfM$ in $\Dc_A$ (with structure morphism $\nu_\mfM^A: \mfM\diamond A\ra \mfM$)
 equipped with a right 
$X$-action $\mu_\mfM^X: \mfM\diamond X\ra \mfM$ such that 
\begin{eqnarray}
&&\nu_\mfM^X(\nu_\mfM^A\diamond \Id_X)(\Id_\mfM\diamond \psi)=\nu_\mfM^A(\nu_\mfM^X\diamond\Id_A)~;\eqlabel{rmr1}\\
&&\nu_\mfM^X(\nu_\mfM^X\diamond \Id_X)=\nu_\mfM^X(\nu_\mfM^A\diamond \Id_X)(\Id_\mfM\diamond \zeta)~;\eqlabel{rmr2}\\
&&\nu_\mfM^X(\nu_\mfM^A\diamond \Id_X)(\Id_\mfM\diamond \sigma)=\Id_\mfM~.\eqlabel{rmr3}
\end{eqnarray}
\end{lemma}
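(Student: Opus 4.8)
The plan is to unravel the abstract definition of a right $\mfC$-module — namely a morphism $\nu_\mfM^\mfC: \mfM\diamond_A\mfC\ra \mfM$ in $\Dc_A$ that is associative and unital — into concrete data on $\mfM$, using the structure of $\mfC=A\ot X$ from the previous sections. First I would observe that, just as left $A$-linear morphisms out of $\mfC=A\ot X$ were shown to correspond to morphisms out of $X$ (via the isomorphisms $\Upsilon_{\mfM,X}$ of \seref{strofcor}), a morphism $\nu_\mfM^\mfC:\mfM\diamond_A\mfC\ra \mfM$ in $\Dc$ corresponds bijectively to a morphism $\mu_\mfM^X:\mfM\diamond X\ra \mfM$; explicitly $\mu_\mfM^X=\nu_\mfM^\mfC\,\Upsilon^{-1}_{\mfM,X}(\Id_\mfM\diamond\un{\eta}_A\ot\Id_X)$, and conversely $\nu_\mfM^\mfC$ is reconstructed from $\mu_\mfM^X$ and $\nu_\mfM^A$ using the description of $\mfM\diamond_A\mfC$ as the coequalizer. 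This is the same bookkeeping already carried out for $\un{m}_\mfC$ versus $\zeta_X$ in \leref{(co)multstr}.

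Next I would impose the condition that $\nu_\mfM^\mfC$ be a morphism in $\Dc_A$, i.e.\ right $A$-linear. Running this through the correspondence — exactly as in the proof that $\un{m}_\mfC$ is right $A$-linear iff \equref{righAlinmult} holds — produces a relation in which $\psi$ intertwines the $A$-action $\nu_\mfM^A$ and the $X$-action $\mu_\mfM^X$; this is precisely \equref{rmr1}. Then I would unpack associativity of $\nu_\mfM^\mfC$ over the monad $-\diamond_A\mfC$: using the isomorphism $\Gamma'$ (or $\Sigma'$) and the formula $\un{m}_\mfC=(\un{m}_A\ot\Id_X)(\Id_A\ot\zeta)\gamma_{\mfC,X}$, the two composites $\nu_\mfM^\mfC(\Id_\mfM\diamond_A\un{m}_\mfC)$ and $\nu_\mfM^\mfC(\nu_\mfM^\mfC\diamond_A\Id_\mfC)$ translate, after cancelling the coequalizer maps, into \equref{rmr2}. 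Finally the unit axiom $\nu_\mfM^\mfC(\Id_\mfM\diamond_A\un{\eta}_\mfC)\Upsilon_\mfM^{-1}=\Id_\mfM$, combined with $\un{\eta}_\mfC$ being built from $\sigma$ via $(\un{m}_A\ot\Id_X)(\Id_A\ot\sigma)$, yields \equref{rmr3}.

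The bulk of the argument is essentially the same diagram-chasing machinery used throughout \seref{strofcor}: push everything through the universal property of the coequalizers $q^A_{\mfM,\mfC}$, use \equref{coeqmorph}, \equref{coeqmorph2}, \equref{rel1}, \equref{rel2} and the canonical isomorphisms $\Upsilon$, $\Upsilon'$, $\Gamma'$, $\theta'$ to strip away the $\diamond_A$'s, then precompose with $\Id_\mfM\diamond\un{\eta}_A\ot(\cdots)$ to reduce identities on $\mfM\diamond\mfC$ (and $\mfM\diamond\mfC\diamond\mfC$) to identities on $\mfM\diamond X$ (and $\mfM\diamond X\diamond X$). One also needs, in the converse direction, that the three relations \equref{rmr1}--\equref{rmr3} on $\mu_\mfM^X$ suffice to guarantee the reconstructed $\nu_\mfM^\mfC$ is well defined on the coequalizer and satisfies the module axioms; this is where left robustness and coflatness of $A$ and $X$ (the standing hypotheses of the Lemma) enter, exactly as they did to make $\mfC\ot_A\mfC$ a bimodule.

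The main obstacle — really the only non-routine point — is the associativity computation: $\nu_\mfM^\mfC$ is a morphism on $\mfM\diamond_A\mfC$, so comparing $\nu_\mfM^\mfC(\Id_\mfM\diamond_A\un{m}_\mfC)$ with $\nu_\mfM^\mfC(\widehat{\nu_\mfM^\mfC})$ requires carefully tracking the associativity constraint $\Gamma'_{\mfM,\mfC,\mfC}$ (equivalently $\Sigma'$) and the robustness isomorphism $\theta'_{\mfM,\mfC,\mfC}$, much as in the coassociativity half of \leref{3.15}. Once both sides are expressed after the coequalizers via $\Lambda_{\mfM,X}$-type isomorphisms and then restricted along $\Id_\mfM\diamond\un{\eta}_A\ot\Id_X\ot\Id_X$, the identity collapses to \equref{rmr2}; checking that this restriction loses no information (i.e.\ that \equref{rmr2} conversely implies the full associativity on the coequalizer) is the step that needs the most care, but it follows the template of the earlier lemmas. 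I would therefore present the proof by stating the correspondence $\nu_\mfM^\mfC\leftrightarrow(\nu_\mfM^A,\mu_\mfM^X)$, then treating right $A$-linearity, associativity, and unitality in turn, and leaving the fully parallel left-$\Cc$-category bookkeeping to the reader where it duplicates \seref{strofcor}.
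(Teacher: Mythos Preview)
Your proposal is correct and follows essentially the same approach as the paper: use the isomorphism $\Upsilon_{\mfM,X}$ to trade $\nu_\mfM^\mfC$ for a morphism $\mfM\diamond X\to\mfM$, then show that right $A$-linearity, associativity, and unitality of $\nu_\mfM^\mfC$ translate respectively into \equref{rmr1}, \equref{rmr2}, \equref{rmr3}. In fact your outline is considerably more detailed than the paper's own proof, which simply states these three equivalences and leaves the verification to the reader; one small slip is that your formula for $\mu_\mfM^X$ should read $\nu_\mfM^\mfC\,\Upsilon^{-1}_{\mfM,X}$ (the factor $\Id_\mfM\diamond\un\eta_A\ot\Id_X$ is already absorbed into $\Upsilon^{-1}_{\mfM,X}$ by \equref{rel1}).
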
 

\begin{proof} 
Since 
$\Upsilon_{\mfM, X}: \mfM\diamond_A(A\ot X)\ra \mfM\diamond X$ is an isomorphism, 
any morphism $\nu_\mfM^\mfC: \mfM\diamond_A(A\ot X)\ra \mfM$ in $\Dc$ is completely determined 
by a morphism $\nu_\mfM^X: \mfM\diamond X\ra \mfM$ in $\Dc$. Then $\nu_\mfM^\mfC$ is right $A$-linear 
if and only if \equref{rmr1} holds. Moreover $\nu_\mfM^\mfC$ is associative if and only if \equref{rmr2} 
is satisfied, and $\nu_\mfM^\mfC$ is unital if and only if \equref{rmr3} is fulfilled. Verification of 
the details is left to the reader.
\end{proof}

This description of right representations of the ring $A\ot X$ will allow us to show that there
is a bijective correspondence between right representations of $A\ot X$ and representations
of the wreath product $A\#_{\psi, \zeta, \sigma}X$, see \thref{5.4}.
Recall from \cite{LackRoss} that if $(\mathbb{A}, s, \psi, \sigma, \zeta)$ is a wreath in a 
$2$-category $\Cc$ then $st$ with 
\[
\xymatrix{
stst~~\ar@2{->}[r]^{1_s\odot \psi\odot 1_t}~~&sstt~~\ar@2{->}[r]^{1_s\odot 1_s\odot \mu}~~&
~~sst\ar@2{->}[r]^{\zeta}~~&~~stt
\ar@2{->}[r]^{1_s\odot \mu}~~&~~st
}
\]
and $
\xymatrix{
1_A\ar@2{->}[r]^\sigma&st
}
$ 
is a monad in ${\cal K}$, called the wreath product of $A$ and $X$.
If ${\cal K}=\Cc$ is a monoidal 
category, and $(A, X)$ is a wreath in $\Cc$ then $A\#_{\psi, \zeta, \sigma}X$, the wreath product of $A$ and $X$, 
is $A\ot X$ with multiplication   
\begin{equation}\eqlabel{multwpr}
\gbeg{4}{6}
\got{1}{A}\got{1}{X}\got{1}{A}\got{1}{X}\gnl
\gcl{1}\gbrc\gcl{1}\gnl
\gmu\gsbox{2}\gnl
\gcn{1}{1}{2}{3}\gvac{1}\gcl{1}\gcl{1}\gnl
\gvac{1}\gmu\gcl{1}\gnl
\gvac{1}\gob{2}{A}\gob{1}{X}
\gend
~~,
\end{equation}
and unit $\sigma: \un{1}\ra A\ot X$.

\begin{remarks}
(i) Observe that Brzezi\'nski products \cite{brz} are particular examples of wreath products. Namely, 
they are wreath products for which the unit morphism $\sigma$ has the form $\un{\eta}_A\ot \iota$, 
for some $\iota: \un{1}\ra X$ morphism in $\Cc$.\\
(ii) Let $A\#_{\psi, \zeta, \sigma}X$ be a wreath product in a monoidal category $\Cc$. It is easy to show that
$\psi$ and $\zeta$ can be recovered from the multiplication
$$
\gbeg{4}{5}
\got{1}{A}\got{1}{X}\got{1}{A}\got{1}{X}\gnl
\gcl{1}\gcl{1}\gcl{1}\gcl{1}\gnl
\gsbox{4}\gnl
\gvac{1}\gcl{1}\gcl{1}\gnl
\gvac{1}\gob{1}{A}\gob{1}{X}
\gend
$$
in $A\#_{\psi, \zeta, \sigma}X$, namely
\begin{equation}\eqlabel{psideltwreath}
\psi=
\gbeg{5}{8}
\gvac{1}\got{1}{X}\got{1}{A}\gnl
\gvac{1}\gcl{1}\gcl{1}\gu{1}\gnl
\gvac{1}\gcl{1}\gcl{1}\gcn{1}{1}{1}{2}\gnl
\gvac{1}\gcl{1}\gcl{1}\gsbox{2}\gnl
\gu{1}\gcl{1}\gmu\gcl{1}\gnl
\gsbox{5}\gnl
\gvac{1}\gcl{1}\gvac{1}\gcl{1}\gnl
\gvac{1}\gob{1}{A}\gvac{1}\gob{1}{X}
\gend
\hspace*{5mm}{\rm and}\hspace*{5mm}
\zeta=
\gbeg{4}{5}
\gvac{1}\got{1}{X}\gvac{1}\got{1}{X}\gnl
\gu{1}\gcl{1}\gu{1}\gcl{1}\gnl
\gsbox{4}\gnl
\gvac{1}\gcl{1}\gcl{1}\gnl
\gvac{1}\gob{1}{A}\gob{1}{X}
\gend~~.
\end{equation}
Furthermore, in $A\#_{\psi, \zeta, \sigma}X$ we have the identities
\begin{equation}\eqlabel{psideltwreathcond}
\gbeg{5}{8}
\got{1}{A}\gvac{2}\got{1}{A}\got{1}{X}\gnl
\gcl{1}\gu{1}\gvac{1}\gcl{1}\gcl{1}\gnl
\gcl{1}\gcn{1}{1}{1}{2}\gvac{1}\gcl{1}\gcl{1}\gnl
\gcl{1}\gsbox{2}\gvac{2}\gcl{1}\gcl{1}\gnl
\gmu\gcl{1}\gcl{1}\gcl{1}\gnl
\gsbox{5}\gnl
\gvac{1}\gcl{1}\gvac{1}\gcl{1}\gnl
\gvac{1}\gob{1}{A}\gvac{1}\gob{1}{X}
\gend
=
\gbeg{3}{3}
\got{1}{A}\got{1}{A}\got{1}{X}\gnl
\gmu\gcl{1}\gnl
\gob{2}{A}\gob{1}{X}
\gend
\hspace*{5mm}{\rm and}\hspace*{5mm}
\gbeg{5}{5}
\got{1}{A}\gvac{1}\got{1}{X}\got{1}{A}\got{1}{X}\gnl
\gcl{1}\gu{1}\gcl{1}\gcl{1}\gcl{1}\gnl
\gcl{1}\gsbox{4}\gnl
\gmu\gvac{1}\gcl{1}\gnl
\gob{2}{A}\gvac{1}\gob{1}{X}
\gend
=
\gbeg{4}{5}
\got{1}{A}\got{1}{X}\got{1}{A}\got{1}{X}\gnl
\gcl{1}\gcl{1}\gcl{1}\gcl{1}\gnl
\gsbox{4}\gnl
\gvac{1}\gcl{1}\gcl{1}\gnl
\gvac{1}\gob{1}{A}\gob{1}{X}
\gend~.
\end{equation}
A long but straightforward computation shows that 
if $A\ot X$ carries an algebra structure in $\Cc$ with unit $\sigma$ then $A\ot X$ is a wreath product 
with the same unit as $A\ot X$ if and only if the two conditions above are satisfied. To this end, we 
define $\psi, \zeta$ as in \equref{psideltwreath} and then show that 
\equref{ta}, \equref{apdf1} and \equref{apdf2} are fulfilled, and that the original 
multiplication of $A\ot X$ coincides with the one on $A\#_{\psi, \zeta, \sigma}X$. 
We leave all these details to the reader.\\
(iii) This characterization of a wreath product algebra allows us to show that 
there exist algebras of the form $A\ot X$ which are not wreath product algebras.\\
Let $H$ be a $k$-bialgebra, let $\mathbb{A}$ be a $H$-bicomodule algebra and 
let ${\cal A}$ be a $H$-bimodule algebra. This means that
$\mathbb{A}$ is an algebra in the monoidal category of bicomodules over $H$,
and that ${\cal A}$ is an algebra in the monoidal category of $H$-bimodules.
Now we consider the right version of the $L$-$R$ smash product 
$\mathbb{A}\nat {\cal A}$, as introduced in \cite[Proposition 2.1]{pvo}.
As a vector space, $\mathbb{A}\nat {\cal A}=\mathbb{A}\ot {\cal A}$,
with multiplication
\[
(u\nat \v)(u'\nat \v')=u_{[0]}u'_{\le 0\ri}\nat (\v\cdot u'_{\le 1\ri})(u_{[-1]}\cdot \v'),
\]
for $u,~u'\in \mathbb{A}$ and $\v,~\v'\in {\cal A}$, and unit $1_\mathbb{A}\nat 1_{\cal A}$.
Then $\mathbb{A}\nat {\cal A}$ is an associative unital $k$-algebra,
and a simple inspection shows that \equref{psideltwreathcond} is not satisfied.
We can conclude that 
$\mathbb{A}\ot {\cal A}$ it is not a wreath product in the category of 
$k$-vector spaces, ${}_k{\cal M}$.
\end{remarks}

Now we can prove the main result of this Section.

\begin{theorem}\thlabel{5.4}
Let $\Dc$ be a right $\Cc$-category and $(A, X)$ a wreath in $\Cc$ such that the $A$-ring $\mfC=A\ot X$ 
is compatible with the $\Cc$-category structure of $\Dc$. Then the category of right representations in $\Dc$ 
over the $A$-coring $\mfC$ is isomorphic to the category of right modules in $\Dc$ over 
the wreath product $A\#_{\psi, \zeta, \sigma}X$.
\end{theorem}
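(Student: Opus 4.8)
The plan is to exhibit mutually inverse functors between the two categories, using the explicit descriptions of both sides that are already available. On one hand, \leref{5.2} tells us that a right $\mfC$-module in $\Dc$ is exactly an object $\mfM\in\Dc_A$ together with a morphism $\nu_\mfM^X:\mfM\diamond X\ra\mfM$ subject to \equref{rmr1}, \equref{rmr2}, \equref{rmr3}. On the other hand, a right module in $\Dc$ over the wreath product algebra $A\#_{\psi,\zeta,\sigma}X=A\ot X$ (in the sense of \deref{coring}, i.e. a module over this algebra regarded as an object of $\Cc$, acting via $\diamond$ on objects of $\Dc$) is an object $\mfM\in\Dc$ together with a morphism $\varrho_\mfM:\mfM\diamond(A\ot X)\ra\mfM$ that is associative and unital with respect to the multiplication \equref{multwpr} and the unit $\sigma$. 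First I would \emph{define the two functors on objects}: given $(\mfM,\nu_\mfM^A,\nu_\mfM^X)$ as in \leref{5.2}, set
\[
\varrho_\mfM:=\nu_\mfM^X(\nu_\mfM^A\diamond\Id_X):\ \mfM\diamond(A\ot X)=(\mfM\diamond A)\diamond X\ra\mfM,
\]
and conversely, given a right $A\#_{\psi,\zeta,\sigma}X$-module $(\mfM,\varrho_\mfM)$, recover the $A$-action by $\nu_\mfM^A:=\varrho_\mfM(\Id_\mfM\diamond(\Id_A\ot\un{\eta}_X))$ (using $\sigma$-unitality to see this is an $A$-action, via \equref{unit2}/\equref{wr5}) and the $X$-action by $\nu_\mfM^X:=\varrho_\mfM(\Id_\mfM\diamond(\un{\eta}_A\ot\Id_X))$.

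Second, I would \emph{check that these assignments land in the right places and are inverse to each other}. This amounts to three verifications. (a) Starting from $(\nu_\mfM^A,\nu_\mfM^X)$, forming $\varrho_\mfM$ and reading off the two actions again returns $\nu_\mfM^A$ and $\nu_\mfM^X$: for the $A$-part one uses $\nu_\mfM^X(\Id_\mfM\diamond\un{\eta}_X\text{-shifted})={\bf r}$-type unitality coming from \equref{rmr3}, and for the $X$-part it is immediate from the unit axiom of the $A$-module $\mfM$. (b) Conversely, starting from $\varrho_\mfM$, the formula $\nu_\mfM^X(\nu_\mfM^A\diamond\Id_X)$ rebuilt from the two recovered actions equals $\varrho_\mfM$ — this is exactly the statement that $\varrho_\mfM$ factors through the multiplication of $A\ot X$ applied to $(\Id_A\ot\un{\eta}_X)$ and $(\un{\eta}_A\ot\Id_X)$, which follows from associativity of $\varrho_\mfM$ together with the identity $(\un{m}_A\ot\Id_X)(\Id_A\ot(\un{\eta}_A\ot\un{m}_X))=\cdots=\Id_{A\ot X}$ obtained by feeding units into \equref{multwpr}. (c) The associativity and unitality of $\varrho_\mfM$ are equivalent, under this correspondence, to the conjunction of \equref{rmr1}, \equref{rmr2}, \equref{rmr3}: here one expands the associativity square for $\varrho_\mfM$ using \equref{multwpr}, and the $\psi$-braiding term produces precisely the mixed relation \equref{rmr1} (the ``$A$ past $X$'' compatibility), the $\zeta$-term produces \equref{rmr2}, and the unit condition produces \equref{rmr3}; conversely \equref{rmr1}--\equref{rmr3} together with the wreath axioms \equref{wr1}--\equref{wr6} let one reassemble the associativity pentagon. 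Finally, I would note that a morphism of $\mfC$-modules in $\Dc_A$ is a right $A$-linear, right $X$-linear map, which under $\varrho_\mfM=\nu_\mfM^X(\nu_\mfM^A\diamond\Id_X)$ is the same as a right $A\#_{\psi,\zeta,\sigma}X$-linear map; this gives the functors on morphisms and makes the bijection functorial, so it is an isomorphism of categories.

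The main obstacle I expect is verification (c): matching the single associativity axiom for $\varrho_\mfM$ over the fairly complicated multiplication \equref{multwpr} against the three separate conditions of \leref{5.2}. The diagrammatic calculus makes this tractable but bookkeeping-heavy — one must carefully track how the coequalizer identifications $q^A$ interact with $\diamond$ (this is where left $\Dc$-coflatness and $\Dc$-robustness of $A$ and $X$, assumed in the statement so that $\mfC$ is compatible with $\Dc$, are used), and one must invoke the wreath compatibilities \equref{wr1}--\equref{wr6} to see that no extra conditions are needed in the ``$\Rightarrow$'' direction. A clean way to organize this is to first do the case $\Dc=\Cc$, where $\diamond=\ot$ and all the $\Theta'$, $\Upsilon$ isomorphisms are transparent, and then observe that every step only used the $\Cc$-category axioms and the compatibility of $\mfC$ with $\Dc$, so the general case follows verbatim. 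I would relegate the bulk of the string-diagram manipulations to ``a straightforward but lengthy computation, left to the reader,'' in keeping with the style of \leref{5.2} and \prref{5.1}.
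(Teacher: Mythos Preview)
Your overall strategy coincides with the paper's: build $\varrho_\mfM=\nu_\mfM^X(\nu_\mfM^A\diamond\Id_X)$ in one direction, and in the other direction strip off the two actions from a given $A\#_{\psi,\zeta,\sigma}X$-module. The paper carries out exactly this, verifying associativity of $\varrho_\mfM$ diagrammatically from \equref{rmr1}, \equref{rmr2} and the $A$-module axiom, and unitality from \equref{rmr3}.

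There is, however, a genuine error in your converse direction. You write $\nu_\mfM^A:=\varrho_\mfM(\Id_\mfM\diamond(\Id_A\ot\un{\eta}_X))$ and later invoke an identity involving $\un{m}_X$. But in a general wreath there is \emph{no} morphism $\un{\eta}_X:\un{1}\ra X$ and no $\un{m}_X:X\ot X\ra X$: the unit is $\sigma:\un{1}\ra A\ot X$ and the ``multiplication'' is $\zeta:X\ot X\ra A\ot X$, and neither need factor through $\un{\eta}_A\ot(-)$. Your formulas only make sense in the special situation of \prref{fact(co)ring}, where $\sigma=\un{\eta}_A\ot\un{\eta}_X$ and $\zeta=\un{\eta}_A\ot\un{m}_X$. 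The paper's correct recovery of the $A$-action is
\[
\nu_\mfM^A:=\varrho_\mfM\bigl(\Id_\mfM\diamond(\un{m}_A\ot\Id_X)(\Id_A\ot\sigma)\bigr):\mfM\diamond A\ra\mfM,
\]
i.e.\ one feeds $a\mapsto a\cdot\sigma$ into the wreath-product action; the $X$-action is recovered as you wrote, $\nu_\mfM^X:=\varrho_\mfM(\Id_\mfM\diamond(\un{\eta}_A\ot\Id_X))$. With this correction the rest of your plan goes through, and the ``factorization'' identity you need in step (b) becomes the fact that in $A\#_{\psi,\zeta,\sigma}X$ one has $(a\ot x)=\bigl((\un{m}_A\ot\Id_X)(a\ot\sigma)\bigr)\cdot(\un{\eta}_A\ot x)$, which follows from \equref{apdf2}.
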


\begin{proof}
We use the characterization of a right representation over $\mfC$ presented in
\leref{5.2}. First observe that a right $\mfC$-module $\mfM$ is
a right module in $\Dc$ over $A\#_{\psi, \zeta, \sigma}X$ with
\[
\gbeg{3}{5}
\got{1}{\mfM}\got{1}{A}\got{1}{X}\gnl
\gcl{1}\gcl{1}\gcl{1}\gnl
\gcl{1}\gcn{1}{1}{2}{1} \gcn{0}{0}{-1}{1}\gnl
\grm\gnl
\gob{1}{\mfM}
\gend
:=
\gbeg{3}{5}
\got{1}{\mfM}\got{1}{A}\got{1}{X}\gnl
\grm\gcl{1}\gnl
\gcl{1}\gvac{1}\gcn{1}{1}{1}{-1}\gnl
\grm\gnl
\gob{1}{\mfM}
\gend~~.
\]
Indeed, we have that
\[
\gbeg{5}{6}
\got{1}{\mfM}\got{1}{A}\got{1}{X}\got{1}{A}\got{1}{X}\gnl
\gcl{1}\gcl{1}\gcl{1}\gcl{2}\gcl{2}\gnl
\gcl{1}\gcn{1}{1}{2}{1} \gcn{0}{0}{-1}{1}\gnl
\grm\gcn{0}{0}{3}{5}\gvac{1}\gcn{1}{1}{2}{-3}\gnl
\grm\gnl
\gob{1}{\mfM}
\gend
=
\gbeg{5}{6}
\got{1}{\mfM}\got{1}{A}\got{1}{X}\got{1}{A}\got{1}{X}\gnl
\grm\gcn{1}{1}{1}{-1}\gcl{1}\gcl{1}\gnl
\grm\gvac{1}\gcn{1}{1}{1}{-3}\gcl{1}\gnl
\grm\gvac{2}\gcn{1}{1}{1}{-5}\gnl
\grm\gnl
\gob{1}{\mfM}
\gend
\equal{\equref{rmr1}}
\gbeg{5}{7}
\got{1}{\mfM}\got{1}{A}\got{1}{X}\got{1}{A}\got{1}{X}\gnl
\grm\gbrc\gcl{1}\gnl
\gcl{1}\gvac{1}\gcn{1}{1}{1}{-1}\gcl{1}\gcl{1}\gnl
\grm\gvac{1}\gcn{1}{1}{1}{-3}\gcl{1}\gnl
\grm\gvac{2}\gcn{1}{1}{1}{-5}\gnl
\grm\gnl
\gob{1}{\mfM}
\gend
\equalupdown{\equref{rmr2}}{\mfM\in \Dc_A}
\gbeg{5}{8}
\got{1}{\mfM}\got{1}{A}\got{1}{X}\got{1}{A}\got{1}{X}\gnl
\gcl{1}\gcl{1}\gbrc\gcl{1}\gnl
\gcl{1}\gmu\gcl{1}\gcl{1}\gnl
\gcl{1}\gvac{1}\gcn{1}{1}{0}{1}\gsbox{2}\gnl
\gcl{1}\gvac{1}\gmu\gcl{1}\gnl
\gcl{1}\gvac{2}\gcn{0}{0}{0}{3}\gcn{1}{1}{1}{-3}\gnl
\grm\gnl
\gob{1}{\mfM}
\gend ~~, 
\]
as needed, cf. \equref{multwpr}. Moreover, it follows from \equref{rmr3}
that this action is unital.\\
Conversely, if $\mfM$ is a right module in $\Dc$ over $A\#_{\psi, \zeta, \sigma}X$ via 
$\nu_\mfM$ then the actions on $\mfM$ defined by 
\[
\nu_\mfM^A:=\nu_\mfM(\Id_\mfM\diamond (\un{m}_A\ot\Id_X)(\Id_A\ot \sigma)): \mfM\diamond A\ra \mfM
\]  
and
\[
\nu_\mfM^X:=\nu_\mfM(\Id_M\diamond \un{\eta}_A\ot\Id_X): \mfM\diamond X\ra \mfM
\]
satisfy (\ref{eq:rmr1}-\ref{eq:rmr3}), so $\mfM$ is a right $\mfC$-module. We leave it
to the reader to verify that these constructions are inverse to each other.
\end{proof}

Now we will focus on the dual situation. We need some preliminary results first.

\begin{lemma}\lelabel{3.20}
Let $\mfM\in \Dc_A$ and let $\rho_\mfM^X: \mfM\ra \mfM\diamond X$ and 
$\rho_\mfM^\mfC: \mfM\ra \mfM\diamond_A\mfC$ be morphisms in $\Dc$ such that 
$\rho_\mfM^X=\Upsilon_{\mfM, X}\rho_\mfM^\mfC$. Then $\rho_\mfM^\mfC$ is right 
$A$-linear if and only if 
\begin{equation}\eqlabel{rAlin}
\rho_\mfM^X\nu_\mfM=(\nu_\mfM\diamond\Id_X)(\Id_{\mfM}\diamond \psi)
(\rho_{\mfM}^X\diamond \Id_A).
\end{equation} 
\end{lemma}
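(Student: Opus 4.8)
The plan is to push the right $A$-linearity condition on $\rho_\mfM^\mfC$ through the canonical isomorphism $\Upsilon_{\mfM, X}\colon\mfM\diamond_A(A\ot X)\ra\mfM\diamond X$. By definition, $\rho_\mfM^\mfC$ is right $A$-linear if and only if $\rho_\mfM^\mfC\nu_\mfM=\nu_{\mfM\diamond_A\mfC}(\rho_\mfM^\mfC\diamond\Id_A)$, where $\nu_{\mfM\diamond_A\mfC}$ is the right $A$-module structure on $\mfM\diamond_A\mfC$ in $\Dc$ (available since $A$ is left $\Dc$-coflat), characterized, exactly as $\nu_{\mfC\ot_A\mfC}$ was earlier in the paper, by $\nu_{\mfM\diamond_A\mfC}(q^A_{\mfM, \mfC}\diamond\Id_A)=q^A_{\mfM, \mfC}(\Id_\mfM\diamond\nu_\mfC)$, with $\nu_\mfC=(\un{m}_A\ot\Id_X)(\Id_A\ot\psi)$ as in \leref{TaSchstr}. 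Since $\Upsilon_{\mfM, X}$ is an isomorphism and $\rho_\mfM^X=\Upsilon_{\mfM, X}\rho_\mfM^\mfC$, applying $\Upsilon_{\mfM, X}$ to both sides shows that right $A$-linearity of $\rho_\mfM^\mfC$ is equivalent to $\rho_\mfM^X\nu_\mfM=\Upsilon_{\mfM, X}\nu_{\mfM\diamond_A\mfC}(\rho_\mfM^\mfC\diamond\Id_A)$.

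The heart of the argument is the identity
\[
\Upsilon_{\mfM, X}\nu_{\mfM\diamond_A\mfC}=(\nu_\mfM\diamond\Id_X)(\Id_\mfM\diamond\psi)(\Upsilon_{\mfM, X}\diamond\Id_A),
\]
which identifies the right $A$-action transported onto $\mfM\diamond X$. To prove it I compose both sides on the right with $q^A_{\mfM, \mfC}\diamond\Id_A$, which is an epimorphism because $-\diamond A$ preserves the coequalizer $q^A_{\mfM, \mfC}$ ($A$ being left $\Dc$-coflat); it then suffices to check that the two composites agree. On the left, the defining relation for $\nu_{\mfM\diamond_A\mfC}$ followed by $\Upsilon_{\mfM, X}q^A_{\mfM, \mfC}=\nu_\mfM\diamond\Id_X$ (strictness makes $\Psi$ and $\Psi^{-1}$ identities) gives $(\nu_\mfM\diamond\Id_X)(\Id_\mfM\diamond\nu_\mfC)$. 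Substituting $\nu_\mfC=(\un{m}_A\ot\Id_X)(\Id_A\ot\psi)$ and invoking the right $A$-module axiom for $\mfM$ in its strict form $\nu_\mfM(\Id_\mfM\diamond\un{m}_A)=\nu_\mfM(\nu_\mfM\diamond\Id_A)$ — together with functoriality of $\diamond$ and the fact that $\psi$ only involves the two rightmost tensor factors — rewrites this as $(\nu_\mfM\diamond\Id_X)(\Id_\mfM\diamond\psi)\bigl((\nu_\mfM\diamond\Id_X)\diamond\Id_A\bigr)$; finally $\nu_\mfM\diamond\Id_X=\Upsilon_{\mfM, X}q^A_{\mfM, \mfC}$ once more turns the last factor into $(\Upsilon_{\mfM, X}\diamond\Id_A)(q^A_{\mfM, \mfC}\diamond\Id_A)$, which is precisely the right-hand side composed with the epimorphism.

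Feeding this identity back into the reformulated condition and using $\rho_\mfM^X=\Upsilon_{\mfM, X}\rho_\mfM^\mfC$ one last time, the equation $\rho_\mfM^X\nu_\mfM=\Upsilon_{\mfM, X}\nu_{\mfM\diamond_A\mfC}(\rho_\mfM^\mfC\diamond\Id_A)$ becomes $\rho_\mfM^X\nu_\mfM=(\nu_\mfM\diamond\Id_X)(\Id_\mfM\diamond\psi)(\rho_\mfM^X\diamond\Id_A)$, i.e.\ \equref{rAlin}, which is the asserted equivalence. The only genuine obstacle is the middle step of the second paragraph: keeping track of which tensor factor each morphism acts on while transporting $\psi$ past the $A$-actions, and making sure every implicit use of strictness (the identifications $(\mfM\diamond A)\diamond X=\mfM\diamond(A\ot X)$ and $\Id_\mfM\diamond(g\ot h)=(\Id_\mfM\diamond g)\diamond h$, etc.) is legitimate. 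Since this is just the module-category analogue of a computation already performed for $\mfC\ot_A\mfC$ in \leref{3.15}, the remaining routine verifications can reasonably be left to the reader once the displayed identity is in place.
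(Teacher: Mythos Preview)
Your proof is correct and follows essentially the same approach as the paper: both push the right $A$-linearity condition through the isomorphism $\Upsilon_{\mfM,X}$, invoke the defining property $\nu_{\mfM\diamond_A\mfC}(q^A_{\mfM,\mfC}\diamond\Id_A)=q^A_{\mfM,\mfC}(\Id_\mfM\diamond\nu_\mfC)$, and then use $\Upsilon_{\mfM,X}q^A_{\mfM,\mfC}=\nu_\mfM\diamond\Id_X$ to land on \equref{rAlin}. The only cosmetic difference is that you isolate the intermediate step as a general transport-of-structure identity $\Upsilon_{\mfM,X}\nu_{\mfM\diamond_A\mfC}=(\nu_\mfM\diamond\Id_X)(\Id_\mfM\diamond\psi)(\Upsilon_{\mfM,X}\diamond\Id_A)$, whereas the paper instead plugs in $\rho_\mfM^\mfC=\Upsilon_{\mfM,X}^{-1}\rho_\mfM^X=q^A_{\mfM,\mfC}(\Id_\mfM\diamond\un{\eta}_A\ot\Id_X)\rho_\mfM^X$ directly and simplifies via $\nu_\mfC(\un{\eta}_A\ot\Id_X\ot\Id_A)=\psi$; the two computations are equivalent.
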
 

\begin{proof}
Recall from \cite{par, sch, bc3} that $\mfM\diamond_A\mfC$ has a right module 
structure in $\Dc$ over $A$ given by the unique morphism 
$\nu_{\mfM\diamond_A\mfC}: (\mfM\diamond_A\mfC)\diamond A\ra \mfM\diamond_A\mfC$ 
in $\Dc$ satisfying 
$\nu_{\mfM\diamond_A\mfC}(q^A_{\mfM, \mfC}\diamond\Id_A)=
q^A_{\mfM, \mfC}(\Id_\mfM\diamond \nu_\mfC)$. 
We then have 
\begin{eqnarray*}
&&\hspace*{-1.5cm}
\nu_{\mfM\diamond_A\mfC}(\rho_\mfM^\mfC\diamond\Id_A)
=\nu_{\mfM\diamond_A\mfC}(\Upsilon_{\mfM, X}^{-1}\diamond\Id_A)(\rho_\mfM^X\diamond\Id_A)\\
&=&\nu_{\mfM\diamond_A\mfC}(q^A_{\mfM, \mfC}\diamond\Id_A)
((\Id_\mfM\diamond\un{\eta}_A\ot\Id_X)\diamond\Id_A)(\rho_\mfM^X\diamond\Id_A)\\
&=&q^A_{\mfM, \mfC}(\Id_\mfM\diamond\nu_\mfC)(\Id_\mfM\diamond(\un{\eta}_A\ot\Id_X\ot\Id_A))
(\rho_\mfM^X\diamond\Id_A)\\
&=&q^A_{\mfM, \mfC}(\Id_\mfM\diamond\psi)(\rho_\mfM^X\diamond\Id_A).
\end{eqnarray*}  
Since $\rho_\mfM^\mfC\nu_\mfM=\Upsilon_{\mfM, X}^{-1}\rho_\mfM^X\nu_\mfM$ it 
follows that $\nu_{\mfM\diamond_A\mfC}$ is right $A$-linear if and only if 
\[
\rho_\mfM^X\nu_\mfM=\Upsilon_{\mfM, X}q^A_{\mfM, \mfC}(\Id_\mfM\diamond\psi)
(\rho_\mfM^X\diamond\Id_A).
\]
It is clear that this condition is equivalent to \equref{rAlin}. 
\end{proof} 

\begin{lemma}\lelabel{3.21}
Under the assumptions of \leref{3.20}, assume that $\rho_\mfM^\mfC$ is right 
$A$-linear, so that $\widehat{\rho_\mfM^\mfC}$ is defined, see \seref{2.2}.
Then $\rho_\mfM^\mfC$ is coassociative if and 
only if 
\begin{equation}\eqlabel{c1}
(\rho_\mfM^X\diamond\Id_X)\rho_\mfM^X=
(\nu_\mfM\diamond\Id_X^{\ot 2})(\Id_\mfM\diamond\d_X)\rho_\mfM^X.
\end{equation}  
\end{lemma}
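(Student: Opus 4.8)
The plan is to write the coassociativity of the right $\mfC$-coaction $\rho_\mfM^\mfC$ on $\mfM$ in the same form used in the proof of \leref{3.15}(ii) and to reduce it, by means of the canonical isomorphisms of \seref{strofcor}, to \equref{c1}. Since $\mfC=A\ot X$ is an $A$-coring, its comultiplication $\un{\Delta}_{\mfC}$ is left $A$-linear, so $\widetilde{\un{\Delta}_{\mfC}}:\ \mfM\diamond_A\mfC\ra\mfM\diamond_A(\mfC\ot_A\mfC)$ is defined; and $\widehat{\rho_\mfM^\mfC}:\ \mfM\diamond_A\mfC\ra(\mfM\diamond_A\mfC)\diamond_A\mfC$ is defined because $\rho_\mfM^\mfC$ is right $A$-linear by hypothesis (equivalently, by \leref{3.20}, because \equref{rAlin} holds). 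The coassociativity of $\rho_\mfM^\mfC$ then reads $\Gamma'_{\mfM,\mfC,\mfC}\widetilde{\un{\Delta}_{\mfC}}\rho_\mfM^\mfC=\widehat{\rho_\mfM^\mfC}\rho_\mfM^\mfC$, and I would prove the lemma by computing both sides and recognizing each as $\Lambda_{\mfM,X}\circ(-)$, where $\Lambda_{\mfM,X}=\Upsilon^{-1}_{\mfM\diamond_A\mfC,X}(\Upsilon^{-1}_{\mfM,X}\diamond\Id_X):\ \mfM\diamond X\diamond X\ra(\mfM\diamond_A\mfC)\diamond_A\mfC$ is the isomorphism in $\Dc$ already introduced in the proof of \leref{3.15}.

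For the right-hand side I would use $\rho_\mfM^\mfC=\Upsilon^{-1}_{\mfM,X}\rho_\mfM^X=q^A_{\mfM,\mfC}(\Id_\mfM\diamond\un{\eta}_A\ot\Id_X)\rho_\mfM^X$, the last equality by \equref{rel1}, together with \equref{coeqmorph2}, to get $\widehat{\rho_\mfM^\mfC}\rho_\mfM^\mfC=q^A_{\mfM\diamond_A\mfC,\mfC}(\rho_\mfM^\mfC\diamond\Id_\mfC)(\Id_\mfM\diamond\un{\eta}_A\ot\Id_X)\rho_\mfM^X$; substituting $\rho_\mfM^\mfC=\Upsilon^{-1}_{\mfM,X}\rho_\mfM^X$ once more and recognizing $\Lambda_{\mfM,X}$ in the resulting composite, I expect to obtain $\widehat{\rho_\mfM^\mfC}\rho_\mfM^\mfC=\Lambda_{\mfM,X}(\rho_\mfM^X\diamond\Id_X)\rho_\mfM^X$, i.e.\ the left-hand side of \equref{c1} placed under $\Lambda_{\mfM,X}$.

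For the left-hand side I would follow the computation in the proof of \leref{3.15}(ii) with $\mfM$ in the role of the first $\mfC$-factor: push $\widetilde{\un{\Delta}_{\mfC}}$ through $q^A_{\mfM,\mfC}$ with \equref{coeqmorph}, replace $\Gamma'_{\mfM,\mfC,\mfC}q^A_{\mfM,\mfC\ot_A\mfC}$ by $\widehat{q^A_{\mfM,\mfC}}\Theta'^{-1}_{\mfM,\mfC,\mfC}$ from the defining diagram of $\Gamma'$ (legitimate since $\mfC$ is $\Dc$-robust), substitute the formula $\un{\Delta}_{\mfC}=\Upsilon^{-1}_{\mfC,X}(\un{m}_A\ot\Id_X^{\ot 2})(\Id_A\ot\d_X)$ from \leref{3.5}(ii), and collapse the coequalizers using \equref{rel1}, \equref{coeqmorph2} and the identity $\Upsilon_{\mfM,X}q^A_{\mfM,A\ot X}=\nu_\mfM\diamond\Id_X$. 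The multiplication $\un{m}_A$ brought in by $\un{\Delta}_{\mfC}$ should be absorbed, via the coequalizer defining $\mfM\diamond_A\mfC$, into the right $A$-action $\nu_\mfM$ on $\mfM$; since here the first factor is $\mfM$ rather than $A\ot X$, no braiding $\psi$ should be produced (this is the structural reason why \equref{c1}, unlike \equref{Deltacoass}, carries no $\psi$). The expected outcome is $\Gamma'_{\mfM,\mfC,\mfC}\widetilde{\un{\Delta}_{\mfC}}\rho_\mfM^\mfC=\Lambda_{\mfM,X}(\nu_\mfM\diamond\Id_X^{\ot 2})(\Id_\mfM\diamond\d_X)\rho_\mfM^X$, the right-hand side of \equref{c1} placed under $\Lambda_{\mfM,X}$.

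Finally, comparing the two computations and using that $\Lambda_{\mfM,X}$ is an isomorphism, $\rho_\mfM^\mfC$ is coassociative if and only if $(\rho_\mfM^X\diamond\Id_X)\rho_\mfM^X=(\nu_\mfM\diamond\Id_X^{\ot 2})(\Id_\mfM\diamond\d_X)\rho_\mfM^X$, which is \equref{c1}. The main obstacle will be the careful bookkeeping of the canonical morphisms $q^A$, $\Upsilon$, $\Gamma'$, $\Theta'$ and $\Lambda_{\mfM,X}$ and of the defining coequalizer relations in the general (strictified) right $\Cc$-category $\Dc$; this is routine but lengthy, and entirely parallel to the calculation already carried out for \leref{3.15}(ii), only with $\mfM$ in place of the first factor $\mfC$.
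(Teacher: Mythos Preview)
Your proposal is correct and follows essentially the same approach as the paper's proof: both compute $\widehat{\rho_\mfM^\mfC}\rho_\mfM^\mfC$ and $\Gamma'_{\mfM,\mfC,\mfC}\widetilde{\un{\Delta}_{\mfC}}\rho_\mfM^\mfC$ using the canonical coequalizer identities and the isomorphism $\Lambda_{\mfM,X}=\Upsilon^{-1}_{\mfM\diamond_A\mfC,X}(\Upsilon^{-1}_{\mfM,X}\diamond\Id_X)$, and then cancel $\Lambda_{\mfM,X}$. The only cosmetic difference is that the paper first obtains $\Upsilon^{-1}_{\mfM\diamond_A\mfC,X}(q^A_{\mfM,\mfC}\diamond\Id_X)(\Id_\mfM\diamond\d_X)\rho_\mfM^X$ for the left-hand side and then uses $(\Upsilon_{\mfM,X}\diamond\Id_X)(q^A_{\mfM,\mfC}\diamond\Id_X)=\nu_\mfM\diamond\Id_X^{\ot 2}$ at the end, whereas you absorb this identity one step earlier; the two computations are equivalent.
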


\begin{proof}
Consider $\Gamma'_{\mfM, X, Y}$ and $\Lambda_{\mfM, X}$ as defined in the proof
of \leref{3.15}.

On one hand, we compute
\begin{eqnarray*}
&&\hspace*{-8mm}
\widehat{\rho_\mfM^\mfC}\rho_\mfM^\mfC=
\widehat{\rho_\mfM^\mfC}q^A_{\mfM, \mfC}(\Id_\mfM\diamond\un{\eta}_A\ot\Id_X)\rho_\mfM^X\\
&=&q^A_{\mfM\diamond_A\mfC, \mfC}(\rho_\mfM^\mfC\diamond\Id_\mfC)
(\Id_\mfM\diamond\un{\eta}_A\ot\Id_X)\rho_\mfM^X\\
&=&q^A_{\mfM\diamond_A\mfC, \mfC}(q^A_{\mfM, \mfC}\diamond\Id_\mfC)
((\Id_\mfM\diamond\un{\eta}_A\ot\Id_X)\diamond\Id_\mfC)
(\rho_\mfM^X\diamond\Id_\mfC)(\Id_\mfM\diamond\un{\eta}_A\ot\Id_X)\rho_\mfM^X\\
&=&q^A_{\mfM\diamond_A\mfC, \mfC}(q^A_{\mfM, \mfC}\diamond\Id_\mfC)
((\Id_\mfM\diamond\un{\eta}_A\ot\Id_X)\diamond\Id_\mfC)\\
&&(\Id_\mfM\diamond\Id_X\diamond\un{\eta}_A\ot\Id_X)(\rho_\mfM^X\diamond\Id_X)\rho_\mfM^X\\
&=&\Lambda_{\mfM, X}(\rho_\mfM^X\diamond\Id_X)\rho_\mfM^X
=\Upsilon_{\mfM\diamond_A\mfC, X}^{-1}(\Upsilon_{\mfM, X}^{-1}\diamond\Id_X)
(\rho_\mfM^X\diamond\Id_X)\rho_\mfM^X.
\end{eqnarray*}
On the other hand, we have
\begin{eqnarray*}
&&\hspace*{-2cm}
\Gamma'_{\mfM, \mfC, \mfC}\widetilde{\un{\Delta}_\mfC}\rho_\mfM^\mfC
=\Gamma'_{\mfM, \mfC, \mfC}\widetilde{\un{\Delta}_\mfC}q^A_{\mfM, \mfC}
(\Id_\mfM\diamond\un{\eta}_A\ot\Id_X)\rho_\mfM^X\\
&\equal{\equref{coeqmorph}}&
\Gamma'_{\mfM, \mfC, \mfC}q^A_{\mfM, \mfC\ot_A\mfC}
(\Id_\mfM\diamond\un{\Delta}_\mfC)(\Id_\mfM\diamond\un{\eta}_A\ot\Id_X)\rho_\mfM^X\\
&=&\widehat{q^A_{\mfM, \mfC}}\Theta'^{-1}_{\mfM, \mfC, \mfC}
(\Id_\mfM\diamond q^A_{\mfC, \mfC})(\Id_{\mfM}\diamond\Id_\mfC\ot\un{\eta}_A\ot\Id_X)\\
&&(\Id_\mfM\diamond (\un{m}_A\ot\Id_X^{\ot 2})(\Id_A\ot \d_X)(\un{\eta}_A\ot \Id_X))
\rho_\mfM^X\\
&=&\widehat{q^A_{\mfM, \mfC}}q^A_{\mfM\diamond\mfC, \mfC}
(\Id_\mfM\diamond\Id_\mfC\ot\un{\eta}_A\ot\Id_X)(\Id_\mfM\diamond \d_X)\rho_\mfM^X\\
&=&q^A_{\mfM\diamond_A\mfC, \mfC}(q^A_{\mfM, \mfC}\diamond\Id_\mfC)
(\Id_\mfM\diamond\Id_\mfC\ot\un{\eta}_A\ot\Id_X)(\Id_\mfM\diamond\d_X)\rho_\mfM^X\\
&=&q^A_{\mfM\diamond_A\mfC, \mfC}(\Id_{\mfM\diamond_A\mfC}\diamond\un{\eta}_A\ot\Id_X)
(q^A_{\mfM, \mfC}\diamond\Id_X)(\Id_\mfM\diamond\d_X)\rho_\mfM^X\\
&=&\Upsilon^{-1}_{\mfM\diamond_A\mfC, X}(q^A_{\mfM, \mfC}\diamond\Id_X)
(\Id_\mfM\diamond\d_X)\rho_\mfM^X.
\end{eqnarray*}
It follows that $\rho_\mfM^\mfC$ is coassociative if and only if  
\[
(\rho_\mfM^X\diamond\Id_X)\rho_\mfM^X =
(\Upsilon_{\mfM, X}\diamond\Id_X)(q^A_{\mfM, \mfC}\diamond\Id_X)
(\Id_\mfM\diamond\d_X)\rho_\mfM^X.
\]
Now
\[
(\Upsilon_{\mfM, X}\diamond\Id_X)(q^A_{\mfM, \mfC}\diamond\Id_X)
=\nu_\mfM\diamond\Id_X\ot\Id_X,
\]
so we can conclude that $\rho^\mfC_\mfM$ is coassociative if and only if \equref{c1} holds. 
\end{proof}

The above results lead us to our next definition.

\begin{definition}
Let $(A, X)$ be a cowreath in a monoidal category $\Cc$ and let $\Dc$ be a right $\Cc$-category. A 
right generalized entwined module in $\Dc$ over $(A, X)$ is a right module $\mfM$ 
in $\Dc$ over $A$ together with a morphism 
$\rho_\mfM^X: \mfM\ra \mfM\diamond X$ in $\Dc$ satisfying \equref{rAlin}, \equref{c1} and 
\begin{equation}\eqlabel{c2}
\nu_\mfM(\Id_\mfM\diamond f)\rho_\mfM^X=\Id_\mfM.
\end{equation}
$\Dc(\psi, \d, f)_A^X$ is the category of right generalized entwined modules in $\Dc$ 
over $(A, X)$ with morphisms $\vartheta: \mfM\ra \mfN$ in $\Dc_A$ satisying 
\begin{equation}\eqlabel{cm}
\rho^X_\mfN \vartheta=(\nu_\mfN\diamond\Id_X)(\vartheta\diamond \un{\eta}_A\ot \Id_X).
\end{equation}
\end{definition}

Lemmas \ref{le:3.20} and \ref{le:3.21} provide the following description of
the category corepresentations 
over a coring of the form $A\ot X$.

\begin{theorem}\thlabel{genentcoring}
Let $\Dc$ be a right $\Cc$-category and $(A, X)$ a cowreath in $\Cc$ such that $A$ and $X$ 
are (left) $\Dc$-coflat and left coflat objects of $\Cc$. Then the category $\Dc^{\mfC}$ of right 
corepresentations in $\Dc$ over the $A$-coring $\mfC=A\ot X$, is isomorphic 
to $\Dc(\psi, \delta, f)_A^X$.
\end{theorem}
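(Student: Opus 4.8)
The plan is to mimic the proof of \thref{5.4}, with \leref{5.2} replaced by Lemmas \ref{le:3.20} and \ref{le:3.21} together with a short analysis of the counit axiom and of colinear morphisms.

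First I would recall from \deref{coring} that a right corepresentation in $\Dc$ over the $A$-coring $\mfC=A\ot X$ is an object $\mfM\in\Dc_A$ equipped with a morphism $\rho_\mfM^\mfC:\mfM\ra\mfM\diamond_A\mfC$ in $\Dc$ which is right $A$-linear, coassociative and counital for the coring structure $(\un{\Delta}_\mfC,\un{\va}_\mfC)$ built from $(\psi,\d_X,f)$ in \seref{strofcor}. Since $\Upsilon_{\mfM,X}:\mfM\diamond_A(A\ot X)\ra\mfM\diamond X$ is an isomorphism, $\rho_\mfM^\mfC$ is equivalently encoded by $\rho_\mfM^X:=\Upsilon_{\mfM,X}\rho_\mfM^\mfC:\mfM\ra\mfM\diamond X$, and every such $\rho_\mfM^X$ occurs. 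Then \leref{3.20} shows that right $A$-linearity of $\rho_\mfM^\mfC$ is equivalent to \equref{rAlin}, and \leref{3.21} shows that, once $\rho_\mfM^\mfC$ is right $A$-linear, coassociativity is equivalent to \equref{c1}.

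It then remains to translate the counit axiom. The counit of $\mfC$ is $\un{\va}_\mfC=\un{m}_A(\Id_A\ot f)$ by \leref{3.5}, and the counit axiom for a right $\mfC$-comodule reads $\Upsilon_\mfM\widetilde{\un{\va}_\mfC}\rho_\mfM^\mfC=\Id_\mfM$. Writing $\rho_\mfM^\mfC=\Upsilon_{\mfM,X}^{-1}\rho_\mfM^X$, using the description $\Upsilon^{-1}_{\mfM,X}=q^A_{\mfM,A\ot X}(\Id_\mfM\diamond\un{\eta}_A\ot\Id_X)$ from \equref{rel1}, then \equref{coeqmorph} for $\widetilde{\un{\va}_\mfC}$, the identity $\un{\va}_\mfC(\un{\eta}_A\ot\Id_X)=f$, and finally $\Upsilon_\mfM q^A_{\mfM,A}=\nu_\mfM$, a short chain of equalities gives $\Upsilon_\mfM\widetilde{\un{\va}_\mfC}\rho_\mfM^\mfC=\nu_\mfM(\Id_\mfM\diamond f)\rho_\mfM^X$; hence the counit axiom is precisely \equref{c2}. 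Combining these three steps, $\rho_\mfM^\mfC\mapsto\rho_\mfM^X$ is a bijection between right $\mfC$-corepresentation structures on $\mfM$ and right generalized entwined module structures on $\mfM$ over the cowreath $(A,X)$.

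Finally I would treat morphisms. A morphism $\vartheta:\mfM\ra\mfN$ of right $\mfC$-comodules is a morphism in $\Dc_A$ with $\rho_\mfN^\mfC\vartheta=\widehat{\vartheta}\rho_\mfM^\mfC$, where $\widehat{\vartheta}$ is as in \equref{coeqmorph2}. Composing with $\Upsilon_{\mfN,X}$, using $\rho_\mfN^\mfC=\Upsilon_{\mfN,X}^{-1}\rho_\mfN^X$ on the left and, on the right, $\widehat{\vartheta}\,\Upsilon_{\mfM,X}^{-1}=q^A_{\mfN,A\ot X}(\vartheta\diamond\un{\eta}_A\ot\Id_X)$ (from \equref{coeqmorph2} and \equref{rel1}) followed by $\Upsilon_{\mfN,X}q^A_{\mfN,A\ot X}=\nu_\mfN\diamond\Id_X$, the equation becomes exactly \equref{cm}. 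Since all these translations are carried out along the single family of fixed isomorphisms $\Upsilon_{-,X}$, they are compatible with composition and identities, so one obtains a strict isomorphism of categories $\Dc^\mfC\cong\Dc(\psi,\d,f)_A^X$, not merely an equivalence. The computational heart of the argument is already contained in \leref{3.21}; the only genuine care needed beyond that is keeping track of the canonical isomorphisms $\Upsilon_\mfM,\Upsilon_{\mfM,X}$ and the coequalizers $q^A_{-,-}$, exactly as elsewhere in this section, so I do not expect a real obstacle.
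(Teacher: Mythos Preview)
Your proposal is correct and follows essentially the same route as the paper: the paper's own proof simply invokes Lemmas~\ref{le:3.20} and~\ref{le:3.21} for right $A$-linearity and coassociativity, defines the functor via $\rho_\mfM^\mfC:=\Upsilon_{\mfM,X}^{-1}\rho_\mfM^X$, and records (without writing out the chain of equalities) that $\Upsilon_\mfM\widetilde{\un{\va}_\mfC}\rho_\mfM^\mfC=\Id_\mfM$ iff \equref{c2} holds. You have supplied the details that the paper omits, including the explicit translation of the morphism condition, so your argument is in fact a more complete version of the same proof.
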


\begin{proof}
The isomorphism is given the functor ${\cal F}: \Dc(\psi, \d_X, f)_A^X\ra \Dc^{\mfC}$ defined 
as follows. For $\mfM\in \Dc(\psi, \d_X, f)_A^X$, ${\cal F}(\mfM)=\mfM$ as modules in $\Dc$ over $A$, with right $\mfC$-comodule structure 
$\rho_\mfM^\mfC:=\Upsilon_{\mfM, X}^{-1}\rho_\mfM^X$. Note that 
$\Upsilon_\mfM\widetilde{\va_\mfC}\rho_\mfM^\mfC=\Id_\mfM$ if and only if  \equref{c2} is satisfied. 
\end{proof}

%%%%%%%%%%%%%%%%%%%%%%%%%%%%%%%%%%%%%%%%%%%%%%%%%%%%%%
\section{Examples of wreaths and cowreaths arising from actions and coactions of Hopf algebras 
and their generalizations}\selabel{exps}
%%%%%%%%%%%%%%%%%%%%%%%%%%%%%%%%%%%%%%%%%%%%%%%%%%%%%
\setcounter{equation}{0}
In this final Section, we discuss a series of examples, coming from
quasi-bialgebras, dual quasi-bialgebras, bialgebroids and weak bialgebras. 
%%%%%%%%%%%%%%%%%%%%%%%%%%%%%%%%%%%%%%%%%%%%%%%%%%%%%%%%%%%%%%%%%
\subsection{Quasi-bialgebras}\selabel{qba}
%%%%%%%%%%%%%%%%%%%%%%%%%%%%%%%%%%%%%%%%%%%%%%%%%%%%%%%%%%%%%%%%%
Our first aim is to provide
examples of (co)wreaths of the form $(A, X)$ with $X$ as a 0-cell in ${\cal T}_A^\#$ 
rather than in ${\cal T}_A$. Such examples can be produced using actions and
coactions of quasi-bialgebras and their duals.
For the definition of a quasi-bialgebra $H$ we invite the reader to consult 
\cite{kas, maj}. We note that we adopt the following convention:
 the tensor components of the reassociator $\Phi$ of $H$ are denoted 
by capital letters,
\[
\Phi=X^1\ot X^2\ot X^3\in H\ot H\ot H
\]
and the components of the inverse $\Phi^{-1}$ are denoted by small letters,
\[
\Phi^{-1}=x^1\ot x^2\ot x^3\in H\ot H\ot H.
\]
Let $H$ be a quasi-bialgebra, and let $(\mfA, \rho, \Phi_\rho)$ be a right $H$-comodule algebra, 
as defined in \cite{hn}. For the right $H$-coaction $\rho$ on $\mfA$, we use the
notation $\rho(\mfa)=\mfa_{\le 0\ri}\ot \mfa_{\le 1\ri}\in \mfA\ot H$.
Let ${\cal A}$ be an $H$-bimodule algebra, that is an algebra in the monoidal category of 
$H$-bimodules. According to \cite{bpvo}, $\mfA\ov{\#} {\cal A}$ is a left $H$-module algebra,
that is an algebra in the monoidal category of left $H$-modules ${}_H{\cal M}$.
The multiplication is given by the formula
\[
(\mfa\ov{\#} \v)(\mfa'\ov{\#} \v')=\mfa \mfa'_{\le 0\ri}\tilde{x}^1_\rho \ov{\#} (\v\cdot \mfa'_{\le 1\ri}\tilde{x}^2_\rho)
(\v'\cd \tilde{x}^3_\rho),
\]
where $\Phi^{-1}_\rho:=\tilde{x}^1_\rho\ot \tilde{x}^2_\rho\ot \tilde{x}^3_\rho$ is the inverse of $\Phi_\rho$ 
in the tensor product algebra $\mfA\ot H\ot H$ and $\cdot$ denotes the right action of $H$ on ${\cal A}$, 
the unit is $1_\mfA\ov{\#} 1_{\cal A}$, and the action of $H$ on $\mfA\ov{\#} {\cal A}$ is given by 
$h\cdot (\mfa\ov{\#} \v)=\mfa \ov{\#}h\cdot \v$.
$\mfA\ov{\#} {\cal A}$ is called the generalized quasi-smash product of $\mfA$ and ${\cal A}$. 
In the case where ${\cal A}=B$ is a right 
$H$-module algebra, considered as an $H$-bimodule algebra with trivial left $H$-action
obtained by restriction of scalars via the counit $\varepsilon$,
$\mfA\ov{\#} B$ reduces to the right generalized smash product of $\mfA$ and $B$, as 
introduced in \cite{bpvo}.

\begin{proposition}\prlabel{wpqHa}
Let $H$ be a quasi-bialgebra, $(\mfA, \rho, \Phi_\rho)$ a right $H$-comodule algebra and 
${\cal A}$ an $H$-bimodule algebra. Then $({\cal A}, \mfA)$ is a wreath in ${}_H{\cal M}$ with
\begin{itemize}
\item[$\bullet$] $\mfA$ considered as a left $H$-module in a trivial way;
\item[$\bullet$] $\psi: {\cal A}\ot \mfA\ra \mfA\ot {\cal A}$, $\psi(\v \ot \mfa)=\mfa_{\le 0\ri}\ot \v\cdot \mfa_{\le 1\ri}$;
\item[$\bullet$] $\zeta: {\cal A}\ot {\cal A}\ra \mfA\ot {\cal A}$, $\zeta(\v \ot \v')=\tilde{x}^1_\rho\ot 
(\v \cdot \tilde{x}^2_\rho)(\v'\cdot \tilde{x}^3_\rho)$; 
\item[$\bullet$] $\sigma: k\ra \mfA\ot {\cal A}$, $\sigma(1)=1_\mfA\ot 1_{\cal A}$.  
\end{itemize}
Furthermore, the resulting wreath product is the generalized quasi-smash product
$\mfA\ov{\#} {\cal A}$. 
\end{proposition}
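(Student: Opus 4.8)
The plan is to verify the hypotheses of \prref{5.1}: that $\mfA$, equipped with the trivial left $H$-action, is an algebra in ${}_H{\cal M}$, that $({\cal A},\psi)$ is an object of ${\cal T}^\#_\mfA=EM({}_H{\cal M})(\mfA)$, and that $\zeta$ and $\sigma$ endow it with an algebra structure there; equivalently, by \prref{3.18}(i) and the dictionary established in the proof of \prref{5.1}, that $\psi$, $\zeta$, $\sigma$ are morphisms in ${}_H{\cal M}$ satisfying \equref{ta}, \equref{apdf1} and \equref{apdf2}. Thus $\mfA$ plays the role of the base algebra $A$ and ${\cal A}$ the role of the $1$-cell $X$. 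That $\mfA$ with the trivial action is an algebra in ${}_H{\cal M}$ is immediate. For the $H$-linearity of $\psi$, $\zeta$, $\sigma$: since $H$ acts trivially on $\mfA$, the tensor actions are $h\cd(\v\ot \mfa)=h\cd \v\ot \mfa$ and $h\cd(\mfa\ot \v)=\mfa\ot h\cd \v$, so linearity of $\psi$ reduces to $(h\cd \v)\cd \mfa_{\le 1\ri}=h\cd(\v\cd \mfa_{\le 1\ri})$, which holds because the two $H$-actions on the bimodule ${\cal A}$ commute; for $\zeta$ and $\sigma$ one uses in addition that ${\cal A}$ is an $H$-module algebra (its product is $H$-linear and $h\cd 1_{\cal A}=\va(h)1_{\cal A}$).

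The core of the proof is then to write each of \equref{ta}, \equref{apdf1}, \equref{apdf2} out on elements and recognise it as an axiom of the right $H$-comodule algebra $(\mfA,\rho,\Phi_\rho)$ over the quasi-bialgebra $H$ (see \cite{hn}), using the $H$-module algebra structure of ${\cal A}$ to push the $H$-components of $\rho$ and of $\Phi^{-1}_\rho$ through the product of ${\cal A}$, after which the identity lives in $\mfA\ot H^{\ot n}$. Explicitly: \equref{ta} reduces to $\rho$ being an algebra map together with $\rho(1_\mfA)=1_\mfA\ot 1_H$; the first equality in \equref{apdf2} reduces to counitality of $\rho$; the last two equalities in \equref{apdf2} reduce to $\rho(1_\mfA)=1_\mfA\ot 1_H$, counitality of $\rho$, and the normalizations $(\Id_\mfA\ot \va\ot \Id_H)\Phi^{-1}_\rho=(\Id_\mfA\ot \Id_H\ot \va)\Phi^{-1}_\rho=1_\mfA\ot 1_H$; the first equality in \equref{apdf1} follows, after multiplying through by $\Phi^{-1}_\rho$, from the $\Phi_\rho$-twisted coassociativity of $\rho$; and the second equality in \equref{apdf1} is the $3$-cocycle (``pentagon'') relation satisfied by $\Phi_\rho$.

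There is a shorter route to the same conclusion. Since \cite{bpvo} already shows that $\mfA\ov{\#}{\cal A}$ is an associative unital algebra in ${}_H{\cal M}$, and a short computation with the counit identities above shows that $\mfa\mapsto \mfa\ov{\#}1_{\cal A}$ is an algebra map whose induced left $\mfA$-module structure is exactly $\un{m}_\mfA\ot \Id_{\cal A}$, it follows that $\mfA\ov{\#}{\cal A}$ is an $\mfA$-ring of the form $\mfA\ot {\cal A}$; by \thref{4.2} (with $\Dc=\Cc={}_H{\cal M}$) and \prref{5.1} it is therefore a wreath with base $\mfA$ and $1$-cell ${\cal A}$, and its structure morphisms, recovered from the multiplication through \leref{TaSchstr}, \prref{3.18}(i) and \equref{psideltwreath}, turn out (another short computation with $\rho(1_\mfA)=1_\mfA\ot 1_H$ and the $\Phi^{-1}_\rho$ normalizations) to be the $\psi$, $\zeta$, $\sigma$ in the statement. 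For the last assertion, substituting $\psi$, $\zeta$, $\sigma$ into \equref{multwpr} gives precisely $(\mfa\ov{\#}\v)(\mfa'\ov{\#}\v')=\mfa \mfa'_{\le 0\ri}\tilde{x}^1_\rho\ov{\#}(\v\cd \mfa'_{\le 1\ri}\tilde{x}^2_\rho)(\v'\cd \tilde{x}^3_\rho)$; the unit $\sigma(1)=1_\mfA\ot 1_{\cal A}$ is the unit of $\mfA\ov{\#}{\cal A}$; and the $H$-action $\mfa\ov{\#}\v\mapsto \mfa\ov{\#}h\cd \v$ coincides with the tensor-product action on $\mfA\ot {\cal A}$ because $H$ acts trivially on $\mfA$; hence the wreath product is $\mfA\ov{\#}{\cal A}$.

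I expect the main obstacle to be the bookkeeping with the reassociator $\Phi_\rho$: the associativity condition, i.e. the second equality in \equref{apdf1}, is the one that genuinely requires the $3$-cocycle axiom and is the most delicate to unwind, and throughout one must keep careful track of which of $\mfA$, ${\cal A}$ plays the role of the base algebra $A$ and which the role of $X$ (here $A=\mfA$, $X={\cal A}$), as well as of the relative placement of the coaction $\rho$ on $\mfA$ and the $H$-action on ${\cal A}$. Taking the shortcut through \cite{bpvo} trades this laborious verification for the two elementary checks that $\mfA\ov{\#}{\cal A}$ is an $\mfA$-ring and that reading off its structure morphisms reproduces $\psi$, $\zeta$, $\sigma$.
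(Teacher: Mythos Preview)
Your main approach---verifying directly that $\psi$, $\zeta$, $\sigma$ are morphisms in ${}_H{\cal M}$ satisfying \equref{ta}, \equref{apdf1}, \equref{apdf2} by reducing each to an axiom of the right $H$-comodule algebra $(\mfA,\rho,\Phi_\rho)$ and the $H$-bimodule algebra ${\cal A}$---is exactly the paper's proof, with the same identifications (e.g.\ \equref{ta} from multiplicativity of $\rho$, the first equality in \equref{apdf1} from twisted coassociativity, the second from the $3$-cocycle on $\Phi_\rho$ together with the $\Phi$-twisted associativity of ${\cal A}$). Your additional ``shorter route'' via \cite{bpvo}, \thref{4.2} and \equref{psideltwreath} is a valid alternative the paper does not take: it outsources the hardest check (the second equality in \equref{apdf1}) to the associativity of $\mfA\ov{\#}{\cal A}$ already established in \cite{bpvo}, at the price of the two small computations you mention (that $\mfa\mapsto \mfa\ov{\#}1_{\cal A}$ induces the left $\mfA$-action $\un{m}_\mfA\ot\Id_{\cal A}$, and that reading off the structure via \equref{psideltwreath} returns the given $\psi$, $\zeta$, $\sigma$).
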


\begin{proof}
The associativity constraint on ${}_H{\cal M}$ is given by left action by the tensor components of $\Phi$. In several situations below, we have left $H$-modules with trivial action,
and then we can freely omit the parentheses.
With this observation in mind, we have the following.\\
1) \equref{ta} follows from the fact that $\rho$ is an algebra morphism and 
${\cal A}$ is a right $H$-module;\\
2) the first equality in \equref{apdf1} comes out as 
\[
\tilde{x}^1_\rho\mfa_{\le 0\ri}\ot (\v\cdot \tilde{x}^2_\rho\mfa_{\le 1\ri_1})(\v'\cdot \tilde{x}^3_\rho\mfa_{\le 1\ri_2})
=\mfa_{\le 0\ri_{\le 0\ri}}\tilde{x}^1_\rho \ot (\v\cdot a_{\le 0\ri_{\le 1\ri}}\tilde{x}^2_\rho)
(\v'\cdot \mfa_{\le 1\ri}\tilde{x}^3_\rho), 
\]
and follows from the coassociativity of the $H$-coaction $\rho$ on $\mfA$;\\
3) the second equality in \equref{apdf1} reduces to  
\begin{eqnarray*}
&&\hspace*{-1.5cm}
(\tilde{x}^1_\rho)_{\le 0\ri}\tilde{y}^1_\rho\ot (\v \cdot (\tilde{x}^1_\rho)_{\le 1\ri}\tilde{y}^2_\rho)
[(\v'\cdot \tilde{x}^2_\rho(\tilde{y}^3_\rho)_1)(\v{''}\cdot \tilde{x}^3_\rho(\tilde{y}^3_\rho)_2)]\\
&=&\tilde{x}^1_\rho\tilde{y}^1_\rho\ot [(x^1\cdot \v\cdot \tilde{x}^2_\rho(\tilde{y}^2_\rho)_1)(x^2\cdot 
\v'\cdot\tilde{x}^3_\rho(\tilde{y}^3_\rho)_2)](x^3\cdot \v{''}\cdot \tilde{y}^3),
\end{eqnarray*}
where $\tilde{y}^1_\rho\ot \tilde{y}^2_\rho\ot \tilde{y}^3_\rho$ 
is a second copy of $\Phi^{-1}_\rho$.
It follows from the associativity of the multiplication on ${\cal A}$ in ${}_H{\cal M}_H$ (which is modulo  
the conjugation by $\Phi$), and from the $3$-cocycle condition on $\Phi_\rho$;\\
4) \equref{apdf2} is trivially satisfied.\\
Using \equref{multwpr} we can easily verify that the corresponding wreath product 
$\mfA\#_{\psi, \zeta, \sigma}{\cal A}$ is precisely the generalized quasi-Hopf smash product 
$\mfA\ov{\#}{\cal A}$.
\end{proof}

\begin{corollary}
Let $H$ be a quasi-bialgebra, let $(\mfA, \rho, \Phi_\rho)$ be a right $H$-comodule algebra 
and let $B$ be a right $H$-module algebra. Then $(\mfA, B)$ is a wreath in ${}_k{\cal M}$ and 
the corresponding wreath product is the right generalized smash product of 
$\mfA$ and $B$. 
\end{corollary}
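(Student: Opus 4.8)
The plan is to deduce the Corollary as the special case of \prref{wpqHa} in which the $H$-bimodule algebra ${\cal A}$ is taken to be the right $H$-module algebra $B$, viewed as an $H$-bimodule algebra via the trivial left $H$-action obtained by restriction of scalars along the counit $\varepsilon$ (exactly as recalled in the paragraph preceding \prref{wpqHa}). Once this identification is in place, \prref{wpqHa} immediately produces a wreath $(B, \mfA)$ in ${}_H{\cal M}$; the remaining work is just to transport this along the forgetful (monoidal) functor ${}_H{\cal M}\to{}_k{\cal M}$ and to check that the wreath product it yields is precisely the right generalized smash product $\mfA\ov{\#}B$ of \cite{bpvo}.

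First I would spell out the structure maps that \prref{wpqHa} assigns in this case. The map $\psi\colon B\ot\mfA\to\mfA\ot B$ is unchanged, $\psi(\v\ot\mfa)=\mfa_{\le 0\ri}\ot\v\cdot\mfa_{\le 1\ri}$. In the formula for $\zeta(\v\ot\v')=\tilde{x}^1_\rho\ot(\v\cdot\tilde{x}^2_\rho)(\v'\cdot\tilde{x}^3_\rho)$ one now uses that the left $H$-action on $B$ is trivial, so the conjugation by the reassociator that appears in the general $H$-bimodule case collapses and $\zeta$ becomes the multiplication map of the right generalized smash product. The unit $\sigma\colon k\to\mfA\ot B$ is still $\sigma(1)=1_\mfA\ot 1_B$. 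Then I would invoke \prref{wpqHa} verbatim: conditions \equref{ta}, the two equalities in \equref{apdf1}, and \equref{apdf2} hold because $\rho$ is an algebra morphism, $B$ is a right $H$-module algebra, $\rho$ is coassociative, and $\Phi_\rho$ satisfies the $3$-cocycle condition — these are exactly the verifications carried out in the proof of \prref{wpqHa}, and nothing new is needed.

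Next I would pass to ${}_k{\cal M}$. Since the underlying-vector-space functor ${}_H{\cal M}\to{}_k{\cal M}$ is strong monoidal, a wreath in ${}_H{\cal M}$ is in particular a wreath in ${}_k{\cal M}$, so $(\mfA, B)$ — equivalently, by \prref{5.1}, $B$ regarded as a $0$-cell of ${\cal T}_{\mfA}^\#$ via $\psi$, $\zeta$, $\sigma$ above — is a wreath in ${}_k{\cal M}$. Finally, using \equref{multwpr} I would write out the multiplication on the corresponding wreath product $\mfA\#_{\psi,\zeta,\sigma}B=\mfA\ot B$ and compare it term by term with the multiplication of the right generalized smash product as defined in \cite{bpvo}; the coincidence is the same computation already indicated at the end of the proof of \prref{wpqHa}, now with trivial left $H$-action on $B$.

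Honestly there is no real obstacle here: the Corollary is a direct specialization, and the only thing to be slightly careful about is keeping track of where the reassociator $\Phi$ of $H$ genuinely acts versus where it disappears because the left action on $B$ factors through $\varepsilon$. I would therefore present the argument as a short paragraph: restrict scalars to view $B$ as an $H$-bimodule algebra, apply \prref{wpqHa}, note that the resulting $\zeta$ is the generalized smash product multiplication, and conclude via \prref{5.1} and the strong monoidality of the forgetful functor that $(\mfA,B)$ is a wreath in ${}_k{\cal M}$ whose wreath product is $\mfA\ov{\#}B$.
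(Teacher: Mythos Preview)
Your overall strategy matches the paper's: specialize \prref{wpqHa} to the case where ${\cal A}=B$ carries the trivial left $H$-action, and observe that the resulting wreath product is the right generalized smash product. The paper's proof is essentially a one-liner to this effect.

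There is, however, a genuine slip in your transport step. You write that ``the underlying-vector-space functor ${}_H{\cal M}\to{}_k{\cal M}$ is strong monoidal''. For a quasi-bialgebra with nontrivial reassociator $\Phi$ this is false: the associativity constraint in ${}_H{\cal M}$ is conjugation by $\Phi$, whereas in ${}_k{\cal M}$ it is the identity, so the forgetful functor does not intertwine them. Hence you cannot conclude in general that a wreath in ${}_H{\cal M}$ is a wreath in ${}_k{\cal M}$ by forgetting.

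The correct observation is the one implicit in the paper's remark that ``the resulting wreath product is a $k$-algebra since the left $H$-action on $B$ is trivial'': both $\mfA$ (by construction in \prref{wpqHa}) and $B$ (by hypothesis) carry the trivial left $H$-action, so every tensor product of copies of $\mfA$ and $B$ is again a trivial $H$-module. On trivial modules the associator of ${}_H{\cal M}$ acts as the identity (by normalization of $\Phi$), so the wreath axioms \equref{ta}, \equref{apdf1}, \equref{apdf2} read identically in ${}_H{\cal M}$ and in ${}_k{\cal M}$. Equivalently, the wreath lives in the image of the (genuinely strong monoidal) functor ${}_k{\cal M}\to{}_H{\cal M}$ given by restriction along $\varepsilon$. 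Replace your forgetful-functor sentence by this argument and the proof goes through.
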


\begin{proof}
This follows from the comments preceding \prref{wpqHa}. Observe that the wreath 
structure of $(\mfA, B)$ is precisely as in the statement of \prref{wpqHa}, and that 
the resulting wreath product is a $k$-algebra since the left $H$-action on $B$ is trivial.  
\end{proof}

In \cite[Example 3.3]{LackRoss} it is shown that Sweedler's crossed product of Hopf algebras 
\cite{sw2} is a particular example of a wreath product. For quasi-Hopf algebra we don't have yet 
such a construction. Nevertheless, it can be considered in the dual case \cite{balan}, and as we 
will next see it is a particular case of a wreath product as well. 

We end this subsection with two examples of cowreaths $(A, X)$ for which $X$ has to be considered  
in ${\cal T}_A^\#$. 

\begin{proposition}\prlabel{expnontrivcowr}
Let $H$ be a quasi-bialgebra, let $(\mfA, \rho, \Phi_\rho)$ be a right $H$-comodule algebra and 
let $C$ be an $H$-bimodule coalgebra, that is a coalgebra in the monoidal category of $H$-bimodules.  
Then $(\mfA, C)$ is a cowreath in ${}_H{\cal M}$ via the following structure:

$\bullet$ $\mfA$ is a left $H$-module via the counit of $H$; 

$\bullet$ $\psi: C\ot \mfA\ra \mfA\ot C$, $\psi(c\ot \mfa)=\mfa_{\le 0\ri}\ot c\cdot \mfa_{\le 1\ri}$; 

$\bullet$ $\d: C\ra \mfA\ot C\ot C$, $\d(c)=\tilde{X}^1_\r\ot \una\cdot \tilde{X}^2_\r\ot \unb\cdot \tilde{X}^3_\r$, 
where $\un{\Delta}(c)=\una\ot \unb$ is our Sweedler notation for the comultiplication on $C$; 

$\bullet$ $f: C\ra A$, $f(c)=\une(c)1$, where $\une$ is the counit of $C$.    

The category of right corepresentations of the 
induced $\mfA$-coring $\mfA\ot C$ in ${}_H{\cal M}$ is isomorphic to the category 
${}_H{\cal M}_\mfA^C$ of 
quasi-Hopf $(H, \mfA)$-bimodules over $C$.   
\end{proposition}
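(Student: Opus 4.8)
The plan is to deduce both statements from the general theory developed above, reducing everything to a handful of diagrammatic identities in ${}_H{\cal M}$ that follow at once from the comodule algebra axioms for $(\mfA, \r, \Phi_\r)$ and from the fact that $C$ is a coalgebra in the category of $H$-bimodules. (The coflatness hypotheses needed for \prref{5.1} and \thref{genentcoring} are automatic here, since the tensor product of ${}_H{\cal M}$ is $\ot_k$.) Throughout I keep in mind that $\mfA$ is a left $H$-module via $\varepsilon$, so the associativity constraint of ${}_H{\cal M}$ is the identity on every tensorand that is a copy of $\mfA$; hence the reassociator $\Phi$ enters the computations only through the $H$-tensorands of $C$ and through the components $\tilde{X}^1_\r\ot \tilde{X}^2_\r\ot \tilde{X}^3_\r$ of $\Phi_\r$ occurring in $\d$.

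For part (i), by \prref{5.1} it suffices to check that $(C, \psi)$ is a $0$-cell of ${\cal T}_\mfA^\#$ and that $\d$ and $f$ make it a coalgebra there; equivalently, using the translation employed in the proof of \thref{4.2}, that the conditions \equref{ta}, \equref{pdf1}, \equref{pdf2} (equivalently \equref{e1}, \equref{e2}, \equref{rightAlin}, \equref{Deltacoass}, \equref{counitAbilin}, \equref{counit1}, \equref{counit2} of \prref{3.18}(ii)) hold for the maps $\psi$, $\d$, $f$ of the statement. I would verify them in this order: \equref{ta} amounts to $\r$ being multiplicative and unital together with $C$ being a right $H$-module; right $\mfA$-linearity of $\un{\Delta}_\mfC$ (the first equality in \equref{pdf1}) follows from coassociativity of $\r$ and $H$-bilinearity of $\un{\Delta}_C$; coassociativity of $\un{\Delta}_\mfC$ (the second equality in \equref{pdf1}) follows from the $3$-cocycle identity for $\Phi_\r$ together with coassociativity of $\un{\Delta}_C$ and associativity of the comultiplication of $C$ modulo conjugation by $\Phi$; right $\mfA$-linearity of $\un{\va}_\mfC$ together with the two counit conditions reduce to the counit axiom for $\un{\Delta}_C$ and the normalisation of $\Phi_\r$. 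This yields the cowreath, and the explicit form of the induced $\mfA$-coring $\mfA\ot C$ is read off from \prref{3.18}.

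For part (ii) I would invoke \thref{genentcoring}: the category ${}_H{\cal M}^{\mfC}$ of right corepresentations over $\mfC=\mfA\ot C$ is isomorphic, via the functor ${\cal F}$ of that theorem, to ${}_H{\cal M}(\psi, \d, f)_\mfA^C$, and it remains to unwind this last category for the present cowreath. An object is a left $H$-module $\mfM$ with an $H$-linear right $\mfA$-action and an $H$-linear map $\rho_\mfM^C:\mfM\ra \mfM\ot C$, written $\rho_\mfM^C(m)=m_{(0)}\ot m_{(1)}$; substituting the explicit $\psi$, $\d$, $f$, condition \equref{rAlin} becomes $\rho_\mfM^C(m\cdot \mfa)=m_{(0)}\cdot \mfa_{\le 0\ri}\ot m_{(1)}\cdot \mfa_{\le 1\ri}$, condition \equref{c1} becomes the coassociativity of $\rho_\mfM^C$ modulo $\Phi_\r$, and \equref{c2} becomes the counit property $(\Id_\mfM\ot \un{\va}_C)\rho_\mfM^C=\Id_\mfM$; finally \equref{cm} is precisely the colinearity demanded of a morphism of ${}_H{\cal M}_\mfA^C$. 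Thus ${}_H{\cal M}(\psi, \d, f)_\mfA^C={}_H{\cal M}_\mfA^C$ and ${\cal F}$ gives the asserted isomorphism of categories.

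The main obstacle I expect is the verification of \equref{Deltacoass}: both sides have to be expanded into elements of $\mfA\ot C\ot C\ot C$, carrying along two copies of $\Phi_\r$, the reassociator $\Phi$ acting on the $C$-tensorands and the comultiplication $\un{\Delta}_C$, and then matched by a single application of the $3$-cocycle condition on $\Phi_\r$. This is the quasi-Hopf counterpart of an ordinary coassociativity check, and it is where all the coherence data must be balanced simultaneously; the remaining identities in (i) and the identification in (ii) are routine once the dictionary between the diagrammatic conditions of \prref{3.18} and the element-wise comodule-algebra axioms is in place.
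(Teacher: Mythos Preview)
Your proposal is correct and follows essentially the same route as the paper: verify the cowreath axioms of \prref{3.18}(ii) / \prref{5.1} for the given $\psi$, $\d$, $f$ using the comodule algebra axioms and the bimodule coalgebra structure of $C$, then invoke \thref{genentcoring} and unwind the resulting entwined-module conditions to recognize ${}_H{\cal M}_\mfA^C$. The paper spends a little more space on the categorical infrastructure of ${}_H{\cal M}$ (existence of coequalizers, description of $\ot_\mfA$, the identification ${}_\mfA({}_H{\cal M})_\mfA\cong {}_{\mfA\ot H}{\cal M}_\mfA$) and on checking that $\psi$, $\d$, $f$ are left $H$-linear, but your remark that coflatness is automatic and that $\Phi$ acts trivially on $\mfA$-tensorands covers this; conversely, you are slightly more careful in noting that the right $H$-linearity of $\un{\Delta}_C$ is needed for the first equality in \equref{pdf1}, which the paper leaves implicit.
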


\begin{proof}
$\psi$ is left $H$-linear since $C$ is an $H$-bimodule.
(\ref{eq:e1}-\ref{eq:e2}) follow from the fact that $\r: \mfA\ra \mfA\ot H$ is an 
algebra morphism in ${}_k{\cal M}$. The map $\d$ is left $H$-linear because $\un{\Delta}$ is
left $H$-linear and the left $H$-action on $\mfA$ is trivial.

Write $\Phi_\r=\tilde{X}^1_\r\ot \tilde{X}^2_\rho\ot \tilde{X}^3_\r$. The first equation in 
\equref{apdf1} comes out as 
\begin{equation}\eqlabel{rightAlinspecquasi}
\tilde{X}^1_\rho\mfa_{{\le 0\ri}_{\le 0\ri}}\ot \una\cdot \tilde{X}^2_\rho\mfa_{{\le 0\ri}_{\le 1\ri}}\ot 
\unb\cdot \tilde{X}^3_\r\mfa_{\le 1\ri}=
\mfa_{\le 0\ri}\tilde{X}^1_\rho\ot \una\cdot \mfa_{{\le 1\ri}_1}\tilde{X}^2_\rho\ot \unb\cdot 
\mfa_{\le 1\ri}\tilde{X}^3_\rho,
\end{equation}
for all $\mfa\in \mfA$ and $c\in C$, and follows because of the coassociativity of $\rho$. 
 
The second equation in \equref{apdf1} takes the form 
\begin{eqnarray*}
&&\hspace*{-5mm}
\tilde{X}^1_\r\tilde{Y}^1_\r\ot \left(X^1\cdot c_{(\un{1}, \un{1})}\cdot (\tilde{X}^2_\rho)_1\tilde{Y}^2\ot 
(X^2\cdot c_{(\un{1}, \un{2})}\cdot (\tilde{X}^2_\r)_2\tilde{Y}^3_\r\ot 
X^3\cdot \unb\cdot \tilde{X}^3_\rho)\right)\\
&&\hspace*{5mm}=
\tilde{X}^1(\tilde{Y}^1_\rho)_{\le 0\ri}\ot\left( \una\cdot \tilde{X}^2_\r(\tilde{Y}^1_\rho)_{\le 1\ri}\ot (c_{(\un{2}, \un{1})}\cdot 
(\tilde{X}^3_\rho)_1\tilde{Y}^2_\rho\ot c_{(\un{2}, \un{2})}\cdot (\tilde{X}^3_\rho)_2\tilde{Y}^3_\rho)\right),
\end{eqnarray*}
where $\Phi_\r=\tilde{Y}^1_\r\ot \tilde{Y}^2_\r\ot \tilde{Y}^3_\r$ is 
a second copy of $\Phi_\r$, and follows by applying the coassociativity of $\un{\Delta}$ and the 
$3$-cocycle condition on $\Phi_\r$.\\
Finally, it follows from the fact that $\une$ is left $H$-linear that $f$ is a morphism in ${}_H{\cal M}$.  
\equref{apdf2} follows immediately from the normality of $\Phi_\r$ and the counit 
property of $\rho$, we leave the verification of these details to the reader.\\
We now prove the second assertion. The monoidal category ${}_H{\cal M}$
of left modules over a quasi-bialgebra $H$ has coequalizers: the coequalizers
of two parallel morphisms $f,g:\ M\to N$ in ${}_H{\cal M}$ 
is the pair $({\rm Coker}(f - g), q)$, where 
$q: N\ra {\rm Coker}(f - g)$ is the canonical surjection. Let $A$ be an algebra
in ${}_H{\cal M}$, and take $M\in ({}_H{\cal M})_A$ and 
$N\in {}_A({}_H{\cal M})$. The tensor product $M\ot_A N$ in ${}_H{\cal M}$
is the quotient of $M\ot N$ over the subobject of $M\ot N$ spanned 
by the elements of the form 
\[
m\triangleleft a\ot n - X^1\blacktriangleright m\ot (X^2\cdot a)\tr (X^3\blacktriangleright n)
\]     
with $m\in M$, $a\in A$ and $n\in N$. It is also clear that all objects of the category ${}_H{\cal M}$ are left and right  coflat.\\
Note that giving a left $\mfA$-module in ${}_H{\cal M}$ is equivalent to giving a left $H$-module 
$M\in {}_k{\cal M}$ which is also a left $\mfA$-module in ${}_k{\cal M}$, and such that 
$\mfa(hm)=h(\mfa m)$, for all 
$\mfa\in \mfA$, $h\in H$ and $m\in M$.
In a similar way, a right $\mfA$-module in ${}_H{\cal M}$ is a left $H$-module that is
also a 
right $\mfA$-module such that $h(m\mfa)=(hm)\mfa$, for all $\mfa\in \mfA$, $h\in H$ 
and $m\in M$.
It follows that a module with the structure of a left and right $\mfA$-module in ${}_H{\cal M}$
is an $\mfA$-bimodule in ${}_H{\cal M}$ if it is an $\mfA$-bimodule in 
${}_k{\cal M}$. In other words, $M$ is an $\mfA$-bimodule in ${}_H{\cal M}$ if and only if $M$ is an 
$(\mfA\ot H, \mfA)$-bimodule in ${}_k{\cal M}$, where $\mfA\ot H$ has the tensor product 
algebra structure in ${}_k{\cal M}$. It is also easy to see that the 
tensor product over $\mfA$ in ${}_H{\cal M}$ is nothing else then the usual tensor product over $\mfA$ in 
${}_k{\cal M}$ endowed with the $H$-module structure given by the comultiplication $\Delta$ of $H$. 
Summarizing our results, we have that ${}_\mfA({}_H{\cal M})_\mfA\cong {}_{\mfA\ot H}{\cal M}_\mfA$ are monoidal categories with 
tensor product taken over $\mfA$ in ${}_k{\cal M}$.\\
Applying \thref{genentcoring}, we obtain that the categories $({}_H{\cal M})^\mfC$ and 
${}_H{\cal M}(\psi, \d, f)_\mfA^C$ are isomorphic. 
The objects of ${}_H{\cal M}(\psi, \d, f)_\mfA^C$ are $(H, \mfA)$-bimodules $\mfM$ together with a left $H$-linear map 
$\r_\mfM^C: \mfM\ni m\mapsto \mfm_{[0]}\ot \mfm_{[1]}\in \mfM\ot C$ satisfying
(\ref{eq:rAlin}, \ref{eq:c1}) and $\une(\mfm_{[1]})\mfm_{[0]}=\mfm$.
More precisely, the left $H$-linearity means that 
$\r^C_\mfM(h\cdot \mfm)=
h_1\cdot \mfm_{[0]}\ot h_2\cdot \mfm_{[1]}$, for all $h\in H$ and $\mfm\in \mfM$. 
\equref{rAlin} says that    
\[
\rho^C_\mfM(m\cdot \mfa)=\mfm_{[0]}\cdot \mfa_{\le 0\ri}\ot \mfm_{[1]}\cdot \mfa_{\le 1\ri},
\]
for all $\mfm\in \mfM$ and $\mfa\in \mfA$.
Keeping the monoidal structure of ${}_H{\cal M}$ in mind, \equref{c1} reduces to 
\[
X^1\cdot \mfm_{[0]_{[0]}}\ot X^2\cdot \mfm_{[0]_{[1]}}\ot X^3\cdot \mfm_{[1]}=
\mfm_{[0]}\cdot\tilde{X}^1_\rho\ot \mfm_{[1]_{\un{1}}}\cdot \tilde{X}^2_\r\ot \mfm_{[1]_{\un{2}}}\cdot \tilde{X}^3_\r,
\]
for all $\mfm\in \mfM$. 
The morphisms in ${}_H{\cal M}(\psi, \d, f)_\mfA^C$ are the $(H, \mfA)$-bimodule morphisms that are right 
$C$-colinear, so ${}_H{\cal M}(\psi, \d, f)_\mfA^C$ is  the category of quasi-Hopf 
$(H, \mfA)$-bimodules over $C$, ${}_H{\cal M}_\mfA^C$.  
\end{proof}

Every right $H$-module coalgebra is an $H$-bimodule coalgebra,
with trivial left $H$-action. If we apply \prref{expnontrivcowr} to a right
$H$-module coalgebra, then we obtain the following result.

\begin{corollary}
Let $H$ be a quasi-bialgebra, let $(\mfA, \rho, \Phi_\rho)$ be a right $H$-comodule algebra and let $C$ be a right 
$H$-module coalgebra. Then with the same structure as in \prref{expnontrivcowr}, $(\mfA, C)$ 
is a cowreath in ${}_k{\cal M}$. The category of corepresentations over the induced $\mfA$-coring is 
isomorphic to the category of right Doi-Hopf modules ${\cal M}(H)_\mfA^C$ as introduced in \cite{bc}.   
\end{corollary}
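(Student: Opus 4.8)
The plan is to deduce this from \prref{expnontrivcowr} by specialising to trivial left $H$-action, exactly as in the paragraph preceding the statement: a right $H$-module coalgebra $C$ is an $H$-bimodule coalgebra once we let $H$ act trivially on the left via $\va$. First I would record that in $\Cc=\Dc={}_k{\cal M}$ the categorical hypotheses needed for the constructions of \prref{3.18} and \thref{genentcoring} are automatic: coequalisers in ${}_k{\cal M}$ are cokernels and $-\ot_k Y$ is right exact, so all objects of ${}_k{\cal M}$ are left and right coflat, and ${}_k{\cal M}$ is strict monoidal. By \prref{5.1} together with \prref{3.18}(ii), it then suffices to check that the triple $(\psi,\d,f)$ of \prref{expnontrivcowr} satisfies \equref{e1}, \equref{e2}, \equref{rightAlin}, \equref{Deltacoass}, \equref{counitAbilin}, \equref{counit1}, \equref{counit2} inside ${}_k{\cal M}$, and afterwards to unwind the description of corepresentations given by \thref{genentcoring}.

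For the cowreath conditions I would reuse verbatim the computations in the proof of \prref{expnontrivcowr}. There they were carried out in the non-strict category ${}_H{\cal M}$, and the only way the associativity constraint of ${}_H{\cal M}$ entered was through insertions of the tensor components $X^1\ot X^2\ot X^3$ of the reassociator $\Phi$ acting on tensorands which are copies of $C$ or $\mfA$ — see e.g.\ the factors $X^i\cdot(-)$ in the second equation of \equref{apdf1}. Since the left $H$-action on $C$ and on $\mfA$ is now the trivial one, each such $X^i$ is replaced by $\va(X^i)$, and as $\Phi$ is normalised, $(\va\ot\Id\ot\Id)(\Phi)=(\Id\ot\va\ot\Id)(\Phi)=(\Id\ot\Id\ot\va)(\Phi)=1\ot1$, so all these terms disappear. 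Thus the identities established in ${}_H{\cal M}$ in \prref{expnontrivcowr} (such as \equref{rightAlinspecquasi} and its companions there) specialise precisely to the strict identities \equref{ta}, \equref{pdf1} and \equref{pdf2} required for $(\mfA,C)$ to be a cowreath in ${}_k{\cal M}$; hence $(\mfA,C)$ is a cowreath in ${}_k{\cal M}$, with induced $\mfA$-coring $\mfC=\mfA\ot C$.

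For the category of corepresentations I would apply \thref{genentcoring} with $\Dc=\Cc={}_k{\cal M}$, obtaining $({}_k{\cal M})^{\mfC}\cong {}_k{\cal M}(\psi,\d,f)_\mfA^C$, and then identify the latter with ${\cal M}(H)_\mfA^C$. Writing out \equref{rAlin}, \equref{c1}, \equref{c2} for the present $\psi$, $\d$, $f$ — and noting that the reassociator terms present in the ${}_H{\cal M}$-version of \equref{c1} in the proof of \prref{expnontrivcowr} are absent here because $\mfM$ carries no $H$-action — an object of ${}_k{\cal M}(\psi,\d,f)_\mfA^C$ is a right $\mfA$-module $\mfM$ over $k$ equipped with a $k$-linear map $\rho_\mfM^C:\mfM\ni m\mapsto \mfm_{[0]}\ot\mfm_{[1]}\in\mfM\ot C$ satisfying $\rho_\mfM^C(m\cdot\mfa)=\mfm_{[0]}\cdot\mfa_{\le 0\ri}\ot\mfm_{[1]}\cdot\mfa_{\le 1\ri}$, $\une(\mfm_{[1]})\mfm_{[0]}=\mfm$ and
\[
\mfm_{[0]_{[0]}}\ot\mfm_{[0]_{[1]}}\ot\mfm_{[1]}
=\mfm_{[0]}\cdot\tilde{X}^1_\rho\ot\mfm_{[1]_{\un{1}}}\cdot\tilde{X}^2_\rho\ot\mfm_{[1]_{\un{2}}}\cdot\tilde{X}^3_\rho,
\]
a morphism being an $\mfA$-linear and $C$-colinear map. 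These are precisely the data and axioms of a right Doi--Hopf module over $(H,\mfA,C)$ as in \cite{bc}, so $({}_k{\cal M})^{\mfC}\cong{\cal M}(H)_\mfA^C$.

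The only step that needs real care is the second paragraph: one has to be certain that in the proof of \prref{expnontrivcowr} every occurrence of the reassociator of ${}_H{\cal M}$ acts solely on tensorands with trivial left $H$-action, so that it collapses by normality of $\Phi$. Granting this, the whole argument is a routine transcription of \prref{expnontrivcowr} and \thref{genentcoring} to the strict monoidal category ${}_k{\cal M}$.
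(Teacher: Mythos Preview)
Your proposal is correct and follows exactly the approach the paper intends: the corollary is stated without proof in the paper, relying solely on the sentence preceding it (``Every right $H$-module coalgebra is an $H$-bimodule coalgebra, with trivial left $H$-action. If we apply \prref{expnontrivcowr} \ldots''), and your argument is precisely the detailed unpacking of that specialisation, including the observation that the reassociator insertions of ${}_H{\cal M}$ collapse by normality of $\Phi$ once the left $H$-action on $C$ (and on $\mfA$) is trivial.
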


\subsection{Dual quasi-Hopf algebras}\selabel{dqba}
It was shown in \cite[Example 3.3]{LackRoss} that the Sweedler crossed
product of Hopf algebras \cite{sw2} is a particular example of wreath product.
An obvious question is to extend this result to the context of quasi-bialgebras.
As far as we know, the Sweedler crossed product construction has not been
extended to quasi-bialgebras. Nevertheless, it has been introduced for
dual quasi-bialgebras by A. B\u alan in \cite{balan}. We will see that B\u alan's
construction is a particular case of a wreath product.\\
For the definition of a dual quasi-bialgebra, 
we refer to \cite{maj}. A dual quasi-bialgebra 
is a coassociative counital $k$-coalgebra $H$ equipped with a unital multiplication 
which is associative up to conjugation by a convolution invertible element 
$\phi: H\ot H\ot H\ra k$, called the reassociator. 
The definition is designed in such a way that ${\cal M}^H$, the category 
of corepresentations of $H$ is monoidal. 

\begin{proposition}\prlabel{dqHcrossprod}
Let $H$ be a dual quasi-Hopf algebra with reassociator $\phi$ and $A$ a $k$-algebra on which 
$H$ acts from the left, and consider it as a right $H$-comodule via the trivial 
coaction $A\ni a\mapsto a\ot 1\in A\ot H$. Consider a $k$-linear map $\tau: H\ot H\ra A$ and the 
following morphisms in ${\cal M}^H$:

$\bullet$ $\psi: H\ot A\ra A\ot  H$, $\psi(h\ot a)=h_1\cdot a\ot h_2$;

$\bullet$ $\zeta: H\ot H\ra A\ot H$, $\zeta(h\ot h')=\tau(h_1\ot h'_2)\ot h_2h'_2$; 

$\bullet$ $\sigma: k\ra A\ot H$, $\sigma(1)=1\ot 1$. 

Then $(A, H)$ is a wreath in ${\cal M}^H$ if and only if 
$(A, \tau)$ is an $H$-crossed system in the sense of \cite{balan}. Furthermore, the corresponding 
wreath product is the crossed product algebra $A\ov{\#}_\tau H$, an algebra in ${\cal M}^H$.   
\end{proposition}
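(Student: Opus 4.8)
The plan is to apply \prref{5.1} together with the explicit description of wreaths obtained in \prref{wpqHa}, specialising everything to the case where the comodule algebra has trivial coaction and the ``bimodule algebra'' data is replaced by the crossed-system data $(A,\tau)$. First I would check that the three morphisms $\psi$, $\zeta$, $\sigma$ given in the statement are indeed morphisms in ${\cal M}^H$: for $\psi$ this is the standard fact that $h\ot a\mapsto h_1\cdot a\ot h_2$ is $H$-colinear when $A$ carries the trivial coaction; for $\zeta$ one uses that $\tau$, as part of a crossed system, is a morphism of $H$-comodules $H\ot H\to A$ (again with trivial coaction on $A$), so that its colinearity follows; and $\sigma$ is colinear trivially. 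This sets up $(A,H)$ as a candidate $1$-cell plus structure $2$-cells for a wreath in the $2$-category with one $0$-cell ${\cal M}^H$.

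Next, by \prref{5.1}, $(A,H)$ is a wreath in ${\cal M}^H$ precisely when $H$ is an algebra in ${\cal T}_A^\#=EM({\cal M}^H)(A)$, which by \thref{4.2} and \prref{3.18} amounts to verifying the conditions \equref{e1}, \equref{e2}, \equref{righAlinmult}, \equref{Multass}, \equref{unitAbilin}, \equref{unit1}, \equref{unit2}; equivalently, the diagrammatic identities \equref{ta}, \equref{apdf1}, \equref{apdf2}. The strategy is to rewrite each of these seven conditions as an identity of $k$-linear maps $H^{\ot n}\to A\ot H$ using the explicit formulas for $\psi,\zeta,\sigma$, and then to recognise the resulting identities as exactly the axioms of an $H$-crossed system in the sense of \cite{balan}. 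Concretely: \equref{ta} becomes the statement that $A$ is a left $H$-module algebra together with the compatibility of $\psi$ with the (trivial) coaction, which holds automatically; \equref{unitAbilin} together with \equref{unit2} reduce to the normalisation conditions $\tau(h\ot 1)=\tau(1\ot h)=\varepsilon(h)1_A$; the first identity in \equref{apdf1} (right $A$-linearity of the multiplication) reduces to the ``twisted module'' condition relating $\tau$ and the $H$-action (the analogue of $h\cdot(h'\cdot a)$ versus $\tau$-conjugation); the second identity in \equref{apdf1} (associativity) reduces to the cocycle condition on $\tau$ twisted by the reassociator $\phi$; and \equref{unit1} gives the remaining unitality constraint. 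Each of these is a bookkeeping computation with Sweedler notation for $\Delta$ and the convolution-invertible $\phi$, entirely parallel to the computations in the proof of \prref{wpqHa} but with the coaction $\rho$ trivial and $\tilde x^i_\rho$ replaced by the appropriate values of $\phi^{\pm1}$.

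Finally, for the ``furthermore'' clause, I would feed $\psi$, $\zeta$, $\sigma$ into the wreath product multiplication formula \equref{multwpr}: the composite
\[
\gbeg{4}{6}
\got{1}{A}\got{1}{H}\got{1}{A}\got{1}{H}\gnl
\gcl{1}\gbrc\gcl{1}\gnl
\gmu\gsbox{2}\gnl
\gcn{1}{1}{2}{3}\gvac{1}\gcl{1}\gcl{1}\gnl
\gvac{1}\gmu\gcl{1}\gnl
\gvac{1}\gob{2}{A}\gob{1}{H}
\gend
\]
evaluated on $(a\ot h\ot a'\ot h')$ gives, after substituting $\psi(h\ot a')=h_1\cdot a'\ot h_2$ and $\zeta(h_2\ot h')=\tau((h_2)_1\ot h'_1)\ot (h_2)_2 h'_2$ and multiplying in $A$, exactly B\u alan's formula for the product in $A\ov{\#}_\tau H$; the unit $\sigma(1)=1_A\ot 1_H$ matches as well. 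The main obstacle I anticipate is purely organisational rather than conceptual: matching the bracketing conventions. In ${\cal M}^H$ the associativity constraint is conjugation by $\phi$ acting on the \emph{right} of comodules, whereas \cite{balan} may write the crossed-product axioms with a particular placement of $\phi$ and its inverse, so care is needed to see that the instances of $X^i$, $x^i$ appearing when one unwinds \equref{apdf1} land in the same positions as in the crossed-system axioms; this is the step where a sign-free but genuinely fiddly comparison has to be carried out, and it is the reason the proof is stated rather than merely asserted. Once the dictionary between the two sets of axioms is fixed, the equivalence and the identification of the wreath product are immediate.
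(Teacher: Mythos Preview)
Your proposal is correct and follows essentially the same route as the paper: both arguments verify that $\psi,\zeta,\sigma$ are $H$-colinear, then translate each of the wreath axioms \equref{ta}, \equref{apdf1}, \equref{apdf2} into the corresponding crossed-system axiom (measuring, twisted module condition, $\phi$-twisted cocycle condition, normality of $\tau$), and finally read off the wreath product multiplication from \equref{multwpr}. Two small remarks: the colinearity of $\zeta$ follows directly from the explicit formula and the multiplicativity of $\Delta$, not from any crossed-system hypothesis on $\tau$; and the reference to \prref{wpqHa} is only an analogy (that result lives in ${}_H\mathcal{M}$ for a quasi-bialgebra, not in $\mathcal{M}^H$ for a dual one), so the computations are parallel rather than a genuine specialisation.
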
  

\begin{proof}
It can be seen easily that $\psi$, $\zeta$ and $\sigma$ are right $H$-colinear. 
The equalities in \equref{ta} hold if and only if 
\[
h\cdot (aa')=(h_1\cdot a)(h_2\cdot a')~~{\rm and}~~h\cdot 1_A=\va(h)1_A
\]
for all $h\in H$ and $a,~a'\in A$, that is, $H$ is measuring $A$. In a similar way, the first equality in \equref{apdf1} takes the form  
\[
[h_1\cdot (h'_1\cdot a)]\tau (h_2\ot h'_2)\ot h_3h'_3=\tau(h_1\ot h'_1)(h_2h'_2\cdot a)\ot h_3h'_3, 
\] 
and is clearly equivalent to 
\[
[h_1\cdot (h'_1\cdot a)]\tau (h_2\ot h'_2)=\tau(h_1\ot h'_1)(h_2h'_2\cdot a),
\] 
for all $h,~h'\in H$ and $a\in A$, the twisted module condition.
The second equality in \equref{apdf1} comes down to
\begin{eqnarray*}
&&\hspace*{-1.5cm}
\left(h_1\cdot \tau(h'_1\ot h^{''}_1)\right)\tau(h_2\ot h'_2h^{''}_2)\ot h_3(h'_3h^{''}_3)\\
&=&\tau(h_1\ot h'_1)\tau(h_2h'_2\ot h^{''}_1)\ot (h_3h'_3)h^{''}_2\phi^{-1}(h_4, h'_4, h^{''}_3).
\end{eqnarray*}
Using the quasi-associativity of $H$, we see that this condition is equivalent to
\[
\left(h_1\cdot \tau(h'_1\ot h^{''}_1)\right)\tau(h_2\ot h'_2h^{''}_2)=
\tau(h_1\ot h'_1)\tau(h_2h'_2\ot h^{''}_1)\phi^{-1}(h_3, h'_3, h^{''}_2),
\]
for all $h, h', h{''}\in H$. This is the cocycle condition in the dual quasi-bialgebra case.
Finally, 
it can be easily checked that \equref{apdf2} is equivalent to 
\[
\tau(1\ot h)=\tau(h\ot 1)=\va(h)1,
\]
for all $h\in H$, which means that $\tau$ is normal.\\
This shows that $(A, H)$ is a wreath in ${\cal M}^H$ 
if and only if $(A, \tau)$ is an $H$-crossed system. Moreover, the multiplication
on the wreath product is given by the formula
\[
(a\ot h)(a'\ot h')=a(h_1\cdot a')\tau(h_2\ot h'_1)\ot h_3h'_2, 
\]
and the unit of the wreath product is $1\ot 1$. 
This is precisely the crossed product algebra $A\ov{\#}_\tau H$ 
in ${\cal M}^H$. Note that $A\ov{\#}_\tau H$ is a right $H$-comodule via 
$a\ov{\#}_\tau h\mapsto a\ov{\#}_\tau h_1\ot h_2$.  
\end{proof}

\subsection{Bialgebroids}\selabel{bialgebroids}
Let $L$ be a $k$-algebra. Throughout this Section, $\mathbb{H}$ is a left $L$-bialgebroid 
with source map $s: L\ra \mathbb{H}$ and target map $t:L^{\rm op}\ra \mathbb{H}$. 
Recall from \cite{bb} that $\mathbb{H}$ considered  
as an $L$-bimodule via $l\cdot h\cdot l'=t(l')s(l)h=s(l)t(l')h$ admits an $L$-coring structure 
$(\mathbb{H}, \widetilde{\Delta}, \widetilde{\va})$ such that 
${\rm Im}(\widetilde{\Delta})\subseteq \mathbb{H}\times_L\mathbb{H}$, where  
\[
\mathbb{H}\times_L\mathbb{H}=\left\{
\sum\limits_ix_i\ot_Ly_i\in \mathbb{H}\ot_L\mathbb{H}\mid 
\sum\limits_ix_it(l)\ot_Ly_i=\sum\limits_ix_i\ot_Ly_is(l)~,~\forall~l\in L
\right\} 
\]
is the Takeuchi product \cite{take}, and
\[
\widetilde{\va}(1_\mathbb{H})=1_L~,~\widetilde{\va}(hh')=\widetilde{\va}(hs(\widetilde{\va}(h'))))=
\widetilde{\va}(ht(\widetilde{\va}(h'))),
\] 
for all $h,~h'\in \mathbb{H}$. Via the component-wise multiplication 
$\mathbb{H}\times_L\mathbb{H}$ is an $L$-ring  with unit $1_\mathbb{H}\ot_L1_\mathbb{H}$.
$\widetilde{\Delta}$ is an algebra map, by definition.\\
We use the notation $\widetilde{\Delta}(h)=h_1\ot_L h_2$. The axioms above
imply that
\begin{eqnarray}
&&\widetilde{\Delta}(s(l))=s(l)\ot_L 1_\mathbb{H}~,~\widetilde{\Delta}(t(l))=1_\mathbb{H}\ot_L t(l)~,\eqlabel{axb1}\\
&&s(\widetilde{\va}(h_1))h_2=h=t(\widetilde{\va}(h_2))h_1~{\rm and}~
h_1t(l)\ot_Lh_2=h_1\ot_Lh_2s(l)~,\eqlabel{axb2}
\end{eqnarray}
for all $l\in L$ and $h\in \mathbb{H}$.
Our main aim is to define a wreath $(\mathbb{H}, B)$ within the monoidal category
$({}_L{\cal M}_L, \ot_L, L)$ of $L$-bimodules 
such that the associated wreath product generalizes the 
Sweedler crossed product \cite{sw2} to bialgebroids. 
We work in a context that is different from the one onsidered in \cite{bb};
however, in the end we obtain the same algebra structure on 
the space $B\ot_L\mathbb{H}$ as in \cite{bb}.\\
Let $i: L\ra B$ be a $k$-algebra morphism; then $B$ is an $L$-ring, and 
$B$ is an $L$-bimodule by restriction of scalars. In a similar way, 
$s:\ L\ra \mathbb{H}$ is a $k$-algebra morphism, making $\mathbb{H}$ into
an $L$-bimodule. This new $L$-bimodule structure is given by the formula
$l \tr h\tl l'=s(l)h s(l')$. Note that it is different from the $L$-bimodule structure
given above.

\begin{definition}
Let $\mathbb{H}$ be a left $L$-bialgebroid, and let $B$ be an $L$-ring.
$\mathbb{H}$ measures $B$ if there exists a 
$k$-linear map $\cdot : \mathbb{H}\ot B\ra B$ satisfying the following conditions, 
for all $l\in L$, $h\in \mathbb{H}$ and $b, b'\in B$.
\begin{eqnarray}
&&~(a)~h\cdot (i(l)b)=(hs(l))\cdot b~,~h\cdot (bi(l))=(ht(l))\cdot b~,\nonumber\\
&&~(b)~h\cdot 1_B=i(\widetilde{\va}(h))~,~1_H\cdot b=b~,~\eqlabel{bialmeas}\\
&&~(c)~h\cdot (bb')=(h_1\cdot b)(h_2\cdot b').\nonumber
\end{eqnarray}
\end{definition}

We present an alternative characterization of measurings.

\begin{proposition}\prlabel{caractmwas}
With notation as above, $\mathbb{H}$ measures $B$ if and only if $1_H\cdot b=b$, for all $b\in B$, 
\equref{bialmeas}(c) holds and 
\begin{equation}\eqlabel{bialmeas2}
h\cdot i(l)=i(\widetilde{\va}(hs(l))=i(\widetilde{\va}(ht(l)),
\end{equation}
for all $l\in L$ and $h\in \mathbb{H}$.
\end{proposition}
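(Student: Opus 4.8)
The plan is to prove the two implications separately, but both reduce to elementary manipulations with the bialgebroid axioms \equref{axb1}, \equref{axb2} together with the definition \equref{bialmeas}. Throughout I would use that $B$ is an $L$-ring via $i: L\ra B$, so $i$ is multiplicative and unital, and that $\widetilde{\va}$ is the counit of the $L$-coring $\mathbb{H}$.

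First, assume $\mathbb{H}$ measures $B$. The conditions ``$1_H\cdot b=b$'' and \equref{bialmeas}(c) are literally part of \equref{bialmeas}, so only \equref{bialmeas2} needs to be checked. For this I would compute $h\cdot i(l)$ by writing $i(l)=i(l)1_B$ and applying \equref{bialmeas}(a) with $b=1_B$: this gives $h\cdot i(l)=h\cdot(i(l)1_B)=(hs(l))\cdot 1_B$, and then \equref{bialmeas}(b) turns $(hs(l))\cdot 1_B$ into $i(\widetilde{\va}(hs(l)))$. For the second equality in \equref{bialmeas2}, I would instead write $i(l)=1_B i(l)$ and use \equref{bialmeas}(a) in the form $h\cdot(1_B i(l))=(ht(l))\cdot 1_B=i(\widetilde{\va}(ht(l)))$. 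That the two expressions $i(\widetilde{\va}(hs(l)))$ and $i(\widetilde{\va}(ht(l)))$ agree can also be seen directly: using $\widetilde{\Delta}(h)=h_1\ot_L h_2$ and \equref{axb2}, $\widetilde{\va}(hs(l))$ and $\widetilde{\va}(ht(l))$ are both equal to $\widetilde{\va}$ applied after absorbing $s(l)$ (resp. $t(l)$) into the appropriate leg, and the Takeuchi/coring identities collapse both to the same element of $L$; but since $i\circ\widetilde{\va}$ is what we need, it is cleanest to derive it directly as above.

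Conversely, assume $1_H\cdot b=b$, that \equref{bialmeas}(c) holds, and that \equref{bialmeas2} holds. I must recover \equref{bialmeas}(a) and \equref{bialmeas}(b). For \equref{bialmeas}(b): the second identity $1_H\cdot b=b$ is assumed; for $h\cdot 1_B=i(\widetilde{\va}(h))$, take $l=1_L$ in \equref{bialmeas2}, so $h\cdot i(1_L)=h\cdot 1_B$ equals $i(\widetilde{\va}(hs(1_L)))=i(\widetilde{\va}(h))$, since $s(1_L)=1_\mathbb{H}$. For \equref{bialmeas}(a): using \equref{bialmeas}(c) with $b'=b$ and first argument $i(l)b$ rewritten via multiplicativity of $i$, I would compute $h\cdot(i(l)b)=(h_1\cdot i(l))(h_2\cdot b)$, then apply \equref{bialmeas2} to $h_1\cdot i(l)=i(\widetilde{\va}(h_1 s(l)))$, and then use the counit axiom \equref{axb2} — namely $s(\widetilde{\va}(h_1))h_2=h$ in the appropriately twisted form $s(\widetilde{\va}(h_1 s(l)))h_2 = hs(l)$ (which follows from \equref{axb2} together with $\widetilde{\Delta}$ being an algebra map and \equref{axb1} applied to $\widetilde{\Delta}(s(l))=s(l)\ot_L 1_\mathbb{H}$) — to collapse $i(\widetilde{\va}(h_1 s(l)))(h_2\cdot b)$ back to $(hs(l))\cdot b$. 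The identity $h\cdot(bi(l))=(ht(l))\cdot b$ is handled symmetrically, using $\widetilde{\Delta}(t(l))=1_\mathbb{H}\ot_L t(l)$ from \equref{axb1} and $t(\widetilde{\va}(h_2))h_1=h$ from \equref{axb2}.

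The only mild subtlety — and the step I expect to be the main obstacle — is making the ``twisted counit'' identity $s(\widetilde{\va}(h_1 s(l)))h_2 = hs(l)$ (and its target-map analogue) rigorous: one must be careful that $\widetilde{\va}(h_1 s(l))$ makes sense (it does, as $\widetilde{\va}(hs(\widetilde{\va}(h')))$-type expressions are exactly what the bialgebroid axioms govern) and that applying $i\circ\widetilde{\va}$ to the first leg and then multiplying by the action of the second leg genuinely reassembles the action of $hs(l)$. This is where the Takeuchi-product constraint ${\rm Im}(\widetilde{\Delta})\subseteq\mathbb{H}\times_L\mathbb{H}$ and the second identity in \equref{axb2} do the real work. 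Once that lemma is in hand, everything else is a direct substitution, and I would simply write out the four short computations and note that no further hypotheses are needed.
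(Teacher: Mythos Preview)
Your proposal is correct and follows essentially the same route as the paper. The only place where you are vaguer than the paper is the ``collapse'' step: to pass from $i(\widetilde{\va}(h_1 s(l)))(h_2\cdot b)$ to $(hs(l))\cdot b$, the paper does not invoke a separate identity of the form $i(l')(h'\cdot b)=(s(l')h')\cdot b$ (which is not directly available), but instead rewrites $i(\widetilde{\va}(h_1 s(l)))=(h_1 s(l))\cdot 1_B$ using the just-established $h\cdot 1_B=i(\widetilde{\va}(h))$, recognizes $h_1 s(l)\ot_L h_2=(hs(l))_1\ot_L (hs(l))_2$, and then runs \equref{bialmeas}(c) backwards on $((hs(l))_1\cdot 1_B)((hs(l))_2\cdot b)=(hs(l))\cdot(1_B b)=(hs(l))\cdot b$; the $t$-side is handled symmetrically.
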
  

\begin{proof}
The direct implication is immediate, note only that in any left 
bialgebroid $\mathbb{H}$ we have that $\widetilde{\va}(hs(l))=\widetilde{\va}(ht(l))$, for all 
$l\in L$ and $h\in \mathbb{H}$, and so the equalities in \equref{bialmeas2} are not contradictory.
For the converse, observe first that $h\cdot 1_B=h\cdot i(1_L)=i(\widetilde{\va}(h))$, for all $h\in \mathbb{H}$. 
Now, for all $l\in L$, $h\in \mathbb{H}$ and $b\in B$ we have 
\begin{eqnarray*}
h\cdot (i(l)b)&\equal{\equref{bialmeas}(c)}&(h_1\cdot i(l))(h_2\cdot b)\equal{\equref{bialmeas2}}
i(\widetilde{\va}(h_1s(l)))(h_2\cdot b)\\
&=&i(\widetilde{\va}((hs(l))_1))((hs(l))_2\cdot b)
=((hs(l))_1\cdot 1_B)((hs(l))_2\cdot b)\\
&\equal{\equref{bialmeas}(c)}&(hs(l))\cdot b;\\
h\cdot (bi(l))&\equal{\equref{bialmeas}(c)}&(h_1\cdot b)(h_2\cdot i(l))\equal{\equref{bialmeas2}}
(h_1\cdot b)i(\widetilde{\va}(h_2t(l)))\\
&=&((ht(l))_1\cdot b)i(\widetilde{\va}((ht(l))_2))
=((ht(l))_1\cdot b)((ht(l))_2\cdot 1_B)\\
&\equal{\equref{bialmeas}(c)}&(ht(l))\cdot b, 
\end{eqnarray*}
as needed. This completes the proof.
\end{proof}

The definition of a measuring is designed in such a way that we have the following result.

\begin{lemma}\lelabel{psicrossbialg}
Let $B$ be an $L$-ring and $\mathbb{H}$ a left $L$-bialgebroid measuring $B$. Then 
\[
\psi: \mathbb{H}\ot_LB\ra B\ot_L\mathbb{H}~,~\psi(h\ot_Lb)=h_1\cdot b\ot_Lh_2
\]
is a well-defined morphism in ${}_L{\cal M}_L$ satisfying \equref{ta}. 
\end{lemma}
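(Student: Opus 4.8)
The plan is to verify directly the two things \leref{psicrossbialg} asks for: first that $\psi$ is well-defined as a map of $L$-bimodules $\mathbb{H}\ot_LB\ra B\ot_L\mathbb{H}$, and second that it satisfies \equref{ta}, i.e.\ the two compatibility conditions \equref{e1} and \equref{e2} with the algebra structure of $\mathbb{H}$ (noting that here $\mathbb{H}$ plays the role of the algebra $A$, and $B$ the role of $X$; or rather, reading \equref{ta} in ${}_L{\cal M}_L$). Since everything is spelled out in components, this is a routine but careful bookkeeping exercise with the Takeuchi relations \equref{axb1} and \equref{axb2}.

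First I would check well-definedness. The domain $\mathbb{H}\ot_LB$ is a tensor product over $L$ where $\mathbb{H}$ carries the $L$-bimodule structure $l\tr h\tl l'=s(l)hs(l')$ and $B$ carries the structure by restriction along $i$. So I must show $\psi(hs(l)\ot_Lb)=\psi(h\ot_Li(l)b)$: the left side is $(hs(l))_1\cdot b\ot_L(hs(l))_2$, the right side is $h_1\cdot(i(l)b)\ot_Lh_2$. Using $\widetilde\Delta(hs(l))=h_1s(l)\ot_Lh_2$ (which follows from $\widetilde\Delta$ being an algebra map together with the first relation in \equref{axb1}) and the measuring axiom $(hs(l))\cdot b'$-type identity $h_1\cdot(i(l)b)=(h_1s(l))\cdot b$ from \equref{bialmeas}(a), both sides equal $(h_1s(l))\cdot b\ot_Lh_2$. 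Next, the image lands in $B\ot_L\mathbb{H}$ where now $\mathbb{H}$ carries the \emph{coring} $L$-bimodule structure $l\cdot h\cdot l'=s(l)t(l')h$, so I must also confirm $\psi$ is $L$-bilinear for the correct outer structures — left $L$-linearity uses $\widetilde\Delta(s(l)h)=s(l)h_1\ot_Lh_2$ together with \equref{bialmeas}(a), and for the right $L$-action I must check $\psi(h\ot_L b)\tl l = \psi((h\ot_L b)\tl l)$, i.e.\ $h_1\cdot b\ot_L h_2 t(l)$ versus the action of $l$ on $h$ on the right in the domain; this is where the Takeuchi relation $h_1t(l)\ot_Lh_2=h_1\ot_Lh_2s(l)$ from \equref{axb2} is used to move $t(l)$ through the tensor, after which \equref{bialmeas}(a) again applies.

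Then I would check \equref{e1}, the hexagon relating $\psi$ to the multiplication $m_\mathbb{H}$. In components, $\psi(hh'\ot_Lb)=(hh')_1\cdot b\ot_L(hh')_2=h_1h'_1\cdot b\ot_Lh_2h'_2$, using that $\widetilde\Delta$ is an algebra map. On the other side, $(m_\mathbb{H}\ot\Id_B)(\Id_\mathbb{H}\ot\psi)(\psi\ot\Id_B)$ applied to $h\ot_Lh'\ot_Lb$ gives first $h_1\cdot(h'_1\cdot b)\ot_Lh_2\ot_Lh'_2$, then $h_1\cdot(h'_1\cdot b)\ot_Lh_2h'_2$; these agree by the measuring axiom \equref{bialmeas}(c), $h\cdot(h'_1\cdot b)$-compatibility, i.e.\ $h_1h'_1\cdot b=h_1\cdot(h'_1\cdot b)$ read with coassociativity of $\widetilde\Delta$. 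For \equref{e2}, $\psi(h\ot_L 1_B)=h_1\cdot 1_B\ot_Lh_2=i(\widetilde\va(h_1))\ot_Lh_2$ by \equref{bialmeas}(b); then $i(\widetilde\va(h_1))\ot_Lh_2 = 1_B\ot_L s(\widetilde\va(h_1))h_2 = 1_B\ot_L h$ using the first identity in \equref{axb2}, which is exactly $\un\eta_\mathbb{H}\ot\Id_B$ read through $L$ — one must be slightly careful that the unit object of ${}_L{\cal M}_L$ is $L$ and $1_B\in B$ corresponds to the image of $1_L$ under the unit of the $L$-ring $B$, but \equref{bialmeas}(b) is phrased precisely so this works.

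I do not expect any genuine obstacle: the lemma is a "definition sanity check" whose whole point is that the measuring axioms were designed to make it true, and the paper explicitly says the verification of such details is routine. The only mildly delicate point — the one I would be most careful about — is keeping straight the \emph{two different} $L$-bimodule structures on $\mathbb{H}$ (the "$\tr,\tl$" structure via $s$ on the source side, and the "coring" structure via $s$ on the left, $t$ on the right on the target side) and making sure $\psi$ is balanced/bilinear for the correct structure at each end; every such check reduces to one of the four identities in \equref{axb1}–\equref{axb2} plus \equref{bialmeas}(a). So the proof I would write is: (1) well-definedness over $L$ and $L$-bilinearity, invoking \equref{axb1}, \equref{axb2}, \equref{bialmeas}(a); (2) \equref{e1} from $\widetilde\Delta$ being an algebra map, coassociativity, and \equref{bialmeas}(c); (3) \equref{e2} from \equref{bialmeas}(b) and the counit identity in \equref{axb2}.
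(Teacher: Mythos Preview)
Your check of \equref{e2} is correct and matches the paper's computation. But your check of \equref{e1} has the roles reversed. In the wreath $(B,\mathbb{H})$ the algebra is $A=B$ and the object is $X=\mathbb{H}$; thus \equref{ta} demands compatibility of $\psi$ with $\un{m}_B$ and $\un{\eta}_B$, not with $m_\mathbb{H}$. The correct first identity reads
\[
\psi(h\ot_L bb')=(h_1\cdot b)((h_2)_1\cdot b')\ot_L(h_2)_2,
\]
i.e.\ $h_1\cdot(bb')\ot_Lh_2=(h_1\cdot b)(h_2\cdot b')\ot_Lh_3$, which is precisely \equref{bialmeas}(c) combined with coassociativity of $\widetilde\Delta$ --- this is the paper's one-line remark ``\equref{ta} follows from \equref{bialmeas}(c)''. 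The equation you attempt, $\psi(hh'\ot_Lb)=\ldots$, is not \equref{e1} for this setup, and your claimed step $h_1h'_1\cdot b=h_1\cdot(h'_1\cdot b)$ is \emph{not} what \equref{bialmeas}(c) says: the measuring axioms contain no ``module'' condition $(gg')\cdot b=g\cdot(g'\cdot b)$.

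There is also a smaller confusion in your right $L$-linearity sketch. The right $L$-action on the domain $\mathbb{H}\ot_LB$ is $(h\ot_Lb)\cdot l=h\ot_Lbi(l)$ (it acts on the $B$-factor), and on the codomain it is $(b\ot_Lh)\cdot l=b\ot_Lhs(l)$ (the $\tl$-structure via $s$, not $h\mapsto t(l)h$). With these structures the verification is the paper's: $\psi(h\ot_Lbi(l))=h_1\cdot(bi(l))\ot_Lh_2=(h_1t(l))\cdot b\ot_Lh_2=h_1\cdot b\ot_Lh_2s(l)$, using the second condition in \equref{bialmeas}(a) and then the Takeuchi relation in \equref{axb2}.
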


\begin{proof}
The map $\psi$ is well-defined because of the second equality in \equref{axb2} and the first condition 
in \equref{bialmeas}(a). $\psi$ is left $L$-linear because of the second equality in \equref{axb2}.\\
To see that $\psi$ is right $L$-linear we compute, $l\in L$, $h\in \mathbb{H}$, $b\in B$, 
\begin{eqnarray*}
\psi((h\ot_Lb)l)&=&\psi(h\ot_Lbi(l))=h_1\cdot (bi(l))\ot_L h_2=(h_1t(l))\cdot b\ot_L h_2\\
&=&h_1\cdot b\ot_L h_2s(l)=(h_1\cdot b\ot_L h_2)l=\psi(h\ot_L b)l.
\end{eqnarray*}
Finally, \equref{ta} follows from \equref{bialmeas}(c) and 
\begin{eqnarray*}
\psi(h\ot_L1_B)&=&h_1\cdot 1_B\ot_L h_2=i(\widetilde{\va}(h_1))\ot_Lh_2=1_B\ot_L \widetilde{\va}(h_1)\tr h_2\\
&=&1_B\ot_L s(\widetilde{\va}(h_1))h_2=1_B\ot_L h, 
\end{eqnarray*}
as needed. 
\end{proof}

For a left $L$-linear morphism $\tau: \mathbb{H}\ot_L\mathbb{H}\ra B\ot_L\mathbb{H}$,
we have another left $L$-linear morphism $\zeta: \mathbb{H}\ot_L\mathbb{H}\ra B\ot_L\mathbb{H}$,
defined by the formula
\begin{equation}\eqlabel{zetabialgcross}
\zeta(h\ot_Lh')=\tau(h_1\ot_Lh'_1)\ot_L h_2h'_2~,~\forall~h,~h'\in H.
\end{equation}
It is easy to show that $\zeta$ is well-defined and left $L$-linear. 
$\zeta$ is also right $L$-linear if $\tau$ satisfies the extra condition
\begin{equation}\eqlabel{bialgcrossax}
\tau(h\ot_Lh't(l))=\tau(h\ot_Lh's(l)),
\end{equation}  
for all $h, h'\in \mathbb{H}$ and $l\in L$.

\begin{proposition}\prlabel{scrossprodbialg}
Let $B$ be an $L$-coring, $\mathbb{H}$ a left $L$-bialgebroid measuring $B$ and 
$\tau: \mathbb{H}\ot_L\mathbb{H}\ra B$ a left $L$-linear map satisfying \equref{bialgcrossax}. 
Consider $\psi$ as in \leref{psicrossbialg}, $\zeta$ defined by \equref{zetabialgcross} 
and $\sigma: L\ra B\ot_L\mathbb{H}$ given by $\sigma(l)=i(l)\ot_L1_\mathbb{H}$, for all $l\in L$. 
If the equalities  
\begin{eqnarray*}
&&\tau(h_1\ot_Lh'_1)(h_2h'_2\cdot b)=[h_1\cdot (h'_1\cdot b)]\tau(h_2\ot_Lh'_2)~,\\
&&(h_1\cdot \tau(h'_1\ot_L h^{''}_1))\tau(h_2\ot_L h'_2h^{''}_2)=\tau(h_1\ot_Lh'_1)\tau(h_2h'_2\ot_L h^{''})~,\\
&&\tau(h\ot_L1_\mathbb{H})=i(\widetilde{\va}(h))=\tau(1_H\ot_L h)~,
\end{eqnarray*}	   
hold for all $h, h', h^{''}\in \mathbb{H}$ and $b\in B$ then $(B, \mathbb{H})$  is a 
wreath in ${}_L{\cal M}_L$.
Furthermore, if $\tau$ satisfies the condition
\begin{equation}\eqlabel{seconextrontau}
\tau(h_1\ot_Lh'_1)i(\widetilde{\va}(h_2h'_2))=\tau(h\ot_Lh'),
\end{equation}
for all $\forall~h,~h'\in \mathbb{H}$,
then the converse is also true. The corresponding 
wreath product is $B\ot_L\mathbb{H}$ with multiplication 
\[
(b\ot_L h)(b'\ot_L h')=b(h_1\cdot b')\tau(h_2\ot_L h'_1)\ot_L h_3h'_2
\]
and unit $1_B\ot_L1_\mathbb{H}$. It is an $L$-ring via $L\ni l\mapsto i(l)\ot_L1_H=1_B\ot_L s(l)\in B\ot_L\mathbb{H}$, 
and therefore a unital associative $k$-algebra.  
\end{proposition}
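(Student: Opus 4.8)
The plan is to deduce \prref{scrossprodbialg} directly from \prref{wpqHa}-style reasoning, that is, from the general machinery of \prref{3.18}, \thref{4.2} and \prref{5.1}, specialised to the monoidal category $\Cc={}_L{\cal M}_L$ of $L$-bimodules with tensor product $\ot_L$ and unit $L$. The algebra $A$ of \prref{3.18} is here $B$ (an algebra in $\Cc$ via the $L$-ring structure), and the object $X$ is $\mathbb{H}$, viewed as an object of $\Cc$ via the bimodule structure $l\tr h\tl l'=s(l)hs(l')$. All objects of ${}_L{\cal M}_L$ are left and right coflat (as for ${}_H{\cal M}$ in \prref{expnontrivcowr}, coequalizers are computed at the level of the underlying category and $\ot_L$ is right exact), so the coflatness hypotheses of \prref{3.18} are automatic and do not need separate checking.

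First I would record that $\psi$ is a well-defined morphism in $\Cc$ satisfying \equref{ta}: this is exactly \leref{psicrossbialg}, so nothing new is needed. Next I would observe that $\zeta$ as in \equref{zetabialgcross} is a well-defined left $L$-linear morphism $\mathbb{H}\ot_L\mathbb{H}\ra B\ot_L\mathbb{H}$, and that right $L$-linearity of $\zeta$ is equivalent to \equref{bialgcrossax} on $\tau$ — this is the remark made just before the proposition, so I would simply invoke it. Similarly $\sigma:L\ra B\ot_L\mathbb{H}$, $\sigma(l)=i(l)\ot_L 1_\mathbb{H}$, is visibly a morphism of $L$-bimodules (well-definedness uses $i(l)i(l')=i(ll')$ and $s$ being an algebra map). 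Then the heart of the proof is to translate the list of conditions \equref{e1}, \equref{e2}, \equref{righAlinmult}, \equref{Multass}, \equref{unitAbilin}, \equref{unit1}, \equref{unit2} from \prref{3.18}(i) into the displayed equalities of \prref{scrossprodbialg}. Conditions \equref{e1}-\equref{e2} are \equref{ta}, already done. I would then check: \equref{righAlinmult} (right $A$-linearity of $\un{m}_\mfC$) becomes the first displayed equality, the twisted-module condition $\tau(h_1\ot_Lh'_1)(h_2h'_2\cdot b)=[h_1\cdot(h'_1\cdot b)]\tau(h_2\ot_Lh'_2)$; \equref{Multass} (associativity) becomes the second, the cocycle condition; \equref{unitAbilin} (right $A$-linearity of $\un{\eta}_\mfC$) becomes $\tau(1_\mathbb{H}\ot_Lh)=i(\widetilde{\va}(h))$; and \equref{unit1}, \equref{unit2} (the unit axioms) give $\tau(h\ot_L1_\mathbb{H})=i(\widetilde{\va}(h))$ together with one further identity that, after using the measuring axioms and \equuref{axb2}{}, collapses to normality. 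Each such translation is a direct computation using $\widetilde{\Delta}(h)=h_1\ot_L h_2$, coassociativity, $s(\widetilde{\va}(h_1))h_2=h$, \equuref{axb1}{}-\equuref{axb2}{}, and the measuring axioms \equref{bialmeas}; I would present the matching in the same style as steps 1)-4) in the proof of \prref{wpqHa}, i.e. writing out both sides elementwise and citing the relevant bialgebroid identity. The one subtlety is the converse direction: starting only from ``$(B,\mathbb{H})$ is a wreath'' one recovers the displayed equalities up to composition with $\un{\eta}_B$ (respectively up to the $\widetilde{\va}$-normalisation), and to strip that off one needs the auxiliary hypothesis \equref{seconextrontau}, which plays exactly the role that ``$\un{\eta}_A\ot\Id_X$ is monic'' plays in \prref{fact(co)ring}; I would point this out explicitly as the reason that extra condition is assumed.

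Having established that $(B,\mathbb{H})$ is a wreath in ${}_L{\cal M}_L$ with the stated $\psi,\zeta,\sigma$, the wreath product is then $B\ot_L\mathbb{H}$ with multiplication given by the general formula \equref{multwpr}; substituting our $\psi$ and $\zeta$ and tracking the $\ot_L$-identifications yields
\[
(b\ot_L h)(b'\ot_L h')=b(h_1\cdot b')\tau(h_2\ot_L h'_1)\ot_L h_3h'_2,
\]
and unit $\sigma(1_L)=1_B\ot_L1_\mathbb{H}$. Finally, by \prref{5.1} (or directly by \thref{4.2}) this is an associative unital algebra in $\Cc={}_L{\cal M}_L$, hence an $L$-ring via $l\mapsto i(l)\ot_L1_\mathbb{H}$; the two expressions $i(l)\ot_L1_\mathbb{H}$ and $1_B\ot_L s(l)$ agree because $i(l)\ot_L1_\mathbb{H}=1_B\cdot i(l)\ot_L 1_\mathbb{H}=1_B\ot_L l\tr 1_\mathbb{H}=1_B\ot_L s(l)$ using $l\tr h=s(l)h$ and the balancing over $L$. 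Being an $L$-ring in particular makes it a unital associative $k$-algebra.

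The main obstacle I expect is purely bookkeeping rather than conceptual: keeping the \emph{two} different $L$-bimodule structures on $\mathbb{H}$ straight — the ``coring'' structure $l\cdot h\cdot l'=s(l)t(l')h$ used implicitly in the Takeuchi product $\mathbb{H}\times_L\mathbb{H}$ and in $\widetilde{\Delta}$, versus the ``ring'' structure $l\tr h\tl l'=s(l)hs(l')$ that makes $\mathbb{H}$ an object of $\Cc$ — and checking at each step that the morphisms $\psi,\zeta,\sigma$ and the multiplication formula are well-defined on the relevant $\ot_L$. Concretely, well-definedness of the multiplication over $L$ in the three slots $h\ot_L b'$, $b'\ot_L h'$, $h_?\ot_L h'_?$ must each be verified using \equuref{axb2}{} (the identity $h_1t(l)\ot_L h_2=h_1\ot_L h_2 s(l)$), the measuring axioms \equref{bialmeas}(a), and \equref{bialgcrossax}; these are exactly the same checks that already appear in the proofs of \leref{psicrossbialg} and the paragraph before the proposition, so they can be done quickly but must not be skipped. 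Once those are in place the equivalence with the conditions of \prref{3.18}(i) is a routine, if lengthy, elementwise verification of the kind carried out for \prref{wpqHa}.
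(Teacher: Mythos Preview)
Your plan is correct and matches the paper's own proof, which simply says ``similar to the one of \prref{dqHcrossprod}'' and refers to \cite[Prop.~4.3]{bb}. One small bookkeeping correction: \equref{unitAbilin} does not yield $\tau(1_\mathbb{H}\ot_Lh)=i(\widetilde{\va}(h))$ --- since $\sigma(l)=i(l)\ot_L1_\mathbb{H}$ and $\widetilde{\Delta}(1_\mathbb{H})=1_\mathbb{H}\ot_L1_\mathbb{H}$, both sides of \equref{unitAbilin} reduce to $b\ot_L1_\mathbb{H}$ and the condition is automatic; the two halves of the normality condition $\tau(h\ot_L1_\mathbb{H})=i(\widetilde{\va}(h))=\tau(1_\mathbb{H}\ot_Lh)$ come from \equref{unit1} and \equref{unit2} respectively.
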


\begin{proof}
Similar to the one of \prref{dqHcrossprod}, see also the proof of \cite[Prop. 4.3]{bb}.
\end{proof}

The wreath product obtained in \prref{scrossprodbialg} is called
the crossed product of $B$ and $\mathbb{H}$, and it is denoted as
$B\#_\tau \mathbb{H}$. We will now show that it generalizes the smash product
algebra from \cite{kasz}.\\
Szlach\'anyi \cite{sz} has presented a reformulation of the definition of bialgebroid
in terms of monoidal categories. Let $\mathbb{H}$ be an $L\ot L^{\rm op}$-ring.
Left $L$-bialgebroid structures on $\mathbb{H}$ correspond bijectively to
monoidal structures on ${}_\mathbb{H}{\cal M}$, such that the restriction of scalars functor 
${}_\mathbb{H}{\cal M}\ra {}_L{\cal M}_L$ is strict monoidal.
The monoidal structure on ${}_\mathbb{H}{\cal M}$ is defined by $\ot_L$ with
$h\cdot (m\ot_Ln)=h_1\cdot m\ot_Lh_2\cdot n$,
for all $m\in M\in {}_\mathbb{H}{\cal M}$ and $n\in N\in {}_\mathbb{H}{\cal M}$,
and the unit is $L$ considered as a left $\mathbb{H}$-module via the action 
$h\btr l=\widetilde{\va}(hs(l))=\widetilde{\va}(ht(l))$,
for all $h\in \mathbb{H}$ and $l\in L$.

\begin{corollary}\colabel{smashbialg}
Let $\mathbb{H}$ be a left $L$-bialgebroid and $B$ and algebra in ${}_\mathbb{H}{\cal M}$, that is, 
a left $\mathbb{H}$-module algebroid. Then 
$
\xymatrix{
\mathbb{H}\ot B
\ar[r]^-{\rm can}&\mathbb{H}\ot_L B\ar[r]^-{\cdot}&B
}
$ 
defines a measuring of $\mathbb{H}$ on $B$. Furthermore, the map $\tau: \mathbb{H}\ot_L\mathbb{H}\ra B$ 
given by $\tau(h\ot_Lh')=(hh')\cdot 1_B$, for all $h, h'\in \mathbb{H}$, is well defined, left 
$L$-linear, obeys \equref{bialgcrossax} and is such that 
the induced map $\zeta$ from \equref{zetabialgcross} fulfills all the conditions 
in \prref{scrossprodbialg}. 

Consequently, $(B, \mathbb{H})$ is a wreath in ${}_L{\cal M}_L$ with the corresponding wreath product given by 
\[
(b\ot_Lh)(b\ot_Lh')=b(h_1\cdot b)\ot_Lh_2h',
\] 
for all $b, b'\in B$ and $h, h'\in \mathbb{H}$. 
\end{corollary}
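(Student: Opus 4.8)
The plan is to deduce the statement from \prref{scrossprodbialg}: I would check that the data attached to the $\mathbb{H}$-action on $B$ satisfy the hypotheses of that proposition, and then simplify the resulting formula for the wreath product. Throughout one must keep apart the two $L$-bimodule structures on $\mathbb{H}$, namely $l\cdot h\cdot l'=t(l')s(l)h$ (the one carrying the coring structure) and $l\tr h\tl l'=s(l)hs(l')$ (the one used to view $\mathbb{H}$ as an object of ${}_L{\cal M}_L$).

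First I would unwind what ``$B$ is an algebra in ${}_\mathbb{H}{\cal M}$'' means in terms of the monoidal structure on ${}_\mathbb{H}{\cal M}$ recalled (after Szlach\'anyi) just before the statement: $B$ is a left $\mathbb{H}$-module which is also an $L$-ring $i:L\ra B$, such that the multiplication $B\ot_L B\ra B$ and the unit $i$ are morphisms in ${}_\mathbb{H}{\cal M}$. Here $\mathbb{H}$-linearity of the multiplication is exactly \equref{bialmeas}(c), while $\mathbb{H}$-linearity of $i$ --- with $L$ carrying the action $h\btr l=\widetilde{\va}(hs(l))$ --- reads $h\cdot i(l)=i(\widetilde{\va}(hs(l)))$, that is, \equref{bialmeas2}. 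Together with unitality $1_\mathbb{H}\cdot b=b$ of the $\mathbb{H}$-action, these are precisely the hypotheses of \prref{caractmwas}, which therefore yields that the composite $\mathbb{H}\ot B\ra\mathbb{H}\ot_L B\ra B$ (the second arrow being the $\mathbb{H}$-action) is a measuring; this proves the first assertion.

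Next I would treat $\tau(h\ot_L h')=(hh')\cdot 1_B$. It is well defined on $\mathbb{H}\ot_L\mathbb{H}$ (over the $\tr,\tl$-structure) because $(hs(l)h')\cdot 1_B$ does not depend on where $s(l)$ sits. Using \equref{bialmeas}(a)--(b) together with $\widetilde{\Delta}(s(l))=s(l)\ot_L 1_\mathbb{H}$, $\widetilde{\Delta}(t(l))=1_\mathbb{H}\ot_L t(l)$ from \equref{axb1}, one gets $s(l)\cdot b=i(l)b$ and $t(l)\cdot b=bi(l)$; these give left $L$-linearity of $\tau$ and condition \equref{bialgcrossax} at once (both sides of the latter equal $(hh')\cdot i(l)$). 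Normality $\tau(h\ot_L 1_\mathbb{H})=i(\widetilde{\va}(h))=\tau(1_\mathbb{H}\ot_L h)$ is immediate from \equref{bialmeas}(b). For the remaining two identities required by \prref{scrossprodbialg} I would use associativity of the $\mathbb{H}$-action $(gh)\cdot b=g\cdot(h\cdot b)$, multiplicativity of $\widetilde{\Delta}$, and \equref{bialmeas}(c): a few lines then show that both sides of the twisted-module identity reduce to $(hh')\cdot b$ and both sides of the cocycle identity reduce to $(hh'h'')\cdot 1_B$. (The supplementary condition \equref{seconextrontau}, needed only for the converse part of \prref{scrossprodbialg}, is not required here.)

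Finally, \prref{scrossprodbialg} applies and yields that $(B,\mathbb{H})$ is a wreath in ${}_L{\cal M}_L$ with wreath product $B\ot_L\mathbb{H}$, multiplication $(b\ot_L h)(b'\ot_L h')=b(h_1\cdot b')\tau(h_2\ot_L h'_1)\ot_L h_3h'_2$ and unit $1_B\ot_L 1_\mathbb{H}$. To obtain the form stated in the corollary I would substitute $\tau(h_2\ot_L h'_1)=(h_2h'_1)\cdot 1_B$ and use the identity $c\,(k_1\cdot 1_B)\ot_L k_2=c\ot_L k$, valid for all $c\in B$ and $k\in\mathbb{H}$ in $B\ot_L\mathbb{H}$: it follows from $k_1\cdot 1_B=i(\widetilde{\va}(k_1))$ (by \equref{bialmeas}(b)), the description of $\ot_L$ with $\mathbb{H}$ a left $L$-module via $s$, and the identity $s(\widetilde{\va}(h_1))h_2=h$ of \equref{axb2}. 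Since $\widetilde{\Delta}(h_2h')=h_2h'_1\ot_L h_3h'_2$ by coassociativity and multiplicativity of $\widetilde{\Delta}$, the multiplication then collapses to $b(h_1\cdot b')\ot_L h_2h'$, as claimed. I expect the only real obstacle to be the bookkeeping --- disentangling the two bimodule structures on $\mathbb{H}$ and pushing Sweedler notation through $\ot_L$ with the bialgebroid axioms \equref{axb1}--\equref{axb2}; once the associativity of the $\mathbb{H}$-action is brought to bear, the calculations become mechanical.
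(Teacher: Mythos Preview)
Your proposal is correct and follows essentially the same route as the paper: invoke \prref{caractmwas} to obtain the measuring from the $\mathbb{H}$-module algebra axioms, then verify the hypotheses of \prref{scrossprodbialg} for the particular $\tau(h\ot_L h')=(hh')\cdot 1_B$, and finally simplify the crossed-product multiplication. Two minor differences: the paper also checks \equref{seconextrontau} (which, as you rightly note, is only needed for the converse implication in \prref{scrossprodbialg} and is therefore superfluous here), and the paper leaves the final collapse of the multiplication formula implicit, whereas you spell out the identity $c\,(k_1\cdot 1_B)\ot_L k_2=c\ot_L k$ that makes it work.
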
 

\begin{proof}
By the strict monoidality of the forgetful functor ${}_\mathbb{H}{\cal M}\ra {}_L{\cal M}_L$, any left $\mathbb{H}$-module 
algebroid $B$ has a canonical $L$-ring structure. Its unit is the map 
$i: L\ni l\mapsto s(l)\cdot 1_B=t(l)\cdot 1_B\in B$. Thus $B$ inherits an $L$-bimodule structure 
from the $\mathbb{H}$-action: $l\cdot b\cdot l'=s(l)t(l')\cdot b=t(l')s(l)\cdot b$, for all $l, l'\in L$ and 
$b\in B$. In addition, the fact that $i: L\ra B$ is left $\mathbb{H}$-linear means that
\[
h\cdot i(l)=i(h\btr l)=i(\widetilde{\va}(h)s(l))=i(\widetilde{\va}(h)t(l)),
\]
for all $h\in \mathbb{H}$ and $l\in L$.
Since $B$ is an algebra in ${}_\mathbb{H}{\cal M}$ it follows that $1_H\cdot b=b$, for all $b\in B$, 
and that \equref{bialmeas}(c) is satisfied. From \prref{caractmwas}, we obtain that $\mathbb{H}$ measures $B$.\\
Consider $\tau: \mathbb{H}\ot_L\mathbb{H}\ra B$, 
$\tau(h\ot_Lh')=hh'\cdot 1_B=i(\widetilde{\va}(hh'))$.
It is easy to see that $\tau$ is well-defined, and left $\mathbb{H}$-linear via $\tr$ defined by $s$.
\equref{seconextrontau} is satisfied since
\[
\tau(h_1\ot_Lh'_1)i(\widetilde{\va}(h_2h'_2))=(h_1h'_1\cdot 1_B)(h_2h'_2\cdot 1_B)=hh'\cdot 1_B=\tau(h\ot_Lh')~,~
\]
for all $h, h'\in \mathbb{H}$. Similar computations guarantee us that the three equalities in the statement 
of \prref{scrossprodbialg} hold, and $(B, \mathbb{H})$ is a wreath product in 
${}_L{\cal M}_L$.\\
We end the proof by noting that the resulting $L$-ring is the so called smash product of $B$ and 
$\mathbb{H}$ introduced in \cite[Def. 2.4]{kasz}.  
\end{proof}

\subsection{Weak bialgebras}\selabel{wba}
Particular examples of left bialgebroids are given by weak bialgebras, see 
\cite{en, bcm}.  Recall from \cite{bns} that a weak bialgebra $H$ 
is a $k$-algebra and a $k$-coalgebra such that the comultiplication $\Delta$ is 
multiplicative, the counit $\va$ respects the units, 
\[
(\Delta\ot\Id_H)(\Delta(1))=(\Delta(1)\ot 1)(1\ot\Delta(1))=(1\ot \Delta(1))(\Delta(1)\ot 1) 
\]
and $\va(hh')=\va(h1_1)\va(1_2h')=\va(h1_2)\va(1_1h')$, for all $h, h'\in H$. 
To a weak bialgebra $H$, we can associate four projections
$\varepsilon_t,\varepsilon_s,\ov{\varepsilon}_t,\ov{\varepsilon}_s:\ H\to H$,
given by the formulas
 $\varepsilon_t(h)=\va(1_1h)1_2$,
$\varepsilon_s(h)=\va(h1_2)1_1$, 
 $\ov{\varepsilon}_t(h)=\va(h1_1)1_2$, and
$\ov{\varepsilon}_s(h)=\va(1_2h)1_1$, 
for all $h\in H$. 
Let  $L={\rm Im}(\varepsilon_t)$ 
and define $s, t: L\ra H$ by 
\[
s(z)=z~,~t(z)=\ov{\varepsilon}_s(z).
\]
Then $H$ is a left $L$-bialgebroid with source and target morphisms $s$ and $t$, 
comultiplication 
$\widetilde{\Delta}: 
\xymatrix{
H\ar[r]^-{\Delta}&H\ot H\ar[r]^-{\rm can}&H\ot_LH
}
$ 
and counit $\widetilde{\va}=\varepsilon_t$, see for example \cite[Prop. 3.1]{bcm}. 
Specializing \prref{scrossprodbialg} to weak Hopf algebras 
we obtain the following result.

\begin{corollary}\colabel{crossprodwHa}
Let $H$ be a weak bialgebra, let $B$ be a $k$-algebra and let $i: L\ra B$ be 
a $k$-algebra map. Assume $B$ is a left $H$-module, the left $H$-action of $h\in H$
on $b\in B$ is denoted by $h\cdot b$. We assume that 
\[
h\cdot i(\varepsilon_t(g))=i(\varepsilon_t(hg))~,~1_H\cdot b=b~{\rm and}~~h\cdot (bb')=(h_1\cdot b)(h_2\cdot b')~,
\]
for all $h, g\in H$ and $b, b'\in B$. $H$ is an $L$-bimodule by restriction of scalars via $s$.
Suppose that we have a well defined left $L$-linear morphism 
$\tau: H\ot_LH\ra B$ such that,  
\begin{eqnarray*}
&&\tau(h\ot_Lh'_1)\va(h'_2g)=\va(h'_1g)\tau (h\ot_Lh'_2),\\
&&\tau(h_1\ot_Lh'_1)(h_2h'_2\cdot b)=[h_1\cdot (h'_1\cdot b)]\tau(h_2\ot_Lh'_2),\\
&&(h_1\cdot \tau(h'_1\ot_L h''_1))\tau(h_2\ot_L h'_2h''_2)=\tau(h_1\ot_Lh'_1)\tau(h_2h'_2\ot_L h''),\\
&&\tau(h\ot_L1)=i(\varepsilon_t(h))=\tau(1\ot_L h),
\end{eqnarray*}
for all $h, h', h'', g\in H$. 
With $\psi$, $\zeta$ and $\sigma$ defined as in \prref{scrossprodbialg},
$(B, H)$ is a wreath in the monoidal category of $L$-bimodules. 
\end{corollary}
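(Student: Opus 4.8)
The plan is to obtain \coref{crossprodwHa} as a direct specialization of \prref{scrossprodbialg}, the only real work being to translate between the weak-bialgebra data and the left $L$-bialgebroid data. First I would invoke the fact, recalled just before the corollary, that a weak bialgebra $H$ with $L={\rm Im}(\varepsilon_t)$, $s(z)=z$ and $t(z)=\ov{\varepsilon}_s(z)$ carries a left $L$-bialgebroid structure with $\widetilde{\Delta}={\rm can}\circ\Delta$ and $\widetilde{\va}=\varepsilon_t$ (see \cite[Prop.~3.1]{bcm}). Thus $B\ot_L H$ and the morphisms $\psi$, $\zeta$, $\sigma$ of \prref{scrossprodbialg} are well defined as soon as we check that $\mathbb{H}=H$ measures $B$.

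Second, I would verify that the three standing hypotheses on the $H$-action, namely $h\cdot i(\varepsilon_t(g))=i(\varepsilon_t(hg))$, $1_H\cdot b=b$, and $h\cdot(bb')=(h_1\cdot b)(h_2\cdot b')$, are precisely the conditions of \prref{caractmwas} in the present situation. The point needing attention is the first one: since $\widetilde{\va}=\varepsilon_t$ and $s={\rm id}$, the identity \equref{bialmeas2} reads $h\cdot i(l)=i(\varepsilon_t(hl))=i(\varepsilon_t(h\,\ov{\varepsilon}_s(l)))$ for $l\in L$, and one uses the standard weak-bialgebra identities $\varepsilon_t(h\varepsilon_t(g))=\varepsilon_t(hg)$ and $\varepsilon_t(hl)=\varepsilon_t(h\,\ov{\varepsilon}_s(l))$ for $l\in L$ to see that $h\cdot i(\varepsilon_t(g))=i(\varepsilon_t(hg))$ is exactly \equref{bialmeas2}. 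Together with $1_H\cdot b=b$ and \equref{bialmeas}(c), \prref{caractmwas} then yields that $H$ measures $B$.

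Third, I would check that the hypotheses on $\tau$ in the corollary are exactly those required by \prref{scrossprodbialg}. That $\tau$ is well defined and left $L$-linear is assumed; the condition $\tau(h\ot_Lh'_1)\va(h'_2g)=\va(h'_1g)\tau(h\ot_Lh'_2)$ is designed to produce \equref{bialgcrossax}, i.e.\ $\tau(h\ot_Lh't(l))=\tau(h\ot_Lh's(l))$ for $l\in L$, via the weak-bialgebra identities $h'\varepsilon_t(g)=\va(h'_2g)h'_1$ and $h'\,\ov{\varepsilon}_s(g)=\va(h'_1g)h'_2$ (applied with $l=\varepsilon_t(g)$, so that $s(l)=l$ and $t(l)=\ov{\varepsilon}_s(l)$ act on the second tensorand as these scalars) together with the $k$-linearity of $\tau$. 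The remaining three displayed equalities on $\tau$ in the corollary, the twisted-module condition, the cocycle condition, and the normalization $\tau(h\ot_L1)=i(\varepsilon_t(h))=\tau(1\ot_Lh)$, coincide verbatim with the three equalities in \prref{scrossprodbialg} once $\widetilde{\va}$ is read as $\varepsilon_t$. Hence all hypotheses of \prref{scrossprodbialg} are met, and that proposition delivers that $(B,H)$ is a wreath in ${}_L{\cal M}_L$ with $\psi$, $\zeta$, $\sigma$ as stated.

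I expect the only genuine obstacle to be the bookkeeping with the two $L$-bimodule structures on $H$ (the ``coring'' one used to form $H\times_L H$ and the ``ring'' one $l\tr h\tl l'=s(l)hs(l')$ used to form $B\ot_L H$) together with the elementary weak-bialgebra manipulations behind \equref{bialmeas2} and \equref{bialgcrossax}, in particular confirming that $\widetilde{\va}(hs(l))=\widetilde{\va}(ht(l))$ and the ``$\varepsilon$''-identities used above hold for every weak bialgebra. Once these are pinned down, the corollary is a formal consequence of \prref{scrossprodbialg}, exactly as \prref{scrossprodbialg} itself is proved by reduction to \prref{dqHcrossprod} and \cite[Prop.~4.3]{bb}; no independent computation of the wreath product is needed, since the claim is only that $(B,H)$ is a wreath.
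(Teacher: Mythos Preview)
Your approach is correct and matches the paper's: both reduce the corollary to \prref{scrossprodbialg} and isolate as the only nontrivial point the equivalence between the first displayed condition on $\tau$ and \equref{bialgcrossax}. One small correction: the two weak-bialgebra identities you invoke are swapped; the correct versions (and the ones the paper actually uses, via $\varepsilon_s(h_1)\ot h_2=1_1\ot h1_2$, $h1_1\ot 1_2=h_1\ot \ov{\varepsilon}_t(h_2)$, $\va(\varepsilon_s(h)g)=\va(hg)$, $\va(\ov{\varepsilon}_t(h)g)=\va(hg)$) are $h'\varepsilon_t(g)=\va(h'_1g)h'_2$ and $h'\ov{\varepsilon}_s(g)=\va(h'_2g)h'_1$, which then give exactly $\tau(h\ot_Lh'_1)\va(h'_2g)=\va(h'_1g)\tau(h\ot_Lh'_2)\Leftrightarrow$ \equref{bialgcrossax}.
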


\begin{proof}
It has to be shown that
$\tau(h\ot_Lh'1_1)\va(1_2g)=\tau (h\ot_Lh'1_2)\va(1_1g)$
is equivalent to the first of the four conditions imposed on $\tau$. This follows easily from the 
formulas $\varepsilon_s(h_1)\ot h_2=1_1\ot h1_2$,
$\va(\varepsilon_s(h)g)=\va(hg)$,
$h1_1\ot 1_2=h_1\ot \ov{\varepsilon}_t(h_2)$
and $\va(\ov{\varepsilon}_t(h)g)=\va(hg)$, see \cite{bns}.
\end{proof}

We can apply \coref{smashbialg} to the left $L$-bialgebroid associated to
a weak bialgebra $H$. An algebra in ${}_H\Mm$ is a left $H$-module algebra $A$,
as introduced in \cite{bohm}. The associated smash product is
the smash product that was introduced in \cite{nik}. 

\subsection{Doi-Koppinen data over right bialgebroids}\selabel{dkd}
Now we look at the dual situation. More precisely, we will construct a coring in a 
category of bimodules from a Doi-Koppinen datum over a bialgebroid. Since we have to deal 
with right actions and coactions we need to work over a right bialgebroid $\mathbb{H}$ 
over a $k$-algebra $R$. As in the left handed case, $s: R\ra \mathbb{H}$ and 
$t: R^{\rm op}\ra \mathbb{H}$ will be the source and target algebra morphisms, and endow 
$\mathbb{H}$ 
with the $R$-bimodule structure given by $r\cdot h\cdot r':=h\cdot t(r)s(r')=h\cdot s(r')t(r)$. 
Then, by definition, $\mathbb{H}$ is an $R$-coring such that the 
image of the comultiplication 
$\widetilde{\Delta}$ 
is included in 
\[
\mathbb{H}\times_R\mathbb{H}=\left\{\sum\limits_ix_i\ot_Ry_i\in \mathbb{H}\ot_R\mathbb{H}\mid
\sum\limits_is(r)x_i\ot_Ry_i=\sum\limits_ix_i\ot_Rt(r)y_i,~\forall~r\in R
\right\}.
\] 
$\mathbb{H}\times_R\mathbb{H}$ is an $R$-ring under the component-wise
multiplication, with unit $1_\mathbb{H}\ot_R1_\mathbb{H}$. It is required that
$\widetilde{\Delta}$ is an algebra morphism. The counit 
$\widetilde{\va}$ has to respect the unit and has to satisfy the condition
\[
\widetilde{\va}(t(\widetilde{\va}(h))h')=\widetilde{\va}(hh')=\widetilde{\va}(s(\widetilde{\va}(h))h'),
\]
for all $h, h'\in H$. A right corepresentation of a right R-bialgebroid $\mathbb{H}$
is a right $R$-module 
$\mfM$ together with a right $R$-module map $\r_\mfM: \mfM\ra \mfM\ot_R\mathbb{H}$ which is 
coassociative and counital. 
Although $\mfM$ is not a left $R$-module, $\mfM\ot_R \mathbb{H}$ is a left
$R$-module, with left $R$-action given by
$r\cdot (\mfm\ot_R h)=\mfm\ot_Rs(r)h$. By \cite[Prop. 1.1]{basz}, 
$\mfM$ has a unique left $R$-module structure, given by 
\[
r\cdot \mfm:=\mfm_{(0)}\cdot \widetilde{\va}(s(r)\mfm_{(1)}),~
{\rm where}~\rho_\mfM(\mfm)=\mfm_{(0)}\ot_R\mfm_{(1)}\in \mfM\ot_R\mathbb{H},
\]
making $\mfM$ into an $R$-bimodule, and such that $\rho_\mfM$ is an $R$-bimodule map 
and ${\rm Im}(\rho_\mfM)$ is included in the Takeuchi product 
\[
\mfM\times_R\mathbb{H}:=\left\{
\sum\limits_ix_i\ot_Ry_i\in \mfM\ot_R\mathbb{H}\mid 
\sum\limits_ir\cdot x_i\ot_Ry_i=\sum\limits_ix_i\ot_Rt(r)y_i,~\forall~r\in R
\right\}.
\]
This key result allows us to define a monoidal structure on ${\cal M}^\mathbb{H}$,
the category of right $\mathbb{H}$-core-presentations and 
right $\mathbb{H}$-colinear maps.
The tensor product of $\mfM, \mfN\in {\cal M}^\mathbb{H}$ is
$\mfM\ot_R\mfN$, together with the structure map
$\rho_{\mfM\ot_R\mfN}$ given by
$$\rho_{\mfM\ot_R\mfN}(\mfm\ot_R\mfn)=
\mfm_{(0)}\ot_R\mfn_{(0)}\ot_R\mfm_{(1)}\mfn_{(1)}.$$
The unit object is $R$ together with the structure map
$\rho_R$ given by $\rho_R(r)=1_R\ot_R s(r)\in R\ot_R\mathbb{H}$.
A right $\mathbb{H}$-comodule algebra is by definition an algebra in
$ {\cal M}^\mathbb{H}$.\\
${\cal M}_\mathbb{H}$, the category of right $\mathbb{H}$-representations,
is also a monoidal cateogry. The tensor product over $R$ of two right 
$\mathbb{H}$-modules is a right $\mathbb{H}$-module by restriction of scalars via 
$\widetilde{\Delta}$. The unit object is $R$, which is a right $\mathbb{H}$-module via 
\[
r\btl h=\widetilde{\va}(t(r)h)=\widetilde{\va}(s(r)h),
\] 
for all $r\in R$ and $h\in \mathbb{H}$. A coalgebra  in ${\cal M}_\mathbb{H}$
is called a right $\mathbb{H}$-module coalgebra. 
A right $\mathbb{H}$-module coalgebra $C$ is an $R$-coring; 
$C$ is as an $R$-bimodule via $r\cdot c\cdot r'=c\cdot s(r)t(r')=c\cdot t(r)s(r')$.
$C$ is a right $\mathbb{H}$-module, we denote
the action by $\cdot: C\ot_R\mathbb{H}\ra C$. Then 
\[
\Delta_C(c\cdot h)=c_1\cdot h_1\ot_Rc_2\cdot h_2~{\rm and}~\va_C(c\cdot h)=\va_C(c)\btl h,
\]       
for all $c\in C$ and $h\in H$.

\begin{definition} \cite{bcm}
A right Doi-Koppinen datum is a triple $(\mathbb{H}, A, C)$
where $\mathbb{H}$ is a right $R$-bialgebroid, $A$ is a right $\mathbb{H}$-comodule algebra
and $C$ is a right $\mathbb{H}$-module coalgebra.
A right $(\mathbb{H}, A, C)$-module is a vector space $\mfM$ with the following structure:
\begin{itemize}
\item[-] $\mfM$ is a right $A$-module, and therefore a right $R$-module by restriction
of scalars via $i:\ R\to A$, $i(r)=r\cdot 1_A=1_A\cdot r$;
\item[-] $\mfM$ is a right $C$-comodule, with structure map
$\rho^C_\mfM:\ M\to \mfM\ot_RC$, $\rho^C_\mfM(\mfm)= \mfm_{(0)}\ot_R\mfm_{(1)}$;
\item[-] for all $a\in A$ and $\mfm\in \mfM$, we have
$\rho^C_\mfM(\mfm\cdot a)=\mfm_{(0)}\cdot a_{\le 0\ri}\ot_R \mfm_{(1)}\cdot a_{\le 1\ri}$.
\end{itemize}
${\cal DK}(\mathbb{H})_A^C$ is the category with right $(\mathbb{H}, A, C)$-modules
as objects and right $A$-linear right $C$-colinear maps as morphisms.
\end{definition}

Now we will show that the (right-handed version of) \cite[Prop. 4.1]{bcm} is a
consequence of \thref{genentcoring}.

\begin{proposition}\prlabel{DKascomcoring}
For a right Doi-Koppinen datum $(\mathbb{H}, A, C)$, we have the following assertions.
\begin{itemize}
\item[(i)] $(A, C)$ is a cowreath in ${}_R{\cal M}_R$, via
$\psi: C\ot_RA\ra A\ot_RC$, $\psi(c\ot_Ra)=a_{\le 0\ri}\ot_Rc\cdot a_{\le 1\ri}$
and $\zeta: C\ra A\ot_R C\ot_RC$, $\zeta=i\ot_R\Delta_C$, where $i:\ R\to A$ is
as above.
\item[(ii)] The bifunctor $\ot_R$ defines a right ${}_R{\cal M}_R$-category structure on ${\cal M}_R$ and the 
category of right corepresentations in ${\cal M}_R$ over the $A$-coring $\mfC=A\ot C$ 
corresponding to the 
cowreath $(A, C)$ from (i) can be identified with the category of right corepresentations over the 
$A$-coring $\mfC$ viewed now in ${}_k{\cal M}$, and is isomorphic to the category ${\cal DK}(\mathbb{H})_A^C$. 
\end{itemize}
\end{proposition}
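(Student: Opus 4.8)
The plan is to recognize $(A,C)$ as a cowreath of the ``trivial'' type treated in \prref{fact(co)ring}(ii). The unit object of $\Cc={}_R{\cal M}_R$ is $R$, so the $R$-ring unit $i:R\to A$ plays the role of $\un{\eta}_A$, and the proposed structure maps are exactly $\d:=i\ot_R\Delta_C=\un{\eta}_A\ot\un{\Delta}_C$ and $f:=i\circ\va_C=\un{\eta}_A\un{\va}_C$, where $(C,\Delta_C,\va_C)$ is a coalgebra in $\Cc$ because $C$ is an $R$-coring. Both $A$ and $C$ are automatically left coflat in $\Cc$, since each functor $-\ot_RX$, being a left adjoint, preserves coequalizers. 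Hence, by the converse implication of \prref{fact(co)ring}(ii) — for which, by the remark following that proposition, the monomorphism hypothesis is not needed — it is enough to exhibit a morphism $\psi:C\ot_RA\to A\ot_RC$ in $\Cc$ satisfying the four compatibility conditions listed there; once this is done, $\mfC=A\ot_RC$ is an $A$-coring, which by \thref{4.2} and \prref{5.1} means precisely that $(A,C)$ is a cowreath in $\Cc$.

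So first I would check that $\psi(c\ot_Ra)=a_{\le 0\ri}\ot_Rc\cdot a_{\le 1\ri}$ is well defined on $C\ot_RA$ and is a map of $R$-bimodules; this uses that the coaction $\rho_A$ is an $R$-bimodule map with image in the Takeuchi product $A\times_R\mathbb{H}$, together with the compatibility of the $\mathbb{H}$-action on $C$ with its $R$-bimodule structure. Next, the two conditions involving $\un{m}_A$ and $\un{\eta}_A$ follow from $A$ being a right $\mathbb{H}$-comodule algebra, $\rho_A(aa')=a_{\le 0\ri}a'_{\le 0\ri}\ot_R a_{\le 1\ri}a'_{\le 1\ri}$ and $\rho_A(1_A)=1_A\ot_R1_\mathbb{H}$, combined with the fact that $C$ is a right $\mathbb{H}$-module. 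The condition relating $\psi$ and $\un{\Delta}_C$ follows from $C$ being a right $\mathbb{H}$-module coalgebra, $\Delta_C(c\cdot h)=c_1\cdot h_1\ot_Rc_2\cdot h_2$, together with coassociativity of $\widetilde{\Delta}$ and ${\rm Im}(\widetilde{\Delta})\subseteq\mathbb{H}\times_R\mathbb{H}$; and the condition relating $\psi$ and $\un{\va}_C$ follows from $\va_C(c\cdot h)=\va_C(c)\btl h$ together with the counit axioms for $\widetilde{\va}$ and for $\rho_A$. These are the right-handed analogues of the computations carried out in the proofs of \prref{wpqHa} and \prref{expnontrivcowr}.

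For part (ii), $\Cc={}_R{\cal M}_R$ has coequalizers (namely cokernels), which are preserved by each $-\ot_RX$; hence every object of $\Cc$ and of $\Dc:={\cal M}_R$ is left $\Dc$-coflat and left coflat, while the robustness of $A$ and of $C$ amounts to associativity of $\ot_R$. Thus $\ot_R$ makes ${\cal M}_R$ a right $\Cc$-category and \thref{genentcoring} applies, giving $\Dc^{\mfC}\cong\Dc(\psi,\d,f)_A^C$. Unwinding the definitions with $\diamond=\ot_R$, $\d=i\ot_R\Delta_C$ and $f=i\circ\va_C$: condition \equref{rAlin} becomes $\rho^C_\mfM(\mfm\cdot a)=\mfm_{(0)}\cdot a_{\le 0\ri}\ot_R\mfm_{(1)}\cdot a_{\le 1\ri}$, condition \equref{c1} becomes coassociativity of $\rho^C_\mfM$ with respect to $\Delta_C$, and condition \equref{c2} becomes $\mfm_{(0)}\cdot\va_C(\mfm_{(1)})=\mfm$; together with the coincidence of morphisms this identifies $\Dc(\psi,\d,f)_A^C$ with ${\cal DK}(\mathbb{H})_A^C$. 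Finally, the coincidence of the categories of corepresentations over $\mfC$ computed in ${\cal M}_R$ and in ${}_k{\cal M}$ is obtained exactly as in the proof of \prref{expnontrivcowr}: a right $A$-module in ${\cal M}_R$ is just a right $A$-module, its $R$-module structure being recovered by restriction of scalars along $i$, and both the relative tensor product $\ot_A$ and the relevant coflatness are the same whether computed in ${}_k{\cal M}$ or in ${\cal M}_R$.

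I expect the main obstacle to be purely organizational: keeping track of the several $R$-bimodule structures on $\mathbb{H}$ and on $C$ (those defined through $s,t$ versus the twisted ones $\btr,\btl$) and checking that $\psi$, $\d$ and $\rho^C_\mfM$ are well defined on the relative tensor products and land in the appropriate Takeuchi products. Once this is settled, the four conditions of \prref{fact(co)ring}(ii) reduce to the bialgebroid axioms (the right-handed versions of \equref{axb1}--\equref{axb2}) together with the comodule-algebra and module-coalgebra axioms, and part (ii) becomes a routine application of \thref{genentcoring}.
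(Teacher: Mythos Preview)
Your proposal is correct and follows essentially the same route as the paper. The paper also recognizes that here $(C,\psi)$ lives in ${\cal T}_A$ rather than merely ${\cal T}_A^\#$, verifies directly that $\psi$ is a well-defined $R$-bimodule map (using ${\rm Im}(\rho_A)\subseteq A\times_R\mathbb{H}$ and the module/comodule axioms), and leaves the remaining entwining-type conditions to the reader; your explicit appeal to \prref{fact(co)ring}(ii) (together with the remark that the monomorphism hypothesis is not needed for the converse) is exactly the clean way to package this. For part (ii) the paper likewise invokes \thref{genentcoring} after observing that a right $A$-module in ${\cal M}_R$ is just a right $A$-module with $R$-action restricted along $i$, so that $\ot_A$ and hence $\mfC$-corepresentations are the same whether computed in ${\cal M}_R$ or in ${}_k{\cal M}$.
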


\begin{proof}
(i) We first show that $\psi$ is well-defined. For all 
$c\in C$, $r\in R$ and $a\in A$ we have that
\begin{eqnarray*}
&&\hspace*{-1cm}
\psi(c\ot_Rr\cdot a)=\psi(c\ot_R a_{\le 0\ri}\cdot \widetilde{\va}(s(r)a_{\le 1\ri}))
\equal{(*)}a_{\le 0\ri_{\le 0\ri}}\ot_R c\cdot a_{\le 0\ri_{\le 1\ri}}s(\widetilde{\va}(s(r)a_{\le 1\ri}))\\
&=&a_{\le 0\ri}\ot_T c\cdot a_{\le 1\ri_1}s(\widetilde{\va}(s(r)a_{\le 1\ri_2}))
=a_{\le 0\ri}\ot_Rc\cdot (s(r)a_{\le 1\ri})_1s(\widetilde{\va}((s(r)a_{\le 1\ri})_2))\\
&=&a_{\le 0\ri}\ot_Rc\cdot s(r)a_{\le 1\ri}
=a_{\le 0\ri}\ot_R(c\cdot r)\cdot a_{\le 1\ri}=\psi(c\cdot r\ot_R a),  
\end{eqnarray*}
At $(*)$, we used that $\rho_A$ is right $R$-linear.
The left $R$-linearity of $\psi$ follows from the fact that ${\rm Im}(\rho_A)\subseteq A\times_R\mathbb{H}$:
\[
\psi(r\cdot c\ot_R a)=\psi(c\cdot t(r)\ot_Ra)=a_{\le 0\ri}\ot_Rc\cdot t(r)a_{\le 1\ri}=
r\cdot a_{\le 0\ri}\ot_Rc\cdot a_{\le 1\ri}.
\] 
The right $R$-linearity of $\psi$ follows immediately from the
right $R$-linearity of $\rho_A$. All the other conditions that are needed to make
$(A, C, \psi)$ to be a cowreath in ${}_R{\cal M}_R$ 
with $(C, \psi)\in {\cal T}_A$ are straightforward, and are left to the reader.\\
(ii) Clearly $\ot_R: {\cal M}_R\times {}_R{\cal M}_R\ra {\cal M}_R$ yields a right 
${}_R{\cal M}_R$-category structure on ${\cal M}_R$. Furthermore, if $A$ is an algebra in 
${}_R{\cal M}_R$, 
then any right module $\mfM$ in ${\cal M}_R$ over $A$ has the right $R$-module structure 
inherited from the right $A$-action, since $(\mfm\cdot r)\cdot a=\mfm\cdot (r\cdot a)$, for all $\mfm\in \mfM$, 
$r\in R$ and $a\in A$, and therefore $\mfm\cdot r=\mfm\cdot (r\cdot 1_A)=\mfm\cdot i(r)$, for all 
$\mfm\in \mfM$ and $r\in R$. So $\mfM$ is a right $A$-module in ${}_k{\cal M}$, considered 
as a right $R$-module via $i$.\\
In a similar way, if $\mfN$ is an $A$-bimodule in ${}_R{\cal M}_R$ then the $R$-bimodule 
structure on $\mfN$ is induced by the $A$-module structure on $\mfN$, that is, $r\cdot \mfn\cdot r'=i(r)\cdot \mfn\cdot i(r')$, 
for all $r, r'\in R$, $\mfn\in \mfN$. This tells us that the tensor product over $A$ in ${}_R{\cal M}_R$ is 
precisely the tensor product over $A$ in ${}_k{\cal M}$. 
Thus, if $\mfC$ is an $A$-coring in ${}_R{\cal M}_R$ then it is actually an $A$-coring in ${}_k{\cal M}$,
viewed as an $R$-bimodule via $i$, and a right corepresentation in ${\cal M}_R$ over $\mfC$ is a usual 
corepresentation of the $A$-coring $\mfC$ in ${}_k{\cal M}$. In other words, $({\cal M}_R)^\mfC\cong {}_k{\cal M}^\mfC$.\\
Now consider $\mfC=A\ot C$, the $A$-coring in ${}_R{\cal M}_R$ determined by the 
cowreath $(A, C)$ in ${}_R{\cal M}_R$ described in (i). 
From the above comments and \thref{genentcoring} it follows that ${}_k{\cal M}^\mfC\cong 
({\cal M}_R)^\mfC\cong {\cal YD}(\mathbb{H})^C_A$.
\end{proof}

A weak bialgebra is a left bialgebroid. The base $k$-algebra $R$ is the image of the 
idempotent morphism $\varepsilon_s: H\ra H$ defined in \seref{wba}.
The source map $s: R\ra H$ is the inclusion, and $t:\ R\ra H$ is the restriction of
$\ov{\varepsilon}_t$. The comultiplication is defined as in the left-handed case,
and the counit is $\varepsilon_s$.\\
Applying  \prref{DKascomcoring} to the case where $\mathbb{H}$ is a weak bialgebra,
we obtain that the category of right weak Doi-Koppinen modules as defined in
\cite{bohm} is 
isomorphic to the category of corepresentations over a coring in ${}_k{\cal M}$. To this end 
we have to use the right handed version of \cite[Theorem 3.11]{bcm}.\\  
Our theory applies also to Doi-Koppinen data in braided monoidal categories. This will be 
explained in full detail in the forthcoming paper \cite{bc4}.

%%%%%%%%%%%%%%%%%%%%%%%%%%%%%%%%%%
\section{Appendix}\selabel{Appendix}
%%%%%%%%%%%%%%%%%%%%%%%%%%%%%%%%%%%
\setcounter{equation}{0}
After an earlier version of this paper was circulating we were informed that \thref{4.2} holds in a more 
general setting, see \thref{EMAmain}, leading to a different proof based on $2$-categorical arguments.
We will now explore this idea; it will turn out that \thref{EMAmain} holds, but that we
need \thref{4.2} in the proof, so that it cannot be used to give an alternative proof
of \thref{4.2}.

The starting point for the generalization of \thref{4.2} is an explicit description 
of $EM({\cal K})$ in the case where the $2$-category 
${\cal K}$ admits the Eilenberg-Moore (EM for short) construction for monads.  

If ${\cal K}$ is a $2$-category then by ${\rm Mnd}({\cal K})$ we denote the 
$2$-category of monads, monad morphisms and monad transformations, see \seref{monint} for detail. 
An object $X$ of ${\cal K}$ gives rise to a monad 
$\un{X}=(X, X\stackrel{1_X}{\rightarrow}X, i_X, i_X)$, i.e., to an object $\un{X}$ in 
${\rm Mnd}({\cal K})$. Furthermore, any $1$-cell $X\stackrel{f}{\rightarrow}Y$ 
defines a $1$-cell 
$
\xymatrix{
\un{X}\ar[r]^-{(f, 1_f)}&\un{Y}
}
$ 
in ${\rm Mnd}({\cal K})$ and any $2$-cell $f\stackrel{\rho}{\Rightarrow}g$ in ${\cal K}$ becomes a 
$2$-cell $(f, 1_f)\stackrel{\rho}{\Rightarrow}(g, 1_g)$ in ${\rm Mnd}({\cal K})$. These 
correspondences produce a $2$-functor ${\rm Inc}_{\cal K}: {\cal K}\ra {\rm Mnd}({\cal K})$, 
called the inclusion $2$-functor of ${\cal K}$. Conversely, 
we have the so-called underlying $2$-functor ${\rm Und}_{\cal K}: {\rm Mnd}({\cal K})\ra 
{\cal K}$ that maps $(A, t)$ to $A$, $(f, \psi)$ to $f$, and $\rho$ to $\rho$. 
From \cite[Theorem 1]{rstreet} we know that the underlying $2$-functor is a left $2$-adjoint for 
the inclusion $2$-functor of ${\cal K}$ in ${\rm Mnd}({\cal K})$. For more detail on $2$-functors 
and $2$-adjunctions we invite the reader to consult \cite[Ch. 7]{Borceux}. 
 
\begin{definition}
A $2$-category ${\cal K}$ admits the EM construction for monads if the 
inclusion $2$-functor ${\rm Inc}_{\cal K}$ has a right $2$-adjoint.
\end{definition}

Let us explain this more explicitly. Let $F: {\rm Mnd}({\cal K})\ra {\cal K}$ be a 
right $2$-adjoint functor for ${\rm Inc}_{\cal K}$. If $\epsilon: {\rm Inc}_{\cal K}F\ra 1_{{\rm Mnd}({\cal K})}$ 
is the counit of the $2$-adjunction then for any monad $\mathbb{A}=(A, t, \mu_t, \eta_t)$ in ${\cal K}$ 
we have that $\epsilon_\mathbb{A}=(u^t, \chi^t): \underline{A^t}\ra \mathbb{A}$ is a monad 
morphism, where $A^t=F(\mathbb{A})$ is the so-called EM object of $\mathbb{A}$. 
Moreover, for any monad $\mathbb{A}$ in ${\cal K}$ and any 
object $X$ of ${\cal K}$ we have a category isomorphism, natural in both arguments,  
\begin{equation}\eqlabel{EMA}
\mathfrak{F}_{X, \mathbb{A}}: {\cal K}(X, A^t)\cong {\rm Mnd}({\cal K})(\un{X}, \mathbb{A})
\end{equation}
defined as follows. $\mathfrak{F}_{X, \mathbb{A}}$ sends a $1$-cell $X\stackrel{f}{\rightarrow}A^t$ in ${\cal K}$ 
in the monad morphism $(u^tf, \chi^t\odot 1_f): \un{X}\ra \mathbb{A}$, while a $2$-cell 
$
\xymatrix{
X\rtwocell^{f}_{f'}{\rho}&A^t 
}
$ 
in ${\cal K}$ is mapped by $\mathfrak{F}_{X, \mathbb{A}}$ in the $2$-cell 
$
\xymatrix{
\un{X}\rrtwocell^{(u^tf, \chi^t\odot 1_f)}_{(u^tf', \chi^t\odot 1_{f'})}{\hspace{6mm}1_{u^t}\odot\rho}&&\mathbb{A}
}
$ 
of ${\rm Mnd}({\cal K})$. 

To a monad $\mathbb{A}=(A, t, \mu_t, \eta_t)$ in ${\cal K}$ we can associate a 
monad morphism 
$
\xymatrix{
\un{A}\ar[r]^-{(t, \mu_t)}&\mathbb{A}
}
$. 
Consequently, there exists 
a unique $1$-cell $A\stackrel{v^t}{\rightarrow}A^t$ in ${\cal K}$ such that 
\begin{equation}\eqlabel{EMA1}
(u^tv^t, \chi^t\odot 1_{v^t})=(t, \mu_t).
\end{equation}

In a similar way, an easy computation shows that 
$(tu^t=u^tv^tu^t, \mu_t\odot 1_{u^t})\stackrel{\chi^t}{\Rightarrow}(u^t, \chi^t)$ 
is a $2$-cell in ${\rm Mnd}({\cal K})$, and therefore there exists a unique 
$2$-cell 
$
\xymatrix{
A^t\rtwocell^{v^tu^t}_{1_{A^t}}{\rho^t}&A^t 
} 
$
such that $1_{u^t}\odot \r^t=\chi^t$. By \cite[\S 1]{rstreet}   
$
\xymatrix{
A^t\ar@<1ex>[r]^-{u^t}& 
\ar@<1ex>[l]^-{v^t}A
}
$ 
defines an adjoint pair of $1$-cells, in the sense that the $2$-cells 
$\xymatrix{
1_A\ar@2{->}[r]^-{\eta_t}&u^tv^t=t
}
$ and 
$
\xymatrix{
v^tu^t\ar@2{->}[r]^-{\r^t}&
1_{A^t}
}
$  
satisfy the equalities 
\begin{equation}\eqlabel{EMA3}
(1_{u^t}\odot \r^t)(\eta_t\odot 1_{u^t})=1_{u^t}~~\mbox{and}~~
(\r^t\odot 1_{v^t})(1_{v^t}\odot \eta_t)=1_{v^t}.
\end{equation}
 
For any monad morphism 
$
\xymatrix{
\mathbb{A}=(A, t)\ar[r]^-{(f, \psi)}&\mathbb{B}=(B, s)
}
$ 
denote $F(f, \psi)$ by $\ov{f}$, a $1$-cell between $A^t$ and $B^s$. By the $2$-naturality  
of the counit we have a commutative diagram 
\[
\xymatrix{
\un{A^t}\ar[r]^-{(u^t, \chi^t)}\ar[d]_-{(\ov{f}, 1_{\ov{f}})}&\mathbb{A}\ar[d]^-{(f, \psi)}\\
\un{B^s}\ar[r]_-{(u^s, \chi^s)}&\mathbb{B}
}.
\]
Otherwise stated, $\ov{f}: A^t\ra B^s$ is a $1$-cell in ${\cal K}$ such that 
\begin{equation}\eqlabel{EMA4}
fu^t=u^s\ov{f}~~\mbox{and}~~
\chi^s\odot 1_{\ov{f}}=(1_f\odot \chi^t)(\psi\odot 1_{u^t}).
\end{equation}
Then by \cite[\S 2.2]{LackRoss} and \cite[Theorem 3.10]{powa} to 
give a $1$-cell in ${\rm Mnd}({\cal K})$ is equivalent to give 
a pair $(f, \ov{f})$ of $1$-cells in ${\cal K}$ such that the first equation 
in \equref{EMA4} holds. Note that the converse of this assertion has the 
following meaning. 

\begin{lemma}\lelabel{Ident1cellsEM}
If $\mathbb{A}, \mathbb{B}$ are monads in ${\cal K}$ 
and $(f, \ov{f})$ is a pair of $1$-cells in ${\cal K}$ obeying $fu^t=u^s\ov{f}$ 
then there exists $\psi: sf\Rightarrow ft$ a $2$-cell in ${\cal K}$ such that 
$(f, \psi): \mathbb{A}\ra \mathbb{B}$ is a monad morphism. Moreover, 
$\psi$ satisfies the second equality in \equref{EMA4}. 
\end{lemma}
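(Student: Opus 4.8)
The plan is to reconstruct the missing $2$-cell $\psi$ directly from the adjunction data $(u^t,v^t,\eta_t,\rho^t)$ and $(u^s,v^s,\eta_s,\rho^s)$ attached to the Eilenberg--Moore objects $A^t$ and $B^s$, and then verify the monad-morphism axioms by the usual triangle-identity manipulations. Concretely, given the pair $(f,\ov f)$ of $1$-cells in ${\cal K}$ with $fu^t=u^s\ov f$, I would define
\[
\psi:=\bigl(1_f\odot(1_{u^t}\odot\rho^t)\bigr)\circ\bigl((1_{u^s}\odot\rho^s)\odot 1_{\ov f u^t v^t}\bigr)\circ\bigl(1_{u^s v^s}\odot(\eta_s\ \text{-type insertion})\bigr),
\]
but more cleanly: since $t=u^tv^t$ and $s=u^sv^s$, and since $fu^t=u^s\ov f$ gives $sf=u^sv^su^s\ov f$ while $ft=u^s\ov f v^t$, the obvious candidate is to build a $2$-cell $v^su^s\ov f\Rightarrow \ov f v^t$ and whisker by $u^s$ on the left. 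Such a $2$-cell is obtained from $\ov f$ together with $\rho^s$ and $\eta_t$ via the mate correspondence for the adjunctions $v^t\dashv u^t$ and $v^s\dashv u^s$; explicitly one forms $(\rho^s\odot 1_{\ov f v^t})\circ(1_{v^s u^s}\odot \text{the }2\text{-cell }\ov f\Rightarrow\ov f u^t v^t\text{ coming from }\eta_t)$. Then set $\psi:=1_{u^s}\odot(\text{this mate})$, rewritten through $fu^t=u^s\ov f$ to land as $sf\Rightarrow ft$.

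First I would record the precise adjunction identities I am allowed to use: equations \equref{EMA1}, \equref{EMA3}, \equref{EMA4} from the excerpt, plus the fact that $u^t\dashv v^t$ with unit $\eta_t$ and counit $\rho^t$ (and likewise for $s$). Next I would define $\psi$ as the mate described above and check the two monad-morphism axioms from \seref{monint}, namely
\[
(1_f\odot\mu_t)\circ(\psi\odot 1_t)\circ(1_s\odot\psi)=\psi\circ(\mu_s\odot 1_f)\quad\text{and}\quad\psi\circ(\eta_s\odot 1_f)=1_f\odot\eta_t.
\]
The unit axiom should fall out almost immediately from the first triangle identity in \equref{EMA3} for the $s$-adjunction. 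For the multiplicativity axiom I would rewrite $\mu_t=1_{u^t}\odot\rho^t\odot 1_{v^t}$ and $\mu_s=1_{u^s}\odot\rho^s\odot 1_{v^s}$ (this is exactly how $\rho^t$ was defined, via $1_{u^t}\odot\rho^t=\chi^t$ together with \equref{EMA1}), and then both sides reduce, after whiskering away $u^s$, to an equality of $2$-cells built purely from $\ov f$, $\rho^s$, $\eta_t$, $\rho^t$; this equality is a naturality/interchange statement for the mate construction and is standard. Finally I would verify the ``Moreover'' clause: that this $\psi$ satisfies the second equality in \equref{EMA4}, $\chi^s\odot 1_{\ov f}=(1_f\odot\chi^t)(\psi\odot 1_{u^t})$. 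Again writing $\chi^s=1_{u^s}\odot\rho^s$ and $\chi^t=1_{u^t}\odot\rho^t$, the right-hand side becomes $(1_f\odot 1_{u^t}\odot\rho^t)\circ(\psi\odot 1_{u^t})$, and substituting the definition of $\psi$ together with the triangle identity \equref{EMA3} for the $t$-adjunction collapses it to $1_{u^s}\odot\rho^s\odot 1_{\ov f}=\chi^s\odot 1_{\ov f}$, as desired.

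The main obstacle I anticipate is bookkeeping rather than conceptual: making sure the whiskerings and the substitution $fu^t=u^s\ov f$ are applied consistently so that the $2$-cell genuinely has source $sf$ and target $ft$, and that the mate is the \emph{correct} one of the two possible mates (using $\eta_t$ and $\rho^s$, not $\eta_s$ and $\rho^t$). A secondary subtlety is that \leref{Ident1cellsEM} is stated for an \emph{arbitrary} $2$-category ${\cal K}$ admitting the EM construction, so I must avoid any shortcut that uses the concrete description of $EM(\Cc)$ from \prref{TA2} or \coref{4.2}; the whole argument has to run on the formal adjunction data only. Given that the paper explicitly cites \cite[\S 2.2]{LackRoss} and \cite[Theorem 3.10]{powa} for the equivalence between $1$-cells in ${\rm Mnd}({\cal K})$ and such pairs $(f,\ov f)$, an acceptable alternative is to simply invoke that equivalence: the pair $(f,\ov f)$ with $fu^t=u^s\ov f$ corresponds under it to a $1$-cell $\un A\to\un B$ in ${\rm Mnd}({\cal K})$, i.e.\ to a monad morphism $(f,\psi)$, and unwinding the correspondence yields the second equation in \equref{EMA4}. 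I would present the self-contained mate construction as the proof and remark that it is exactly the content of the cited equivalence.
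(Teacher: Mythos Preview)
Your overall strategy coincides with the paper's: the paper defines $\psi$ exactly as the mate you are aiming at, namely
\[
\psi:\ sf\ \stackrel{1_{sf}\odot\eta_t}{\Longrightarrow}\ sft=su^s\ov f\,v^t\ \stackrel{\chi^s\odot 1_{\ov f v^t}}{\Longrightarrow}\ u^s\ov f\,v^t=ft,
\]
then cites \cite[Theorem~3.10]{powa} for the monad-morphism axioms (which is precisely your ``acceptable alternative''), and finally checks the second equality in \equref{EMA4} by a short direct computation using the triangle identity \equref{EMA3}. So conceptually you are on the paper's track.

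However, your explicit write-up has genuine typing errors that would have to be repaired before the argument goes through. The $2$-cell ``$v^su^s\ov f\Rightarrow\ov f v^t$'' you propose does not type-check: $v^su^s\ov f$ is a $1$-cell $A^t\to B^s$, while $\ov f v^t$ is a $1$-cell $A\to B^s$. Likewise, ``the $2$-cell $\ov f\Rightarrow\ov f u^tv^t$ coming from $\eta_t$'' is ill-formed, since $\eta_t:1_A\Rightarrow u^tv^t$ lives over $A$, not $A^t$, and cannot be whiskered on the right by $\ov f:A^t\to B^s$. The correct placement is to whisker $\eta_t$ by $sf$ on the \emph{left} (giving $sf\Rightarrow sft$), then use $fu^t=u^s\ov f$ to rewrite $sft$ as $su^s\ov f v^t$, and finally apply $\chi^s=1_{u^s}\odot\rho^s$ whiskered by $\ov f v^t$. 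Once $\psi$ is written this way, your verification of \equref{EMA4} via the $t$-triangle identity is exactly the paper's computation; the paper also records the auxiliary interchange $(1_f\odot\chi^t)(\chi^s\odot 1_{\ov f v^tu^t})=(\chi^s\odot 1_{\ov f})(1_{sf}\odot\chi^t)$, which is the ``naturality/interchange'' step you allude to. Your own caveat that ``the main obstacle is bookkeeping'' was accurate; fixing the whiskerings as above makes the proof identical to the paper's.
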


\begin{proof}
The explicit definition of $\psi$ was given in the proof of \cite[Theorem 3.10]{powa} (as well as 
the fact that $(f, \psi)$ is a monad morphism and $F(f, \psi)=\ov{f}$). 
Namely, $\psi$ is defined by the following vertical composition of $2$-cells,
\begin{equation}\eqlabel{EMA5}
\xymatrix{
\psi: sf\ar@2{->}[r]^-{1_{sf}\odot \eta_t}&sft=sfu^tv^t=su^s\ov{f}v^t\ar@2{->}[rr]^-{\chi^s\odot 1_{\ov{f}v^t}}&&
u^s\ov{f}v^t=fu^tv^t=ft
}.
\end{equation} 
To see that $\psi$ satisfies the second equality in \equref{EMA4} observe first that  
\begin{eqnarray*}
(1_f\odot \chi^t)(\chi^s\odot 1_{\ov{f}v^tu^t})&=&
(1_{fu^t}\odot \r^t)(1_{u^s}\odot \r^s\odot 1_{\ov{f}v^tu^t})\\
&=&1_{u^s}\odot (1_{\ov{f}}\odot \r^t)(\r^s\odot 1_{\ov{f}v^tu^t})\\
&=&1_{u^s}\odot (\r^s\odot 1_{\ov{f}})(1_{v^su^s\ov{f}}\odot \r^t)\\
&=&(\chi^s\odot 1_{\ov{f}})(1_{u^sv^sf}\odot \chi^t)=(\chi^s\odot 1_{\ov{f}})(1_{sf}\odot \chi^t),
\end{eqnarray*}
so that
\begin{eqnarray*}
(1_f\odot \chi^t)(\psi\odot 1_{u^t})&=&(1_f\odot \chi^t)(\chi^s\odot 1_{\ov{f}v^tu^t})(1_{sf}\odot \eta_t\odot 1_{u^t})\\
&=&(\chi^s\odot 1_{\ov{f}})(1_{sf}\odot (1_{u^t}\odot \r^t)(\eta_t\odot 1_{u^t}))=\chi^s\odot 1_{\ov{f}},
\end{eqnarray*}
as stated. 
\end{proof} 

Thus the $1$-cells in ${\rm Mnd}({\cal K})$, and a fortiori the $1$-cells in $EM({\cal K})$, can be identified 
with commutative diagrams of the form
\begin{equation}\eqlabel{EMA6}
\xymatrix{
A^t\ar[r]^{\ov{f}}\ar[d]_-{u^t}&B^s\ar[d]^-{u^s}\\
A\ar[r]_-{f}&B
}.
\end{equation}
It was pointed out in \cite[\S 2.2]{LackRoss} that, via this identification, $2$-cells in $EM({\cal K})$ 
correspond to $2$-cells in ${\cal K}$. We discuss this in more detail in \prref{prEMA}. In the proof, we focus on the parts 
that are missing in \cite{LackRoss}.

\begin{proposition}\prlabel{prEMA}
Let ${\cal K}$ be a $2$-category admitting EM constructions for monads. Then $EM({\cal K})$ is isomorphic
to the following $2$-category:
\begin{itemize}
\item[$\bullet$] 0-cells are monads in ${\cal K}$;
\item[$\bullet$] a $1$-cell from $\mathbb{A}=(A, t)$ to $\mathbb{B}=(B, s)$ is a pair $(f, \ov{f})$ of $1$-cells in 
${\cal K}$ making the diagram \equref{EMA6} commutative;
\item[$\bullet$] a $2$-cell from $(f, \ov{f})$ to $(g, \ov{g})$ is a $2$-cell $\tau: \ov{f}\Rightarrow \ov{g}$ in ${\cal K}$. 
\end{itemize}
\end{proposition}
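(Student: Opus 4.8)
The plan is to construct an explicit isomorphism of $2$-categories between $EM({\cal K})$ and the candidate $2$-category ${\cal E}$ described in the statement, exploiting the hypothesis that ${\cal K}$ admits EM constructions for monads and hence the natural isomorphism \equref{EMA}. On $0$-cells both sides are literally the monads in ${\cal K}$, so the functor $G\colon EM({\cal K})\to{\cal E}$ is the identity on $0$-cells. On $1$-cells: given a $1$-cell in $EM({\cal K})$, i.e.\ a monad morphism $(f,\psi)\colon\mathbb{A}\to\mathbb{B}$, I set $G(f,\psi)=(f,\ov{f})$ where $\ov{f}=F(f,\psi)$ is the image under the right $2$-adjoint $F$; the $2$-naturality of the counit gives exactly the commuting square \equref{EMA6}, i.e.\ $fu^t=u^s\ov{f}$, so $(f,\ov{f})$ is a legitimate $1$-cell of ${\cal E}$. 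For the inverse assignment on $1$-cells one invokes \leref{Ident1cellsEM}: from a commuting square $(f,\ov{f})$ one recovers a uniquely determined $\psi$ (given by \equref{EMA5}) making $(f,\psi)$ a monad morphism with $F(f,\psi)=\ov{f}$, and the uniqueness part of \cite[Theorem 3.10]{powa} guarantees these two assignments on $1$-cells are mutually inverse.

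Next I would handle the $2$-cells. A $2$-cell $\rho\colon(f,\psi)\Rightarrow(g,\phi)$ in $EM({\cal K})$ is, by \cite[\S 2.2]{LackRoss} / the discussion in the excerpt, transported under $\mathfrak{F}^{-1}_{X,\mathbb{A}}$-type isomorphisms to a $2$-cell $\tau\colon\ov{f}\Rightarrow\ov{g}$ in ${\cal K}$; conversely a $2$-cell $\tau\colon\ov{f}\Rightarrow\ov{g}$ between the two squares determines a $2$-cell of $EM({\cal K})$. Here I would use \equref{EMA} applied with $X=A$ (or the appropriate EM object), together with \equref{EMA1}, \equref{EMA4} and \equref{EMA5}, to check that the defining equation \equref{2cellEM} of a $2$-cell in $EM({\cal K})$ translates precisely into ``$\tau$ is an arbitrary $2$-cell $\ov f\Rightarrow\ov g$'' with no further constraint. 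The main verification is that this correspondence is bijective: surjectivity uses the fullness built into the $2$-adjunction isomorphism \equref{EMA}, injectivity uses its faithfulness, and one must be careful that the identifications are compatible with the chosen $u^t,v^t,\r^t$ satisfying \equref{EMA3}.

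Finally I would check that $G$ is a genuine $2$-functor, i.e.\ that it respects both compositions of $1$-cells, both (vertical and horizontal) compositions of $2$-cells, and identities. For $1$-cell composition this is automatic because $F$ is a $2$-functor, so $F((g,\phi)(f,\psi))=F(g,\phi)F(f,\psi)$, and horizontal composition of squares \equref{EMA6} is just pasting; for identities one uses $1_{(f,\psi)}=1_f\odot\eta_t$ in $EM({\cal K})$ versus $1_{(f,\ov f)}=1_{\ov f}$ in ${\cal E}$, which correspond under the $2$-adjunction counit since $\chi^t\odot 1_{v^t}=\mu_t$ by \equref{EMA1}. Vertical composition of $2$-cells is preserved because both are computed by the formula $\rho'\circ\rho=(1_g\odot\mu_t)\circ(\rho'\odot 1_t)\circ\rho$ on one side, which under the isomorphism \equref{EMA} collapses to ordinary vertical composition $\tau'\circ\tau$ in ${\cal K}$; this collapse is exactly the content of the naturality of $\mathfrak{F}_{X,\mathbb{A}}$ in $X$ and $\mathbb{A}$.

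I expect the main obstacle to be the bookkeeping around the $2$-cells: verifying that \equref{2cellEM}, which involves $\mu_t$ and the structure $2$-cells of the monad morphisms $(f,\psi)$ and $(g,\phi)$, becomes vacuous after passing through the adjunction isomorphism --- in other words, that every $2$-cell $\ov f\Rightarrow\ov g$ in ${\cal K}$ arises, and arises uniquely, from a $2$-cell of $EM({\cal K})$. This requires using simultaneously the triangle identities \equref{EMA3}, the definition \equref{EMA5} of $\psi$ from the square, and the naturality square for the counit $\epsilon$ of the $2$-adjunction, and is where \cite{LackRoss} leaves the details to the reader. Everything else is either formal ($2$-functoriality of $F$) or a direct translation of already-established identities.
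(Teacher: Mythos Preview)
Your proposal is correct and follows essentially the same approach as the paper: the $0$- and $1$-cell correspondences are handled via \leref{Ident1cellsEM} and the $2$-functor $F$, and the crux is the bijection on $2$-cells obtained through the adjunction isomorphism $\mathfrak{F}_{A^t,\mathbb{B}}$. The paper makes explicit what you leave as ``bookkeeping'': from $\rho$ one forms $\lambda=(1_g\odot\chi^t)(\rho\odot 1_{u^t}):u^s\ov{f}\Rightarrow u^s\ov{g}$, verifies directly (using \equref{2cellEM} and \equref{EMA4}) that $\lambda$ is a monad transformation so that $\mathfrak{F}^{-1}_{A^t,\mathbb{B}}$ applies, and conversely sets $\rho_\tau=(1_{u^s}\odot\tau\odot 1_{v^t})(1_f\odot\eta_t)$, then checks by hand that these are mutual inverses via \equref{EMA1} and \equref{EMA3}.
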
 

\begin{proof}
Let $\rho: (f, \psi)\Rightarrow (g, \phi)$ be a $2$-cell in $EM({\cal K})$, that is, $\rho: f\Rightarrow gt$ 
is a $2$-cell in ${\cal K}$ such that \equref{2cellEM} is satisfied. We identify $(f, \psi)$ with $(f, \ov{f})$ 
and $(g, \phi)$ with $(g, \ov{g})$, as in \leref{Ident1cellsEM}, and we define 
\[
\xymatrix{
\l: u^s\ov{f}=fu^t\ar@2{->}[r]^-{\r\odot 1_{u^t}}&gtu^t\ar@2{->}[r]^-{1_g\odot \chi^t}&gu^t=u^s\ov{g}
}.
\]
We claim that $\l: (u^s\ov{f}, \chi^s\odot 1_{\ov{f}})\Rightarrow (u^s\ov{g}, \chi^s\odot 1_{\ov{g}})$ is a 
monad transformation, that is a $2$-cell in ${\rm Mnd}({\cal K})(\un{A^t}, \mathbb{B})$. Indeed, on one hand we have 
\begin{eqnarray*}
\l (\chi^s\odot 1_{\ov{f}})&=&(1_g\odot \chi^t)(\r\odot 1_{u^t})(\chi^s\odot 1_{\ov{f}})\\
&\equal{\equref{EMA4}}&(1_g\odot \chi^t)(\r\odot 1_{u^t})(1_f\odot \chi^t)(\psi\odot 1_{u^t})\\
&=&(1_g\odot \chi^t(1_t\odot \chi^t))((\rho\odot 1_t)\psi\odot 1_{u^t})\\
&=&(1_g\odot \chi^t)((1_g\odot \mu_t)(\rho\odot 1_t)\psi\odot 1_{u^t})\\
&\equal{\equref{2cellEM}}&(1_g\odot \chi^t)((1_g\odot \mu_t)(\phi\odot 1_t)(1_s\odot \r)\odot 1_{u^t}).
\end{eqnarray*}
On the other hand,
\begin{eqnarray*}
(\chi^s\odot 1_{\ov{g}})(1_s\odot \l)&\equal{\equref{EMA4}}&(1_g\odot \chi^t)(\phi\odot 1_{u^t})
(1_{sg}\odot \chi^t)(1_s\odot \r\odot 1_{u^t})\\
&=&(1_g\odot \chi^t(1_t\odot \chi^t))((\phi\odot 1_t)(1_s\odot \r)\odot 1_{u^t})\\
&=&(1_g\odot \chi^t)((1_g\odot \mu_t)(\phi\odot 1_t)(1_s\odot \r)\odot 1_{u^t}).
\end{eqnarray*}
Consequently $\l (\chi^s\odot 1_{\ov{f}})=(\chi^s\odot 1_{\ov{g}})(1_s\odot \l)$, proving 
that $\l$ is a monad morphism. It follows from \equref{EMA} that there exists a unique 
$2$-cell $\tau_\rho: \ov{f}\Rightarrow \ov{g}$ in ${\cal K}(A^t, B^s)$ such that 
\begin{equation}\eqlabel{EMA8}
1_{u^s}\odot \tau_\rho=(1_g\odot \chi^t)(\r\odot 1_{u^t}).
\end{equation}

Conversely, let $\tau: \ov{f}\Rightarrow \ov{g}$ be a $2$-cell in ${\cal K}$. We claim that   
$\rho_\tau: (f, \psi)\ra (g, \phi)$ defined as the composition 
\[
\xymatrix{
\rho_\tau: f\ar@2{->}[r]^-{1_f\odot \eta_t}&ft=fu^tv^t=u^s\ov{f}v^t\ar@2{->}[rr]^-{1_{u^s}\odot \tau\odot 1_{v^t}}&&
u^s\ov{g}v^t=gu^tv^t=gt
}
\]
is a $2$-cell in $EM({\cal K})$. To this end observe first that the diagram below is commutative 
\[
\xymatrix{
u^s\ov{f}v^tu^t=fu^tv^tu^t\ar@2{->}[r]^-{1_{fu^t}\odot \r^t}\ar@2{->}[d]_-{1_{u^s}\odot \tau\odot 1_{v^tu^t}}&
fu^t=u^s\ov{f}\ar@2{->}[d]^-{1_{u^s}\odot \tau}\\
u^s\ov{g}v^tu^t=gu^tv^tu^t\ar@2{->}[r]_-{1_{gu^t}\odot \r^t}&gu^t=u^s\ov{g}
}.
\]
Since $1_{u^t}\odot \r^t=\chi^t$ it follows that 
\begin{equation}\eqlabel{EMA7}
(1_{u^s}\odot \tau)(1_f\odot \chi^t)=(1_g\odot \chi^t)(1_{u^s}\odot \tau\odot 1_{v^tu^t}).
\end{equation} 
We use now \equref{EMA7} to compute   
\begin{eqnarray*}
&&\hspace*{-2cm}
(1_g\odot \mu_t)(1_{u^s}\odot \tau\odot 1_{v^tt})\equal{\equref{EMA1}}
(1_g\odot \chi^t)(1_{u^s}\odot \tau\odot 1_{v^tu^t})\odot 1_{v^t}\\
&\equal{\equref{EMA7}}&(1_{u^s}\odot \tau)(1_f\odot \chi^t)\odot 1_{v^t}
\equal{\equref{EMA1}}(1_{u^s}\odot \tau\odot 1_{v^t})(1_f\odot \mu_t).
\end{eqnarray*}
Define $\psi: sf\Rightarrow ft$, $\phi: sg\Rightarrow gt$ using \equref{EMA5}; then  
$$
(1_g\odot \mu_t)(\r_\tau\odot 1_t)\psi=(1_g\odot \mu_t)(1_{u^s}\odot \tau\odot 1_{v^tt})(1_f\odot \eta_t\odot 1_t)\psi
=(1_{u^s}\odot \tau\odot 1_{v^t})\psi.
$$
In a similar manner we compute that
\begin{eqnarray*}
&&\hspace*{-2cm}
(1_g\odot \mu_t)(\chi^s\odot 1_{\ov{g}v^tt})=(1_g\odot \chi^t)(\chi^s\odot 1_{\ov{g}v^tu^t})\odot 1_{v^t}\\
&=&(\chi^s\odot 1_{\ov{g}v^t})(1_{sg}\odot \chi^t\odot 1_{v^t})=(\chi^s\odot 1_{\ov{g}v^t})(1_{sg}\odot \mu_t),
\end{eqnarray*} 
and therefore 
\begin{eqnarray*}
&&\hspace*{-2cm}
(1_g\odot \mu_t)(\phi\odot 1_t)(1_s\odot \r_\tau)=(1_g\odot \mu_t)(\chi^s\odot 1_{\ov{g}v^tt})(1_{sg}\odot \eta_t\odot 1_t)
(1_{su^s}\odot \tau\odot 1_{v^t})(1_{sf}\odot \eta_t)\\
&=&((\chi^s\odot 1_{\ov{g}})(1_{su^s}\odot \tau)\odot 1_{v^t})(1_{sf}\odot \eta_t)\\
&=&(1_{u^s}\odot \tau\odot 1_{v^t})(\chi^s\odot 1_{\ov{f}v^t})(1_{sf}\odot \eta_t)=
(1_{u^s}\odot \tau\odot 1_{v^t})\psi.
\end{eqnarray*}
It remains to be shown that the correspondences $\r\mapsto \tau_\r$ and $\tau\mapsto \rho_\tau$ described above 
are inverses. Indeed,
\begin{eqnarray*}
\rho_{\tau_\r}&=&(1_{u^s}\odot \tau_\rho\odot 1_{v^t})(1_f\odot \eta_t)
\equal{\equref{EMA8}}(1_g\odot \chi^t\odot 1_{v^t})(\rho\odot 1_t)(1_f\odot \eta_t)\\
&=&(1_g\odot (\chi^t\odot 1_{v^t})(1_t\odot \eta_t))\rho\equal{\equref{EMA1}}
(1_g\odot \mu_t(1_t\odot \eta_t))\r=\r
\end{eqnarray*}
and  $\tau_{\r_\tau}=\tau$ since 
\begin{eqnarray*}
1_{u^s}\odot \tau_{\r_\tau}&\equal{\equref{EMA8}}&
(1_g\odot \chi^t)(\rho_\tau\odot 1_{u^t})=
(1_g\odot \chi^t)(1_{u^s}\odot \tau\odot 1_{v^tu^t})(1_f\odot \eta_t\odot 1_{u^t})\\
&=&(1_{u^s}\odot (1_{\ov{g}}\odot \r^t)(\tau\odot 1_{v^tu^t}))(1_f\odot \eta_t\odot 1_{u^t})\\
&=&(1_{u^s}\odot \tau)(1_f\odot (1_{u^t}\odot \r^t)(\eta_t\odot 1_{u^t}))
\equal{\equref{EMA3}}1_{u^s}\odot \tau.
\end{eqnarray*}
This completes the proof.
\end{proof}  

\prref{prEMA} allows us to provide an alternative description for 
wreaths and cowreaths in ${\cal K}$. For the proof of \coref{coEMA}, see \cite[p. 257]{LackRoss} 
and \prref{prEMA}. 

\begin{corollary}\colabel{coEMA}
Let ${\cal K}$ be a $2$-category admitting the EM constructions for monads. 

(i) Giving a wreath in ${\cal K}$ is equivalent to giving the following data:
\begin{itemize}
\item[$(i_1)$] A monad $\mathbb{A}=(A, t, \mu, \eta)$ in ${\cal K}$;
\item[$(i_2)$] A pair 
$(A\stackrel{s}{\rightarrow}A, A^t\stackrel{\ov{s}}{\rightarrow}A^t)$ of $1$-cells in ${\cal K}$ satisfying $u^t\ov{s}=su^t$;
\item[$(i_3)$] A monad structure 
$(A^t, A^t\stackrel{\ov{s}}{\rightarrow}A^t, \zeta: \ov{s}~\ov{s}\Rightarrow \ov{s}, \sigma: 1_{A^t}\Rightarrow \ov{s})$ 
on $\ov{s}$ in ${\cal K}$. 
\end{itemize}

(ii) Giving a cowreath in ${\cal K}$ is equivalent to giving the following data:
\begin{itemize}
\item[$(ii_1)$] A monad $\mathbb{A}=(A, t, \mu, \eta)$ in ${\cal K}$;
\item[$(ii_2)$] A pair 
$(A\stackrel{s}{\rightarrow}A, A^t\stackrel{\ov{s}}{\rightarrow}A^t)$ of $1$-cells in ${\cal K}$ satisfying $u^t\ov{s}=su^t$;
\item[$(ii_3)$] A comonad structure $(A^t, A^t\stackrel{\ov{s}}{\rightarrow}A^t, \d: \ov{s}\Rightarrow \ov{s}~\ov{s}, 
\va: \ov{s}\Rightarrow 1_{A^t})$ on $\ov{s}$ in ${\cal K}$. 
\end{itemize}
\end{corollary}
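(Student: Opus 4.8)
The plan is to obtain \coref{coEMA} as a direct unwinding of \prref{prEMA}, using only that, by definition, a wreath (resp. cowreath) in ${\cal K}$ is a monad (resp. comonad) internal to the $2$-category $EM({\cal K})$. So the first step is to record the horizontal structure of $EM({\cal K})$ in the reformulation of \prref{prEMA}: under the isomorphism there, a $1$-cell $(f,\psi)$ is carried to the pair $(f,\ov f)$ with $\ov f=F(f,\psi)$; since the composition of $1$-cells in $EM({\cal K})$ agrees with that of ${\rm Mnd}({\cal K})$ and $F$ is a $2$-functor, the composite of $(f,\ov f)$ with $(g,\ov g)$ is $(gf,\ov g\,\ov f)$, and the identity $1$-cell on $\mathbb{A}$ is $(1_A,1_{A^t})$ because $F(1_A,1_t)=1_{A^t}$; finally, by \prref{prEMA} itself, a $2$-cell $(f,\ov f)\Rightarrow(g,\ov g)$ is a $2$-cell $\ov f\Rightarrow\ov g$ of ${\cal K}$, with vertical and horizontal composition of $2$-cells performed on the $\ov{(-)}$-components. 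I would cite \cite[\S 2.2]{LackRoss} for this identification.

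With this dictionary in hand, the proof of (i) is a transport of axioms. A monad in $EM({\cal K})$ on the $0$-cell $\mathbb{A}=(A,t,\mu,\eta)$ consists of: an endo-$1$-cell, i.e.\ a pair $(s,\ov s)$ with $s\colon A\to A$, $\ov s\colon A^t\to A^t$ and $u^t\ov s=su^t$ (this is $(i_1)$ together with $(i_2)$); a multiplication, i.e.\ a $2$-cell $(ss,\ov s\,\ov s)\Rightarrow(s,\ov s)$ in $EM({\cal K})$, hence a $2$-cell $\zeta\colon\ov s\,\ov s\Rightarrow\ov s$ in ${\cal K}$; and a unit, i.e.\ a $2$-cell $(1_A,1_{A^t})\Rightarrow(s,\ov s)$, hence a $2$-cell $\sigma\colon 1_{A^t}\Rightarrow\ov s$ in ${\cal K}$. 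Because the vertical composition of $2$-cells in $EM({\cal K})$ is, under \prref{prEMA}, just vertical composition in ${\cal K}$ of the $\ov{(-)}$-components, the associativity and two-sided unit axioms for $(\zeta,\sigma)$ in $EM({\cal K})$ become verbatim the associativity and unit axioms for $(\zeta,\sigma)$ in ${\cal K}$; that is, they assert exactly that $(\ov s,\zeta,\sigma)$ is a monad on $\ov s$ in ${\cal K}$, which is $(i_3)$. Reversing each step shows that the data $(i_1)$--$(i_3)$ assemble into a monad in $EM({\cal K})$, so the two notions are equivalent.

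Part (ii) is the dual argument: a cowreath is a comonad in $EM({\cal K})$, and the same dictionary turns it into a monad $\mathbb{A}$, a pair $(s,\ov s)$ with $u^t\ov s=su^t$, a comultiplication $\delta\colon\ov s\Rightarrow\ov s\,\ov s$ and a counit $\va\colon\ov s\Rightarrow 1_{A^t}$ in ${\cal K}$, the coassociativity and counit axioms in $EM({\cal K})$ passing over to the statement that $(\ov s,\delta,\va)$ is a comonad on $\ov s$ in ${\cal K}$; this is $(ii_1)$--$(ii_3)$. The only point requiring care is the bookkeeping in the first step, namely that ``iterating the endo-$1$-cell $(s,\ov s)$ in the reformulated $EM({\cal K})$'' really amounts to iterating $\ov s$ in ${\cal K}$ (so that $\zeta$ and $\delta$ land on $\ov s\,\ov s$), which is where $2$-functoriality of the EM right adjoint $F$ and the compatibility of the isomorphism of \prref{prEMA} with composition of $1$-cells are used; everything after that is a mechanical transfer of the (co)monad axioms along that isomorphism.
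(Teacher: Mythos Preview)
Your proposal is correct and follows essentially the same route as the paper. The paper's own proof is a one-line reference to \cite[p.~257]{LackRoss} together with \prref{prEMA}; your argument is precisely an unpacking of that reference, transporting the (co)monad axioms through the $2$-isomorphism of \prref{prEMA} and using the $2$-functoriality of $F$ to ensure that composition of $1$-cells and horizontal/vertical composition of $2$-cells on the $EM({\cal K})$ side match composition of the $\ov{(-)}$-components in ${\cal K}$.
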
    

We are now ready to state and prove the result announced at the beginning of this Appendix. 
From now on, we specialize our results above to the bicategory ${\rm Bim}(\Cc)$.   

Let $M$ be a left module over an algebra $A$ in $\Cc$; $M$ is called
right robust if for any 
$X\in {}_{A}\Cc$ and $Y\in \Cc$ the canonical morphism (see \cite{bc3, sch})
\[
\theta_{M, X, Y}: M\ot_{A}(X\ot Y)\ra (M\ot_{A}X)\ot Y
\]
is an isomorphism in $\Cc$. 

By ${}_A\Cc_A^!$ we denote the full subcategory of ${}_A\Cc_A$ whose objects are right robust. By 
\cite{bc3, sch} the category ${}_A\Cc_A^!$ is monoidal, the monidal structure being similar to that 
of ${}^!_A\Cc_A$. 

\begin{definition}
If $\Cc$ is a monoidal category with coequalizers then ${\rm Bim}(\Cc)$ is the bicategory that has as objects 
coflat algebras in ${\Cc}$, as $1$-cells right robust bimodules and as $2$-cells bimodule morphisms 
in $\Cc$, respectively. The vertical composition of $2$-cells in ${\rm Bim}(\Cc)$ is the morphisms composition in 
$\Cc$ and the horizontal one is given by the monoidal structure of $\Cc$.  
\end{definition}

In the sequel, we will regard ${\rm Bim}(\Cc)$ as a 2-category. Before giving a description of wreaths
and cowreaths, we need a desciption of monads in ${\rm Bim}(\Cc)$.

\begin{lemma}
Giving a monad in ${\rm Bim}(\Cc)$ is equivalent to giving a pair $(A, T)$ of coflat algebras in $\Cc$ 
together with an algebra morphism $i: A\ra T$ in $\Cc$ such that $T$ is right robust when it is 
considered as an $A$-bimodule via $i$.  
\end{lemma}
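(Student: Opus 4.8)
The plan is to recognise a monad in ${\rm Bim}(\Cc)$ as exactly an algebra in the monoidal category $({}_A\Cc_A^!,\ot_A,A)$, and then to feed this into the classical identification of $A$-rings with algebra morphisms $A\to T$; the remaining work is to keep track of the coflatness and robustness side conditions.

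First I would unwind the definition. Viewing ${\rm Bim}(\Cc)$ as a $2$-category, a monad $(A,t,\mu,\eta)$ on it consists of: a $0$-cell $A$, which is a coflat algebra in $\Cc$; a $1$-cell $t\colon A\to A$, which is a right robust $A$-bimodule $M$; and two $2$-cells, which --- since the identity $1$-cell on $A$ is $A$ with its regular bimodule structure and horizontal composition over $A$ is $\ot_A$ --- are $A$-bimodule morphisms $\mu\colon M\ot_A M\to M$ and $\eta\colon A\to M$. The monad axioms $\mu\circ(\mu\odot 1_t)=\mu\circ(1_t\odot\mu)$ and $\mu\circ(\eta\odot 1_t)=1_t=\mu\circ(1_t\odot\eta)$, once the associativity and unit constraints $\Sigma'$, $\Upsilon$, $\Upsilon'$ of $({}_A\Cc_A^!,\ot_A,A)$ recalled in \seref{prelim} are inserted, become precisely associativity of $\mu$ and unitality of $\eta$. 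So a monad in ${\rm Bim}(\Cc)$ on $A$ is the same thing as an algebra in $({}_A\Cc_A^!,\ot_A,A)$, that is, a right robust $A$-ring.

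Next I would invoke the standard correspondence between $A$-rings and algebra morphisms out of $A$ (see \cite{sw,par,sch}). From an algebra $(M,\mu,\eta)$ in ${}_A\Cc_A$ one builds $T:=M$ with multiplication $\un{m}_T:=\mu\, q^A_{M,M}$ and unit $\un{\eta}_T:=\eta\,\un{\eta}_A$, and sets $i:=\eta$; the $A$-bilinearity of $\mu$ and $\eta$ then makes $i\colon A\to T$ an algebra morphism and recovers the $A$-bimodule structure of $M$ from $i$ through $\un{m}_T$. Conversely, from $(T,i)$ one regards $T$ as an $A$-bimodule via $i$, observes that the multiplication of $T$ is $A$-balanced (by associativity) and hence factors through $\mu\colon T\ot_A T\to T$, and takes $i$ for the unit; these two constructions are mutually inverse and functorial. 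Under the identification $T=M$, right robustness of $M$ as an $A$-bimodule is literally right robustness of $T$ as an $A$-bimodule via $i$, while coflatness of $A$ is just the $0$-cell hypothesis.

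The step I expect to be the real obstacle is the leftover assertion that the algebra $T$ so produced is coflat in $\Cc$, so that $(A,T)$ is genuinely a pair of coflat algebras --- equivalently, so that $T$ is a legitimate $0$-cell of ${\rm Bim}(\Cc)$, which is the point of the whole description. Here I would try to derive left coflatness of $T=M$ from its right robustness together with coflatness of $A$: using the canonical isomorphism $(N\ot A)\ot_A M\cong N\ot M$ and the robustness isomorphisms $\theta_{M,-,-}$, one rewrites $-\ot M$ in terms of $-\ot A$ and $\ot_A$ with $M$ and reduces preservation of coequalizers to the case of $A$, a diagram chase with the morphisms $q^A$, $\Upsilon$, $\theta$ and the coequalizers at hand. (If this reduction does not go through in full generality, coflatness of $T$ can simply be carried along as part of the data, which is the reading that makes $T$ a $0$-cell of ${\rm Bim}(\Cc)$ from the outset and is harmless for everything that follows in \seref{Appendix}.) I would close by noting that, via the same dictionary, $2$-cells between monads correspond to morphisms of the triples $(A,T,i)$ in the evident sense --- not needed for the statement as phrased, but reassuring.
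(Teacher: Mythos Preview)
Your approach is essentially identical to the paper's: both unwind a monad in ${\rm Bim}(\Cc)$ as an algebra $(T,\mu,\eta)$ in ${}_A\Cc_A^!$ and then invoke the standard dictionary between $A$-rings and algebra morphisms $i\colon A\to T$, with the same formulas $\un{m}_T=\mu\,q^A_{T,T}$ and $\un{\eta}_T=\eta\,\un{\eta}_A$. The paper's proof is in fact terser than yours and leaves all verifications to the reader.

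The one place you go beyond the paper is in flagging the coflatness of $T$. You are right that nothing in the definition of a $1$-cell in ${\rm Bim}(\Cc)$ forces $T$ to be coflat, yet the statement asserts it and the subsequent identification $A^T=T$ as a $0$-cell needs it. The paper simply does not address this point. Your proposed reduction via $N\ot T\cong (N\ot A)\ot_A T$ shifts the problem to showing that $-\ot_A T\colon \Cc_A\to\Cc$ preserves coequalizers, which is not what right robustness of $T$ gives you directly (robustness controls how $T\ot_A-$ interacts with $\ot$, not how $-\ot_A T$ interacts with colimits). So your honest fallback --- carrying coflatness of $T$ as part of the data --- is the safe reading, and it is also the reading implicitly adopted by the paper when it later treats $T$ as a $0$-cell without further comment.
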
  

\begin{proof}
A monad in ${\rm Bim}(\Cc)$ is defined by the following data: 
\begin{itemize}
\item[$\bullet$] A $0$-cell $A$ in ${\rm Bim}(\Cc)$, that is, a coflat algebra $A$ in $\Cc$;
\item[$\bullet$] A $1$-cell $A\stackrel{T}{\rightarrow}A$ in ${\rm Bim}(\Cc)$, that is, a 
right robust $A$-bimodule $T$ in $\Cc$;
\item[$\bullet$] $2$-cells $TT\stackrel{\mu_T}{\rightarrow}T$ and $1_A\stackrel{\eta_T}{\rightarrow}T$ satisfying 
coherence conditions. Explicitly, we have $A$-bimodule morphisms $\mu_T: T\ot_AT\ra T$ and 
$\eta_T: A\ra T$ such that $(T, \mu_T, \eta_T)$ is an algebra in ${}_A{\cal C}_A^!$.  
\end{itemize}
It is well known that such a data can provide an algebra morphism $i: A\ra T$ in $\Cc$. 
The algebra structure of $T$ in $\Cc$ is given my $\un{m}_T=\mu_Tq^A_{T, T}$ and $\un{\eta}_T=\eta_T\un{\eta}_A$, 
and so $\un{\eta}_T$ becomes an algebra morphism in $\Cc$. Conversely, $T$ is an $A$-bimodule in $\Cc$ 
via the restriction of scalars functor defined by $i$, and its algebra structure in $\Cc$ determines an algebra structure 
in ${}_{A}\Cc_A^!$. We leave the verification of the details to the reader.
\end{proof}

The category ${\rm Bim}(\Cc)$ admits the EM constructions for monads. If $i: A\ra T$ is a 
monad in ${\rm Bim}(\Cc)$ then $A^T=T$ as algebras, i.e., objects in ${\rm Bim}(\Cc)$. 
Furthermore, $T\stackrel{u^T}{\rightarrow}A$ is $u^T=T$ regarded as a $(T, A)$-bimodule in $\Cc$ 
via the multiplication of $T$ and $i$. $\chi^T: Tu^T=T\ot_AT\Rightarrow u^T=T$ is the $(T, A)$-bimodule 
morphism in $\Cc$ uniquely determined by $q^A_{T, T}\chi^T=\un{m}_T$. Finally, $A\stackrel{v^T}{\rightarrow}A^T$ is 
again $T$, viewed now as an $(A, T)$-bimodule in $\Cc$ via $i$ and $\un{m}_T$, while 
$\rho^T: v^Tu^T=T\ot_AT\Rightarrow T$ is again the unique morphism in $\Cc$ determined by $\rho^Tq^A_{T, T}=\un{m}_T$ 
but considered now as a $T$-bimodule morphism in the usual way.   

\begin{lemma}\lelabel{leEMA}
Let $i: A\ra T$ and $j: B\ra S$ be monads in ${\rm Bim}(\Cc)$. Give a monad morphism 
between $A\stackrel{i}{\rightarrow}T$ and $B\stackrel{j}{\rightarrow}S$ is equivalent to 
giving a pair $(X, \psi)$ consisting of a right robust $(A, B)$-bimodule $X$ and an $(A, B)$-bimodule 
morphism $\psi: X\ot_BS\ra T\ot_AX$ in $\Cc$ such that the following diagrams commute
\[
\xymatrix{
X\ot_BS\ot_BS\ar[r]^-{\widehat{\psi}}\ar[d]_-{\widetilde{\un{m}^B_S}}&T\ot_AX\ot_BS\ar[r]^-{\widetilde{\psi}}&T\ot_AT\ot_AX
\ar[d]^-{\widehat{\un{m}^A_T}}\\
X\ot_BS\ar[rr]^-{\psi}&&T\ot_AX
}~,~\hspace*{-2mm}
\xymatrix{
X\ar[r]^-{\Id_X\ot \un{\eta}_S}\ar[d]_-{\un{\eta}_T\ot \Id_X}&X\ot S\ar[r]^-{q^B_{X, S}}&X\ot_BS\ar[d]^-{\psi}\\
T\ot X\ar[rr]^-{q^A_{T, X}}&&T\ot_AX
}.
\]
Here $\un{m}^A_T$ and $\un{m}^B_S$ stand for the multiplication of the algebras 
$T$ and $S$ in ${}_A\Cc_A^!$ and ${}_B\Cc_B^!$, respectively.  

Giving a $2$-cell in $EM({\rm Bim}(\Cc))$ between $(X, \psi)$ and $(Y, \phi)$ is equivalent to 
giving an $(A, B)$-bilinear morphism $\ov{\tau}: X\ra T\ot_AY$ making the diagram below commutative, 
\[
\xymatrix{
X\ot_BS\ar[r]^-{\widehat{\ov{\tau}}}\ar[d]_-{\psi}&T\ot_AY\ot_BS\ar[r]^-{\widetilde{\phi}}&
T\ot_AT\ot_AY\ar[d]^-{\widehat{\un{m}^A_T}}\\
T\ot_AX\ar[r]^-{\widetilde{\ov{\tau}}}&T\ot_AT\ot_AY\ar[r]^-{\widehat{\un{m}^A_T}}&T\ot_AY
}.
\]
\end{lemma}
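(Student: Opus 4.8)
The plan is to specialize the general description of $1$-cells and $2$-cells in $EM({\cal K})$ (as reformulated in \prref{prEMA}) to the bicategory ${\cal K}={\rm Bim}(\Cc)$, using the explicit EM data recorded just before the statement. Recall that for a monad $i:\ A\ra T$ in ${\rm Bim}(\Cc)$ we have $A^T=T$, with $u^T=v^T={}_TT_A$ (respectively ${}_AT_T$) the hom $1$-cells, and $\chi^T$, $\rho^T$ the unique morphisms in $\Cc$ with $q^A_{T, T}\chi^T=\un m_T=\rho^Tq^A_{T, T}$. By \prref{prEMA}, a $1$-cell in $EM({\rm Bim}(\Cc))$ from $i:\ A\ra T$ to $j:\ B\ra S$ is a pair $(f, \ov f)$ of $1$-cells in ${\rm Bim}(\Cc)$ with $fu^T=u^S\ov f$; here $f$ is a right robust $(A, B)$-bimodule and $\ov f$ a right robust $(T, S)$-bimodule. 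The first thing I would do is unwind the identification of \leref{Ident1cellsEM}: by that lemma (together with the cited \cite[Theorem 3.10]{powa}) such a pair is equivalent to a single monad morphism $(X, \psi):\ {}_AT_A\ra {}_BS_B$, i.e. a right robust $(A, B)$-bimodule $X$ together with an $(A, B)$-bimodule map $\psi:\ X\ot_BS\ra T\ot_AX$ satisfying the two monad-morphism axioms from \seref{monint}. So the first half of the statement is simply the translation of ``$\psi$ compatible with $\mu_S$'' and ``$\psi$ compatible with $\eta_S$'' into the bicategory ${\rm Bim}(\Cc)$.

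Next I would write out those two axioms explicitly. The horizontal composites in ${\rm Bim}(\Cc)$ are relative tensor products, and the unit $2$-cell is built from $\un\eta_T$, $\un\eta_S$; the multiplications $\mu_t$, $\mu_s$ of the abstract monads $t={}_AT_A$, $s={}_BS_B$ are precisely the bimodule-multiplications $\un m^A_T:\ T\ot_AT\ra T$ and $\un m^B_S:\ S\ot_BS\ra S$. Translating $(1_f\odot\mu_t)\circ(\psi\odot 1_t)\circ(1_s\odot\psi)=\psi\circ(\mu_s\odot 1_f)$ into this language, the three factors on the left become, reading right to left, the maps $\widehat\psi:\ X\ot_BS\ot_BS\ra T\ot_AX\ot_BS$ (apply $\psi$ in the first two tensor-factors over $B$), then $\widetilde\psi:\ T\ot_AX\ot_BS\ra T\ot_AT\ot_AX$ (apply $\psi$ in the last two factors), then $\widehat{\un m^A_T}$; the right-hand side is $\psi\circ\widetilde{\un m^B_S}$. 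This is exactly the first commuting square in the statement. The second axiom $\psi\circ(\eta_s\odot 1_f)=1_f\odot\eta_t$ becomes, after inserting the coequalizer maps $q^B_{X, S}$ and $q^A_{T, X}$ that present the relative tensor products, exactly the second commuting square (the one with $\Id_X\ot\un\eta_S$ and $\un\eta_T\ot\Id_X$). The tilde/hat notation is the one fixed in \seref{2.2} for the morphisms induced on $-\ot_A-$ by left- and right-linear maps, so \equref{coeqmorph} and \equref{coeqmorph2} are all that is needed to check that these induced maps exist and that the squares make sense.

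For the $2$-cell part, I would invoke the second half of \prref{prEMA}: a $2$-cell in $EM({\rm Bim}(\Cc))$ from $(f, \ov f)$ to $(g, \ov g)$ is, under the identification of $1$-cells above, a $2$-cell $\tau:\ \ov f\Rightarrow \ov g$ in ${\rm Bim}(\Cc)$, i.e. a $(T, S)$-bimodule morphism. On the other hand, the proof of \prref{prEMA} exhibits the bijection $\tau\leftrightarrow\rho_\tau$ between such $\tau$ and $2$-cells $\rho:\ (f, \psi)\Rightarrow (g, \phi)$ in the usual presentation of $EM({\cal K})$, where such a $\rho$ is a $2$-cell $X\Rightarrow gt$ in ${\cal K}$, here an $(A, B)$-bilinear map $\ov\tau:\ X\ra T\ot_AY$, satisfying the $2$-cell condition \equref{2cellEM}. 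Specializing \equref{2cellEM} to ${\rm Bim}(\Cc)$ and translating the horizontal composites into relative tensor products and induced morphisms $\widehat{(-)}$, $\widetilde{(-)}$ exactly as in the $1$-cell computation, one obtains the displayed commuting square with $\widehat{\ov\tau}$, $\widetilde\phi$, $\widehat{\un m^A_T}$ on the top row and $\widetilde{\ov\tau}$, $\widehat{\un m^A_T}$ on the bottom. So the content of the lemma is just a bookkeeping translation.

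The main obstacle is purely notational bookkeeping: one must keep careful track of which tensor factors are amalgamated over $A$ versus over $B$, and of the direction in which each of $\psi$, $\phi$, $\ov\tau$, $\un m^A_T$, $\un m^B_S$ acts, so that the hat/tilde decorations land on the correct relative tensor products. In particular one needs the monoidality of ${}_A\Cc^!_A$ and ${}_A\Cc_A^!$ (recalled from \cite{par, sch, bc3}) so that $T\ot_AT\ot_AX$, $T\ot_AY\ot_BS$, etc., are unambiguous, and one uses the canonical isomorphisms $\Sigma'$, $\Gamma'$, $\Theta'$ to suppress reassociations; checking that the abstract horizontal composites $\odot$ of \prref{prEMA} really unwind to the stated squares under this dictionary is the one place where care is required, and I would simply point to the EM data displayed before the statement together with \prref{prEMA} and \leref{Ident1cellsEM} rather than redo those identifications from scratch. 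Everything else — well-definedness of the induced maps, bilinearity — follows formally from \equref{coeqmorph}, \equref{coeqmorph2} and the fact that the abstract $2$-cells were already known to satisfy their axioms.
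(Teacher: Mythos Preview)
Your proposal is correct and follows essentially the same route as the paper: both invoke \prref{prEMA} (and \leref{Ident1cellsEM}) to identify $1$-cells in $EM({\rm Bim}(\Cc))$ with pairs $(f,\ov f)$, then pass to the monad-morphism datum $(X,\psi)$ and translate the two monad-morphism axioms into the displayed squares; for $2$-cells both use \prref{prEMA} again and unwind the compatibility. The only presentational difference is that the paper makes the passage from the right $S$-action on $T\ot_AX$ to $\psi$ explicit (constructing an intermediate $\psi_0$ and factoring through $q^B_{X,S}$, in analogy with \leref{TaSchstr}), whereas you appeal directly to the abstract equivalence from \leref{Ident1cellsEM} and \cite[Theorem 3.10]{powa}.
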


\begin{proof}
We know that ${\rm Bim}(\Cc)$ admits the $EM$ constructions for monads, hence giving a monad 
morphism from $A\stackrel{i}{\rightarrow}T$ to $B\stackrel{j}{\rightarrow}S$ is equivalent to 
giving a pair $(s: A\ra B, \ov{s}: A^T\ra B^S)$ of $1$-cells in ${\rm Bim}(\Cc)$ such that 
$u^S\ov{s}=su^T$. Clearly, this is equivalent to giving a pair $(X, \ov{X})$ with 
$X\in {}_A\Cc_B^!$ and $\ov{X}\in {}_T\Cc_S^!$ such that $\ov{X}\ot_SS=T\ot_AX$ in ${}_T\Cc_B$. 
This forces $\ov{X}=T\ot_AX$ in ${}_T\Cc_S$, and therefore we should have a right $S$-module structure on 
$T\ot_AX$ when $X$ is an $(A, B)$-bimodule in $\Cc$. We next show that this is equivalent to the existence 
of an $(A, B)$-bimodule morphism $\psi: X\ot_BS\ra T\ot_AX$ in $\Cc$ such that the first two 
diagrams in the statement are commutative. 

Let $\nu^S_{\ov{X}}: (T\ot_AX)\ot S\ra T\ot_AX$ be a right $S$-action on $\ov{X}=T\ot_AX$;
we then define $\psi: X\ot_BS\ra T\ot_AX$ as follows. First consider $\psi_0: X\ot S\ra T\ot_AX$ 
as the composition  
\[
\psi_0=\left(\xymatrix{
X\ot S\ar[rr]^-{\un{\eta}_T\ot\Id_{X\ot S}}&&T\ot X\ot S\ar[rr]^-{q^A_{T, X}\ot \Id_S}&&(T\ot_AX)\ot S\ar[r]^-{\nu^S_{\ov{X}}}&
T\ot_AX
}
\right).
\]
We can easily show that $\psi_0$ behaves well with respect to the universality property of the 
coequalizer
\begin{displaymath}
\xymatrix{
X\ot B\ot S\ar@<-1ex>[rr]_-{\Id_X\ot \un{m}_S(j\ot \Id_S)} 
\ar@<1ex>[rr]^-{{\nu_{X}^B\ot\Id_S}}&&
X\ot S\ar[r]^-{q^B_{X, S}}& X\ot_BS
},
\end{displaymath}  
and so there exists a unique morphism $\psi: X\ot_BS\ra T\ot_AX$ in $\Cc$ such that 
$\psi q^B_{X, S}=\psi_0$. We leave it to the reader to show that $\psi$ is left $A$-linear and right $B$-linear, and that 
it satisfies the two equations in the statement. 

Conversely, if we know $\psi$ then $T\ot_AX$ becomes a right $S$-module via the structure morphism
\[
\xymatrix{
(T\ot_AX)\ot S\ar[r]^-{\theta^{-1}_{T, A, X}}&T\ot_A(X\ot S)\ar[r]^-{\widetilde{q^B_{X, S}}}&T\ot_AX\ot_BS
\ar[r]^-{\widetilde{\psi}}&T\ot_AT\ot_AX\ar[r]^-{\widehat{\un{m}^A_T}}&T\ot_AX
}.
\]
Notice that these two correspondences are the counterparts of the ones defined 
in \leref{TaSchstr}. 

Finally it follows from \prref{prEMA} that giving a $2$-cell in $EM({\rm Bim}(\Cc))$ is equivalent to 
giving a $(T, S)$-bimodule morphism $\tau: T\ot_AX\ra T\ot_AY$ in $\Cc$. Also, it is immediate that 
$\tau$ is completely determined by an $(A, B)$-bilinear morphism $\ov{\tau}: X\ra T\ot_AY$. Since 
$\tau$ can be recovered from $\ov{\tau}$ as $\widehat{\un{m}_T^A}\widetilde{\ov{\tau}}$ 
it follows that $\psi$ is right $S$-linear if and only if the third diagram in the statement 
is commutative. This finishes the proof.   
\end{proof}

The above explicit description of $EM({\rm Bim}(\Cc))$ allows us to prove our final result \thref{EMAmain}.
Although \thref{4.2} follows from \thref{EMAmain}, this does not lead to a new proof, since
the proof of \thref{EMAmain} is based on \prref{5.1}, which is itself based on \thref{4.2}.

\begin{theorem}\thlabel{EMAmain}
Let $\Cc$ be a monoidal category with coequalizers. Then there exists a bijective correspondence 
between the (co)wreath structures in ${\rm Bim}(\Cc)$ and pairs $(A, \mathfrak{X})$ 
consisting of a coflat algebra $A$ in $\Cc$ and a (co)wreath $\mathfrak{X}$ in ${}_A\Cc_A^!$. 
\end{theorem}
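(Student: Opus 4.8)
The plan is to recognize a (co)wreath in ${\rm Bim}(\Cc)$ as a (co)wreath in a single hom-category of ${\rm Bim}(\Cc)$. First I would observe that a wreath in a $2$-category ${\cal K}$ --- a monad $\mathbb{A}=(U,t,\mu,\eta)$ in ${\cal K}$ together with a $1$-cell $s:U\ra U$ and $2$-cells $\psi$, $\sigma$, $\zeta$ satisfying \equref{wr1}--\equref{wr6} --- involves only $1$-cells $U\ra U$, the $2$-cells between them, the unit $1_U$, and vertical and horizontal composition; that is, only the monoidal category ${\cal K}(U,U)$, in which $(t,\mu,\eta)$ is exactly an algebra object. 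Comparing with the definition of a wreath in a monoidal category (viewed as a one-object $2$-category), giving a wreath in ${\cal K}$ is therefore the same as giving a $0$-cell $U$ of ${\cal K}$ together with a wreath in ${\cal K}(U,U)$; the dual inspection gives the analogous statement for cowreaths.

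Next I would specialize to ${\cal K}={\rm Bim}(\Cc)$. Its $0$-cells are exactly the coflat algebras $A$ of $\Cc$, and by construction ${\rm Bim}(\Cc)(A,A)$ has the right robust $A$-bimodules as objects, the $A$-bimodule morphisms as arrows, and tensor product $\ot_A$ with unit $A$ --- i.e.\ ${\rm Bim}(\Cc)(A,A)={}_A\Cc_A^!$ as monoidal categories. Combined with the first step this already yields the bijection asserted by \thref{EMAmain}: a (co)wreath in ${\rm Bim}(\Cc)$ amounts to a coflat algebra $A$ together with a (co)wreath in ${}_A\Cc_A^!$. Finally, by \prref{5.1} applied to the monoidal category ${}_A\Cc_A^!$, such a (co)wreath is the same as a pair $(T,X)$ with $T$ an algebra in ${}_A\Cc_A^!$ and $X$ a (co)algebra in ${\cal T}_T^\#=EM({}_A\Cc_A^!)(T)$ --- the form in which the result is meant to be used; and since \prref{5.1} rests on \thref{4.2}, this argument cannot be turned around to reprove \thref{4.2}.

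Alternatively, I could run the same argument through the concrete descriptions assembled in this Appendix: by \coref{coEMA}, a wreath in ${\rm Bim}(\Cc)$ is a monad $\mathbb{A}$, a compatible pair $(s,\ov{s})$ of $1$-cells with $u^T\ov{s}=su^T$, and a monad structure on $\ov{s}$; the monad $\mathbb{A}$ unwinds, by the description of monads in ${\rm Bim}(\Cc)$, to a coflat algebra $A$ and an algebra $T$ in ${}_A\Cc_A^!$ with $A^T=T$; the pair $(s,\ov{s})$ unwinds, by \leref{leEMA}, to an object $X$ of ${}_A\Cc_A^!$ with $\ov{s}=T\ot_A X$; and, using $\ov{s}\ot_T\ov{s}\cong T\ot_A X\ot_A X$ together with \prref{3.18}/\thref{4.2} read internally in ${}_A\Cc_A^!$ with base algebra $T$, the monad structure on $\ov{s}$ unwinds to morphisms $\psi:X\ot_A T\ra T\ot_A X$, $\zeta:X\ot_A X\ra T\ot_A X$, $\sigma:A\ra T\ot_A X$ forming a wreath $(T,X)$ in ${}_A\Cc_A^!$; the cowreath case uses \coref{coEMA}(ii) and produces $\psi$, $\delta:X\ra T\ot_A X\ot_A X$, $f:X\ra T$.

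I expect the main obstacle to be organisational rather than conceptual: building the dictionary that translates the $2$-categorical operations of ${\rm Bim}(\Cc)$ --- whiskerings, horizontal composites, the adjunction $2$-cells $\chi^T$, $\rho^T$, $\eta_T$, and the canonical comparisons $\theta_{-,-,-}$, $\Theta'_{-,-,-}$ with the coequalizers $q^A_{-,-}$ defining $T\ot_A X$ and $\ov{s}\ot_T\ov{s}$ --- into the tensor product $\ot_A$ and the composition of ${}_A\Cc_A^!$, so that conditions \equref{wr1}--\equref{wr6} (resp.\ \equref{wr1}--\equref{wr2}, \equref{cwr2}--\equref{cwr6}) become, term by term, the corresponding (co)wreath conditions in ${}_A\Cc_A^!$. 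One also has to check at each stage that the objects that occur remain right robust (hence are legitimate $1$-cells of ${\rm Bim}(\Cc)$ and objects of ${}_A\Cc_A^!$) and that the morphisms written down are well defined; this is routine given the machinery of \seref{prelim}, and is precisely what \leref{leEMA} and the lemmas preceding it were set up to handle.
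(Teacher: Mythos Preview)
Your proposal is correct. Your second, alternative route is essentially the paper's proof: specialize \coref{coEMA} to ${\rm Bim}(\Cc)$, use \leref{leEMA} to translate the data $(s,\ov{s})$ into an $A$-bimodule $X$ with $\ov{s}=T\ot_AX$ and a morphism $\psi$, and then invoke \prref{5.1} (hence \thref{4.2}) internally in ${}_A\Cc_A^!$ to match the (co)monad structure on $\ov{s}$ with a (co)wreath $(T,X)$.

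Your first route, however, is genuinely shorter and more conceptual than the paper's. You bypass the entire EM-construction-for-monads apparatus (\prref{prEMA}, \coref{coEMA}, \leref{leEMA}) by noting at the outset that all the data and axioms \equref{wr1}--\equref{wr6} of a wreath in ${\cal K}$ live inside the single monoidal hom-category ${\cal K}(U,U)$, so that a wreath in ${\cal K}$ is tautologically a $0$-cell $U$ together with a wreath in ${\cal K}(U,U)$. For ${\cal K}={\rm Bim}(\Cc)$ this immediately reads off ${\rm Bim}(\Cc)(A,A)={}_A\Cc_A^!$ and finishes. The paper's longer detour is not wasted, since the machinery of \prref{prEMA} and \leref{leEMA} makes explicit how the abstract $2$-cells in $EM({\rm Bim}(\Cc))$ correspond to concrete $(A,B)$-bimodule maps; but for the bare statement of \thref{EMAmain} your first argument is cleaner and needs none of it. Both routes end at the same place --- invoking \prref{5.1} in ${}_A\Cc_A^!$ --- and you correctly observe that this makes the dependence on \thref{4.2} unavoidable.
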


\begin{proof}
We specialize \coref{coEMA} to ${\rm Bim}(\Cc)$ and use the descriptions in \leref{leEMA} 
to conclude a (co)wreath in ${\rm Bim}(\Cc)$ consists of a triple 
$(A\stackrel{i}{\rightarrow}T, X, \psi)$, where 
\begin{itemize}
\item[(a)] $A, T$ are coflat algebras in $\Cc$, $i$ is an algebra 
morphism, and $T$ considered as an $A$-bimodule via $i$ is right robust; 
\item[(b)] $X$ is a right robust $A$-bimodule;
\item[(c)] $\psi: X\ot_AT\ra T\ot_AX$ is an $A$-bimodule morphism in $\Cc$ such that $T\ot_AX$ 
becomes a $T$-bimodule in $\Cc$ when it is considered 
as a left $T$-module via the multiplication $\un{m}_T$ 
of $T$ and as a $T$-right module via $\psi$ and $\un{m}_T$; 
\item[(d)] $T\ot_AX$ with the $T$-bimodule structure described in (c) admits a $T$-(co)ring structure in $\Cc$. 
\end{itemize}
The conditions (a-d) can be restated as follows:
\begin{itemize}
\item[(a')] $A$ is a coflat algebra in $\Cc$ and $T\in {}_A\Cc_A^!$ is an algebra;
\item[(b')] $X$ is an object of ${}_A\Cc_A^!$; 
\item[(c')] $\psi: X\ot_AT\ra T\ot_AX$ is a morphism in ${}_A\Cc_A^!$ that endows $T\ot_AX$ with a 
$T$-bimodule structure in ${}_A\Cc_A^!$, providing that $T\ot_AX$ has the left $T$-module structure 
given by the multiplication $\un{m}^A_T$ of the algebra $T$ in  ${}_A\Cc_A^!$;  
\item[(d')] $T\ot_AX$ admits a $T$-(co)ring structure in ${}_A\Cc_A^!$.
\end{itemize}
We now apply \prref{5.1} to the monoidal category ${}_A\Cc_A^!$. Then we find that triples $(A\stackrel{i}{\rightarrow}T, X, \psi)$
satisfying (a'-d') are in bijective correspondence with 
pairs $(A, \mathfrak{X})$ consisting of a coflat algebra $A$ and a (co)wreath $\mathfrak{X}$ 
in ${}_A\Cc_A^!$. This finishes our proof.
\end{proof}

\end{document}